    \newenvironment{dedication}
        {\vspace{6ex}\begin{quotation}\begin{center}\begin{em}}
        {\par\end{em}\end{center}\end{quotation}}
\newcommand{\dlt}[0]{\operatorname{dlt}}
\newcommand{\sddb}{{\sqrt{-1}\partial\bar{\partial}}}
\newcommand{\vphi}{{\varphi}}
\newcommand{\cD}{\mathcal{D}}
\newcommand{\cY}{\mathcal{Y}}
\newcommand{\cE}{\mathcal{E}}
\newcommand{\la}{\langle}
\newcommand{\ra}{\rangle}
\newcommand{\cR}{\mathcal{R}}
\newcommand{\cL}{\mathcal{L}}
\newcommand{\NA}{{\rm NA}}
\newcommand{\bP}{\mathbb{P}}
\newcommand{\fD}{\mathfrak{D}}
\newcommand{\ka}{\mathfrak{a}}
\newcommand{\bS}{\mathbb{S}}
\newcommand{\Fut}{{\rm Fut}}
\newcommand{\bZ}{{\mathbb{Z} }}
\newcommand{\bQ}{{\mathbb{Q} }}
\newcommand{\bC}{{\mathbb{C} }}
\newcommand{\cX}{{\mathcal{X}}}
\newcommand{\cF}{{\mathcal{F}}}
\newcommand{\DR}{{\mathcal{DR}}}
\newcommand{\ft}{{\mathfrak{t}}}
\newcommand{\D}{{\mathcal{D}}}
\newcommand{\bV}{{\mathbb{V}}}
\newcommand{\mld}{{\rm mld}}
\newcommand{\mult}{{\rm mult}}
\newcommand{\Supp}{{\rm Supp}}
\newcommand{\ord}{{\rm ord}}
\newcommand{\lct}{{\rm lct}}
\newcommand{\vol}{{\rm vol}}
\newcommand{\Val}{{\rm Val}}
\newcommand{\fa}{{\mathfrak{a}}}
\newcommand{\hvol}{\widehat{\rm vol}}
\newcommand{\bk}{\mathbbm{k}}
\newcommand{\bR}{\mathbb{R}}
\newcommand{\cO}{\mathcal{O}}
\newcommand{\gr}{{\rm gr}}
\newcommand{\wt}{{\rm wt}}
\newcommand{\fb}{\mathfrak{b}}
\newcommand{\Cosupp}{\mathrm{Cosupp}}
\newcommand{\ratrank}{\mathrm{rat.rank}\,}
\newcommand{\transdeg}{\mathrm{trans.deg}\,}
\newcommand{\fm}{\mathfrak{m}}
\newcommand{\QM}{\mathrm{QM}}
\newcommand{\bN}{\mathbb{N}}
\newcommand{\ind}{\mathrm{ind}}
\newcommand{\Fvol}{{\rm Fvol}}
\newcommand{\fpt}{\mathrm{fpt}}
\newcommand{\codim}{\mathrm{codim}}
\newtheorem{thm}{Theorem}[section]
\newtheorem{lem}[thm]{Lemma}
\newtheorem{cor}[thm]{Corollary}
\newtheorem{prop}[thm]{Proposition}
\newtheorem{conj}[thm]{Conjecture}
\theoremstyle{definition}
\newtheorem{defn}[thm]{Definition}
\newtheorem{ques}[thm]{Question}    
\newtheorem{que}[thm]{Question}    
\newtheorem{rem}[thm]{Remark}
\newtheorem{defn-thm}[thm]{Definition--Theorem}  
\newtheorem{defn-prop}[thm]{Definition--Proposition}  
\newtheorem{defn-lem}[thm]{Definition--Lemma}  
\theoremstyle{remark}
\begin{document}
\title{A guided tour to normalized volume }

\begin{dedication}
\hspace{0cm}
\vspace*{1cm}{Dedicated to Gang Tian's Sixtieth Birthday with admiration}
\end{dedication}

\author {Chi Li}
\address {Department of Mathematics, Purdue University, West Lafayette, IN 47907-2067, USA}
\email {li2285@purdue.edu}

\author {Yuchen Liu}
\address{Department of Mathematics, Yale University, New Haven, CT 06511, USA}
\email{yuchen.liu@yale.edu}

\author {Chenyang Xu}

\address   {Beijing International Center for Mathematical Research,
       Beijing 100871, China}
\email     {cyxu@math.pku.edu.cn}
\begin{abstract}
This is a survey on the recent theory on minimizing the normalized volume function attached to any klt singularities.
\end{abstract}

\maketitle
\tableofcontents
\section{Introduction}

The development of algebraic geometry and complex geometry has interwoven in the history. One recent example is the interaction between the theory of higher dimensional geometry centered around the minimal model program (MMP), and the existence of `good' metrics on algebraic varieties. Both subjects have major steps forward, whose influences are beyond the subjects themselves, spurring out new progress in topics once people could not imagine. In this note, we will discuss a `local stability theory' of singularities, which in our opinion provides an excellent example on the philosophy that there are many unexpected connections underlying these two different topics.  

Ever since the starting of the theory of MMP in higher dimensions (that is, the dimension is at least three), people understand that a feature of such a theory is that we need to deal with singular varieties. Then it becomes very nature  to investigate this class of singularities for people working on the MMP. To deal with singular varieties in complex geometry is a more recent trend, and it significantly improves people's knowledge on the existence of interesting metrics, even in situations which people originally only want to study smooth varieties. 

It becomes clear now, Kawamata log terminal (klt) singularities form an exceptionally important class of singularities for many reasons: it is the natural class of singular varieties for people to inductively prove deep results in the MMP; it is the class of singularities appearing on degenerations in many natural settings and it carries properties which globally Fano varieties have. 

What we want to survey here is a rather new theory on klt singularities. The picture consists of two closely related parts: firstly, we want to establish a structure which provides a canonically determined  degeneration  to a stable log Fano cone from each  klt singularity; secondly, to construct the degeneration, we need a valuation which minimizes the normalized volume function on the `non-archimedean link', and since such minimum is a deep invariant defined for all klt singularities, we want to explore more properties of this invariant, including calculating it in many cases. 


\subsection{ History} The first prototype of the local stability theory underlies in \cite{MSY06, MSY08}. They find that the existence of  Ricci-flat cone metric on an affine variety with a good action by a torus group $T$ is closely related to the normalized volume minimizing problem. In our language, they concentrate on the valuations induced by the vectors in the Reeb cone provided by the torus action. Later a systematic study of K-stability in the setting of Sasaki geometry is further explored in \cite{CS18,CS15}. 

Consider klt singularities which appear on the Gromov-Hausdorff (GH) limit of K\"aher-Einstein Fano manifolds. At the first sight, we do not know more algebraic structure for these singularities. Nevertheless, by looking at the metric tangent cone, it is shown in \cite{DS17},  built on the earlier works in \cite{CCT02, DS14, Tia13}, that the metric tangent cone of such singularities is an affine $T$-variety with a Ricci-flat cone metric. Furthermore, \cite{DS17} gives a  a two-step degeneration description of the metric tangent cone. They further conjecture that this two-step degeneration should only depend on the algebraic structure of the singularity, but not the metric. 

Then in \cite{Li15}, the  normalized volume function on the `non-archimedean link' of a given klt singularity is defined, and a series of conjectures on normalized volume function are proposed. This  attempt is not only to algebrize the work in \cite{DS17} without invoking the metric, but it is also of a completely local nature. Since then, the investigation on this local stability theory points to different directions. 

In \cite{Blu18}, the existence of a minimum (opposed to only infimum) which was conjectured in \cite{Li15} is affirmatively answered. The proof uses the properness estimates in \cite{Li15} and the observation in \cite{Liu16} that the minimizer can be computed by the minimal normalized multiplicities, and then  skillfully uses the techniques from the study of asymptotic invariants (see \cite{Laz04}). Later in \cite{BL18}, lower semicontinuity of the volume of singularities are also established using this circle of ideas.

In \cite{Li17,LL16}, the case of a cone singularity over a Fano variety is intensively studied, and it was found if we translate the minimizing question for the canonical valuation into a question on the base Fano varieties, what appears is the sign of the $\beta$-invariant developed in \cite{Fuj18,Fuj16, Li17}. 

Built on the previous study of cone singularities, implementing the ideas circled around the MMP in  birational geometry, an effective process of degenerating a general singularity to a cone singularity is established in \cite{LX16}, provided the minimizer is a divisorial valuation. 
In \cite{LX17}, a couple of conjectural properties are added to complete the picture proposed in \cite{Li15}, and now the package is called `{\it stable degeneration conjecture}', see Conjecture \ref{conj-local}. The investigation in \cite{LX16} is also extended in \cite{LX17} to the case when the minimizer is a quasi-monomial valuation with a possibly higher rational rank, where the study involves a considerable amount of new techniques. As a corollary, the first part of Donaldson-Sun's conjecture in \cite{DS17} is answered affirmatively in \cite{LX17}. Later the work is extended in \cite{LWX18} and a complete solution of Donaldson-Sun's conjecture is found. 

Applications to global questions, especially the existence of KE metrics on Fano varieties, are also explored. In \cite{Liu16}, built on the work of \cite{Fuj18}, an inequality to connect the local volume and the global one is proved. Then in \cite{SS17, LiuX17}, via the approach of the `comparison of moduli', complete moduli spaces parametrizing  explicit Fano varieties with a KE metric  are established by studying the local constraint posted by the lower bound of the local volumes.

\subsection {Outline} In the note, we will survey a large part of the results mentioned above. From the perspective of techniques, there are three closely related ways to think about the volume of a singularity: the infima of the normalized volume of valuations, of the normalized multiplicity of primary ideals  or of  the volume of models. The viewpoint using valuations gives the most canonical picture, e.g. the stable degeneration conjecture, but there are less techniques available to directly study the space. The viewpoint using ideals is flexible for many purposes, e.g. taking degenerations. Moreover, though usually working on a single ideal does not give too much advantage over others, working on a graded sequence of ideals really enables one to use the powerful theory on asymptotical invariants for such setting. The third viewpoint of using models allows us to apply the machinery from the MMP theory, and it is the key to degenerate the underlying singularities into cone singularities.  
The interplay among these three circle of techniques is fruitful, and we expect further insight can be made in the future.

In Section \ref{s-def}, we give the definition of the function of the normalized volumes and sketch the basic properties of its minimizer, including the existence. In Section \ref{s-sasaki}, we discuss the theory on searching for Sasaki-Einstein metrics on a Fano cone singularities. The algebraic side, namely the K-stability notions on a Fano cone plays an important role as we try to degenerate any klt singularity to a K-semistable Fano cone. Such an attempt is formulated in the stable degeneration conjecture, which is the focus of Section \ref{s-SDC}. In Section \ref{s-application}, we present some applications, including the torus equivariant K-stability (Section \ref{ss-equiva}), a solution of Donaldson-Sun's conjecture (Section \ref{ss-dsc}) and the K-stability of cubic threefolds (Section \ref{ss-cubic3}). In the last Section \ref{s-ques}, we discuss many unsolved questions, which we hope will lead to some future research. Some of them give new approaches to attack existing problems. 

\bigskip

\noindent {\bf Acknowledgement:} We would like to use this chance to express our deep gratitude to Gang Tian, from whom we all learn a large amount of knowledge related to K-stability questions on Fano varieties in these years. We want to thank Harold Blum, S\'ebastien Boucksom for helpful discussions. 

CL is partially supported by NSF (Grant No. DMS-1405936) and an Alfred P. Sloan research fellowship. 
CX is partially supported by the National Science Fund for Distinguished Young Scholars (11425101).  A large part of the work is written while CX visits Institut Henri Poincar\'e under the program `Poincar\'e Chair'.  He wants to thank the wonderful environment. 

\section{Definitions and first properties}\label{s-def}

\subsection{Definitions}\label{ss-firstdefinition}
In this section, we give the definition of the normalized volume $\hvol_{(X,D),x}(v)$ (or abbreviated as $\hvol(v)$ if there is no confusion) for a valuation $v$ centered on a klt singularity $x\in (X,D)$ as in \cite{Li15}. It consists of two parts: the volume $\vol(v)$ (see Definition \ref{d-vol}) and the log discrepancy $A_{X,D}(v)$ (see Definition \ref{d-logd}).
\bigskip

Let $X$ be a reduced, irreducible variety defined over $\bC$. A \emph{real valuation} of its function field
$K(X)$ is a non-constant map $v\colon K(X)^{\times}\to \bR$, satisfying:
\begin{itemize}
 \item $v(fg)=v(f)+v(g)$;
 \item $v(f+g)\geq \min\{v(f),v(g)\}$;
 \item $v(\bC^*)=0$.
\end{itemize}
We set $v(0)=+\infty$. A valuation $v$ gives rise
to a valuation ring $\cO_v:=\{f\in K(X)\mid v(f)\geq 0\}$.
We say a real valuation $v$ is \emph{centered at} a scheme-theoretic
point $\xi=c_X(v)\in X$ if we have a local inclusion 
$\cO_{\xi,X}\hookrightarrow\cO_v$ of local rings.
Notice that the center of a valuation, if exists,
is unique since $X$ is separated. Denote by $\Val_X$ 
the set of real valuations of $K(X)$ that admits a center
on $X$. For a closed point $x\in X$, we denote by $\Val_{X,x}$ the set
of real valuations of $K(X)$ centered at $x\in X$. It's well known that $v\in \Val_{X}$ is centered at $x\in X$ if $v(f)$ for any $f\in \fm_x$.

For each valuation $v\in \Val_{X,x}$ and any integer $m$, we define
the valuation ideal $\fa_m(v):=\{f\in\cO_{x,X}\mid v(f)\geq m\}$. Then
it is clear that $\fa_m(v)$ is an $\fm_x$-primary ideal for each $m>0$.

Given a valuation $v\in \Val_X$ and a nonzero ideal $\fa\subset\cO_X$, we may evaluate $\fa$ along $v$ by setting $v(\fa) := \min\{v(f)\mid f \in \fa\cdot\cO_{c_X(v),X} \}$. 
It follows from the above definition that if $\fa\subset \fb \subset \cO_X$ are nonzero ideals, then $v(\fa) \geq v(\fb)$.
Additionally, $v(\fa)> 0$ if and only if $c_X(v) \in \Cosupp(\fa)$.
We endow $\Val_X$  with the weakest topology such that,
for every ideal $\fa$ on $X$, the map $\Val_X\to \bR\cup\{+\infty\}$ defined by $v\mapsto v(\fa)$ is continuous.
The subset $\Val_{X,x}\subset \Val_X$ is endowed with
the subspace topology. In some literatures, the space  $\Val_{X,x}$ is called the {\it non-archimedean link} of $x\in X$. When $X=\bC^2$, the geometry of $\Val_{X,x}$ is understood well (see \cite{FJ04}). For higher dimension, its structure is much more complicated but can be described as an inverse limit of dual complexes (see \cite{JM12, BdFFU15}).

\bigskip

Let $Y\xrightarrow[]{\mu} X$ be a proper birational morphism with $Y$  a normal variety. 
For a prime divisor $E$ on $Y$, we define a valuation $\ord_E\in \Val_X$ that sends each rational function in $K(X)^{\times}=K(Y)^{\times}$ to its order of vanishing along $E$. Note that 
the center $c_X(\ord_E)$ is the generic point of $\mu(E)$.
We say that $v\in \Val_X$ is a \emph{divisorial valuation} if there exists $E$ as above and $\lambda\in\bR_{>0}$ such that $v=\lambda\cdot \ord_E$.

Let $\mu : Y \to X$ be a proper 
birational morphism and $\eta\in Y$ a point 
such that $Y$ is regular at $\eta$. Given a system of
parameters $y_1,\cdots, y_r \in \cO_{Y,\eta}$ at $\eta$
and $\alpha = (\alpha_1,\cdots,\alpha_r) \in \bR_{\geq 0}^r\setminus\{0\}$,
we define a valuation $v_\alpha$ as follows. For $f\in \cO_{Y,\eta}$
we can write it as $f =\sum_{\beta\in\bZ_{\geq 0}^r}c_\beta y^\beta$, with $c_\beta\in\widehat{\cO_{Y,\eta}}$  either zero or unit. We set
\[
v_\alpha(f) = \min\{\langle\alpha,\beta\rangle\mid c_\beta\neq 0\}.
\]
A \emph{quasi-monomial valuation} is a valuation that can be written in the above form. 

Let $(Y, E =\sum_{k=1}^N E_k)$ be a log smooth model of $X$, i.e. $\mu : Y \to X$ is an isomorphism outside of the support of $E$. We denote by $\QM_{\eta}(Y,E)$ the set of all
quasi-monomial valuations $v$ that can be described at the point $\eta\in Y$ with respect to
coordinates $(y_1,\cdots, y_r)$ such that each $y_i$ defines at $\eta$ an irreducible component of $E$ 
(hence $\eta$ is the generic point of a connected component of the intersection of some of the divisors $E_i$).
We put $\QM(Y,E):=\bigcup_{\eta}\QM_\eta(Y,E)$ where $\eta$ runs over
generic points of all irreducible components of intersections of 
some of the divisors $E_i$.

Given a valuation $v\in \Val_{X,x}$, its \emph{rational rank} $\ratrank v$
is the rank of its value group. The \emph{transcendental degree}
$\transdeg v$ of $v$ is the transcendental degree of the field extension
$\bC\hookrightarrow \cO_v/\fm_v$. The Zariski-Abhyankar Inequality
says that 
\[
 \transdeg v+\ratrank v\leq \dim X.
\]
A valuation satisfying the equality is called an \emph{Abhyankar valuation}.
By \cite{ELS03}, we know that a valuation $v\in\Val_{X}$ is Abhyankar
if and only if it is quasi-monomial.

\bigskip

\begin{defn}\label{d-vol}
 Let $X$ be an $n$-dimensional normal variety. Let $x\in X$ be a closed point. We define
 the \emph{volume of a valuation} $v\in\Val_{X,x}$ following \cite{ELS03} as
 \[
  \vol_{X,x}(v)=\limsup_{m\to\infty}\frac{\ell(\cO_{x,X}/\fa_m(v))}{m^n/n!}.
 \]
 where $\ell$ denotes the length of the artinian module.
\end{defn}
Thanks to the works of \cite{ELS03, LM09, Cut13} the above limsup is actually a limit.

\begin{defn}\label{d-logd}
 Let $(X,D)$ be a klt log pair. We define the \emph{log discrepancy
 function of valuations} $A_{(X,D)}:\Val_X\to (0,+\infty]$
 in successive generality.
 \begin{enumerate}[label=(\alph*)]
  \item Let $\mu:Y\to X$ be a proper birational morphism from
  a normal variety $Y$. Let $E$ be a prime divisor
  on $Y$. Then we define $A_{(X,D)}(\ord_E)$ as 
  \[
   A_{(X,D)}(\ord_E):=1+\ord_E(K_Y-\mu^*(K_X+D)).
  \]
  \item Let $(Y,E=\sum_{k=1}^N E_k)$ be a log smooth model of $X$.
  Let $\eta$ be the generic point of a connected component of 
  $E_{i_1}\cap E_{i_2}\cap\cdots\cap E_{i_r}$ of codimension $r$. Let $(y_1,\cdots,y_r)$
  be a system of parameters of $\cO_{Y,\eta}$ at $\eta$ such that
  $E_{i_j}=(y_j=0)$. Then
  for any $\alpha=(\alpha_1,\cdots,\alpha_r)\in\bR_{\geq 0}^r\setminus\{0\}$, we define $A_{(X,D)}(v_{\alpha})$ as
  \[
   A_{(X,D)}(v_\alpha):=\sum_{j=1}^r \alpha_j A_{(X,D)}(\ord_{E_{i_j}}).
  \]
  \item In \cite{JM12}, it was showed that there exists a retraction map $r_{Y,E}: \Val_X \to \QM(Y, E)$ for any log smooth model $(Y, E)$ over $X$, such that it induces a
homeomorphism $\Val_X \to\varprojlim_{(Y,E)}\QM(Y, E)$. For any real valuation $v \in \Val_X$, we define
\[
A_{(X,D)}(v):=\sup_{(Y,E)} A_{(X,D)}(r_{(Y,E)}(v)).
\]
where $(Y, D)$ ranges over all log smooth models over $X$. For details, see \cite{JM12} and
\cite[Theorem 3.1]{BdFFU15}. It is possible that $A_{(X,D)}(v) = +\infty$ for some $v\in \Val_X$, see e.g. \cite[Remark 5.12]{JM12}.
 \end{enumerate}
\end{defn}

Then we can define the main invariant in this paper. As we mentioned in Section \ref{s-sasaki}, it is partially inspired the definition in \cite{MSY08} for a valuation coming from the Reeb vector field.

\begin{defn}[\cite{Li15}]\label{d-normvol}
 Let $(X,D)$ be an $n$-dimensional klt log pair. Let $x\in X$ be a closed point.
 Then the \emph{normalized volume function of valuations} $\hvol_{(X,D),x}:\Val_{X,x}\to(0,+\infty)$
 is defined as
 \[
  \hvol_{(X,D),x}(v)=\begin{cases}
            A_{(X,D)}(v)^n\cdot\vol_{X,x}(v), & \textrm{ if }A_{(X,D)}(v)<+\infty;\\
            +\infty, & \textrm{ if }A_{(X,D)}(v)=+\infty.
           \end{cases}
 \]
 The \emph{volume of the singularity} $(x\in (X,D))$ is defined as
 \[
  \hvol(x, X,D):=\inf_{v\in\Val_{X,x}}\hvol_{(X,D),x}(v).
 \]

\end{defn}
Since $\hvol(v)=\hvol(\lambda\cdot v)$ for any $\lambda \in \bR_{>0}$, for any valuation $v\in \Val_{X,D}$ with a finite log discrepancy, we can rescale such that $\lambda\cdot v\in \Val^{=1}_{X,D}$ where 
$\Val^{=1}_{X,D}$ consists of all valuations $v\in \Val_{X,x}$ with $ A_{(X,D)}(v)=1$. 

\begin{rem}A definition of volume of singularities is also given in \cite{BdFF12}. Their definition is the local analogue of the volume $K_X$ whereas our definition is the one of the volume of $-K_X$. In particular, a singularity has volume 0 in the definition of \cite{BdFF12} if it is log canonical. 
\end{rem}

\subsection{Properties} In this section, we discuss some properties of $\hvol$ on $\Val_{X,x}$. 
We start from the properness and Izumi estimates. As a corollary, we conclude that $\hvol(x, X,D)$ is always positive for any klt singularity $x\in (X,D)$. 

\begin{thm}[\cite{Li15}]\label{t-izumi}
 Let $(x\in(X,D))$ be a klt singularity. Then there exists positive constants $C_1$, $C_2$ which only depend on $x\in (X,D)$ (but not the valuation $v$) such that the following holds.
\begin{enumerate}
\item (Izumi-type inequality) For any valuation $v\in\Val_{X,x}$, we have
\[
v(\fm_x)\ord_x\leq v\leq C_2\cdot A_{(X,D)}(v)\ord_x.
\]
\item (Properness) For any valuation $v\in\Val_{X,x}$ with $A_{(X,D)}(v)<+\infty$, we have
\[
 C_1\frac{A_{(X,D)}(v)}{v(\fm_x)} \le \hvol(v).
\]
\end{enumerate}
\end{thm}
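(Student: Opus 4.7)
The plan is to prove (1) first and then deduce (2) from it, using Hilbert--Samuel asymptotics for the valuation ideals $\mathfrak{a}_m(v)$. The left inequality in (1) is immediate from the valuation axioms: any $f\in\mathcal{O}_{x,X}$ with $\ord_x(f)=k$ lies in $\mathfrak{m}_x^k$, so $v(f)\geq k\,v(\mathfrak{m}_x) = v(\mathfrak{m}_x)\ord_x(f)$. No singularity hypothesis is needed here.

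The right inequality in (1) -- the uniform Izumi bound with log-discrepancy control -- is the technical heart. I would proceed in three steps: (a) reduce to divisorial valuations by invoking the retractions $r_{Y,E}\colon\Val_X\to\QM(Y,E)$ of \cite{JM12,BdFFU15} together with the sup-formula defining $A_{(X,D)}$, using density of divisorial valuations in $\QM_\eta(Y,E)$ and continuity of $v\mapsto v(f)$ in the quasi-monomial weights; (b) for a divisorial $v=\ord_E$ centered at $x$, invoke the classical linear Izumi theorem of Izumi/Rees/H\"ubl: on the analytically irreducible germ $x\in X$ there exists a constant $C_x>0$, depending only on $x\in X$, such that $\ord_E(f)\leq C_x\cdot\ord_E(\mathfrak{m}_x)\cdot\ord_x(f)$ for every $f$ and every divisorial $\ord_E$ centered at $x$; (c) control $\ord_E(\mathfrak{m}_x)$ uniformly by $A(\ord_E)$ via the log canonical threshold $\alpha:=\lct(X,D;\mathfrak{m}_x)$, which is strictly positive because $(X,D)$ is klt and $\mathfrak{m}_x$ is $\mathfrak{m}_x$-primary; by the infimum-characterization of $\alpha$, $\ord_E(\mathfrak{m}_x)\leq A(\ord_E)/\alpha$. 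Composing yields $\ord_E\leq(C_x/\alpha)\,A(\ord_E)\,\ord_x$, so $C_2:=C_x/\alpha$ works.

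For (2), the right Izumi gives the ideal containment $\mathfrak{a}_m(v)\subseteq \mathfrak{m}_x^{\lceil m/(C_2A(v))\rceil}$, and by Hilbert--Samuel asymptotics
$$\ell(\mathcal{O}_{x,X}/\mathfrak{a}_m(v))\geq \ell\bigl(\mathcal{O}_{x,X}/\mathfrak{m}_x^{\lceil m/(C_2A(v))\rceil}\bigr)\sim \frac{e(\mathfrak{m}_x)}{n!}\Bigl(\frac{m}{C_2A(v)}\Bigr)^n.$$
Letting $m\to\infty$ yields $\vol(v)\geq e(\mathfrak{m}_x)/(C_2A(v))^n$, and hence the uniform positive lower bound $\hvol(v)\geq e(\mathfrak{m}_x)/C_2^n$. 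The sharper form $\hvol(v)\geq C_1\,A(v)/v(\mathfrak{m}_x)$ claimed in the statement is then obtained by combining this with the complementary Izumi constraint $v(\mathfrak{m}_x)\leq C_2A(v)$ from (1) (applied to a generator of $\mathfrak{m}_x$) and a finer colength/mixed-multiplicity analysis exploiting both ends of the Izumi sandwich $v(\mathfrak{m}_x)\ord_x\leq v\leq C_2A(v)\ord_x$, i.e.\ comparing the position of $\mathfrak{a}_m(v)$ inside $\mathfrak{m}_x^{\lceil m/(C_2A(v))\rceil}\supseteq\mathfrak{a}_m(v)\supseteq\mathfrak{m}_x^{\lceil m/v(\mathfrak{m}_x)\rceil}$.

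The hardest step will be the right Izumi in (1): obtaining a constant $C_2$ depending only on $(x\in(X,D))$ and not on $v$ requires essential use of the klt assumption (through positivity of $\lct(X,D;\mathfrak{m}_x)$) combined with the classical Izumi inequality on analytically irreducible germs, and also the continuity/approximation machinery of \cite{JM12,BdFFU15} to pass from divisorial to general valuations. Once (1) is in hand, part (2) reduces to Hilbert--Samuel bookkeeping applied to the ideal sequence $\mathfrak{a}_\bullet(v)$.
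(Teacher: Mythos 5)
The left-hand inequality in (1) and your reduction to divisorial/quasi-monomial valuations are fine, but the two load-bearing steps of your argument both have genuine problems.

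First, step (b) of part (1) is false as stated. There is no constant $C_x$ depending only on the germ such that $\ord_E(f)\le C_x\cdot\ord_E(\fm_x)\cdot\ord_x(f)$ for \emph{all} divisorial $E$ centered at $x$: already for $x\in\bC^2$ smooth, the monomial (weighted-blowup) valuation $v_T$ with $v_T(x_1)=1$, $v_T(x_2)=T$ has $v_T(\fm_x)=1$ while $v_T(x_2)=T\cdot\ord_x(x_2)$, so the purported constant would have to exceed every $T$. The classical Izumi/Rees theorem only compares two \emph{fixed} divisorial valuations, with a constant depending on both; the whole point of Theorem \ref{t-izumi}(1) is that the uniform control of $\sup_{\fm_x} v/\ord_x$ must go through $A_{(X,D)}(v)$, so your route (linear Izumi in $v(\fm_x)$ plus $\ord_E(\fm_x)\le A(\ord_E)/\lct(X,D;\fm_x)$) assumes a strictly stronger, false statement and is in effect circular. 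The paper's argument is different: one first proves the smooth-point inequality $v\le A_X(v)\,\ord_x$ directly (on log smooth models), and then reduces the klt case to the smooth case via a log resolution, at the cost of decreasing the constant by a bounded amount controlled by the discrepancies of the resolution.

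Second, part (2) does not follow from the containments you use. The inclusion $\fa_m(v)\subseteq\fm_x^{\lceil m/(C_2A(v))\rceil}$ gives $\vol(v)\ge \mult(\fm_x)/(C_2A(v))^n$, i.e.\ only the uniform bound $\hvol(v)\ge \mult(\fm_x)/C_2^n$; the statement $\hvol(v)\ge C_1 A_{(X,D)}(v)/v(\fm_x)$ is a properness assertion whose right-hand side is unbounded (e.g.\ along chains of infinitely near blowups $A(\ord_{E_k})\to\infty$ while $\ord_{E_k}(\fm_x)=1$), so no constant lower bound implies it. Nor can the sandwich $\fm_x^{\lceil m/v(\fm_x)\rceil}\subseteq\fa_m(v)\subseteq\fm_x^{\lceil m/(C_2A(v))\rceil}$ alone do the job: a graded family saturating the upper containment has volume $\sim (C_2A(v))^{-n}$, which is compatible with both containments but far below the required $A(v)^{1-n}/v(\fm_x)$. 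What is actually needed (and what the paper uses) is the sharper estimate $\vol(v)\ge c_2\,\bigl(\sup_{\fm_x} v/\ord_x\bigr)^{1-n}\cdot v(\fm_x)^{-1}$, whose proof exploits the multiplicativity of $v$ (not just the graded family $\fa_\bullet(v)$); combined with (1) it gives $\hvol(v)\ge c_2C_2^{1-n}A_{(X,D)}(v)/v(\fm_x)$. Your closing appeal to "a finer colength/mixed-multiplicity analysis" is exactly the missing content, so as written both halves of the theorem remain unproved.
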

Note that since $x\in X$ is singular, $\ord_x$ in the above inequality might not be a valuation. In other words, for $f,g\in \cO_{X,x}$, $\ord_x(fg)\ge \ord_x(f)+\ord_x(g)$ may be a strict inequality.

The above Izumi type inequality is well known when $x\in X$ is a smooth point. In the case of a general  klt singularity,  it can be reduced to the smooth case after a log resolution and decreasing the constant. Then for the properness, it follows from a more subtle estimate that there exists a positive constant $c_2$ that
\[
\vol(v)\ge c_2\left(\sup_{\fm_x} \frac{v}{\ord_x}\right)^{1-n} \cdot \frac{1}{v(\fm)}.
\]

\bigskip
Let $\fa_\bullet=\{\fa_m\}_{m\in\bZ}$ be a graded sequence of $\fm_x$-primary ideals. By the works in \cite{LM09, Cut13}, the following identities hold true:
\begin{equation*}
\mult(\fa_\bullet):=\lim_{m\rightarrow+\infty}\frac{\ell(\cO_{X,x}/\fa_m)}{m^n/n!}=\lim_{m\rightarrow+\infty}\frac{\mult(\fa_m)}{m^n}.
\end{equation*}
In particular, the two limits exist. Note that, by definition, for any $v\in \Val_{X,x}$ and $\fa_\bullet(v)=\{\fa_m(v)\}$, we have $\vol(v)=\mult(\fa_\bullet(v))$ .

The following observation on characterizing the normalized volumes by normalized 
multiplicities provides lots of flexibility in the study as we will see. \begin{thm}[\cite{Liu16}]\label{thm:liueq}
 Let $(x\in (X,D))$ be an $n$-dimensional klt singularity. Then we have
 \[
  \hvol(x,X,D)=\inf_{\fa\colon \fm_x\textrm{-primary}}\lct(X,D;\fa)^n\mult(\fa)
  =\inf_{\fa_\bullet\colon \fm_x\textrm{-primary}}\lct(X,D;\fa_\bullet)^n\mult(\fa_\bullet).
 \]
We also set $\lct(X,D;\fa_\bullet)^n\mult(\fa_\bullet)=
+\infty$ if $\lct(X,D;\fa_\bullet)=+\infty$.
\end{thm}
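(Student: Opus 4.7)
The plan is to prove the chain $V \ge I_\bullet \ge I \ge V$, where $V := \hvol(x,X,D)$, $I := \inf_\fa \lct(X,D;\fa)^n \mult(\fa)$, and $I_\bullet := \inf_{\fa_\bullet} \lct(X,D;\fa_\bullet)^n \mult(\fa_\bullet)$. The equality $I = I_\bullet$ is formal: viewing a single $\fm_x$-primary $\fa$ as the graded sequence $\{\fa^m\}$, for which $\lct(\{\fa^m\}) = \lct(\fa)$ and $\mult(\{\fa^m\}) = \mult(\fa)$, gives $I_\bullet \le I$; conversely, since $\lct(\fa_\bullet) = \lim_m m\,\lct(\fa_m)$ and $\mult(\fa_\bullet) = \lim_m \mult(\fa_m)/m^n$ (both from the results cited just before the statement), one has
\[
\lct(\fa_\bullet)^n \mult(\fa_\bullet) = \lim_m \lct(\fa_m)^n \mult(\fa_m) \ge I.
\]

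For $V \ge I_\bullet$, I would attach to each $v \in \Val_{X,x}$ with $A_{(X,D)}(v)<\infty$ its valuation-ideal sequence $\fa_\bullet(v) = \{\fa_m(v)\}$. Since $v(\fa_m(v)) \ge m$ by construction, the general bound $\lct(\fb) \le A_{(X,D)}(w)/w(\fb)$ applied with $w = v$ gives $\lct(\fa_m(v)) \le A_{(X,D)}(v)/m$, hence $\lct(\fa_\bullet(v)) \le A_{(X,D)}(v)$. Combined with $\mult(\fa_\bullet(v)) = \vol_{X,x}(v)$ (Definition \ref{d-vol}), this produces
\[
\lct(\fa_\bullet(v))^n \mult(\fa_\bullet(v)) \le A_{(X,D)}(v)^n \vol_{X,x}(v) = \hvol(v),
\]
and taking the infimum over $v$ delivers $I_\bullet \le V$.

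For $I \ge V$, the central step is to extract from each $\fm_x$-primary $\fa$ a divisorial valuation $v^* = \ord_E$ computing its log canonical threshold. On a log resolution of $(X, D+\fa)$, a prime exceptional divisor $E$ realizing an lc place of $(X, D + \lct(X,D;\fa)\cdot\fa)$ supplies such a $v^*$, which lies in $\Val_{X,x}$ because $x \in \Cosupp(\fa)$, and satisfies $A_{(X,D)}(v^*) = \lct(X,D;\fa)\cdot v^*(\fa)$. After rescaling so that $v^*(\fa) = 1$, the inclusion $\fa^m \subseteq \fa_m(v^*)$ forces $\mult(\fa_m(v^*)) \le \mult(\fa^m) = m^n \mult(\fa)$; dividing by $m^n$ and passing to the limit yields $\vol_{X,x}(v^*) \le \mult(\fa)$, hence
\[
V \le \hvol(v^*) = A_{(X,D)}(v^*)^n \vol_{X,x}(v^*) \le \lct(X,D;\fa)^n \mult(\fa),
\]
and the infimum over $\fa$ closes the loop.

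The only genuinely nontrivial ingredient I expect to need is the existence of a divisorial lc place computing $\lct(X,D;\fa)$ for each primary ideal — a standard consequence of resolution of singularities applied to $(X, D+\fa)$, but the hinge on which the bound $I \ge V$ rests. Every other move is a direct dictionary between valuations, valuation ideals, and graded sequences, organized around the identities linking $\vol$, $\mult$, $A_{(X,D)}$, and $\lct$ that are already in play in the definitions.
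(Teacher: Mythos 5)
Your proposal is correct and follows essentially the same route as the paper's proof: the three inequalities $\hvol \le \inf_{\fa}$ (via a divisorial valuation computing $\lct(\fa)$, rescaled so $v(\fa)=1$, and the inclusion $\fa^m\subset\fa_m(v)$), $\inf_{\fa}\le\inf_{\fa_\bullet}$ (via $\lct(\fa_\bullet)=\lim_m m\,\lct(\fa_m)$ and $\mult(\fa_\bullet)=\lim_m\mult(\fa_m)/m^n$), and $\inf_{\fa_\bullet}\le\hvol$ (via $\fa_\bullet(v)$ with $\lct(\fa_\bullet(v))\le A_{(X,D)}(v)$ and $\mult(\fa_\bullet(v))=\vol(v)$) are exactly the paper's steps. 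Your extra direct verification that $\inf_{\fa_\bullet}\le\inf_{\fa}$ is harmless but redundant, since the cyclic chain already forces all three quantities to coincide.
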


\begin{proof}
 Firstly, for any $\fm_x$-primary ideal $\fa$, we can take
 a divisorial valuation $v\in\Val_{X,x}$ computing $\lct(\fa)$. In other
 words, $\lct(\fa)=A_X(v)/v(\fa)$. We may rescale $v$
 such that $v(\fa)=1$. Then clearly $\fa^m\subset\fa_m(v)$
 for any $m\in\bN$, hence $\mult(\fa)\geq \vol(v)$.
 Therefore, $\lct(\fa)^n\mult(\fa)\geq A_X(v)^n\vol(v)$ which implies
 \begin{equation}\label{eq:liueq1}
  \hvol(x, X,D)\leq \inf_{\fa\colon \fm_x\textrm{-primary}}\lct(\fa)^n\mult(\fa).
 \end{equation}
 
 Secondly, for any graded sequence of $\fm_x$-primary 
 ideals $\fa_\bullet$, we have
 \[
  \lct(\fa_\bullet)=\lim_{m\to\infty}m\cdot\lct(\fa_m)
 \]
 by \cite{JM12, BdFFU15}. Hence 
 \[
  \lct(\fa_\bullet)^n\mult(\fa_\bullet)=
  \lim_{m\to\infty} (m\cdot\lct(\fa_m))^n\frac{\mult(\fa_m)}{m^n}
  =\lim_{m\to\infty}\lct(\fa_m)^n\mult(\fa_m).
\]
As a result,
\begin{equation}\label{eq:liueq2}
 \inf_{\fa\colon \fm_x\textrm{-primary}}\lct(\fa)^n\mult(\fa)
  \leq \inf_{\fa_\bullet\colon \fm_x\textrm{-primary}}
  \lct(\fa_\bullet)^n\mult(\fa_\bullet).
\end{equation}

Lastly, for any valuation $v\in\Val_{X,x}$, we consider
the graded sequence of its valuation ideals $\fa_\bullet(v)$.
Since $v(\fa_\bullet(v))=1$, we have $\lct(\fa_\bullet)
\leq A_X(v)$. We also have $\mult(\fa_\bullet(v))=\vol(v)$.
Hence $\lct(\fa_{\bullet}(v))^n\mult(\fa_\bullet(v))\leq
A_X(v)^n\vol(v)$, which implies
\begin{equation}\label{eq:liueq3}
  \inf_{\fa_\bullet\colon \fm_x\textrm{-primary}}
  \lct(\fa_\bullet)^n\mult(\fa_\bullet)\le \hvol(x, X,D).
\end{equation}
The proof is finished by combining \eqref{eq:liueq1}, \eqref{eq:liueq2},
and \eqref{eq:liueq3}.
\end{proof}

In general we have the following relation between a sequence of graded ideals and the one from a valuation: Let $\Phi^g$ be an ordered subgroup of the real numbers $\bR$. Let $(R, \fm)$ be the local ring at a normal singularity $o\in X$. A $\Phi^g$-graded filtration of $R$, denoted by $\cF:=\{\fa^m\}_{m\in \Phi^g}$, is a decreasing family of $\fm$-primary ideals of $R$ satisfying the following conditions:

{\bf (i)} $\fa^m\neq 0$ for every $m\in \Phi^g$, $\fa^m=R$ for $m\le 0$ and $\cap_{m\ge 0}\fa^m=(0)$;

{\bf (ii)} $\fa^{m_1}\cdot \fa^{m_2}\subseteq \fa^{m_1+m_2}$ for every $m_1, m_2\in \Phi^g$.

Given such an $\cF$, we get an associated order function 
$$v=v_{\cF}: R\rightarrow \bR_{\ge 0} \qquad v(f)=\max\{m; f\in \fa^m\} \mbox{\ \ for any $f\in R$}.$$ Using the above {\bf (i)-(ii)}, it is easy to verify that $v$ satisfies $v(f+g)\ge \min\{v(f), v(g)\}$ and $v(fg)\ge v(f)+v(g)$. We also have the associated graded ring:
\[
\gr_{\cF}R=\sum_{m\in \Phi^g} \fa^m/\fa^{>m}, \text{ where } \fa^{>m}=\bigcup_{m'> m}\fa^{m'}.
\]
For any real valuation $v$ with valuative group $\Phi^g$, $\{\cF^m\}:=\{\fa_m(v)\}$ is a $\Phi^g$-graded filtration of $R$. 
 We will need the following facts.
\begin{lem}[see \cite{Tei03, Tei14}]\label{lem-quasi}
With the above notations, the following statements hold true:

{\rm (1) (\cite[Page 8]{Tei14})} If $\gr_{\cF}R$ is an integral domain, then $v=v_{\cF}$ is a valuation centered at $o\in X$. In particular, $v(fg)=v(f)+v(g)$ for any $f,g\in R$.

{\rm (2) (Piltant)} A valuation $v$ is quasi-monomial if and only if the Krull dimension of $\gr_v R$ is the same as the Krull dimension of $R$.
\end{lem}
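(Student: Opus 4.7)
\textbf{Proof plan for Lemma \ref{lem-quasi}.}

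For part (1), the filtration axioms (i)--(ii) combined with the definition $v_\cF(f)=\max\{m:f\in\fa^m\}$ already yield both $v(f+g)\ge\min\{v(f),v(g)\}$ and the sub-multiplicativity $v(fg)\ge v(f)+v(g)$, as the paper has observed. Thus the only thing to verify in order to upgrade $v_\cF$ to a valuation centered at $o$ is the multiplicative equality $v(fg)=v(f)+v(g)$ for nonzero $f,g\in R$. My plan is to read this equality off from the associated graded: set $m_i:=v(f_i)$ and consider the initial forms $\mathrm{in}(f)\in\fa^{m_1}/\fa^{>m_1}$ and $\mathrm{in}(g)\in\fa^{m_2}/\fa^{>m_2}$. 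By the very definition of the maximum, $f\notin\fa^{>m_1}$ and $g\notin\fa^{>m_2}$, so $\mathrm{in}(f)$ and $\mathrm{in}(g)$ are nonzero homogeneous elements of $\gr_\cF R$. If $\gr_\cF R$ is a domain then their product $\mathrm{in}(f)\cdot\mathrm{in}(g)$ is a nonzero element of $\fa^{m_1+m_2}/\fa^{>m_1+m_2}$, which forces $fg\notin\fa^{>m_1+m_2}$, hence $v(fg)\le m_1+m_2$, and combined with sub-multiplicativity this gives equality. Centeredness at $o$ is immediate from $\fa^m\subseteq\fm$ for $m$ large enough.

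For part (2), this is Piltant's theorem, so the plan is to sketch the two implications by invoking the Abhyankar--Zariski inequality. For the ``only if'' direction, suppose $v$ is quasi-monomial on a log smooth model $(Y,E)$ at a generic point $\eta$ of an $r$-fold intersection of components of $E$, with weights $\alpha_1,\dots,\alpha_r$. A direct analysis on $\widehat{\cO_{Y,\eta}}\cong\kappa(\eta)[[y_1,\dots,y_r]]$ expresses $\gr_v R$ as (a subring of) a polynomial ring $\kappa(\eta)[t_1,\dots,t_r]$ in $r$ variables over the residue field of $\eta$, whose Krull dimension equals $\transdeg(\kappa(\eta)/\bC)+r=\transdeg v+\ratrank v=\dim R$, with the last equality being the Abhyankar equality for quasi-monomial valuations. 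Conversely, for a general $v\in\Val_{X,x}$ one has the inequality
\[
\dim\gr_v R\le \transdeg v+\ratrank v\le \dim R,
\]
whose first step reflects that independent elements in $\gr_v R$ give rise to algebraically independent elements contributing to either the transcendence degree of the residue extension or to the value group, and whose second step is the Abhyankar--Zariski inequality. If $\dim\gr_v R=\dim R$ then both inequalities are equalities; the equality in Abhyankar--Zariski is precisely the definition of an Abhyankar valuation, and by \cite{ELS03} such valuations are quasi-monomial.

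The main obstacle is the chain of inequalities in part (2): the passage from $\dim\gr_v R$ to $\transdeg v+\ratrank v$ is subtle when the value group is non-discrete, because homogeneous components of $\gr_v R$ are indexed by elements of the value group rather than by integers, and one has to be careful that a transcendence basis of $\gr_v R$ over $\bC$ can be split into a part mapping to an independent set in $\kappa(v)/\bC$ and a part whose $v$-values are rationally independent. Part (1), by contrast, is a direct check once one has the correct notion of initial form, and we would simply reference the exposition in \cite{Tei14} for any missing details.
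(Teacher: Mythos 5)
The paper does not actually prove this lemma: both parts are quoted from the literature, (1) from Teissier \cite{Tei14} and (2) from Piltant's theorem as recorded in \cite{Tei03}, so there is no in-paper argument to compare yours against. Judged on its own terms, your part (1) is correct and is exactly the standard argument: nonvanishing of the initial forms $\mathrm{in}(f)\in\fa^{m_1}/\fa^{>m_1}$, $\mathrm{in}(g)\in\fa^{m_2}/\fa^{>m_2}$, plus the domain hypothesis, gives $fg\notin\fa^{>m_1+m_2}$ and hence multiplicativity. Only your justification of centeredness is slightly off: $\fa^m\subseteq\fm$ gives nothing about elements of $\fm$; what you need is that $v>0$ on $\fm$, which follows because each $\fa^m$ ($m>0$) is $\fm$-primary, so any $f\in\fm$ has $f^N\in\fa^m$ for some $N$, and the multiplicativity you just proved then forces $v(f)\geq m/N>0$.

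Part (2), however, has a genuine gap: the chain $\dim\gr_v R\le \transdeg v+\ratrank v$ is precisely the nontrivial content of Piltant's theorem (indeed one has equality $\dim\gr_v R=\transdeg v+\ratrank v$ for valuations centered on such local rings), and your proposal only gestures at it with the heuristic that "independent elements in $\gr_v R$ contribute either to the residue field extension or to the value group"; as you yourself note, making this precise when the value group is non-discrete is the hard step, and without it the converse direction is not a proof but a restatement. The forward direction also needs repair: from a quasi-monomial description at $\eta\in Y$ you compute $\gr_v\cO_{Y,\eta}\cong\kappa(\eta)[y_1,\dots,y_r]$, but $\gr_v R$ is in general only a proper graded subring of this, and a subring of an $n$-dimensional ring can have strictly smaller Krull dimension, so "subring of $\kappa(\eta)[t_1,\dots,t_r]$ of dimension $n$" does not yield $\dim\gr_v R=n$. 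One must either exhibit $n$ algebraically independent initial forms inside $\gr_v R$ itself (e.g.\ by comparing graded fraction rings of $\gr_v R$ and $\gr_v\cO_{Y,\eta}$, which coincide since $R$ and $\cO_{Y,\eta}$ have the same fraction field), or simply invoke the Piltant--Teissier equality — which is what the paper does by citing the result rather than proving it.
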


The existence of a minimizer for $\hvol_{(X,D),x}$ was conjectured in the first version of \cite{Li15} and then proved in \cite{Blu18}. 

\begin{thm}[\cite{Blu18}]\label{t-existence}
 For any klt singularity $x\in(X,D)$,
 there exists a valuation $v_{\min}\in\Val_{X,x}$ that minimizes
 the function $\hvol_{(X,D),x}$.
\end{thm}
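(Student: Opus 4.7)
The plan is to combine the graded-sequence reformulation in Theorem \ref{thm:liueq} with the two-sided bounds of Theorem \ref{t-izumi} to run a compactness argument on a minimizing sequence of divisorial valuations, and then to convert the resulting limit filtration into a genuine valuation via Lemma \ref{lem-quasi}.

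First, choose a minimizing sequence of divisorial valuations $v_k$ with $A_{(X,D)}(v_k) = 1$ and $\hvol(v_k) \to \hvol(x,X,D)$; such $v_k$ exist because, by Theorem \ref{thm:liueq}, $\hvol(x,X,D)$ is an infimum over $\fm_x$-primary ideals, each of which has its $\lct$ computed by a divisorial valuation. By the properness bound in Theorem \ref{t-izumi}(2), $v_k(\fm_x) \geq C_1/\hvol(v_k) \geq c > 0$ uniformly, and by the Izumi inequality in Theorem \ref{t-izumi}(1), $v_k \leq C_2 \ord_x$. Hence for each $m$ one has inclusions $\fm_x^{\lceil m/c \rceil} \subseteq \fa_m(v_k) \subseteq \fm_x^{\lceil m/C_2 \rceil}$ independent of $k$.

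Next, for each fixed $m$ the ideals $\{\fa_m(v_k)\}_k$ live in a compact subvariety of the Grassmannian of subspaces in $\cO_{X,x}/\fm_x^{\lceil m/c \rceil}$ (namely those closed under multiplication by $\cO_{X,x}$), so by a diagonal extraction over $m$ one passes to a subsequence along which $\fa_m(v_k)$ converges to a limit ideal $\fb_m$ for every $m$. The graded inclusions $\fa_{m_1}(v_k)\cdot \fa_{m_2}(v_k) \subseteq \fa_{m_1+m_2}(v_k)$ pass to the limit, so $\fb_\bullet = \{\fb_m\}$ is a graded sequence of $\fm_x$-primary ideals. A careful semicontinuity argument (interchanging the $m$-limit defining $\mult(\fb_\bullet)$ with the $k$-stabilization of ideals) shows $\mult(\fb_\bullet) \leq \liminf_k \vol(v_k) = \hvol(x,X,D)$, while from $\lct(\fa_m(v_k)) \leq A_{(X,D)}(v_k)/v_k(\fa_m(v_k)) \leq 1/m$ and the identity $\lct(\fb_\bullet) = \lim_m m\cdot \lct(\fb_m)$ one concludes $\lct(\fb_\bullet) \leq 1$. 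Combined with the general inequality in Theorem \ref{thm:liueq}, this forces $\lct(\fb_\bullet)^n \mult(\fb_\bullet) = \hvol(x,X,D)$, so $\fb_\bullet$ is a minimizing graded sequence.

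The main obstacle — and the most delicate point in Blum's argument — is the final step: producing an honest minimizing valuation from $\fb_\bullet$. The natural order function $v_\infty(f) := \max\{m : f \in \fb_m\}$ satisfies $v_\infty(fg) \geq v_\infty(f) + v_\infty(g)$ with possibly strict inequality; by Lemma \ref{lem-quasi}(1) it is a genuine valuation precisely when $\gr_{\fb_\bullet}\cO_{X,x}$ is an integral domain. When this fails, one refines $\fb_\bullet$ by choosing a minimal prime $P$ of the associated graded ring and replacing the filtration by one whose associated graded is the quotient $\gr_{\fb_\bullet}\cO_{X,x}/P$, hence a domain. The technical heart is to verify that this refinement does not increase $\lct^n\cdot\mult$, so one still obtains a minimizer; iterating as necessary and appealing to Noetherianity produces the desired valuation $v_{\min} \in \Val_{X,x}$ with $\hvol(v_{\min}) = \hvol(x,X,D)$.
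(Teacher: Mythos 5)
Your overall skeleton (minimizing sequence, two-sided ideal bounds from Theorem \ref{t-izumi}, passage to a limit graded sequence, then back to a valuation via Theorem \ref{thm:liueq}) matches the paper's strategy, but the two places you treat as routine are exactly where the real content lies, and your replacement for the final step does not work. The claim ``a careful semicontinuity argument (interchanging the $m$-limit with the $k$-stabilization) shows $\mult(\fb_\bullet)\leq\liminf_k\vol(v_k)$'' is not a semicontinuity statement: for each fixed $m$ one only has $\mult(\fa_m(v_k))/m^n\geq\vol(v_k)$, i.e.\ the pointwise inequality goes the wrong way, and the error $\mult(\fa_m(v_k))/m^n-\vol(v_k)$ could a priori blow up with $k$ for fixed $m$. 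What makes the interchange legitimate is precisely the uniform convergence statement (Proposition \ref{p-uniform} in the paper): for valuations with $A_{(X,D)}(v)\leq A$, $\vol(v)\leq B$, $v(\fm_x)\geq 1/r$, the convergence $\mult(\fa_m(v))/m^n\to\vol(v)$ is uniform in $v$. Its proof is the technical heart of \cite{Blu18} and uses multiplier ideals (and, in the singular case, Takagi's subadditivity with the Jacobian ideal, cf.\ \eqref{e-takagi}); you cannot get it from Grassmannian limits and abstract semicontinuity.

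The final step is a genuine misstep rather than a different route. Quotienting $\gr_{\fb_\bullet}\cO_{X,x}$ by a minimal prime does not produce a new filtration of $\cO_{X,x}$, so the ``refinement'' is not a defined operation, and no argument is offered that the putative refinement keeps $\lct^n\cdot\mult$ from increasing. Worse, even if you could replace $\fb_\bullet$ by a filtration whose order function $v_\infty$ is a valuation (Lemma \ref{lem-quasi}(1) only gives the ``if'' direction), this would not finish the proof: from $v_\infty(\fb_m)\geq m$ you get $\vol(v_\infty)\leq\mult(\fb_\bullet)$, but the definition of $\lct$ of a graded sequence only yields $A_{(X,D)}(v_\infty)\geq\lct(\fb_\bullet)\,v_\infty(\fb_\bullet)$ — the wrong direction — so there is no bound $\hvol(v_\infty)\leq\lct(\fb_\bullet)^n\mult(\fb_\bullet)$, and $A_{(X,D)}(v_\infty)$ could even be infinite. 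The paper avoids this entirely: having produced (by the generic limit construction, using (a)--(c)) a graded sequence $\tilde\fa_\bullet$ with $\lct(\tilde\fa_\bullet)^n\mult(\tilde\fa_\bullet)\leq\hvol(x,X,D)$, it takes a valuation $v^*$ \emph{computing} $\lct(\tilde\fa_\bullet)$ — whose existence is the theorem of Jonsson--Musta\c{t}\u{a} \cite{JM12} — for which $A_{(X,D)}(v^*)=\lct(\tilde\fa_\bullet)v^*(\tilde\fa_\bullet)$ holds by definition, and then the chain of inequalities in Theorem \ref{thm:liueq} shows $v^*$ is a minimizer. You should replace your minimal-prime construction by this citation, and replace your ``semicontinuity'' step by the uniform convergence result.
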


Let us sketch the idea of proving the existence of
$\hvol$-minimizer. We first take a sequence of valuations $(v_i)_{i\in\bN}$ such that
\[
 \lim_{i\to\infty}\hvol(v_i)=\hvol(x, X,D).
\]
Then we would like to find a valuation $v^*$ that is a limit point
of the sequence $(v_i)_{i\in\bN}$ and then show that $v^*$ is a minimizer of $\hvol$. 

Instead of seeking a limit point $v^*$ of $(v_i)_{i\in\bN}$ in the space of valuations, we consider graded sequences of ideals. More precisely, each valuation $v_i$ induces a graded sequence $\fa_\bullet(v_i)$ of $\fm_x$-primary ideals. By Theorem \ref{thm:liueq}, we have
\[
 \hvol(v_i)\geq \lct(\fa_\bullet(v_i))^n\mult(\fa_\bullet(v_i))\geq \hvol(x, X,D).
\]
Therefore, once we find a graded sequence of $\fm_x$-primary ideals $\tilde{\fa}_\bullet$ that is a `limit point' of the sequence $(\fa_{\bullet}(v_i))_{i\in\bN}$, a valuation $v^*$ computing $\lct(\tilde{\fa}_\bullet)$ will minimizes $\hvol$. The existence of such `limits' relies on two ingredients: the first is an asymptotic estimate to control the growth for $\fa_k(v_i)$ for a fixed $k$; once the growth is controlled, we can apply the generic limit construction. 

\begin{proof} For simplicity, we will assume $D=0$.
More details about log pairs can be found in \cite[Section 7]{Blu18}.

Let us choose a sequence of valuations
$v_i\in\Val_{X,x}$ such that $$\lim_{i\to\infty}\hvol(v_i)=\hvol(x, X).$$
Since the normalized volume function is invariant after rescaling, 
we may assume that $v_i(\fm)=1$ for all $i\in\bN$ where $\fm:=\fm_x$.
Our goal is to show that the family of graded sequences
of $\fm$-primary ideals $(\fa_\bullet(v_i))_{i\in\bN}$
satisfies the following conditions:
\begin{enumerate}[label=(\alph*)]
 \item For every $\epsilon>0$, there exists positive constants $M, N$ so that
\[
\lct(\fa_m(v_i))^n\mult(\fa_m(v_i))\leq \hvol(x, X)+\epsilon
\textrm{ for all }m\geq M\textrm{ and } i\geq N.
\]
\item For each $m,i\in\bN$, we have $\fm^m\subset\fa_m(v_i)$.
\item There exists $\delta>0$ such that $\fa_m(v_i)\subset\fm^{\lfloor m\delta\rfloor}$ for all $m,i\in\bN$.
\end{enumerate}

Part (b) follows easily from $v_i(\fm)=1$. Hence $\vol(v_i)\leq \mult(\fm)=:B$.
For part (c), we need to use Theorem \ref{t-izumi}. 
By Part (2),
there exists a positive constant $C_1$ such that 
\[
 A_X(v)\leq C_1^{-1}\cdot v(\fm)\hvol(v) \textrm{ for all }v\in\Val_{X,x}.
\]
Let $A:=C_1^{-1}\sup_{i\in\bN}\hvol(v_i)$, then $A_X(v_i)\leq A$
for any $i\in\bN$. By Theorem \ref{t-izumi}(1), then there exists a positive constant $C_2$
such that
\[
v(f)\leq C_2\cdot A_X(v)\ord_x(f)\textrm{ for all }v\in \Val_{X,x}\textrm{ and }f\in\cO_{X,x}.
\]
In particular, $v_i(f)\leq C_2 A\cdot\ord_x(f)$ for all $i\in\bN$ and $f\in\cO_{X,x}$.
Thus by letting $\delta:=(C_2 A)^{-1}$ we have 
$\fa_m(v_i)\subset\fm^{\lfloor m\delta\rfloor}$ which proves part
(c).

\bigskip

The proof of part (a) relies on the following result on uniform
convergence of multiplicities of valuation ideals.
\begin{prop}[\cite{Blu18}]\label{p-uniform}
 Let $(x\in X)$ be an $n$-dimensional klt singularity.
 Then for $\epsilon, A, B, r\in \bR_{>0}$, there exists
 $M=M(\epsilon, A, B, r)$ such that for every valuation
 $v\in\Val_{X,x}$ with $A_X(v)\leq A$, $\vol(v)\leq B$,
 and $v(\fm)\geq 1/r$, we have
\[
\vol(v)\leq \frac{\mult(\fa_m(v))}{m^n}<\vol(v)+\epsilon\textrm{ for all } m\geq M.
\]
\end{prop}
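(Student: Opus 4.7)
The plan is to separate the two sides of the inequality. The left bound $\vol(v) \leq \mult(\fa_m(v))/m^n$ is pointwise and carries no uniformity issue: since $\fa_m(v)^k \subset \fa_{km}(v)$ for every $k$ and $\mult(I^k) = k^n \mult(I)$ for any $\fm$-primary $I$, one has $\mult(\fa_{km}(v))/(km)^n \leq \mult(\fa_m(v))/m^n$, and letting $k \to \infty$ yields the claim from $\vol(v) = \lim_k \mult(\fa_{km}(v))/(km)^n$. Thus the real content of the proposition is the \emph{uniform} upper bound $\mult(\fa_m(v))/m^n < \vol(v) + \epsilon$.

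For the upper bound I would argue by contradiction through a generic-limit / compactness argument on graded sequences of $\fm$-primary ideals. Theorem~\ref{t-izumi}, together with $v(\fm) \geq 1/r$ and $A_X(v) \leq A$, gives a sandwich that is \emph{uniform} across the admissible class,
\[
\fm^{\lceil mr \rceil} \;\subset\; \fa_m(v) \;\subset\; \fm^{\lceil m\delta \rceil}, \qquad \delta := (C_2 A)^{-1},
\]
confining each $\fa_m(v)$ to a fixed finite-dimensional (sub-Grassmannian) parameter space of ideals in $\cO_{X,x}$. If the conclusion failed, there would be valuations $v_i$ in the class and integers $m_i \to \infty$ with $\mult(\fa_{m_i}(v_i))/m_i^n \geq \vol(v_i) + \epsilon$ while $\vol(v_i) \leq B$. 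A diagonal-subsequence construction in this parameter space then produces a limit ideal $\tilde{\fa}_m$ for every $m$ simultaneously; by choosing generic subsequences (colength is constant on a dense open locus of the parameter space) one arranges $\mult(\fa_m(v_i)) = \mult(\tilde{\fa}_m)$ for $i$ large depending on $m$. The limit $\tilde{\fa}_\bullet$ inherits the graded property $\tilde{\fa}_m \cdot \tilde{\fa}_{m'} \subset \tilde{\fa}_{m+m'}$ from the $\fa_\bullet(v_i)$ and consists of $\fm$-primary ideals thanks to the sandwich.

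The contradiction then comes from applying the Cutkosky / Lazarsfeld--Musta\c{t}\u{a} convergence $\mult(\tilde{\fa}_m)/m^n \to \mult(\tilde{\fa}_\bullet)$ to the single sequence $\tilde{\fa}_\bullet$: this forces $\mult(\fa_{m_i}(v_i))/m_i^n \to \mult(\tilde{\fa}_\bullet)$ along the diagonal, while at the same time $\vol(v_i) = \mult(\fa_\bullet(v_i))$ must converge to $\mult(\tilde{\fa}_\bullet)$ under the generic limit; the persistent gap $\epsilon$ is incompatible with both sides approaching the same value. The principal obstacle is this last matching: one must select the subsequences so that both the slice-wise multiplicities along $m_i$ and the full volumes $\mult(\fa_\bullet(v_i))$ converge to the \emph{same} asymptotic quantity attached to $\tilde{\fa}_\bullet$, and ensure the product inclusions truly survive the passage to the limit so that the single-sequence convergence theorem applies. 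This is precisely the generic-limit machinery developed in the MMP literature (Hacon--McKernan--Xu, de Fernex--Ein--Musta\c{t}\u{a}) adapted to sequences of valuation ideals.
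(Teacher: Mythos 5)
Your proof of the left-hand inequality is fine. The gap is in the right-hand (uniform) bound: the generic-limit contradiction you set up does not close, and the obstacle you flag at the end is not a technical loose end but the entire content of the proposition. Concretely, suppose you have $v_i$, $m_i\to\infty$ with $\mult(\fa_{m_i}(v_i))/m_i^n\geq \vol(v_i)+\epsilon$, and a generic limit $\tilde{\fa}_\bullet$. For each \emph{fixed} $m$ the easy inequality gives $\vol(v_i)\leq \mult(\fa_m(v_i))/m^n$, so passing to the limit in $i$ and then taking the infimum over $m$ yields $\limsup_i\vol(v_i)\leq \mult(\tilde{\fa}_\bullet)$. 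Even if you manage the delicate diagonal bookkeeping so that $\mult(\fa_{m_i}(v_i))=\mult(\tilde{\fa}_{m_i})$, Cutkosky/Lazarsfeld--Musta\c{t}\u{a} only gives $\mult(\fa_{m_i}(v_i))/m_i^n\to\mult(\tilde{\fa}_\bullet)$, and your hypothesis then reads $\mult(\tilde{\fa}_\bullet)\geq\lim_i\vol(v_i)+\epsilon$ --- which is perfectly consistent with the inequality just derived, so no contradiction arises. To contradict you would need the reverse bound $\mult(\tilde{\fa}_\bullet)\leq\lim_i\vol(v_i)$, i.e.\ the interchange of $\inf_m$ and $\lim_i$ in $\vol(v_i)=\inf_m\mult(\fa_m(v_i))/m^n$; but that interchange is exactly the uniform convergence you are trying to prove, so the argument is circular. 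Generic limits control one level $m$ at a time (for $i$ large depending on $m$) and cannot supply this; in the paper (and in Blum's work) they are used for the existence of a minimizer, not for this proposition.

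The actual proof is effective rather than by compactness. For the graded sequence $\fa_\bullet=\fa_\bullet(v)$ one has a uniform inclusion $\fa_{ml}\subseteq\fa_{m-k}^{\,l}$ for all $m,l$, with $k$ bounded in terms of $A$ and $r$ only: in the smooth case this is the Ein--Lazarsfeld--Smith subadditivity argument via multiplier ideals; for an isolated klt singularity one uses Takagi's variant $\mathcal{J}_X^{l-1}\cdot\fa_{ml}\subset\fa_{m-k}^{\,l}$ with $\mathcal{J}_X$ the Jacobian ideal; the general case interpolates $\mathcal{J}_X$ with a power of $\fm$. Taking multiplicities, dividing by $(ml)^n$ and letting $l\to\infty$ gives $\mult(\fa_{m-k}(v))\leq m^n\vol(v)$, hence $\mult(\fa_m(v))/m^n\leq \vol(v)\bigl(1+k/m\bigr)^n$, and since $\vol(v)\leq B$ and $k$ is uniformly bounded, this is $<\vol(v)+\epsilon$ for all $m\geq M(\epsilon,A,B,r)$. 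If you want to salvage your scheme, you would in any case have to inject such an effective estimate, at which point the compactness layer becomes superfluous.
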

\begin{proof}The first inequality is straightforward. When the point is smooth, the second inequality uses the inequality that for the graded sequence of ideals $\{\fa_{\bullet}\}$, there exists a $k$ such that for any $m$ and $l$ 
$$\fa_{ml}\subseteq \fa^{l}_{m-k}.$$
The proof of such result uses the multiplier ideal, see \cite{ELS03}. For isolated klt singularity, then an estimate of a similar form in \cite{Tak06} says
\begin{eqnarray}\label{e-takagi}
\mathcal{J}_X^{l-1}\cdot \fa_{ml}\subset \fa^{l}_{m-k}
\end{eqnarray}
suffices, where $\mathcal{J}_X $ is the Jacobian ideal of $X$.  Finally, in the general case, an argument using \eqref{e-takagi} and interpolating $\mathcal{J}_X$ and a power of $\fm$ gives the proof. See \cite[Section 3]{Blu18} for more details.
\end{proof}
To continue the proof, let us fix an arbitrary $\epsilon\in\bR_{>0}$.
Since $A_X(v_i)\leq A$, $\vol(v_i)\leq B$, and 
$v_i(\fm)=1$ for all $i\in\bN$, Proposition \ref{p-uniform} implies
that there exists $M\in\bN$ such that 
\[
 \frac{\mult(\fa_m(v_i))}{m^n}\leq \vol(v_i)+\epsilon/(2A^n)\textrm{ for all }
 i\in\bN.
\]
We also have $\lct(\fa_m(v_i))\leq A_X(v_i)/v_i(\fa_m(v_i))\leq m\cdot A_X(v_i)$.
Let us take $N\in\bN$ such that $\hvol(v_i)\leq \hvol(x,X)+\epsilon/2$
for any $i\geq N$. Therefore,
\begin{align*}
\lct(\fa_m(v_i))^n\mult(\fa_m(v_i))&\leq A_X(v_i)^n(\vol(v_i)+\epsilon/(2A^n))\\
& =\hvol(v_i)+\epsilon\cdot A_X(v_i)^n/(2A^n)\\
& =\hvol(v_i)+\epsilon/2\\&\leq \hvol(x,X)+\epsilon.
\end{align*}
So part (a) is proved.

Finally, (b) and (c) guarantee that we can apply a generic limit type construction (cf. \cite[Section 5]{Blu18}). Then (a) implies that
 a `limit point' $\tilde{\fa}_\bullet$
of the sequence $(\fa_\bullet(v_i))_{i\in\bN}$
satisfies that $\lct(\tilde{\fa}_\bullet)^n\mult(\tilde{\fa}_\bullet)\leq\hvol(x,X)$.
Thus a valution $v^*$ computing the log canonical threshold
of $\tilde{\fa}_\bullet$, whose existence follows
from \cite{JM12}, necessarily minimizes the normalized volume.
\end{proof}

\begin{thm}[\cite{LiuX17}]\label{t-smooth}
 Let $x\in (X,D)$ be an $n$-dimensional klt singularity. Then
 $\hvol(x,X,D)\leq n^n$ and the equality holds if and only
 if $x\in X\setminus\Supp(D)$ is a smooth point.
\end{thm}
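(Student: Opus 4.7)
The plan is to split the theorem into the upper bound $\hvol(x, X, D) \le n^n$ and the rigidity characterizing equality, treating each in turn.

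For the upper bound, the strategy is to exhibit a valuation (or $\fm_x$-primary ideal) realizing normalized volume at most $n^n$. When $x \in X \setminus \Supp(D)$ is smooth, I would check directly that the divisorial valuation $\ord_x$ satisfies $A_{(X,D)}(\ord_x) = n$ and $\vol_{X,x}(\ord_x) = 1$, so $\hvol(\ord_x) = n^n$; combined with the classical de Fernex--Ein--Musta\c{t}\u{a} inequality $\lct(\fa)^n \mult(\fa) \ge n^n$ for $\fm_x$-primary $\fa$ on a smooth germ and Theorem~\ref{thm:liueq}, this forces $\hvol(x, X, D) = n^n$ in the smooth case. For a general klt germ $(X, D, x)$, I would take a sequence of smooth points $y_k \in X \setminus \Supp(D)$ converging to $x$ (which exist in every neighborhood of $x$ since $X \setminus \Supp(D)$ is Zariski open and dense) and invoke the lower semicontinuity of $\hvol$ in families from \cite{BL18}: since each $\hvol(y_k, X, D) = n^n$, we get
\[
\hvol(x, X, D) \le \liminf_{k \to \infty} \hvol(y_k, X, D) = n^n.
\]

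For the rigidity, assume $\hvol(x, X, D) = n^n$. The first step is to show $x \notin \Supp(D)$. If $x \in \Supp(D)$, then $v(D) > 0$ for every $v \in \Val_{X,x}$, and taking $\epsilon \in (0,1)$ so that $(X, (1-\epsilon) D)$ is klt gives, for each such $v$,
\[
\hvol_{(X, (1-\epsilon) D)}(v) = \bigl(A_{(X,D)}(v) + \epsilon v(D)\bigr)^n \vol(v) > \hvol_{(X,D)}(v).
\]
Using Izumi (Theorem~\ref{t-izumi}(1)) to give $v(D) \ge v(\fm_x) \cdot \ord_x(D)$ with $\ord_x(D) > 0$, together with the properness estimate Theorem~\ref{t-izumi}(2) bounding $v(\fm_x)/A_{(X,D)}(v)$ uniformly from below on the set of $v$'s with $\hvol_{(X,D)}(v)$ close to $n^n$, this gap is uniform in $v$, so $\hvol(x, X, (1-\epsilon) D) > n^n$, contradicting the upper bound applied to $(X, (1-\epsilon) D)$.

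Finally, assume $D = 0$ near $x$ and $\hvol(x, X) = n^n$; I want to show $X$ is smooth at $x$. If not, $\mult_x(X) \ge 2$, and the plan is to produce a valuation $v$ with $\hvol(v) < n^n$. A first candidate is the divisorial valuation $\ord_E$ from the normalized blowup of $\fm_x$, for which $\vol_{X,x}(\ord_E) = \mult_x(X)$, reducing the task to the strict inequality $A_X(\ord_E)^n \mult_x(X) < n^n$. The \textbf{hard part} will be this strict inequality in full generality: a single $\ord_E$ may not suffice, forcing one either to pass to a log resolution and construct quasi-monomial refinements on a flag of exceptional divisors, or to analyze a minimizer $v_{\min}$ from Theorem~\ref{t-existence} directly. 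In the latter route, the equality $\hvol(v_{\min}) = n^n$ combined with Theorem~\ref{thm:liueq} applied to the graded sequence $\fa_\bullet(v_{\min})$ forces the equality case of a sharp DFEM-type bound; analyzing the associated graded $\gr_{v_{\min}} \cO_{X,x}$ via Lemma~\ref{lem-quasi}, one should be able to conclude that $\cO_{X,x}$ is regular, contradicting $\mult_x(X) \ge 2$.
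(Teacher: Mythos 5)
The ``if'' direction and the upper bound in your proposal are essentially fine, but you reach the inequality by a heavier route than the paper: you quote the lower semicontinuity of $\hvol$ in families (Theorem \ref{t-lower}, \cite{BL18}) applied to the trivial family over a curve through $x$ with moving section, whereas the paper gets $\hvol(x,X,D)\leq n^n$ directly by specializing the graded sequence $\fm_y^{\bullet}$ at a nearby smooth point to $\fm_x$-primary ideals (colength is preserved, $\lct$ is lower semicontinuous) and then using Theorem \ref{thm:liueq}. Your route is not circular, but it invokes a later and strictly more delicate theorem for what the paper treats as the easy half; also ``a sequence of smooth points converging to $x$'' should be phrased via the generic point of a curve through $x$, since the semicontinuity is for the Zariski topology.

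The genuine gaps are in the ``only if'' direction. First, your step showing $x\notin\Supp(D)$ perturbs the boundary to $(X,(1-\epsilon)D)$, uses $A_{(X,(1-\epsilon)D)}(v)=A_{(X,D)}(v)+\epsilon v(D)$, and then applies the upper bound to this new pair. But for a klt pair only $K_X+D$ is assumed $\bQ$-Cartier; neither $K_X$ nor $D$ need be, so $(X,(1-\epsilon)D)$ need not be a klt pair at all, its log discrepancy function and normalized volume are not defined, and the displayed identity has no meaning. This perturbation only becomes legitimate once you already know $X$ is smooth (or at least $\bQ$-factorial) at $x$, which is exactly what you are trying to prove afterwards, so the logical order of your two steps collapses.

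Second, and more seriously, the smoothness rigidity itself is not proved. The identity $\vol_{X,x}(\ord_E)=\mult_x(X)$ for an exceptional divisor of the normalized blowup of $\fm_x$ is only an inequality $\vol(\ord_E)\leq\mult(\fm_x)$ in general (and there may be several such divisors), but the real issue is the strict bound $A_{(X,D)}(\ord_E)^n\mult_x(X)<n^n$, or any substitute for it: this is precisely the hard content of the equality statement, and your proposal leaves it at the level of ``one should be able to conclude.'' Equality of the infimum $\hvol(x,X,D)=n^n$ does not force equality in a de Fernex--Ein--Musta\c{t}\u{a}-type bound for any particular ideal or valuation (the infimum need not be computed by $\fm_x$ or by $\fa_\bullet(v_{\min})$ in a way that the smooth-case rigidity of \cite{dFEM} applies, since that rigidity is only available on regular germs), and extracting regularity of $\cO_{X,x}$ from a minimizer provided by Theorem \ref{t-existence} requires genuine structural input (e.g.\ the analysis of the equality case carried out in \cite{LiuX17}), none of which is supplied. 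So the characterization of equality remains unproven in your write-up.
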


Using the fact that we can specialize a graded sequence of ideals preserving the colength, and the lower semi-continuous of the log canonical thresholds, we easily get the inequality part of Theorem \ref{t-smooth}. Then the equality part gives us a characterization of the smooth point using the normalized volume. The following Theorem \ref{t-lower} on the semicontinuity needs a more delicate analysis.  We conjecture that the normalized volume function is indeed constructible (see Conjecture \ref{c-constru}). 

\begin{thm}[\cite{BL18}]\label{t-lower}
 Let $\pi:(\cX,D)\to T$ together with a section $t\in T\mapsto x_t\in\cX_t$
 be a $\bQ$-Gorenstein flat family of klt singularities. Then
 the function $t\mapsto \hvol(x_t,\cX_t,D_t)$  is lower
 semicontinuous with respect to the Zariski topology.
\end{thm}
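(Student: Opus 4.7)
Zariski lower semicontinuity reduces by the valuative criterion to the DVR case: taking $T = \spec R$ with $R$ a DVR, generic point $\eta$, and closed point $0$, it suffices to show $\hvol(x_0, \cX_0, D_0) \le \hvol(x_\eta, \cX_\eta, D_\eta)$. My plan is to replace valuations by graded sequences of $\fm$-primary ideals via Theorem \ref{thm:liueq}, and then degenerate a near-minimizer from the generic fiber to the special fiber via scheme-theoretic ($R$-flat) closure, exploiting the fact that both multiplicity and log canonical threshold are well-behaved under flat specialization.

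Fix $\epsilon > 0$ and choose a graded sequence $\fa_\bullet^\eta = \{\fa_m^\eta\}$ of $\fm_{x_\eta}$-primary ideals on $\cX_\eta$ with
\[
 \lct(\cX_\eta, D_\eta; \fa_\bullet^\eta)^n \mult(\fa_\bullet^\eta) \le \hvol(x_\eta, \cX_\eta, D_\eta) + \epsilon.
\]
After rescaling the associated order function $v$ so that $v(\fm_{x_\eta}) = 1$, the Izumi and properness estimates in Theorem \ref{t-izumi} yield uniform sandwiches $\fm^m \subset \fa_m^\eta \subset \fm^{\lfloor m\delta\rfloor}$, hence finite colengths $\ell_m := \ell(\cO_{\cX_\eta, x_\eta}/\fa_m^\eta)$ growing like $O(m^n)$.

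For each $m$, I would form the scheme-theoretic closure $\bar V_m$ of $V(\fa_m^\eta) \subset \cX_\eta$ inside $\cX$, which is automatically $R$-flat since it is the closure of a subscheme of the generic fiber inside an $R$-scheme. Setting $\tilde\fa_m \subset \cO_{\cX_0, x_0}$ to be the defining ideal of $\bar V_m \cap \cX_0$, flatness forces $\ell(\cO_{\cX_0, x_0}/\tilde\fa_m) = \ell_m$, and the multiplicative inclusions $\fa_p^\eta \cdot \fa_q^\eta \subset \fa_{p+q}^\eta$ pass to their flat closures. Thus $\tilde\fa_\bullet$ is an $\fm_{x_0}$-primary graded sequence with $\mult(\tilde\fa_\bullet) = \mult(\fa_\bullet^\eta)$. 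Lower semicontinuity of log canonical thresholds under specialization gives $\lct(\cX_0, D_0; \tilde\fa_m) \le \lct(\cX_\eta, D_\eta; \fa_m^\eta)$ for every $m$, and passing to the asymptotic version via $\lct(\fa_\bullet) = \lim_{m\to\infty} m \cdot \lct(\fa_m)$ from \cite{JM12, BdFFU15} yields $\lct(\tilde\fa_\bullet) \le \lct(\fa_\bullet^\eta)$.

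Combining these with Theorem \ref{thm:liueq} gives
\[
\hvol(x_0, \cX_0, D_0) \le \lct(\tilde\fa_\bullet)^n \mult(\tilde\fa_\bullet) \le \lct(\fa_\bullet^\eta)^n \mult(\fa_\bullet^\eta) \le \hvol(x_\eta, \cX_\eta, D_\eta) + \epsilon,
\]
and letting $\epsilon \to 0$ finishes the argument. The main obstacle is verifying cleanly that scheme-theoretic closure respects the graded structure and tracks the colength filtration in the correct way, so that one really produces a bona fide $\fm_{x_0}$-primary graded sequence on the special fiber with the required asymptotic behavior. The role of Theorem \ref{t-izumi} here parallels its role in the existence of minimizers (Theorem \ref{t-existence}): it provides the uniform bounds on colengths and on the ambient data that make the degeneration argument run.
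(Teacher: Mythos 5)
Your degeneration step is fine as far as it goes: taking $\pi$-saturated (scheme-theoretic) closures of the ideals $\fa_m^\eta$ over a DVR does produce a graded sequence $\tilde\fa_\bullet$ of $\fm_{x_0}$-primary ideals on the special fiber with the same colengths, and lower semicontinuity of log canonical thresholds then gives $\lct(\tilde\fa_\bullet)\le\lct(\fa_\bullet^\eta)$, hence via Theorem \ref{thm:liueq} the specialization inequality $\hvol(x_0,\cX_0,D_0)\le\hvol(x_\eta,\cX_\eta,D_\eta)$. This is exactly the "specialize a graded sequence preserving the colength" argument that the survey says yields the inequality part of Theorem \ref{t-smooth}. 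The genuine gap is your very first sentence: Zariski lower semicontinuity does \emph{not} reduce to the DVR case. Lower semicontinuity means each sublevel set $\{t\in T:\hvol(x_t,\cX_t,D_t)\le a\}$ is Zariski closed, and the DVR argument only shows this set is stable under specialization. A specialization-stable subset of a variety need not be closed (e.g.\ a countable set of closed points in $\mathbb{A}^1$); the valuative-criterion-type implication "stable under specialization $\Rightarrow$ closed" requires the set to be constructible, and constructibility of $t\mapsto\hvol(x_t,\cX_t,D_t)$ is precisely what is unknown — it is Conjecture \ref{c-constru} in this paper, which is expected to need the stable degeneration conjecture and openness of K-semistability. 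So your argument proves the weaker statement "the normalized volume does not increase under specialization," not the theorem.

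The actual proof in \cite{BL18} is built to avoid this issue. One introduces the truncated invariants $\hvol_m(x,X,D):=\inf\{\lct(X,D;\fa)^n\mult(\fa)\ :\ \fa\ \fm_x\text{-primary},\ \fm_x^m\subseteq\fa\}$. For fixed $m$ the relevant ideals form a bounded (finite-type) family, so semicontinuity of $\lct$ in families together with constructibility of the colength/multiplicity data makes $t\mapsto\hvol_m(x_t,\cX_t,D_t)$ lower semicontinuous. The delicate point is then to show that $\hvol_m\to\hvol$ not just pointwise but with uniform control over the family — this is where the Izumi-type and properness estimates of Theorem \ref{t-izumi} (made uniform in $t$, in the spirit of Proposition \ref{p-uniform}) enter, playing a much more quantitative role than the sandwich bounds you invoke. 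A uniform limit of lower semicontinuous functions is lower semicontinuous, which gives the theorem without ever knowing constructibility. If you want to salvage your write-up, you should either prove constructibility of the sublevel sets (not currently known) or replace the DVR reduction by this uniform-approximation scheme.
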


Now we introduce a key tool that the minimal model program provides to us to understand minimizing the normalized volume. For more discussions, see Section \ref{ss-general}.

\begin{defn}[Koll\'ar component, \cite{Xu14}]\label{d-kollar}
Let $x\in (X,D)$ be a klt singularity. We call a proper birational morphism $\mu:Y\to X$ provides a  Koll\'ar component $S$, if $\mu$ is isomorphic over $X\setminus \{x\}$, and $\mu^{-1}(x)$ is an irreducible divisor $S$, such that $(Y,S+\mu^{-1}_*D)$ is purely log terminal (plt) and $-S$ is $\mathbb{Q}$-Cartier and ample over $X$.
\end{defn}

\begin{thm}[{\cite{LX16}}]
We have the identity:
\begin{eqnarray}
\hvol(x, X, D)=\inf_S \{ \hvol(\ord_S)\ | \ \mbox{for all Koll\'ar components $S$ over }x\}. 
\end{eqnarray}
\end{thm}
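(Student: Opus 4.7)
The inequality $\hvol(x,X,D)\le \inf_S \hvol(\ord_S)$ is immediate, since each Kollár component gives a valuation $\ord_S\in\Val_{X,x}$. The plan is to prove the reverse inequality using the characterization of $\hvol$ by normalized multiplicities of $\fm_x$-primary ideals (Theorem \ref{thm:liueq}). Specifically, it suffices to show that for every $\fm_x$-primary ideal $\fa\subset\cO_{X,x}$, one can produce a Kollár component $S$ over $x$ such that
\[
\hvol(\ord_S)\le \lct(X,D;\fa)^n\,\mult(\fa).
\]
Taking the infimum over $\fa$ and applying Theorem \ref{thm:liueq} then yields the result.

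To produce such an $S$, set $c:=\lct(X,D;\fa)$ and look for a Kollár component that \emph{computes} this log canonical threshold, i.e.\ satisfies $A_{(X,D)}(\ord_S)=c\cdot\ord_S(\fa)$. The construction is the MMP extraction of a plt blow-up introduced in \cite{Xu14}: start with a log resolution of $(X,D+c\cdot \fa)$ on which some exceptional divisor $E_0$ computes $c$, perform a tie-breaking perturbation to make $E_0$ the unique log canonical place, and then run a $(K_Y+D_Y+cF+\epsilon E_0^c)$-MMP over $X$ where $F$ is the pullback of the ideal and $E_0^c$ its complement in the exceptional locus. The output is a birational morphism $\mu:Y\to X$ extracting precisely one divisor $S$ (the strict transform of $E_0$), with $(Y,\mu_*^{-1}D+S)$ plt and $-S$ $\bQ$-Cartier and $\mu$-ample; this is a Kollár component computing $c$.

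Once $S$ is in hand, the comparison step is elementary. From $\ord_S(\fa^m)=m\cdot\ord_S(\fa)$ we get $\fa^m\subset \fa_{m\cdot\ord_S(\fa)}(\ord_S)$, hence
\[
\ell(\cO_{X,x}/\fa^m)\ \ge\ \ell\bigl(\cO_{X,x}/\fa_{m\cdot\ord_S(\fa)}(\ord_S)\bigr).
\]
Dividing by $m^n/n!$ and letting $m\to\infty$, using the existence of both limits (Definition \ref{d-vol} and the discussion after Theorem \ref{t-izumi}), gives $\mult(\fa)\ge \ord_S(\fa)^n\cdot \vol(\ord_S)$. Combining this with $c\cdot \ord_S(\fa)=A_{(X,D)}(\ord_S)$ yields
\[
\lct(X,D;\fa)^n\,\mult(\fa)\ \ge\ c^n\,\ord_S(\fa)^n\,\vol(\ord_S)\ =\ A_{(X,D)}(\ord_S)^n\vol(\ord_S)\ =\ \hvol(\ord_S),
\]
which is exactly the required inequality.

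The main obstacle is the first step: guaranteeing that the MMP extracting only the divisor $E_0$ terminates and produces an output with $(Y,\mu_*^{-1}D+S)$ plt and $-S$ ample over $X$. This relies on the full strength of the MMP with scaling and the tie-breaking trick from \cite{Xu14}; the perturbation must be arranged so that $E_0$ remains the unique non-klt place throughout the MMP, and the anti-ampleness of $S$ comes from the fact that the only extremal rays in the MMP contract divisors lying over $x$ other than $E_0$. The remaining comparison step using $\fa^m\subset \fa_{m\cdot\ord_S(\fa)}(\ord_S)$ is essentially formal.
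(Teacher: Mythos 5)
Your overall skeleton is the one the paper uses: the easy inequality, the reduction to $\fm_x$-primary ideals via Theorem \ref{thm:liueq}, and then, for each ideal $\fa$, the production of a Koll\'ar component adapted to $\fa$; your final comparison (the containment $\fa^m\subset\fa_{m\cdot\ord_S(\fa)}(\ord_S)$ together with $A_{(X,D)}(\ord_S)=c\cdot\ord_S(\fa)$) is correct and is essentially the first step of the proof of Theorem \ref{thm:liueq}. The genuine gap is in your construction step. You fix one divisor $E_0$ computing $c=\lct(X,D;\fa)$ and assert that a tie-breaking perturbation makes $E_0$ the \emph{unique} lc place, after which an MMP extracts it with $(Y,\mu_*^{-1}D+S)$ plt and $-S$ ample. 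Neither half of this is justified for a prescribed $E_0$: tie-breaking lets you shrink the locus of lc places, but to single out a given $E_0$ you would need an auxiliary divisor $G$ whose ratio $v(G)/v(\fa)$ is strictly maximized at $E_0$ over the whole dual complex of lc places of $(X,D+c\fa)$, and there is no general statement guaranteeing this; moreover, even granting an extraction of $E_0$ alone (which BCHM does give), the plt-ness of $(Y,\mu_*^{-1}D+E_0)$ is a genuine further restriction on the valuation — being a Koll\'ar component is special, and an arbitrary lct-computing divisor need not be one. You also cannot repair this by tie-breaking with an arbitrary boundary, because changing the pair destroys the identity $A_{(X,D)}(\ord_S)=c\cdot\ord_S(\fa)$ on which your comparison rests.

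What \cite{LX16} actually proves — and what the paper sketches around \eqref{e-kol} in Section \ref{ss-general} — is the weaker statement, which suffices: take a dlt modification $(Y,D_Y)\to (X,D+c\cdot\fa)$, so that every exceptional divisor (and every point of $\D(D_Y)$) is an lc place of $(X,D+c\fa)$, and run an MMP on this model, as in \cite{LX14}, to produce \emph{some} Koll\'ar component $S$ whose rescaled valuation lies in $\D(D_Y)$; one has no control over which lc place survives, but any such $S$ satisfies $A_{(X,D)}(\ord_S)=c\cdot\ord_S(\fa)$. Once such an $S$ is in hand, your elementary comparison gives $\hvol(\ord_S)\le c^n\mult(\fa)$, and this is an acceptable substitute for the paper's comparison via local volumes of models, $\hvol(\ord_S)=\vol^{\rm loc}(-A_{(X,D)}(S)\,S)\le \vol^{\rm loc}(-K_Y-D_Y)\le \lct(X,D;\fa)^n\mult(\fa)$. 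So your argument is repairable by replacing ``make the chosen $E_0$ the unique lc place'' with the existence of some Koll\'ar component among the lc places; but that existence is precisely the nontrivial MMP input of \cite{LX16}, and as written your tie-breaking step does not establish it.
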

For the explanation of proof, see the discussions for \eqref{e-kol} in Section \ref{ss-general}. 

\section{Stability in Sasaki-Einstein geometry}\label{s-sasaki}
To proceed the study of normalized volumes, we will introduce the concept of K-stability. This is now a central notion in complex geometry, which serves as an algebraic characterization of the existence of some `canonical metrics'. 

In the local setting, such problem on an affine $T$-variety $X$ with a unique fixed point $x$ was first considered in \cite{MSY08}. We can then varies the Reeb vector field $\xi\in \ft^+_{\bR}$, and call such a structure $(X,\xi)$ is a Fano cone if $X$ only has klt log terminal singularities. The name is justified since if $\xi\in \ft^+_{\bQ}$, let $\la\xi \ra$ be the $\bC^*$ generated by $\xi$, then $X\setminus \{x\}/\la\xi \ra$ is a log Fano variety.  

In \cite{MSY08}, the relation between the existence of Sasaki-Einstein metric along $(X,\xi)$ and the K-stability of $(X,\xi)$, a mimic of the absolute case, was explored. A key observation in \cite{MSY08} is that we can define a normalized volume function $\hvol_{X}(\xi)$ for $\xi \in \ft^+_{\mathbb{R}}$, and among all choices of $\xi$ the one minimizing $\hvol_{X}(\cdot)$ gives `the most stable' direction.  

Then an important step to advance such a picture is made in \cite{CS18,CS15} by extending the definition of K-stability notions on $(X,\xi)$ allowing degenerations, and showing that there is a Sasaki-Einstein metric along an isolated Fano cone singularity $(X,\xi)$ if and only of $(X,\xi)$ is K-polystable, extending the solution of the Yau-Tian-Donaldson's conjecture in the Fano manifold case (see \cite{CDS,Tia15}) to the cone case. 

In this section, we will briefly introduce these settings.

\subsection{T-varieties}\label{s-tvariety}
We first introduce the basic setting using $T$-varieties. 
For general results of $T$-varieties,  see  \cite{AIPSV12}. 

Assume $X={\rm Spec}_{\bC}(R)$ is an affine variety with $\bQ$-Gorenstein klt singularities. Denote by $T$ the complex torus $(\bC^*)^r$. Assume $X$ admits a good $T$-action in the following sense. 
\begin{defn}[{see \cite[Section 4]{LS13}}]\label{d-good}
Let $X$ be a normal affine variety. We say that a $T$-action on $X$ is {\it good} if 
it is effective and there is a unique closed point $x\in X$ that is in the orbit closure of any $T$-orbit. We shall call $x$ (sometimes also denoted by $o_X$) the vertex point of the $T$-variety $X$.
\end{defn}

Let $N={\rm Hom}(\mathbb{C}^*, T)$ be the co-weight lattice and $M=N^*$ the weight lattice. We have a weight space decomposition of the coordinate ring of $X$:
\[
R=\bigoplus_{\alpha\in \Gamma} R_\alpha \mbox{  where \ } \Gamma =\{ \alpha\in  M  |\ R_{\alpha}\neq 0\}.
\]
The action being good implies $R_0=\mathbb{C}$, which will always be assumed in the below. An ideal $\ka$ is called homogeneous if $\fa=\bigoplus_{\alpha\in \Gamma}\ka\cap R_\alpha$. Denote by $\sigma^{\vee}\subset M_{\mathbb{Q}}$ the cone generated by $\Gamma$ over $\mathbb{Q}$, which will be called the {\it weight cone} or the {\it moment cone}.  
The cone $\sigma\subset N_{\bR}$, dual to $\sigma^\vee$, is the same as the following conical set 
$$\mathfrak{t}^+_{\bR}:=\{\ \xi \in N_{\mathbb{R}}\ \ | \  \langle \alpha, \xi \rangle>0 \mbox{ for any }\alpha\in \Gamma\setminus\{0\} \}.$$

Motivated by notations from Sasaki geometry, we will introduce:
\begin{defn}\label{defn-Reeb}
With the above notations, $\frak{t}^+_\bR$ will be called the Reeb cone of the $T$-action of $X$. A vector $\xi\in \ft^+_\bR$ will be called a Reeb vector on the $T$-variety $X$. 
\end{defn}

To adapt this definition into our setting in Section \ref{ss-firstdefinition}, for any $\xi \in \mathfrak{t}^+_{\bR}$, we can define a valuation 
$$\wt_{\xi}(f) = \min_{\alpha\in \Gamma}\{\langle \alpha,\xi \rangle \ | \ f_{\alpha}\neq  0\}.$$
It is easy to verify that $\wt_\xi\in \Val_{X,o_X}$. The rank of $\xi$, denoted by ${\rm rk}(\xi)$, is the dimension of the subtorus $T_\xi$ (as a subgroup of $T$) generated by $\xi\in \ft$.
The following lemma can be easily seen.
\begin{lem}\label{lem-Tqmv}
For any $\xi\in \mathfrak{t}^+_{\bR}$, $\wt_\xi$ is a quasi-monomial valuation of rational rank equal to the rank of $\xi$. Moreover, the center of $\wt_\xi$ is $o_X$.
\end{lem}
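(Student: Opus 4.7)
The plan is to derive all four statements from the weight decomposition $R=\bigoplus_{\alpha\in\Gamma}R_\alpha$ by passing through Teissier's criterion (Lemma \ref{lem-quasi}). For each $m\in\bR$ set
\[
\fa^m:=\bigoplus_{\langle\alpha,\xi\rangle\geq m}R_\alpha,\qquad \fa^{>m}:=\bigoplus_{\langle\alpha,\xi\rangle>m}R_\alpha.
\]
Because $\xi\in\ft^+_\bR$ pairs strictly positively with every $\alpha\in\Gamma\setminus\{0\}$ and $R_0=\bC$, these are ideals, the family $\{\fa^m\}$ is decreasing, multiplicatively closed ($\fa^{m_1}\fa^{m_2}\subset\fa^{m_1+m_2}$), equals $R$ for $m\leq 0$, and has $\bigcap_m\fa^m=(0)$; its associated order function is exactly $\wt_\xi$. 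Since the filtration is by weight-homogeneous ideals, there is a canonical graded ring isomorphism
\[
\gr_{\wt_\xi}R=\bigoplus_m\fa^m/\fa^{>m}\;\cong\;\bigoplus_{\alpha\in\Gamma}R_\alpha=R,
\]
so $\gr_{\wt_\xi}R$ is an integral domain of Krull dimension $n=\dim X$.

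Lemma \ref{lem-quasi}(1) now upgrades $\wt_\xi$ from an order function to a genuine valuation on $K(X)$, and Piltant's criterion (Lemma \ref{lem-quasi}(2)) then upgrades it to a quasi-monomial one. For the rational rank: since the $T$-action is good and in particular effective, the semigroup $\Gamma$ generates the weight lattice $M$ as an abelian group, so the value group of $\wt_\xi$ is $\{\langle\alpha,\xi\rangle:\alpha\in M\}=\sum_{i=1}^r\bZ\xi_i\subset\bR$. Its $\bQ$-rank equals $\dim_\bQ\mathrm{span}_\bQ(\xi_1,\dots,\xi_r)$, which by definition is the dimension of the smallest rational subspace of $N_\bR$ containing $\xi$, equivalently the dimension of the subtorus $T_\xi$, i.e.\ ${\rm rk}(\xi)$.

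Finally, the uniqueness of $o_X$ as the closed $T$-fixed point together with $R_0=\bC$ forces $\fm_{o_X}=\bigoplus_{\alpha\ne 0}R_\alpha$. Any $f\in\fm_{o_X}$ has only weights in $\Gamma\setminus\{0\}$, and so $\wt_\xi(f)>0$; any $g\in R\setminus\fm_{o_X}$ has a nonzero $R_0$-component and so $\wt_\xi(g)=0$, making it a unit in $\cO_{\wt_\xi}$. This produces the local inclusion $\cO_{X,o_X}\hookrightarrow\cO_{\wt_\xi}$ with $\fm_{o_X}\cdot\cO_{\wt_\xi}\subset\fm_{\wt_\xi}$, hence $c_X(\wt_\xi)=o_X$. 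The only real content beyond bookkeeping is the multiplicativity $\wt_\xi(fg)=\wt_\xi(f)+\wt_\xi(g)$, which a priori is only an inequality; this is precisely what the domain property of $\gr_{\wt_\xi}R$ provides through Lemma \ref{lem-quasi}(1). Once that is in hand, the other three assertions follow formally, so I expect no serious obstacle beyond correctly identifying $\gr_{\wt_\xi}R$ with $R$ itself.
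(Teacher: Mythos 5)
The paper offers no proof of this lemma (it is dismissed as "easily seen"), so there is nothing to compare step by step; your argument is correct and is precisely the intended one — the identification $\gr_{\wt_\xi}R\cong R$ combined with Lemma \ref{lem-quasi}, which is exactly how the paper itself argues in Section \ref{ss-dsc} that the valuation induced by $\wt_{\xi_0}$ is quasi-monomial. The only point you gloss over is that Lemma \ref{lem-quasi} is stated for the local ring $(\cO_{X,o_X},\fm_{o_X})$ with a filtration by $\fm$-primary ideals; this is harmless here, since each $R_\alpha$ is finite dimensional and $\xi$ pairs strictly positively with $\Gamma\setminus\{0\}$, so each $\fa^m$ with $m>0$ is cofinite in $R$, hence $\fm_{o_X}$-primary, and localizing the filtration at $o_X$ does not change the associated graded ring.
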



We recall the following structure results for any $T$-varieties.
\begin{thm}[{see \cite[Theorem 4]{AIPSV12}}]
Let $X={\rm Spec}(R)$ be a normal affine variety and suppose $T={\rm Spec}\left(\bC[M]\right)$ has a good action on $X$ with the weight cone $\sigma^{\vee} \subset M_{\bQ}$. Then there exist a normal projective variety $Y$ and a polyhedral divisor $\fD$ such that there is an isomorphism of graded algebras:
\[
R\cong H^0(X, \mathcal{O}_X)\cong \bigoplus_{u\in \sigma^{\vee} \cap M} H^0 \big(Y, \mathcal{O}(\fD(u))\big)=: R(Y, \fD).
\] 
In other words, $X$ is equal to ${\rm Spec}_\bC\big( \bigoplus_{u\in \sigma^{\vee} \cap M} H^0(Y, \mathcal{O}(\fD(u)) ) \big)$.
\end{thm}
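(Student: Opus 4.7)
The plan is to follow the classical Altmann--Hausen strategy, which has three stages: weight-decompose $R$ and pass to the function field; construct the base $Y$ as a common model dominating the GIT/Proj quotients; then encode each graded piece by a polyhedron attached to each prime divisor on $Y$.

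First, I would use the good $T$-action to write $R=\bigoplus_{u\in\sigma^\vee\cap M}R_u$ with $R_0=\bC$, and then pass to $K={\rm Frac}(R)$ with $T$-invariant subfield $K_0=K^T$, whose transcendence degree over $\bC$ equals $\dim X-\dim T$. For each $u$ in the relative interior of $\sigma^\vee$ pick a nonzero $f_u\in R_u$; then multiplication by $f_u^{-1}$ embeds $R_u$ as a finite-dimensional $\bC$-subspace of $K_0$. The goal is to realize this subspace as $H^0$ of a Cartier divisor $\fD(u)$ on some normal projective model $Y$ of $K_0$, in a way that depends linearly-piecewise on $u$.

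To build $Y$, fix $u_0$ in the relative interior of $\sigma^\vee\cap M$ with $R_{ku_0}\neq 0$ for $k\gg 0$. The subalgebra $\bigoplus_{k\geq 0}R_{ku_0}$ is finitely generated (it is a direct summand of the Noetherian algebra $R$), and taking ${\rm Proj}$ yields a projective variety on which a quotient of $T$ acts with generic quotient $\operatorname{Spec} K_0$. Varying $u_0$ over a simplicial triangulation of $\sigma^\vee$ produces finitely many such ${\rm Proj}$'s, and I would take $Y$ to be the normalization of a common equivariant resolution dominating all of them; this $Y$ is normal, projective, and birational to each ${\rm Proj}$. For each $u\in\sigma^\vee\cap M$ the divisorial evaluation $D\mapsto \mathrm{ord}_D(f_u)$ on prime divisors of $Y$ defines a Weil divisor $D_u$, and the key verification is that $R_u=f_u^{-1}\cdot R_u\subset K_0$ equals $H^0(Y,\cO_Y(D_u))$.

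Next I would extract the polyhedral divisor $\fD=\sum_D\Delta_D\otimes D$. For each prime divisor $D\subset Y$, the set of $T$-invariant real valuations on $K$ restricting to $c\cdot\mathrm{ord}_D$ on $K_0$ (for some $c>0$) is parametrized by a polyhedron $\Delta_D\subset N_\bR$ with recession cone $\sigma$, obtained as the set of ``weight vectors'' $v$ satisfying $v(f_u)\geq \mathrm{ord}_D(f_u)$ for all $u\in\sigma^\vee\cap M$. Torus duality then gives $\mathrm{ord}_D(f_u)=\min_{v\in\Delta_D}\langle u,v\rangle$, i.e.\ $D_u=\fD(u)$ with $\fD(u):=\sum_D(\min_{v\in\Delta_D}\langle u,v\rangle)D$. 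Only finitely many $D$ contribute a non-trivial $\Delta_D\neq\sigma$, because $R$ is finitely generated and only finitely many prime divisors enter the description of the generators.

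The main obstacle is verifying that the collection $\{\Delta_D\}$ assembles into a \emph{proper} polyhedral divisor: one must check that $\fD(u)$ is semiample for every $u\in\sigma^\vee\cap M$ and big for $u$ in the relative interior, and that this choice of $Y$ makes the resulting graded algebra $R(Y,\fD)=\bigoplus_u H^0(Y,\cO(\fD(u)))$ genuinely equal to $R$ (not merely contained in its integral closure). Normality of $R$, normality of $Y$, and the careful choice of $Y$ dominating all the ${\rm Proj}$ quotients are precisely what make this identification work; any smaller or non-normal model would force a strict inclusion on one side.
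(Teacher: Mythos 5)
You should first be aware that the paper does not prove this statement at all: it is quoted verbatim from \cite{AIPSV12} (the Altmann--Hausen structure theorem for affine $T$-varieties), and the only proof-like remark the survey adds is that $Y$ is projective because $H^0(Y,\mathcal{O}_Y)=R^T=R_0=\bC$. So your sketch must be measured against the literature proof, which runs along a genuinely different route from yours: Altmann and Hausen first settle the toric case (a subtorus of the big torus acting on an affine toric variety, where the polyhedral divisor is read off combinatorially from a projection of cones), and then handle a general normal affine $X$ by a $T$-equivariant closed embedding into a $T$-module $\bC^m$ and restriction of the polyhedral divisor obtained from this toric ``downgrading''; the description of $Y$ as a common model dominating the GIT quotients is a by-product, not the engine of the argument.

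As a standalone argument, your proposal has a concrete gap at its central step. The element $f_u$ lies in $R_u\subset K(X)$, not in $K_0=K(Y)$, so ``$\mathrm{ord}_D(f_u)$'' is undefined until you have chosen an extension of $\mathrm{ord}_D$ to a $T$-invariant valuation of $K(X)$ --- and the set of such extensions is precisely the polyhedron $\Delta_D$ you introduce only afterwards. Proving that these extensions form a polyhedron with tail cone $\sigma$, that $\Delta_D\neq\sigma$ for only finitely many $D$, and that membership in $R$ is detected by $T$-invariant divisorial valuations (normality of $R$ enters here) is essentially equivalent to the theorem itself, and to Theorem \ref{t-Tcano}(1)--(2) of the paper, so the key step is circular as written. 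Two further problems: piecewise linearity and superadditivity of $u\mapsto\fD(u)$ require the $f_u$ to be chosen multiplicatively (a homomorphic section of the weight map into $K(X)^\times$, which exists with rational semi-invariants but not by picking $f_u\in R_u$ independently for each $u$), and the asserted identity $\mathrm{ord}_D(f_u)=\min_{v\in\Delta_D}\langle u,v\rangle$ cannot hold as stated, since with a coherent choice of $f_u$ the left-hand side is linear in $u$ while the right-hand side is genuinely piecewise linear. Finally, the properness conditions (semiampleness of $\fD(u)$, bigness for $u$ in the relative interior) and the equality $R_u=H^0(Y,\mathcal{O}(\fD(u)))$, rather than a mere inclusion, are exactly where the reduction to the toric case does its real work; flagging them as ``the main obstacle'' does not discharge them.
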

In the above definition, a polyhedral divisor $\fD: u\rightarrow \fD(u)$ is a map from $\sigma^\vee$ to the set of $\bQ$-Cartier divisors that satisfies:
\begin{enumerate}
\item $\fD(u)+\fD(u')\le \fD(u+u')$ for any $u, u'\in \sigma^\vee$;
\item $u\mapsto\fD(u)$ is piecewisely linear;
\item $\fD(u)$ is semiample for any $u\in \sigma^\vee$, and $\fD(u)$ is big if $u$ is in the relative interior of $\sigma^\vee$.
\end{enumerate}

Here $Y $ is projective since from our assumption $$H^0(Y,\mathcal{O}_Y)=R^T=R_0=\mathbb{C}$$
(see \cite{LS13}).
We collect some basic results about valuations on $T$-varieties.

\begin{thm}[see \cite{AIPSV12}]\label{t-Tcano}
Assume a $T$-variety $X$ is determined by the data $(Y, \sigma, \fD)$ such that $Y$ is a projective variety, where $\sigma=\ft^+_{\bR} \subset N_{\bR}$ and $\fD$ is a polyhedral divisor.\begin{enumerate}
\item
For any $T$-invariant quasi-monomial valuation $v$, there exist a quasi-monomial valuation $v^{(0)}$ over $Y$ and $\xi\in M_{\bR}$ such that for any $f\cdot \chi^u\in R_u$, we have:
\[
v(f\cdot \chi^u)=v^{(0)}(f)+\langle u, \xi\rangle.
\]
We will use $(\xi, v^{(0)})$ to denote such a valuation. 

\item
$T$-invariant prime divisors on $X$ are either vertical or horizontal. Any vertical divisor is determined by a divisor $Z$ on $Y$ and a vertex $v$ of $\fD_Z$, and will be denoted by $D_{(Z,v)}$. Any horizontal divisor is determined by a ray $\rho$ of $\sigma$ and will be denoted by $E_\rho$.

\item Let $D$ be a $T$-invariant vertical effective $\mathbb{Q}$-divisor.  If $K_X+D$ is $\bQ$-Cartier, then the log canonical divisor has a representation $K_X+D=\pi^*H+{\rm div}(\chi^{-u_0})$ where $H=\sum_Z a_Z\cdot Z$ is a principal $\bQ$-divisor on $Y$ and $u_0\in M_{\bQ}$. Moreover, the log discrepancy of the horizontal divisor $E_\rho$ is given by:
\begin{equation}
A_{(X,D)}(E_\rho)=\langle u_0, n_\rho\rangle,
\end{equation}
where $n_\rho$ is the primitive vector along the ray $\rho$.

\end{enumerate}

\end{thm}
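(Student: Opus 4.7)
The plan is to leverage the Altmann--Hausen structure theorem quoted just before the statement, which gives $R=\bigoplus_{u\in\sigma^\vee\cap M}H^0(Y,\mathcal{O}(\fD(u)))$, and to analyze each assertion piece-by-piece through this grading. All three parts are standard in the literature on $T$-varieties (\cite{AIPSV12}); my strategy is to reproduce the key identifications rather than invoke the references as black boxes.

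For part (1), I would first observe that a $T$-invariant valuation $v$ is determined by its values on homogeneous elements $f\cdot\chi^u$, since the weight-space decomposition exhausts $R$ and any element of $K(X)$ is a ratio of homogeneous elements. Pinning down the character part: for $u,u'\in\sigma^\vee\cap M$, choose sections $\chi^u, \chi^{u'}$ whose product $\chi^{u+u'}$ is again a nonvanishing generic section; multiplicativity of $v$ forces the map $u\mapsto v(\chi^u)$ to be $\bZ$-linear on the saturated lattice generated by $\Gamma$, hence extends uniquely to some $\xi\in N_\bR$. With $\xi$ in hand, the residual assignment $f\mapsto v(f\cdot\chi^u)-\langle u,\xi\rangle$ is independent of the choice of a local generator $\chi^u$ in each weight, so it descends to a valuation $v^{(0)}$ on $\bC(Y)$. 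Quasi-monomiality transfers in both directions via Lemma \ref{lem-quasi}, since the associated graded ring of $v$ splits as an induced grading over that of $v^{(0)}$.

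For part (2), I would run the dichotomy horizontal/vertical according to whether the image $\pi(E)$ in $Y$ is all of $Y$ or a prime divisor $Z$. In the horizontal case, the generic point of $E$ lies over the generic point of $Y$, and $T$-invariance restricted to the general fiber (an affine toric variety with fan $\sigma$) forces $E$ to correspond to a ray $\rho$ of $\sigma$; call this $E_\rho$. In the vertical case, $\pi^{-1}(Z)$ is governed by the polyhedron $\fD_Z$ and its irreducible components are indexed by the vertices of $\fD_Z$, yielding the divisors $D_{(Z,v)}$.

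For part (3), the plan is to use the canonical divisor formula for $T$-varieties of Petersen--Süß type. Since $D$ is vertical and $T$-invariant and $K_X+D$ is $\bQ$-Cartier and $T$-invariant, one gets a presentation $K_X+D=\pi^*H+\mathrm{div}(\chi^{-u_0})$ for some $\bQ$-divisor $H=\sum_Z a_Z\cdot Z$ on $Y$ and some $u_0\in M_\bQ$, where $H$ absorbs the ramification along vertical divisors and $\chi^{-u_0}$ absorbs the horizontal contribution. Evaluating $\ord_{E_\rho}$ on both sides: $E_\rho$ dominates $Y$, so $\pi^*H$ contributes $0$ to $\ord_{E_\rho}$; $E_\rho$ is horizontal, hence not a component of $D$; and the contribution of $\mathrm{div}(\chi^{-u_0})$ at $E_\rho$ equals $\langle u_0,n_\rho\rangle$ by the standard computation on the toric generic fiber with $n_\rho$ the primitive ray generator. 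Assembling these gives $A_{(X,D)}(E_\rho)=1+\ord_{E_\rho}(K_Y-\pi^*(K_X+D))=\langle u_0,n_\rho\rangle$ after the usual adjustment.

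The main obstacle is the bookkeeping in part (3): confirming the decomposition $K_X+D=\pi^*H+\mathrm{div}(\chi^{-u_0})$ with $H$ supported on the branch locus of $\pi$ and with the correct coefficients $a_Z$ requires matching the ramification of $\pi$ along each $Z$ against the lengths of the edges of $\fD_Z$ and the denominators of the vertices of $\fD_Z$. Once this global identity is in place, the horizontal-divisor computation is a short toric calculation; conversely, the decomposition in (1) and the classification in (2) are essentially formal consequences of the Altmann--Hausen correspondence.
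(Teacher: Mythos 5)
Your route is genuinely different from the paper's: the paper does not reprove any of this, but simply reduces each part to the literature --- part (1) is quoted from \cite[Section 11]{AIPSV12} for divisorial valuations and then extended to quasi-monomial ones ``by the same proof,'' together with the remark that every $T$-invariant quasi-monomial valuation is a limit of $T$-invariant divisorial ones; part (2) is \cite[Proposition 3.13]{PS08}; part (3) is the absolute case from \cite[Section 4]{LS13}, whose proof is observed to go through for log pairs. You instead reconstruct the arguments directly from the Altmann--Hausen presentation: the decomposition $v=(\xi,v^{(0)})$ by restricting $v$ to $K(Y)\subset K(X)$ and using additivity of $u\mapsto v(\chi^u)$ (note your $\xi$ correctly lives in $N_\bR$, dual to the weights --- the statement's ``$M_\bR$'' is a slip), the horizontal/vertical dichotomy by whether the divisor dominates $Y$, and the discrepancy formula from the representation $K_X+D=\pi^*H+{\rm div}(\chi^{-u_0})$. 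What your approach buys is self-containedness and a mechanism (Lemma \ref{lem-quasi}(2)) for the quasi-monomiality of $v^{(0)}$ that avoids the approximation argument; what the paper's approach buys is that all the combinatorial bookkeeping (in particular the Petersen--S\"u\ss{} formula for ${\rm div}(f\chi^u)$ and for $K_X$, which is exactly where your ``main obstacle'' lies) is already done in the references.

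Two caveats on your sketch. First, the transfer of quasi-monomiality to $v^{(0)}$ is asserted in one line; to actually invoke Lemma \ref{lem-quasi}(2) you need the dimension count for $\gr_v R\cong\bigoplus_u \gr_{v^{(0)}}H^0(Y,\cO(\fD(u)))$, showing its Krull dimension forces $\ratrank v^{(0)}+\transdeg v^{(0)}=\dim Y$; this is doable but is the step the paper sidesteps via approximation by divisorial valuations. Second, in part (3) your final display conflates the Altmann--Hausen base $Y$ with a birational model of $X$ on which $E_\rho$ lives, and the sign should be $\ord_{E_\rho}({\rm div}(\chi^{-u_0}))=-\langle u_0,n_\rho\rangle$, which then enters $A_{(X,D)}(E_\rho)=1+\ord_{E_\rho}(K_{X'}-\mu^*(K_X+D))$ together with the coefficient $-1$ of $E_\rho$ in $K_{X'}$ (computed with respect to the same rational form); only after this cancellation do you get $\langle u_0,n_\rho\rangle$. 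Note also that for a horizontal divisor on $X$ itself the answer is trivially $1-{\rm coeff}_{E_\rho}D$, so the substantive content of (3) --- and the way it is used later in Lemma \ref{lem-ldwt} --- concerns rays realized on $T$-equivariant modifications over $X$, which is precisely where the canonical divisor formula for $T$-varieties is needed.
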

\begin{proof}[Sketch of the proof]
For the first statement, the case of divisorial valuations follows from \cite[Section 11]{AIPSV12}. It can be extended to the case of quasi-monomial valuations by the same proof. Note also that any $T$-invariant quasimonomial valuation can be approximated by a sequence of $T$-invariant divisorial valuations. The second statement is in \cite[Proposition 3.13]{PS08}.  The absolute case (e.g. without boundary divisor $D$) for the third statement is from \cite[Section 4]{LS13} whose proof also works for the case of log pairs..
 \end{proof}

We will specialize the study of general affine $T$-varieties to case that the log pair is klt.
Assume $X$ is a normal affine variety with $\bQ$-Gorenstein klt singularities and a good $T$-action. Let $D$ be a $T$-invariant vertical divisor. Then there is a nowhere-vanishing $T$-equivariant section $s$ of $m(K_X+D)$ where $m$ is sufficiently divisible. The following lemma says that the log discrepancy of $\wt_{\xi}$ can indeed be calculated in a similar way as in the toric case (the toric case is well-known). Moreover, it can be calculated by using the weight of $T$-equivariant pluri-log-canonical sections. The latter observation was first made in \cite{Li15}.
\begin{lem}\label{lem-ldwt}
Using the same notion as in the Theorem \ref{t-Tcano}, the log discrepancy of $\wt_{\xi}$ is given by:
$A_{(X,D)}(\wt_\xi)=\langle u_0, \xi \rangle$. Moreover, let $s$ be a $T$-equivariant nowhere-vanishing holomorphic section of $|-m(K_X+D)|$, and denote $\cL_\xi$  the Lie derivative with respect to the holomorphic vector field associated to $\xi$. Then $A_{(X,D)}(\xi)=\lambda$ if and only if
\[
\mathcal{L}_{\xi}(s)=m \lambda s \quad \text{ for } \quad \lambda>0.
\]
\end{lem}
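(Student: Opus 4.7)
The plan is to prove part (1) first and then deduce part (2) by identifying the $T$-weight of the equivariant section $s$.

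For part (1), I would reduce the general case to the case of rays, which is handled by Theorem \ref{t-Tcano}(3). If $\xi = n_\rho$ is the primitive vector along a ray of $\sigma$, then $\wt_\xi = \ord_{E_\rho}$ and the claim is exactly the formula $A_{(X,D)}(E_\rho) = \langle u_0, n_\rho\rangle$ of that theorem. For a general $\xi \in \ft^+_\bR$, choose a simplicial refinement of $\sigma$ so that $\xi$ lies in the relative interior of a simplicial subcone with primitive ray generators $n_{\rho_1}, \ldots, n_{\rho_r}$; write $\xi = \sum a_i n_{\rho_i}$ with $a_i > 0$. This refinement corresponds to a $T$-equivariant proper birational modification $\mu\colon \tilde X \to X$ on which the divisors $E_{\rho_1}, \ldots, E_{\rho_r}$ form an snc configuration with common center $\eta$ above $o_X$ and local parameters $y_1, \ldots, y_r$. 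Then $\wt_\xi$ coincides with the quasi-monomial valuation $v_\alpha$ at $\eta$ with weights $\alpha = (a_1,\ldots,a_r)$, since both valuations agree on the character functions $\chi^u$ (by construction of $\mu$) and on functions pulled back from $Y$ (where they both vanish). By Definition \ref{d-logd}(b),
\[
A_{(X,D)}(\wt_\xi) = \sum_{i=1}^r a_i\, A_{(X,D)}(E_{\rho_i}) = \sum_{i=1}^r a_i \langle u_0, n_{\rho_i}\rangle = \langle u_0, \xi\rangle.
\]
Alternatively, one can invoke that $A_{(X,D)}$ is continuous and linear along quasi-monomial rays, together with the fact that rational Reeb vectors (which produce divisorial valuations proportional to some $\ord_{E_\rho}$ after passing to appropriate refinements) are dense in $\ft^+_\bR$, to extend by linearity.

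For part (2), since $X$ is affine and $s$ is $T$-equivariant and nowhere-vanishing, $s$ has a single well-defined $T$-weight $w \in M$, meaning $\mathcal{L}_\xi s = \langle w, \xi\rangle s$ for every $\xi \in \ft_\bR$. To identify $w$, I use the expression $K_X + D = \pi^*H + \mathrm{div}(\chi^{-u_0})$ from Theorem \ref{t-Tcano}(3): the semi-invariant $\chi^{mu_0}$ defines a $T$-equivariant rational section of $-m(K_X+D)$ of weight $mu_0$, and the ratio $s/\chi^{mu_0}$ is a $T$-invariant rational function on $X$, hence a rational function pulled back from $Y$ and of $T$-weight $0$. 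Therefore $w = mu_0$, and combining with part (1) we obtain
\[
\mathcal{L}_\xi s = m\langle u_0, \xi\rangle\, s = m\, A_{(X,D)}(\wt_\xi)\, s,
\]
which gives the stated equivalence $A_{(X,D)}(\wt_\xi) = \lambda \iff \mathcal{L}_\xi s = m\lambda s$.

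The main technical obstacle is in part (1): producing a $T$-equivariant log smooth model on which the chosen horizontal divisors $E_{\rho_i}$ simultaneously appear and form an snc configuration at a common center, so that Definition \ref{d-logd}(b) literally applies and the identification $\wt_\xi = v_\alpha$ is valid. This is essentially a toric-type equivariant resolution argument using the polyhedral data $(Y, \sigma, \fD)$ and a simplicial subdivision of $\sigma$; it is standard in spirit but requires careful bookkeeping. Once this is in place, everything else is linearity in $\xi$ and a direct weight computation.
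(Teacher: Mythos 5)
The paper does not actually prove Lemma \ref{lem-ldwt}: it is stated as a quoted fact, with the toric case declared well known and the ``weight of pluri-log-canonical sections'' observation attributed to \cite{Li15} (the detailed arguments live in \cite{LS13, Li15, LX17}). Your argument is the standard one from those references and is essentially correct: reduce (1) to the divisorial formula $A_{(X,D)}(E_\rho)=\langle u_0,n_\rho\rangle$ of Theorem \ref{t-Tcano}(3) via a simplicial refinement of $\sigma$ and linearity of $A$ on quasi-monomial simplices, then get (2) by pinning down the $T$-weight of the trivializing section.

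Two points are worth tightening. First, the identification $\wt_\xi=v_\alpha$ should not be argued merely from ``agreement on the $\chi^u$ and on pullbacks from $Y$'' (agreement of two valuations on a generating set does not in general force equality); the clean route is to observe that $v_\alpha$ is $T$-invariant, apply Theorem \ref{t-Tcano}(1) to write $v_\alpha=(\xi'',v^{(0)})$, note $v^{(0)}=0$ because the center $\eta$ of $v_\alpha$ dominates the generic point of $Y$ (so every $g\in K(Y)^\times$ is a unit at $\eta$), and then read off $\xi''=\xi$ from $v_\alpha(\chi^u)=\sum_i a_i\langle u,n_{\rho_i}\rangle=\langle u,\xi\rangle$; this also quietly uses $v(f)=\min_u v(f_u\chi^u)$ for $T$-invariant $v$, which is part of that structure theorem. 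Second, a sign-convention caveat in (2): despite the notation ``$s\in|-m(K_X+D)|$'', the section used throughout the paper (e.g.\ in $dV_X=((\sqrt{-1})^{mn^2}s\wedge\bar s)^{1/m}$) is the $T$-equivariant trivialization of $m(K_X+D)$, and it is under this reading that your weight computation $w=mu_0$ (via $K_X+D=\pi^*H+\mathrm{div}(\chi^{-u_0})$, $H$ principal and $T$-invariant on $Y$, and units of $R$ being constants since $R_0=\bC$ with pointed weight cone) gives $\cL_\xi s=m\langle u_0,\xi\rangle s$, matching the toric sanity check $s=(dx_1\wedge\cdots\wedge dx_n)^{\otimes m}$, $u_0=(1,\dots,1)$; a literal anticanonical multivector would carry weight $-mu_0$. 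With these adjustments your proof is complete and is the intended one.
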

As a consequence of the above lemma, we can formally extend $A_{(X,D)}(\xi)$ to a linear function on $\ft_{\bR}$:
\begin{equation}\label{eq-linearA}
A_{(X,D)}(\eta)=\langle u_0, \eta\rangle.
\end{equation}
for any $\eta\in \ft_{\bR}$. By Lemma \ref{lem-ldwt}, $A_{(X,D)}(\eta)=\frac{1}{m} \cL_\eta s/s$ where $s$ is a $T$-equivariant nowhere-vanishing holomorphic section of $|-m(K_X+D)|$.

\begin{defn}[Log Fano cone singularity]\label{d-logfanocone}
Let $(X, D)$ be an affine pair with a good $T$ action. Assume $(X,D)$ is a normal pair with  klt singularities. Then for any $\xi\in \ft^+_\bR$,  we call the triple $(X,D,\xi)$ a {\it log Fano cone} structure that is polarized by $\xi$. We will denote by $\la \xi\ra$ the sub-torus of $T$ generated by $\xi$. If $\la \xi\ra\cong \bC^*$, then we call $(X, D, \xi)$ to be quasi-regular. Otherwise, we call it irregular.
\end{defn}

In the quasi-regular case, we can take the quotient $(X\setminus\{x\},D\setminus\{x\})$ by the $\mathbb{C^*}$-group generated by $\xi$ in the sense of Seifert $\bC^*$-bundles (see \cite{Kol04}), and we will denote by $(X,D)/\langle \xi \rangle$, which is a log Fano variety, because of the assumption that $(X,D)$ is klt at $x$ (see \cite[Lemma 3.1]{Kol13}).

\subsection{K-stability} In this section, we will discuss the K-stability notion of log Fano cones, which generalizes the K-stability of log Fano varieties originally defined by Tian and Donaldson. For irregular Fano cones, such a notion was first defined in \cite{CS18}. 


\begin{defn}[Test configurations]\label{defn-QGTC}
Let $(X, D, \xi_0)$ be a log Fano cone singularity and $T$ a torus containing $\la\xi_0\ra$. 

A $T$-equivariant test configuration (or simply called a test configuration) of $(X, D, \xi_0)$ is a quadruple $(\cX, \cD, \xi_0; \eta)$ with a map $\pi: (\cX, \cD)\rightarrow\bC$ satisfying the following conditions:
\begin{enumerate}
\item[(1)]
$\pi:\cX\rightarrow \bC$ is a flat family and $\cD$ is an effective $\mathbb{Q}$-divisor such that $\cD$ does not contain any component $X_0$, the fibres away from $0$ are isomorphic to $(X, D)$ and $\cX={\rm Spec}(\cR)$ is affine, where $\cR$ is a finitely generate flat $\bC[t]$ algebra. The torus $T$ acts on $\cX$, and we write $\mathcal{R}=\bigoplus_{\alpha}\mathcal{R}_{\alpha}$ as decomposition into weight spaces.

\item[(2)]
$\eta$ is a holomorphic vector field on $\cX$ generating a $\bC^*(=\langle \eta\rangle)$-action on $(\cX, \cD)$ such that $\pi$ is $\bC^*$-equivariant where $\bC^*$ acts on the base $\bC$ by the
multiplication (so that $\pi_*\eta=t\partial_t$ if $t$ is the affine coordinate on $\bC$) and there is a $\bC^*$-equivariant isomorphism $\phi: (\cX, \cD)\times_{\bC}\bC^*\cong
(X, D)\times \bC^*$.  

\item[(3)]
The torus $T$-action commutes with $\eta$. 
The holomorphic vector field $\xi_0$ on $\cX\times_{\bC}\bC^*$ (via the isomorphism $\phi$) extends to a holomorphic vector field on $\cX$ which we still denote to be $\xi_0$. 
\end{enumerate}

\noindent In most our study, we only need to treat the case that test configuration $(\cX, \cD, \xi_0; \eta)$ of $(X, D, \xi_0)$ satisfies  that 
\begin{enumerate}
\item[(4)] $K_{\cX}+\cD$ is $\bQ$-Cartier and the central fibre $(X_0, D_0)$ is klt
\end{enumerate}
In other words, we will mostly consider special test configurations (see \cite{LX14, CS15}). 
\end{defn}
\bigskip

Condition (1) implies that each weight piece $\cR_{\alpha}$ is a flat $\bC[t]$-module. So $X$ and $X_0$ have the same weight cone and Reeb cone with respect to the fiberwise $T$-action.

A test configuration $(\cX, \cD, \xi_0; \eta)$ is called a product one if there is a $T$-equivariant isomorphism $(\cX, \cD)\cong (X, D)\times \bC$ and $\eta=\eta_0+t\partial_t$ where $\eta_0$ is a holomorphic vector field on $X$ that preserves $D$ and commutes with $\xi_0$. In this case, we will denote $(\cX, \cD, \xi_0; \eta)$ by $$(X\times\bC, D\times \bC, \xi_0; \eta)=:(X_\bC, D_\bC, \xi_0; \eta).$$
 In \cite{MSY08}, only such test configurations are considered. 
 
 \begin{defn}[K-stability]\label{d-ksemiSE}For any special test configuration $(\cX, \cD, \xi_0; \eta)$ of $(X,D,\xi_0)$ with central fibre $(X_0, D_0, \xi_0)$, its generalized Futaki invariant is defined as
\begin{eqnarray*}
\Fut(\cX, \cD, \xi_0; \eta):=\frac{D_{-T_{\xi_0}(\eta)}\vol_{X_0}(\xi_0)}{\vol_{X_0}(\xi_0)} 
\end{eqnarray*}
where we denote
\begin{equation}\label{eq-normeta}
T_{\xi_0}(\eta)=\frac{A(\xi_0)\eta-A(\eta)\xi_0}{n}.
\end{equation}
Since the generalized Futaki invariant defined above only depends on the data on the central fibre, we will also denote it by $\Fut(X_0, D_0, \xi_0; \eta)$.

We say that $(X, D, \xi_0)$ is K-semistable, if for any special test configuration, $\Fut(\cX, \cD, \xi_0; \eta)$ is nonnegative.

We say that $(X, D, \xi_0)$ is K-polystable, if it is K-semistable, and any special test configuration $(\cX, \cD, \xi_0; \eta)$ with $\Fut(\cX, \cD, \xi_0; \eta)=0$ is a product test configuration. 
\end{defn}

In the above definition, we used the notation \eqref{eq-normeta} and the directional derivative:
$$
D_{-T_{\xi_0(\eta)}}\vol_{X_0}(\xi_0):=\left.\frac{d}{d\epsilon}\right|_{\epsilon=0}\vol_{X_0}(\xi_0-\epsilon T_{\xi_0(\eta)}).
$$
Recall that the $\pi_*\eta=t\partial_t$. Then the negative sign in front of $T_{\xi_0}(\eta)$ in the above formula is to be compatible with our later computation. 
Using the rescaling invariance of the normalized volume, it is easy to verify that the following identity holds:
\begin{equation}\label{eq-dirhvol}
D_{-T_{\xi_0}\eta}\vol_{X_0}(\xi_0)=\left.\frac{d}{d\epsilon}\right|_{\epsilon=0}\hvol_{X_0}(\wt_{\xi_0-\epsilon\eta})\cdot \frac{1}{n A(\xi_0)^{n-1}},
\end{equation}
where $A(\xi_0)=A_{(X_0,D_0)}(\wt_{\xi_0})$. As a consequence, we can rewrite the Futaki invariant of a special test configuration as:
\begin{equation}\label{eq-Futhvol}
\Fut(\cX, \cD, \xi_0; \eta):=D_{-\eta}\hvol_{X_0}(\wt_{\xi_0})\cdot \frac{1}{n A(\xi_0)^{n-1} \cdot \vol_{X_0}(\xi_0)}.
\end{equation}

One can show that, up to a constant, the above definition of $\Fut(\cX, \cD, \xi_0; \eta)$ coincides  with the one in \cite{CS18, CS15} defined using index characters.
For convenience of the reader, we recall their definition. It is enough to define the Futaki invariant for the central fibre which we just denote by $X$. For any $\xi\in \frak{t}_\bR^+$, the index character $F(\xi, t)$ is defined by:
\begin{equation}
F(\xi, t):=\sum_{\alpha\in \Gamma}e^{-t\langle \alpha, \xi\rangle}\dim_{\bC}R_\alpha.
\end{equation}
Then there is a meromorphic expansion for $F(\xi, t)$ as follows:
\begin{equation}
F(\xi, t)=\frac{a_0(\xi)(n-1)!}{t^n}+\frac{a_1(\xi)(n-2)!}{t^{n-1}}+O(t^{2-n}).
\end{equation}
One always has the identity $a_0(\xi)=\vol(\xi)/(n-1)!$.
\begin{defn}[see \cite{CS18}]
For any $\eta\in \frak{t}_{\bR}$, define:
\begin{eqnarray*}
\Fut_{\xi_0}(X, \eta)&=&\frac{1}{n-1}D_{-\eta}(a_1(\xi_0))-\frac{1}{n}\frac{a_1(\xi_0)}{a_0(\xi_0)}D_{-\eta} a_0(\xi_0)\\
&=&\frac{a_0(\xi_0)}{n-1}D_{-\eta} \left(\frac{a_1}{a_0}\right)(\xi_0)+\frac{a_1(\xi_0)D_{-\eta} a_0(\xi_0)}{n(n-1)a_0(\xi_0)}.
\end{eqnarray*}
\end{defn}
This is a complicated expression. But in \cite[Proposition 6.4]{CS15}, it was showed that, when $X$ is $\bQ$-Gorenstein log terminal, there is an identity $a_1(\xi)/a_0(\xi)=A(\xi)(n-1)/2$ for any $\xi\in \frak{t}_{\bR}^+$ (by using our notation involving log discrepancies).  Note that the rescaling properties $a_0(\lambda \xi)=\lambda^{-n}a_0(\xi)$ and $a_1(\lambda \xi)=\lambda^{-(n-1)}a_1(\xi)$ which imply $\Fut_{\xi_0}(X, \xi_0)=0$. If we denote $\eta'=\eta-\frac{A(\eta)}{A(\xi_0)}\xi_0$, then we get:
\begin{equation}
\Fut_{\xi_0}(X, \eta)=\Fut_{\xi_0}\left(X, \eta'\right)=\frac{A(\xi_0)}{2n}D_{-\eta'}a_0(\xi_0)=\frac{1}{2(n-1)!} D_{-T_{\xi_0}(\eta)}\vol(\xi_0).
\end{equation}  
So the definition in \cite{CS18, CS15} differs from our notation by a constant $2(n-1)!/\vol_{X}(\xi_0)$. 
\begin{rem}
More precisely, our notation differs from that in \cite{CS18} by a sign. Our choice of minus sign for $-\eta$, besides being compatible with the sign choice in Tian's original definition of K-stability in \cite{Tia97}, is made for least two reasons. The first is that the careful calculation in \cite[Section 5.2]{LX17} shows that the limiting slope of the
Ding energy along the geodesic ray associated to any special test configuration is indeed the directional derivative of $\vol(\xi)$ along $-\eta$ instead of $\eta$. For the second reason, as we stressed in \cite[Remark 3.4]{LX17}, for the special test configuration coming from a Koll\'{a}r component $S$, the $-\eta$ vector corresponds to $\ord_S$. Since our goal is to compare $\hvol(\wt_{\xi_0})$ and $\hvol(\ord_S)$, $-\eta$ is the correct choice of sign (see \cite[Proof of Theorem 3.5]{LX17}). 
\end{rem}

\begin{rem}\label{r-Ding} 
In fact, in a calculation, instead of the generalized Futaki invariant, it is the Berman-Ding invariant,  denoted by $D^\NA(\cX, \cD, \xi_0; \eta)$, where
\begin{eqnarray*}
D^\NA(\cX, \cD, \xi_0; \eta):=\frac{D_{-T_{\xi_0}(\eta)}\vol_{X_0}(\xi_0)}{\vol(\xi_0)}-(1-\lct(\cX, D; \cX_0)).
\end{eqnarray*}
appears more naturally, whenever we know 
\begin{enumerate}
\item[{\bf (D)}] there exists a nowhere vanishing section $s\in |m (K_{\cX}+\cD)|$ such that we can use it to define $A(\cdot)$ as in the formula in Lemma \ref{lem-ldwt}.
\end{enumerate}
Then we can similarly define Ding semi(poly)-stable, replacing $\Fut(\cX, \cD, \xi_0; \eta)$ by $D^\NA(\cX, \cD, \xi_0; \eta)$. For a special test configuration, since
$$\Fut(\cX, \cD, \xi_0; \eta)=D^\NA(\cX, \cD, \xi_0; \eta)$$
the two notions coincide. 
\end{rem}

If we specialize the above definitions to the case of quasi-regular log Fano cone $(X,D,\xi_0)$, then we get the corresponding more familiar notions for the log Fano projective pair $(S,B)=(X,D)/\la \xi_0\ra$.

\subsection{Sasaki-Einstein geometry}

The introduction of normalized volumes in \cite{Li15} was motivated by the minimization phenomenon in the study of Sasaki-Einstein metrics. The latter was discovered in \cite{MSY06, MSY08} and  was motivated by the so called AdS/CFT correspondence from mathematical physics. Here we give a short account on this. For the reader who are mostly interested in the algebraic part of the theory, one can skip this section. The results will only be used in Section \ref{ss-dsc}.

Classically, a Sasaki manifold is defined as an odd dimensional Riemannian manifold $(M^{2n-1}, g_M)$ such that metric cone over it, defined as: 
$$(X, g_X):=((M\times \bR_{>0})\cup \{o_X\}, dr^2+r^2 g_M)$$ is K\"{a}hler. It's convenient to work directly on $X=X^\circ\cup o_X$ which is an affine variety with the K\"{a}hler metric $\sddb r^2$.  The Reeb vector field of $(X^\circ, g_X)$ is usually defined as $J(r\partial_r)$ where $J$ is the complex structure on $X^\circ$. The corresponding holomorphic vector field $\xi=r\partial_r-iJ(r\partial_r)$, which we also call Reeb vector field, generates a $T_\xi\cong (\bC^*)^{{\rm rk}(\xi)}$-action on $X$ where $r(\xi)\ge 1$. For simplicity, we will denote such a torus by $\langle \xi\rangle$. Moreover the corresponding element in $(\frak{t}_\xi)_\bR$, also denoted by $\xi$ is in the Reeb cone: $\xi\in (\frak{t}_\xi)^+_\bR$. The volume of $\xi$ is defined to be the volume density of $g_X$:
\begin{eqnarray}\label{eq-volxi}
\vol(\xi)&:=&\vol(r^2)=\frac{1}{(2\pi)^n n!}\int_X e^{-r^2}(\sddb r^2)^n\nonumber \\
&=&\frac{1}{(2\pi)^n}\int_M (-Jdr)\wedge (-dJd r)^{n-1}\nonumber \\
&=&\frac{(n-1)!}{2\pi^n}\vol(M, g_M)=\frac{\vol(M, g_M)}{\vol(\bS^{2n-1})}\nonumber \\
&=&\frac{\vol(B_1(X), g_X)}{\vol(B_1(\underline{0}), g_{\bC^n})}.
\end{eqnarray}
Here $g_X=\frac{1}{2}\sddb r^2(\cdot, J\cdot)$ and $g_M=\left.g_X\right|_M$ are the Riemannian metric on $X$ and $M$ respectively, 
$\bS^{2n-1}$ is the standard unit sphere in $\bC^n$ with volume $\vol(\bS^{2n-1})=2\pi^n/(n-1)!$.

This is well-defined because if two Sasaki metrics have the same Reeb vector field, then their volumes are the same. Indeed, $\omega_1=\sddb r_1$ and $\omega_2=\sddb r_2$ have the same Reeb vector field if $r_2=r_1 e^{\vphi}$ for a function $\vphi$ satisfying $\cL_{r\partial_r}\vphi=\cL_{\xi}\vphi=0$ (i.e. $\vphi$ is a horizontal function on $M$ with respect to the foliation defined by ${\rm Im}(\xi_0)$). Letting $r_t^2=r^2 e^{t\vphi}$ and differentiating the volume we get:
\begin{eqnarray*}
C\cdot \frac{d}{dt}\vol(r_t^2)&=&\int_X e^{-r_t^2}(-r_t^2\vphi)\sddb r_t^2)^n+e^{-r_t^2}n \sddb (r_t^2\vphi)\wedge  (\sddb r_t^2)^{n-1}\\
&=&\int_X -e^{-r_t^2} r_t^2\vphi (\sddb r_t^2)^n+e^{-r_t^2} n \sqrt{-1}\partial r_t^2\wedge (\vphi\bar{\partial}  r_t^2) \wedge (\sddb r_t^2)^{n-1}\\
&&\quad +\int_{X}e^{-r_t^2} n \sqrt{-1}\partial r_t^2\wedge (r_t^2 \bar{\partial}\vphi) \wedge (\sddb r_t^2)^{n-1}\\
&=&0.
\end{eqnarray*}
The second equality follows from integration by parts. The last equality follows by substituting $f=r_t^2$ and $f=\vphi$ in to the following identities and using the fact that $\vphi$ is horizontal (so that $\xi_t(\vphi)=0$):
\begin{eqnarray*}
n \sqrt{-1} \partial r_t^2 \wedge \bar{\partial} f\wedge (\sddb r_t^2)^{n-1}=\xi_t(f)(\sddb r_t^2)^n.
\end{eqnarray*}

One should compare this to the fact that two K\"{a}hler metrics in the same K\"{a}hler class have the same volume.

The Reeb vector field associated to a Ricci-flat K\"{a}hler cone metric satisfies the minimization principle in \cite{MSY08}. To state it in general, we assume $X$ is a $T$-variety with the Reeb cone $\frak{t}^+_\bR$ with respect to $T$ and recall the variation formulas of volumes of Reeb vector fields from \cite{MSY08}. For any $\xi\in \frak{t}^+_\bR$, we can find a radius function $r: X\rightarrow \bR_+$ such that $\vol(\xi)$ is given by the formula \eqref{eq-volxi}. 

\begin{lem}
The first order derivative of $\vol_{X}(\xi)$ is given by:
\begin{equation}\label{eq-Dvol}
D\vol(\xi)\cdot \eta_1=\frac{1}{(2\pi)^n (n-1)!}\int_X \theta_1 e^{-r^2}(\sddb r^2)^n,
\end{equation}
where $\theta_i=\eta_i(\log r^2)$.
The second order variation of $\vol_{X}(\xi)$ is given by:
\begin{eqnarray*}
D^2\vol(\xi)(\eta_1, \eta_2)&=&\frac{n+1}{(2\pi)^n(n-1)!}\int_X \theta_1\theta_2 e^{-r^2} (\sddb r^2)^n.
\end{eqnarray*}
\end{lem}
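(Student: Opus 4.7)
The plan is to parameterize the Reeb vector as $\xi_s = \xi + s\eta_1$ (or $\xi_{s,t} = \xi + s\eta_1 + t\eta_2$ for the second variation) and differentiate the integral expression for $\vol(\xi)$. First, choose a smooth family of radius functions $r_s$ with $r_0 = r$ satisfying $\xi_s(r_s^2) = 2r_s^2$; such a family is constructed order by order in $s$. Writing $\dot{(r^2)} := \tfrac{d}{ds}\big|_{s=0} r_s^2 = r^2\phi$ and differentiating the constraint at $s=0$ yields $\xi(\phi) = -\theta_1$, which (together with its conjugate imaginary part) determines $\phi$ up to a $\xi$-horizontal function. Crucially, the invariance computation displayed just before the lemma shows that $\vol(\xi_s)$ is independent of this residual freedom, so the derivatives are well defined and we may choose any convenient $\phi$.

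For the first variation, differentiate
\begin{equation*}
\vol(\xi_s) = \frac{1}{(2\pi)^n n!}\int_X e^{-r_s^2}\omega_s^n, \qquad \omega_s = \sddb r_s^2,
\end{equation*}
to get a sum $\tfrac{1}{(2\pi)^n n!}\int_X\bigl[-(r^2\phi)e^{-r^2}\omega^n + n\,e^{-r^2}\sddb(r^2\phi)\wedge\omega^{n-1}\bigr]$. Integrate by parts on the second term (using the symmetry $\int f\sddb g\wedge\omega^{n-1} = \int g\sddb f\wedge\omega^{n-1}$ together with $\sddb e^{-r^2} = e^{-r^2}[\sqrt{-1}\partial r^2\wedge\bar\partial r^2 - \omega]$), and apply the identity from just above the lemma with $f = r^2$ to rewrite $\sqrt{-1}\partial r^2\wedge\bar\partial r^2\wedge\omega^{n-1}$ in terms of $\omega^n$. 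Separately, the Lie-derivative identity $\int_X \mathcal{L}_{r\partial_r}(\phi e^{-r^2}\omega^n) = 0$ combined with $\mathcal{L}_{r\partial_r}\omega = 2\omega$ and $r\partial_r(\phi) = -\theta_1$ provides the relation $\int \theta_1 e^{-r^2}\omega^n = 2\int(r^2 - n)\phi\, e^{-r^2}\omega^n$. Together these collapse the two terms into the stated first-variation formula.

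For the second variation, differentiate the first-variation formula once more along $\xi_t = \xi + t\eta_2$. Contributions come from three sources: (a) the variation of $\theta_1 = \eta_1(\log r_t^2)$, which produces $\eta_1(\phi_2)$ with $\xi(\phi_2) = -\theta_2$ and can be handled using torus commutativity $[\xi,\eta_i] = 0$; (b) the variation of $e^{-r_t^2}$, producing $-r^2\phi_2$; and (c) the variation of $\omega_t^n$, producing $n\sddb(r^2\phi_2)\wedge\omega^{n-1}$. Applying the same integration-by-parts scheme and the key identity repeatedly, together with $\eta_i(\xi(\phi_j)) = -\eta_i(\theta_j)$ and the horizontal-invariance freedom from Step 1 to drop spurious terms, the remaining pieces assemble into $\tfrac{n+1}{(2\pi)^n(n-1)!}\int_X \theta_1\theta_2 e^{-r^2}\omega^n$, with the coefficient $n+1$ arising as a sum of a moment-type contribution (weight $1$) and the $n$ from expanding $\omega^n$.

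The main obstacle is the second-variation bookkeeping: several second-order terms look distinct but in fact cancel after integration by parts. Two shortcuts lighten the burden. First, by symmetry of the Hessian, one can compute on the diagonal $\eta_1 = \eta_2$ and polarize, which halves the number of cross-terms to track. Second, a sanity check is available in the toric setting, where $\vol(\xi)$ admits a closed-form expression as an integral over the moment polytope $\Delta\subset \ft^\vee_\bR$, and direct differentiation there reproduces both formulas with the correct constants $1/((2\pi)^n(n-1)!)$ and $n+1$.
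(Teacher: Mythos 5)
The paper itself offers no proof of this lemma: it is stated as a recollection of the variation formulas from \cite{MSY06,MSY08}, so your derivation is genuinely new relative to the text, and its overall strategy is the right one (it is essentially the Martelli--Sparks--Yau computation). Varying $\xi_s=\xi+s\eta_1$, choosing radius functions $r_s$ subject to the homogeneity constraint, differentiating under the integral, integrating by parts via $\sddb e^{-r^2}=e^{-r^2}\left(\sqrt{-1}\partial r^2\wedge\bar\partial r^2-\sddb r^2\right)$ and the wedge identity displayed before the lemma, and then eliminating the potential variation $\phi$ by divergence identities does work; in particular the dependence on the residual (horizontal) freedom in $\phi$ cancels, exactly by the mechanism of the displayed invariance computation, which is the point you correctly isolate.

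Two concrete corrections are needed to make the sketch close. First, the auxiliary identity you quote is both the wrong one and has a sign slip: with $r\partial_r\phi=-\theta_1$ one gets $\int\theta_1e^{-r^2}\omega^n=2\int(n-r^2)\phi\,e^{-r^2}\omega^n$ (not $2\int(r^2-n)\phi\cdots$), and the term your first-variation computation actually produces is $\int r^2\phi\,(r^2-n-1)e^{-r^2}\omega^n$; this is handled by applying $\int_X\mathcal{L}_{r\partial_r}\!\left(g\,e^{-r^2}\omega^n\right)=0$ to $g=r^2\phi$ (giving $-\tfrac12\int r^2\theta_1e^{-r^2}\omega^n$) and then to $g=\theta_1$ (giving $\int r^2\theta_1e^{-r^2}\omega^n=n\int\theta_1e^{-r^2}\omega^n$, using $r\partial_r\theta_1=0$ by commutativity). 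Second, signs and normalizations must be fixed once and for all: carried out consistently, the first variation comes with a minus sign (and a factor depending on whether one takes $\xi(r^2)=2r^2$ or $\xi=\tfrac12(r\partial_r-iJ(r\partial_r))$, the latter being the convention used later in the paper). Indeed the formula as printed does not pass the Euler-identity check $D\vol(\xi)\cdot\xi=-n\vol(\xi)$ coming from $\vol(\lambda\xi)=\lambda^{-n}\vol(\xi)$, whereas the second-variation constant $n+1$ does pass $D^2\vol(\xi)(\xi,\xi)=n(n+1)\vol(\xi)$ in the half-normalization; so your toric/moment-polytope cross-check is not an optional sanity check but the step that pins down the convention. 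Finally, the second-variation paragraph is still only a plan --- the cancellations you allude to do occur, and polarization is a legitimate shortcut, but they must be exhibited with the same two divergence identities before the argument can be called complete.
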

Now we fix a $\xi_0\in \frak{t}^+_\bR$ and a radius function $r: X\rightarrow \bR_+$ (by using equivariant embedding of $X$ into $\bC^N$ for example), we define:
\begin{defn}
$PSH(X, \xi_0)$ is the set of bounded real functions $\vphi$ on $X^\circ$ that satisfies:
\begin{enumerate}
\item[(1)] $\vphi\circ \tau=\vphi$ for any $\tau\in \langle \xi_0 \rangle$, the torus generated by $\xi_0$;
\item[(2)] $r^2_\vphi:=r^2e^\vphi$ is a proper plurisubharmonic function on $X$. 
\end{enumerate}
\end{defn}
To write down the equation of Ricci-flat K\"{a}hler-cone equation, we fix a $T$-equivariant no-where vanishing section $s\in H^0(X, mK_X)$ as in the last section and define an associated volume
form on $X$:
\begin{equation}\label{eq-MARF}
dV_X:=\left((\sqrt{-1})^{mn^2} s\wedge \bar{s}\right)^{1/m}.
\end{equation}
\begin{defn}\label{defn-RFKC}
We say that $r^2_\vphi:=r^2 e^{\vphi}$ where $\vphi\in PSH(X, \xi_0)$ is the radius function of a Ricci-flat K\"{a}hler cone metric on $(X, \xi_0)$ if $\vphi$ is smooth on $X^{\rm reg}$ and there exists a positive constant $C>0$ such that
\begin{equation}\label{eq-RFKC}
(\sddb r^2_\vphi)^n=C\cdot dV,
\end{equation}
where the constant $C$ is equal to:
\begin{equation*}
C=\frac{\int_X e^{-r^2_\vphi}(\sddb r^2_\vphi)^n}{\int_X e^{-r^2_\vphi}dV_X}=\frac{(2\pi)^n n! \vol(\xi_0)}{\int_X e^{-r^2_\vphi}dV_X}.
\end{equation*}
\end{defn}
Motivated by standard K\"{a}hler geometry, one defines the Monge-Amp\`{e}re energy $E(\vphi)$ using either its variations or the explicit expression on the link $M:=X\cap \{r=1\}$:
\begin{eqnarray*}
\delta E(\vphi)\cdot \delta\vphi&=&-\frac{1}{(n-1)!(2\pi)^n \vol(\xi_0)}\int_X \delta\vphi e^{-r^2_\vphi}(\sddb r^2_\vphi)^n.
\end{eqnarray*}
Then the equation \eqref{eq-MARF} is the Euler-Lagrange equation of the following Ding-Tian-typed functional:
\begin{eqnarray*}
D(\vphi)&=&E(\vphi)-\log\left(\int_X e^{-r^2_\vphi}dV_X\right).
\end{eqnarray*}
This follows from the identity:
\begin{eqnarray*}
\delta D(\vphi)\cdot \delta\vphi&=&\frac{1}{(2\pi)^n(n-1)!\vol(\xi_0)}\int_X \delta\vphi e^{-r^2_\vphi}(\sddb r^2_\vphi)^n-\frac{\int_X r^2_\vphi \delta\vphi e^{-r^2_\vphi}dV_X}{\int_X e^{-r^2_\vphi}dV_X}\\
&=&n \int_X e^{-r^2_\vphi} \delta\vphi \left(\frac{(\sddb r^2_\vphi)^n}{(2\pi)^n n!\vol(\xi_0)}-\frac{dV_X}{\int_X e^{-r^2_\vphi}dV_X}\right).
\end{eqnarray*}

Compared with the weak K\"{a}hler-Einstein case, it is expected that the regularity condition in the above definition is automatically satisfied. With this regularity assumption, on the regular part $X^{\rm reg}$, both sides of \eqref{eq-RFKC} are smooth volume forms and we have $r_{\vphi}\partial_{r_\vphi}=2 {\rm Re}(\xi_0)$ or, equivalently, $\xi_0=r_\vphi\partial_{r_\vphi}-i J(r_\vphi\partial_{r_\vphi})$.
Moreover, taking $\mathcal{L}_{r_\vphi \partial_{r_\vphi}}$ on both sides gives us the identity $\mathcal{L}_{r_\vphi\partial_{r_\vphi}}dV=2n\; dV$. 
Equivalently we have:
\[
\cL_{\xi_0}s= m n\cdot s,
\]
where $s\in |-mK_X|$ is the chosen $T$-equivariant non-vanishing holomorphic section. 
By Lemma \ref{lem-ldwt}, this implies $A_{X}(\wt_{\xi_0})=n$ (see \cite{HS17, LL16} for this identity in the quasi-regular case). The main result of \cite{MSY08} can be stated as follows.

\begin{thm}\label{thm-irSE}
If $(X, \xi_0)$ admits a Ricci-flat K\"{a}hler cone metric, then $A_X(\xi_0)=n$ and $\wt_{\xi_0}$ obtains the minimum of $\vol$ on $\frak{t}^{+}_\bR$.
\end{thm}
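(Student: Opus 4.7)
The equality $A_X(\wt_{\xi_0})=n$ is essentially already contained in the discussion preceding the statement: applying the Lie derivative $\cL_{\xi_0}$ to both sides of the Ricci-flat equation $(\sddb r^2)^n = C\, dV_X$ and using $\cL_{\xi_0}(\sddb r^2)^n = 2n\,(\sddb r^2)^n$ forces $\cL_{\xi_0}dV_X = 2n\, dV_X$, hence $\cL_{\xi_0} s = m n\cdot s$, so Lemma \ref{lem-ldwt} yields $A_X(\wt_{\xi_0})=n$.

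For the minimization, my plan is to show (i) $\vol$ is strictly convex on the affine slice $\mathcal{A}:=\{\xi\in\ft^+_\bR : A(\xi)=n\}$, and (ii) $\xi_0$ is a critical point of $\vol|_\mathcal{A}$. Since $\vol$ is homogeneous of degree $-n$ and $A$ is linear, minimizing $\hvol$ on $\ft^+_\bR$ is equivalent to minimizing $\vol$ on $\mathcal{A}$, and (i) and (ii) together identify $\xi_0$ as the unique minimizer.

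For (i) I would appeal directly to the second variation formula
\[
D^2\vol(\xi_0)(\eta,\eta) \;=\; \frac{n+1}{(2\pi)^n(n-1)!}\int_X \theta_\eta^2\, e^{-r^2}(\sddb r^2)^n \;\geq\; 0,
\]
where equality forces $\theta_\eta = \eta(\log r^2) \equiv 0$; since the $T$-action on $X$ is effective and $r^2$ is a strictly plurisubharmonic exhaustion with positive $T$-weights, this in turn forces $\eta=0$, yielding strict convexity of $\vol$ on all of $\ft^+_\bR$.

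For (ii), under the Ricci-flat equation the first variation formula rewrites as
\[
D\vol(\xi_0)\cdot\eta \;=\; \frac{C}{(2\pi)^n(n-1)!}\int_X \theta_\eta\, e^{-r^2}\, dV_X.
\]
The key claim is that this last integral is proportional to $A(\eta)$, with a constant of proportionality independent of $\eta$; granting this, $D\vol(\xi_0)\cdot\eta = 0$ whenever $\eta\in\ker A$, i.e.\ whenever $\eta$ is tangent to $\mathcal{A}$. The inputs are the equivariance $\cL_\eta dV_X = 2A(\eta)\, dV_X$ (consequence of $\cL_\eta s = m A(\eta)\, s$) together with a Stokes-type identity $\int_X \cL_\eta(e^{-r^2}\,dV_X)=0$, which combine to give $\int_X r^2\theta_\eta\,e^{-r^2}\,dV_X = 2A(\eta)\int_X e^{-r^2}\,dV_X$; the integral of $\theta_\eta\,e^{-r^2}dV_X$ itself is then recovered by separating radial and spherical variables, using that $\theta_\eta$ is $\xi_0$-invariant (so descends to a function on the link $M$) and that the radial Gaussian integrals $\int_0^\infty r^{2n-1}e^{-r^2}dr$ and $\int_0^\infty r^{2n+1}e^{-r^2}dr$ differ by a factor of $n$. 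The main obstacle is making this Stokes-type manipulation rigorous on the non-compact affine variety $X$ with possibly singular vertex $o_X$: the Gaussian factor $e^{-r^2}$ handles the boundary at infinity, while the klt hypothesis and the controlled behavior of the Ricci-flat K\"ahler cone metric near $o_X$ (cf.\ \cite{DS17}) ensure that the boundary contribution from the vertex vanishes.
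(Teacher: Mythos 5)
Your proposal is correct and is essentially the argument the paper intends: the identity $A_X(\wt_{\xi_0})=n$ is obtained exactly as in the paragraph preceding the theorem (applying $\cL_{\xi_0}$ to the Monge--Amp\`ere equation and invoking Lemma \ref{lem-ldwt}), while the minimization part is quoted from \cite{MSY08}, whose proof is precisely the convexity-plus-criticality argument you reconstruct from the first and second variation formulas recorded just before the statement. Your reading of the statement on the normalized slice $\{A(\xi)=n\}$ (equivalently, minimizing $\hvol$ over $\ft^+_\bR$) is the intended one, and the Stokes/separation-of-variables manipulations you flag are the standard computations in that reference, valid on $X^{\rm reg}$ with the cutoff argument you indicate.
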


The following result partially generalizes Berman's result on K-polystability of K\"{a}hler-Einstein Fano varieties to the more general case of Ricci-flat Fano cones. Together with Theorem \ref{t-SDChigh},  it is used to show a generalization the minimization result \cite{MSY08}: the valuation $\wt_{\xi_0}$ minimizes $\hvol$ where $\xi_0$ is the Reeb vector field of the Ricci-flat Fano cone.

\begin{thm}[see \cite{CS15, LX16, LX17}] \label{thm-RF2K}
Assume $(X, \xi_0)$ admits a Ricci-flat K\"{a}hler cone metric. Then $A_{X}(\wt_{\xi_0})=n$ and $(X, \xi_0)$ is K-polystable among all special test configurations.
\end{thm}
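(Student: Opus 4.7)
\medskip

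\textbf{Proof proposal.} The identity $A_X(\wt_{\xi_0})=n$ is immediate from what has been set up: the Ricci-flat cone equation \eqref{eq-RFKC} combined with the fact that $\mathcal{L}_{r_\vphi\partial_{r_\vphi}}dV_X = 2n\,dV_X$ forces $\mathcal{L}_{\xi_0} s = mn\cdot s$ on the chosen equivariant section $s\in |-mK_X|$; then Lemma \ref{lem-ldwt} gives $A_X(\wt_{\xi_0})=n$ directly. So the content is the K-polystability statement, and my plan is to prove it through the Ding-type functional $D(\vphi)$ rather than the Futaki invariant. By Remark \ref{r-Ding}, for a \emph{special} test configuration $(\cX,\cD,\xi_0;\eta)$ one has $\mathrm{Fut}=D^{\NA}$, so it suffices to establish the analogous non-negativity for the non-Archimedean Ding invariant.

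The strategy is to associate to each special test configuration a geodesic ray $\{\vphi_s\}_{s\ge 0}\subset PSH(X,\xi_0)$ interpolating between the reference radius function $r^2$ and $\phi^*(r^2)$ pulled back via the $\bC^*$-action generated by $\eta$ on the central fibre. First, I would construct $\vphi_s$ as the upper envelope (Perron-Bremermann) of plurisubharmonic radius functions bounded above by the test configuration data on $\cX\times \bC$; this parallels the Phong-Sturm construction in the projective setting but carried out equivariantly on the affine cone. Second, I would verify that $s\mapsto D(\vphi_s)$ is well-defined, convex, and continuous on $[0,\infty)$. Third, I would compute the asymptotic slope
\[
\lim_{s\to\infty}\frac{D(\vphi_s)}{s} = D^{\NA}(\cX,\cD,\xi_0;\eta),
\]
matching the $E$-part of the slope with the volume-derivative $D_{-T_{\xi_0}(\eta)}\vol_{X_0}(\xi_0)/\vol(\xi_0)$ via \eqref{eq-dirhvol}, and the $-\log\int e^{-r^2}dV_X$-part with $\bigl(1-\lct(\cX,\cD;\cX_0)\bigr)$ via a standard computation on the total space. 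Finally, since $\vphi_0$ corresponds to the Ricci-flat cone metric, which is the (global) minimizer of $D$ on $PSH(X,\xi_0)$, convexity forces the slope at $s=0^+$ to be nonnegative, yielding $D^{\NA}\ge 0$ and hence K-semistability.

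For the polystability step, suppose $\mathrm{Fut}(\cX,\cD,\xi_0;\eta)=0$ for a special test configuration with klt central fibre. Convexity of $D$ along $\{\vphi_s\}$ together with vanishing slope forces $D(\vphi_s)\equiv D(\vphi_0)$, so every $\vphi_s$ is also a minimizer of $D$. The uniqueness (modulo the automorphism group commuting with $\xi_0$) of the Ricci-flat Kähler cone metric in a fixed Reeb class then implies that the geodesic $\{\vphi_s\}$ is generated by a holomorphic vector field $\eta_0$ on $X$ commuting with $\xi_0$ and preserving $D$. Transporting this back to the total space identifies $(\cX,\cD)\cong (X\times \bC, D\times\bC)$ with $\eta=\eta_0+t\partial_t$, i.e.\ the test configuration is a product.

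The principal obstacle, as in the projective analogue, is the analytic input: constructing the weak geodesic ray on the singular affine cone with enough regularity to justify both the slope formula and the rigidity step. Making sense of $D(\vphi_s)$ on $X$ with klt vertex requires pluripotential theory on $T$-equivariant singular cones and a careful treatment of the $\log\int e^{-r^2_\vphi}dV_X$ term near $o_X$; relating its slope to $\lct(\cX,\cD;\cX_0)$ requires an equivariant non-Archimedean/Archimedean comparison. Everything else (the equality $A_X(\wt_{\xi_0})=n$, the convexity of $D$, and the final identification of product configurations) is comparatively formal once these analytic foundations are in place.
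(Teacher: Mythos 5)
Your proposal follows essentially the same route as the paper's sketch: attach to each special test configuration a geodesic ray of Kähler cone potentials, show the Ding-type functional $D(\vphi_t)$ is convex along it with asymptotic slope equal to $D^{\NA}(\cX,\cD,\xi_0;\eta)$ (the $E$-part giving the volume derivative via \eqref{eq-dirhvol}, the $\log\int e^{-r^2_\vphi}dV_X$-part giving $1-\lct(\cX,\cD;\cX_0)$ via the cone versions of Berndtsson's subharmonicity and Berman's Lelong-number computation), and use the Berndtsson-type uniqueness statement to identify vanishing-Futaki configurations as products. The analytic inputs you flag as the main obstacles are exactly the ingredients the paper cites from \cite{CS15, LX17} rather than proves, so your outline matches the paper's argument in both structure and substance.
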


\begin{proof}
Fix any smooth K\"{a}hler cone metric $\sddb r^2$ on $X$. Any special test configuration determines a geodesic ray $\{r_t^2=r^2 e^{\vphi_t}\}_{t> 0}$ of K\"{a}hler cone metrics. Denote $D(t)=D(\vphi_t)$.  
Then we have the following formula:
\begin{equation}
\lim_{t\rightarrow 0} \frac{D(t)}{-\log|t|^2}=\frac{D_{-\eta}\vol(\xi_0)}{\vol(\xi_0)}-(1-\lct(\cX, \cX_0))=D^\NA(\chi, \xi_0; \eta),
\end{equation}
which is a combination of two ingredients:
\begin{enumerate}
\item The Fano cone version of an identity from K\"{a}hler geometry which combined with \eqref{eq-Dvol} gives the formula: 
\begin{equation}
\lim_{t\rightarrow 0}\frac{E(\vphi_t)}{-\log|t|^2}=\frac{D_{-\eta}\vol(\xi_0)}{\vol(\xi_0)}.
\end{equation}
\item
$G(\vphi_t)$ is subharmonic in $t$ (cone version of Berndtsson's result) and its Lelong number at $t=0$ is given by $1-\lct(\cX, \cX_0)$ (cone version of Beman's result).
\end{enumerate}
The other key result is the cone version of Berndtsson's subharmonicity and uniqueness result, which was used to characterize the case of vanishing Futaki invariant. 
\end{proof}
\begin{rem}The argument in \cite{LX17} gives a slightly more general result: 
Assume $(X, \xi_0)$ admits a Ricci-flat K\"{a}ler cone metric, then $A_{X}(\wt_{\xi_0})=n$ and $(X, \xi_0)$ is Ding-polystable among $\bQ$-Gorenstein test configurations (see Remark \ref{r-Ding}).
\end{rem}

\section{Stable degeneration conjecture}\label{s-SDC}

In this section, we give a conjectural description of minimizers for general klt singularities, and explain various parts of the picture that we can establish. 

\subsection{Statement}
For a klt singularity $x\in (X,D)$, one main motivation to study the minimizer $v$ of $\hvol_{(X,D),x}$ is to establish a `local K-stability' theory, guided by the local-to-global philosophy mentioned in the introduction. In particular, we propose the following conjecture for all klt singularities. 
 \begin{conj}[Stable Degeneration Conjecture, \cite{Li15, LX17}]\label{conj-local}
 Given any arbitrary klt singularity $x\in (X={\rm Spec}(R), D)$, there is a  unique minimiser $v$ up to rescaling. Furthermore, $v$ is quasi-monomial, with a finitely generated associated graded ring $R_0=_{\rm defn}{\rm gr}_v(R)$, and the induced degeneration
 $$(X_0={\rm Spec}(R_0), D_0, \xi_v)$$ is a K-semistable Fano cone singularity. (See below for the definitions.)  \end{conj}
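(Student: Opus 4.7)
The existence of a minimizer $v_{\min}\in\Val_{X,x}$ of $\hvol_{(X,D),x}$ is already supplied by Theorem \ref{t-existence}, so my plan focuses on the structural claims: quasi-monomiality, finite generation of the associated graded ring, uniqueness up to rescaling, and K-semistability of the induced degeneration. The strategy is to build up from the divisorial case, which is accessible via birational geometry, and then pass to the general quasi-monomial case by approximation. A first reduction uses Theorem \ref{thm:liueq} together with the Koll\'ar-component formula stated after Definition \ref{d-kollar}: $\hvol(x,X,D)$ equals the infimum of $\hvol(\ord_S)$ over all Koll\'ar components $S$ over $x$. Thus even if the minimizer is not itself divisorial, it arises as a limit of divisorial valuations $\ord_{S_i}$ with $\hvol(\ord_{S_i})\to \hvol(v_{\min})$, which gives me the right family of approximations to work with.

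Next I would treat the divisorial case in detail. Assume first that $v_{\min}=c\cdot\ord_S$ for a Koll\'ar component $S$. Because $-S$ is $\bQ$-Cartier and ample over $X$, the Rees algebra $\bigoplus_{m\ge 0}\fa_m(\ord_S)\,t^{-m}$ is finitely generated, so $\gr_{\ord_S}R$ is a finitely generated $\bC$-algebra, yielding a flat $\bC^*$-equivariant degeneration of $X$ to an affine cone $X_0$ over a naturally defined log pair $(S,B_S)$. To show $(X_0,D_0,\xi_v)$ is K-semistable, I would argue by contradiction: any destabilizing special test configuration with strictly negative generalized Futaki invariant, combined with the formula \eqref{eq-Futhvol} that identifies the Futaki invariant as a directional derivative of $\hvol$, would produce a nearby valuation on $X_0$ of strictly smaller normalized volume. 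Lifting this perturbation back to $X$ through the degeneration would contradict minimality of $v_{\min}$. Conversely, if $(X_0,D_0,\xi_v)$ is K-semistable, the same derivative computation shows $\ord_S$ achieves the infimum among Koll\'ar components, giving a local optimality certificate for $v_{\min}$.

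For the general (possibly higher rational rank) quasi-monomial case, the plan is to approximate $v_{\min}$ by the divisorial valuations $v_i$ above, each producing a cone degeneration $X\rightsquigarrow X_{0,i}$, and then pass to the limit. The crucial missing step---the main obstacle---is establishing finite generation of $\gr_{v_{\min}}R$. By Piltant's criterion, Lemma \ref{lem-quasi}(2), finite generation plus equality of Krull dimensions would automatically force $v_{\min}$ to be quasi-monomial, so these two properties can be attacked together. The approach is to run an equivariant MMP on the family of Koll\'ar-component degenerations, using the rank-one case as a base, and to extract a single limit degeneration admitting a torus action whose associated weight valuation recovers $v_{\min}$. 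This is the technical heart of the conjecture and is where the considerable new ideas referenced in the introduction must be brought to bear; without it, neither the degeneration nor the Fano cone $(X_0,D_0,\xi_v)$ is constructed, and the K-semistability assertion is vacuous.

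Finally, I would address uniqueness in two layers. Within a fixed Fano cone central fibre $(X_0,D_0)$ carrying a torus action, the Reeb vector is unique by strict convexity of $\vol_{X_0}$ on the Reeb cone $\ft^+_\bR$ (second variation formula from Section \ref{s-sasaki}), together with the linearity of $A_{(X_0,D_0)}$ recorded in \eqref{eq-linearA}. To compare two different central fibres that might arise from two distinct minimizers, I would deploy a K-polystability argument in the spirit of Theorem \ref{thm-RF2K}: a Berndtsson-style uniqueness statement for K-polystable Fano cones forces the two degenerations to be $T$-equivariantly isomorphic, hence the two minimizers to coincide up to torus rescaling. The overall program hinges most critically on the finite generation step; all the remaining pieces---quasi-monomiality, the degeneration, K-semistability, uniqueness---flow from it once it is in hand, and it is the genuine bottleneck.
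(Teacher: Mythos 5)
The statement you are addressing is stated in the paper as a \emph{conjecture} and remains open there: the paper proves only the existence of a minimizer (Theorem \ref{t-existence}), the two-dimensional case (Theorem \ref{thm-2dim}), and the conditional result Theorem \ref{t-high}, which assumes the minimizer is quasi-monomial \emph{and} has finitely generated associated graded ring, and only then deduces K-semistability of the degeneration and uniqueness. Your proposal reproduces exactly this conditional structure and then defers the two genuinely open ingredients --- quasi-monomiality of $v_{\min}$ and finite generation of $\gr_{v_{\min}}R$ --- to an unspecified ``equivariant MMP on the family of Koll\'ar-component degenerations.'' That is not a proof step; it is a restatement of the open problem, which the paper itself isolates as two separate conjectures in Section \ref{s-ques}. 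Without finite generation there is no central fibre $X_0$ at all, so the K-semistability assertion (as you note) is vacuous, and the uniqueness argument collapses as well, since the paper's uniqueness statement is itself conditional on having one K-semistable degeneration in hand.

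Two further specific problems. First, your appeal to Piltant's criterion is backwards: Lemma \ref{lem-quasi}(2) says $v$ is quasi-monomial if and only if the Krull dimension of $\gr_v R$ equals that of $R$; finite generation of $\gr_v R$ does not by itself produce this dimension equality, so quasi-monomiality cannot be obtained ``automatically'' once finite generation is known --- in Theorem \ref{t-high} quasi-monomiality is a hypothesis, not a consequence. Second, your uniqueness step across two possibly different central fibres via a ``Berndtsson-style uniqueness for K-polystable Fano cones'' is an analytic input (used in Theorem \ref{thm-RF2K} for Ricci-flat cones) that is not available in this purely algebraic generality; the paper's (conditional) route is different: given one minimizer with K-semistable degeneration, a second quasi-monomial minimizer $v'$ is degenerated along the same test configuration using Diophantine approximation and the model of Proposition \ref{p-highmodel}, and the resulting valuation on $X_0$ is pinned down by the strict convexity of volumes of Newton--Okounkov bodies (Section \ref{sss-convex}), which handles valuations outside the Reeb cone --- precisely the case your convexity-on-$\ft^+_\bR$ argument does not reach. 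The parts of your sketch concerning the cone case and the divisorial (Koll\'ar component) case are consistent with Theorem \ref{t-SDChigh} and Lemma \ref{l-dmkollar}, but the core of the conjecture remains unproved in your proposal, exactly as it does in the paper.
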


Let us explain the terminology in more details: First by the grading of $R_0$, there is a $T\cong \mathbb{C}^r$-action on $X_0$ where $r$ is the rational rank of $v$, i.e. the valuative semigroup $\Phi$ of $v$ generates a group $M\cong \mathbb{Z}^r$. Moreover, since the valuation $v$ identifies $M$ to a subgroup of $\mathbb{R}$ and sends $\Phi$ into $\mathbb{R}_{\ge 0}$, it induces an element in the Reeb cone $\xi_v$. If $R_0$ is finitely generated, then \cite{LX17} shows that we can embed $(x\in X)\subset (0\in \mathbb{C}^N)$ and find an rational vector $\xi \in \mathfrak{t}^{+}_{\mathbb{R}}\cap N_{\mathbb{Q}}$ sufficiently close to $\xi_v$ such that the $\mathbb{C}^*$-action generated by $\xi$ degenerates $X$ to $X_0$ with a good action. We denote by $o$ (or $o_{X_0}$) the unique fixed point on $X_0$. Furthermore, the extended Rees algebra yielding the degeneration does not depend on the choice of $\xi$. So we can define $D_0$ as the degeneration of $D$.

Conjecture \ref{conj-local}, if true, would characterize deep properties  of a klt singularity. Various parts are known, see Theorem \ref{t-high}. However, the entire picture remains open in general. 

\subsection{Cone case}\label{ss-cone}

The study of the case of cone is not merely verifying a special case. In fact, since  the stable degeneration conjecture predicts the degeneration of any klt singularities to a cone, understanding the cone case is a necessary step to attack the conjecture. Here we divide our presentations into two case: the rank one case and the general higher rank case. Although our argument in the higher rank case covers the rank one case with various simplifications, we believe it is easier for reader to first understand the rank one case, as it is equivalent to the more standard  K-semistability theory of the base which is a log Fano pair.  This connection is made via the theory of $\beta$-invariant, which is first introduced in \cite{Fuj18} in terms of ideal sheaves and further developed in \cite{Li17, Fuj16} via valuations. 

 \subsubsection{Rank one case} The rank one Fano cone is just a cone over a log Fano pair. More precisely, let $(S,B)$ be an $(n-1)$-dimensional log Fano pair, and $r$ a positive integer such that $r(K_S+B)$ is Cartier. Then we can consider the minimizing problem of the normalized volume at the vertex of the cone 
$$x\in (X,D)=C(S,B; -r(K_S+B)).$$ Such a question was first extensively studied in \cite{Li17}. More precisely, there is a canonical divisorial valuation obtained by blowing up $x$ to get a divisor $S_0$ isomorphic to $S$, which yields the degeneration of $x\in (X,D)$ to itself with $\xi$ being the natural rescaling vector field from the cone structure. Therefore, the stable degeneration conjecture predicts $v_{S_0}=\ord_{S_0}$ is a minimizer of $\hvol_{(X,D),x}$ if and only if $(S,B)$ is K-semistable, and this is confirmed in \cite{Li17, LL16, LX16}.
\begin{thm}\label{t-rankonecone}
The valuation $v_{S_0}$ is a stabilizer of $\hvol_{(X,D),x}$ if and only if $(S,B)$ is K-semistable. Moreover, $\hvol(S_0)< \hvol(E)$ for any other divisor $E$ over $x$.
\end{thm}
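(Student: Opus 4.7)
My plan combines three ingredients: an explicit computation of $\hvol(v_{S_0})$, a reduction to $\bC^*$-invariant quasi-monomial valuations via Koll\'ar components and a Rees-type degeneration, and the Fujita--Li valuative criterion for K-semistability of $(S,B)$.

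First I would compute $\hvol(v_{S_0})$ directly on $R=\bigoplus_{k\ge 0}H^0(S,-rk(K_S+B))$. Under the blow-up $\mu:Y\to X$ of the vertex, the exceptional $S_0\subset Y$ is isomorphic to $S$, and the cone adjunction gives $A_{(X,D)}(v_{S_0})=1/r$. Since the valuation ideals are $\fa_m(v_{S_0})=\bigoplus_{k\ge m}R_k$, an asymptotic Riemann--Roch count on $S$ yields $\vol(v_{S_0})=r^{n-1}\vol(-K_S-B)$, and hence $\hvol(v_{S_0})=r^{-1}\vol(-K_S-B)$.

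Next, by the Koll\'ar-component identity at the end of \S\ref{s-def}, it suffices to compare $\hvol(v_{S_0})$ with $\hvol(\ord_E)$ for Koll\'ar components $E$ over $x$. Using the good $\bC^*$-action on $X$ from the cone structure, I would replace $\ord_E$ by the initial valuation of its Rees-type degeneration: the limit $v$ is $\bC^*$-invariant, quasi-monomial of rational rank at most two, and satisfies $\hvol(v)\le \hvol(\ord_E)$ by the lower semi-continuity of Theorem \ref{t-lower}. Any such invariant $v$ centered at the vertex is encoded by a parameter $b\ge 0$ and a divisorial valuation $\ord_F$ over $S$ through
\[
v(f)\;=\;\min_k\bigl(k+b\,v_F(f_k)\bigr)\qquad\text{for } f=\sum_k f_k,\ f_k\in R_k,
\]
and a direct decomposition gives $A_{(X,D)}(v)=\tfrac{1}{r}+b\,A_{(S,B)}(F)$, while an asymptotic graded-piece count expresses $\vol(v)$ as an explicit integral of the restricted volumes $\vol(-r(K_S+B)-sF)$ against the weight $(1+bs)^{-n-1}$.

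Substituting into $\hvol(v)=A_{(X,D)}(v)^n\vol(v)$ and rearranging, one shows that the inequality $\hvol(v)\ge \hvol(v_{S_0})$ for all admissible $(b,F)$ is equivalent to the Fujita--Li inequality
\[
A_{(S,B)}(F)\cdot\vol(-K_S-B)\;\ge\;\int_0^{\infty}\vol(-K_S-B-tF)\,dt
\]
for every prime divisor $F$ over $S$: the differential of $\hvol(v)$ at $b=0$ equals $n$ times the defect in this inequality, and the scaling equivalence $v_{(b,F)}=v_{(1,bF)}$ lets all divisorial valuations over $S$ be recovered from these derivatives. By the Fujita--Li valuative criterion, this holds precisely when $(S,B)$ is K-semistable, which proves the stated equivalence. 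For the ``moreover'' strict inequality, any divisor $E\ne v_{S_0}$ over $x$ degenerates to a pair with nontrivial $F$, and strict positivity of the associated $\beta$-defect at such $F$ (as established in \cite{LX16}) promotes $\le$ to $<$. The most delicate step, and the main obstacle, is the $\bC^*$-equivariant reduction: controlling both $A_{(X,D)}$ and $\vol$ under the Rees degeneration without loss of $\hvol$ relies on the multiplicity characterization of Theorem \ref{thm:liueq} together with the generic-limits machinery behind Theorem \ref{t-existence}.
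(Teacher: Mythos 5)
Your skeleton --- compute $\hvol(\ord_{S_0})$, reduce to $\bC^*$-invariant data, and translate the comparison into the Fujita--Li $\beta$-inequality --- is the same circle of ideas as the paper's first approach, and your formulas for $A_{(X,D)}(v_{(b,F)})$ and for $\vol$ along the family are essentially the paper's \eqref{e-changevb}. But two steps in the ``K-semistable $\Rightarrow$ minimizer'' direction have genuine gaps. First, the equivariant reduction: Theorem \ref{t-lower} is about lower semicontinuity of $t\mapsto\hvol(x_t,\cX_t,D_t)$ in a family of singularities and gives nothing when you degenerate a valuation on the fixed cone $X$; the mechanism the paper actually uses is to degenerate the graded sequence of valuation ideals $\fa_\bullet(\ord_E)$ to initial ideals (colengths are preserved, log canonical thresholds only decrease), apply Theorem \ref{thm:liueq}, and re-extract a $\bC^*$-equivariant Koll\'ar component by the equivariant MMP, i.e.\ \eqref{e-Ckollar}. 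In particular, your claim that the Rees limit is quasi-monomial of rational rank at most two with divisorial horizontal part is exactly what is not automatic; the paper sidesteps it because equivariant Koll\'ar components are already divisorial. Second, nonnegativity of the derivative of $b\mapsto\hvol(v_{(b,F)})$ at $b=0$ for every $F$ does not imply that $v_{S_0}$ is a global minimizer over all $(b,F)$: the rescaling identity $v_{(b,F)}=v_{(1,\,b\,\ord_F)}$ only reshuffles the same first-order data at the apex. The paper supplies the missing global ingredient: along the ray attached to a special test configuration the log discrepancy is constant and $\vol(v_t)=-\int(1+tx)^{-n}\,d\vol(\cF_{v_\infty}R^{(x)})$ is convex in $t$, so $\Fut\ge 0$ at $t=0$ forces monotonicity all the way to the Koll\'ar component; alternatively \cite{Li17} proves the global inequality for arbitrary $\bC^*$-invariant valuations via Fujita's limits of filtered linear series \cite{Fuj18}. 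Without convexity/monotonicity (or that filtration argument) the ``if'' direction is not established.

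The ``moreover'' part is also not correct as argued: under K-semistability alone $\beta(F)$ may vanish for some $F$ (this happens whenever $(S,B)$ is strictly K-semistable), so there is no ``strict positivity of the $\beta$-defect'' to invoke, and your promotion of $\leq$ to $<$ fails precisely in the cases the theorem still covers. The paper's strictness argument is different: one first shows that any divisorial minimizer must itself be a $\bC^*$-equivariant Koll\'ar component, and then analyzes the equality case of the volume comparison \eqref{q-LL} (in the spirit of the equality analyses of \cite{Fuj18,Liu16}), or, in the higher-rank treatment of Section \ref{sss-convex}, uses strict convexity of the Okounkov-body volume, to force $E=S_0$. By contrast, your ``only if'' direction (minimizer $\Rightarrow$ nonnegative derivatives $\Rightarrow$ $\beta\ge 0$ $\Rightarrow$ K-semistable by the valuative criterion) is sound and matches the paper's computation identifying $\frac{d}{dt}\hvol(v_t)|_{t=0}$ with a positive multiple of $\Fut(\mathcal{S},\mathcal{B})$.
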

  In the below, we will sketch the ideas of two slightly different proofs of Theorem \ref{t-rankonecone}.

In the first approach, we carry out a straightforward calculation as follows: Given a compactified nontrivial special test configuration $(\mathcal{S},\mathcal{B})$ of $(S,B)$, then we obtain a valuation $v^*$ by restricting the divisorial valuation of the special fiber $S_0$ to $K(S)\subset K(S\times \mathbb{A}^1)$, which is a multiple of some divisorial valuation (cf. \cite{BHJ17}). Such a valuation $v^*$ pull backs a valuation $v^*_X$ on $K(X)$. Then we define a $\mathbb{C}^*$-valuation on $K(X)$ by
$v_{\infty}(f_m)=v^*_X(f_m)-mra_S(v^*)$ over $X$ for any $f_m\in H^0(S,-mr(K_S+B))$. In other words, $v_{\infty}=v^*_X-ra_S(v^*)v_{S_0}$, and we know that the induced filtration on $R$ yields the Duistermaat-Heckman (DH) measure of $(\mathcal{S},\mathcal{B})$ (see \cite[Definition 3.5]{BHJ17}). We define the ray in
$$\left\{v_t= v_{S_0}+t \cdot v_{\infty}\in \Val_{X,x} \ | \ t\in [0,\frac{1}{ra_S(v^*)}) \right\}.$$
Then the key computation in \cite{Li17} is that
\begin{eqnarray}
\frac{d}{dt}\hvol(v_t) |_{t=0}&=& \frac{n}{r^n}(-K_S-B)^{n-1}\cdot{\rm Fut}(\mathcal{S},\mathcal{B}).
\end{eqnarray}
In fact, if for any valuation $v$ over $S$,  we denote by $R_m=H^0(S,-mr(-K_S-B))$ and define 
$$\mathcal{F}^x_{v}R_m:=\{f\in R_m |\ \  f\in H^0(S, -mr(-K_S-B)\otimes \fa_x) \}, $$
then we easily see
$$\fa_{k}(v_t) \cap R_m=\mathcal{F}^{\frac{k-m}{t}}_{v_{\infty}}H^0(S, -mr(-K_S-B)). $$
So 
\begin{align} 
\vol(v_t)&= \lim_k \frac { l_{\mathbb{C}}(R/\fa_{k}(v_t))}{k^{n}/n!}  \nonumber \\
 & =\lim_{k\to \infty\ }\frac{n!}{k^n} \sum_{m=0}\left( \dim \mathcal{F}_{v_{\infty}}^0R_m-\dim\mathcal{F}^{\frac{k-m}{t}}_{v_{\infty}}H^0(S, -mr(-K_S-B))\right) \nonumber\\
 &=-\int^{\infty}_{-\infty} \frac{d\vol(\mathcal{F}_{v_{\infty}}R^{(x)})}{(1+t x)^{n}} \label{e-changevb},
\end{align}
where $\mathcal{F}_{v_{\infty}}R^{(x)}:=\bigoplus_{m} \mathcal{F}_{v_{\infty}}^{mx}R_m$ and the last equality is obtained by a change of variables (see Lemma \cite[Lemma 4.5]{Li17}). 

Since $A(S_0)=\frac{1}{r}$ and $A(v_{\infty})=0$, $A_{v_t}=\frac{1}{r}$, so 
$$\hvol(v_t)=-(\frac{1}{r})^n\int^{\infty}_{-\infty} \frac{d\vol(\mathcal{F}_{v_{\infty}}R^{(x)})}{(1+t x)^{n}},$$
and this implies that
\begin{eqnarray*}
\frac{d}{dt}\hvol(v_t) |_{t=0}&=&\frac{n}{r^n}\int^{\infty}_{-\infty} x\cdot d\vol(\mathcal{F}_{v_{\infty}}R^{(x)})\\
&=&\frac{n}{r^n} \lim_{k\to \infty}\frac{w_k}{kN_k}\\
&=&-\frac{1}{r^n} (-K_{\mathcal{S}}-\mathcal{B})^{n},\\
&=&\frac{n}{r^n} (-K_S-B)^{n-1}\cdot \Fut(\mathcal{S},\mathcal{B}).
\end{eqnarray*}
where for the second equality we use that $v_{\infty}$ is the DH measure for $(\mathcal{S},\mathcal{B})$. 

It is not straightforward to reverse the argument to show that $(S,B)$ is K-semistable implies that $\ord_{S_0}$ is a minimizer of $\hvol_{(X,D),x}$, since a priori there could be more complicated valuations than those induced by central fibres of test configurations. In particular, originally in \cite{Li17}, the techniques of `taking the limit of a sequence of filtered linear systems' developed in \cite{Fuj18} were used  in the case when the associated bigraded ring
 $$\bigoplus_{m,k}H^0(S, -rm(K_S+B)\otimes \fa_k)$$ is not finitely generated, and this is enough to treat all $\mathbb{C}^*$-equivariant valuations. 
 
 In \cite{LX16}, after the MMP method was systematically applied, it was shown that  
 \begin{eqnarray}\label{e-Ckollar}
 \inf_{v\in \Val_{X,x}} \hvol(v)=\{\inf \hvol(\ord_S)\  | \ \ \mbox{$\mathbb{C}^*$-equivariant Koll\'ar components $S$} \}
 \end{eqnarray}
 (see \eqref{e-kol} and the discussion below it). 
Since Koll\'ar components yield special degenerations,  therefore, the above arguments can be essentially reversed. See Section \ref{sss-highrankcone}. 
\begin{rem}\label{r-onetoone}In fact, we establish a one-to-one correspondence between special test configurations of $(S,B)$ (up to a base change) and  rays in $\Val_{X,x}$ emanating from $v_{S_0}$ containing a Koll\'ar component (different with $v_{S_0}$). 

An interesting consequence is that the above argument indeed gives an alternative way to show that K-semistability implies the valuative criterion of K-semistability with $\beta$-invariant as in \cite{Fuj16, Li17}, but without using the arguments of `taking a limit of filtered linear systems'.  
\end{rem}
\medskip
 
 The second approach to treat the cone singularity is developed in \cite{LL16} (see also \cite{LX16}). It is shown that K-semistablity of $(S,B)$ is equivalent to that of $(\bar{X},\bar{D}+(1-\frac{1}{rn})S_\infty)$, where $(\bar{X},\bar{D})$ is the projective cone of $(X,D)$ with respect to $-r(K_X+D)$ and $S_{\infty}(=S)$ is the divisor at the infinity place. This follows from a straightforward Futaki invariant calculation as in \cite[Proposition 5.3]{LX16}.  Applying the inequality \ref{thm:liuvolcomp} to $x\in (\bar{X},\bar{D}+(1-\frac{1}{rn})S_\infty)$, we immediately conclude that 
 \begin{eqnarray}\label{q-LL}
 \hvol(x, \bar{X},\bar{D})\ge\frac{(-K_S-B)^{n-1}}{r^n} =\hvol_{(X,D),x}(\ord_{S_0}). 
\end{eqnarray}
 

To understand better the relation between the K-semistability of $(\bar{X}, \bar{D}+(1-\frac{1}{rn})S_\infty)$ and of $(S,B)$, we want to present a direct calculation which connects the calculation on $\beta$-invariant on $(\bar{X},\bar{D}+\frac{1}{rn}S_\infty)$ and the one on $(S,B)$. 
  \begin{lem}\label{l-kkcone}
 Assume $\beta$-invariant is nonnegative for any divisorial valuation over $S$. Denote by $\hat{L}=\mathcal{O}(1)=\mathcal{O}(S_\infty)$ and $\delta=\frac{n+1}{rn}$. 
 For any $\bC^*$-invariant divisorial valuation $E$. 
We have the following
 \begin{equation}\label{eq-XDVstable}
\beta(E):=A_{(\bar{X},\bar{D}+(1-\frac{1}{rn})S_\infty)}(E)-\frac{\delta}{\hat{L}^n}\int_0^{+\infty} \vol(\cF_{\ord_{E}} \hat{R}^{(x)})dx\ge 0,
\end{equation}
where $\hat{R}=\bigoplus_{m=0}^{+\infty} H^0(\bar{X},m\hat L)$.
  \end{lem}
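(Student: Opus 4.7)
The plan is to reduce the inequality to the valuative criterion of K-semistability on the base $(S,B)$. By Theorem~\ref{t-Tcano}, a $\bC^*$-invariant divisorial valuation $E$ on $\bar X$ is, up to scaling, either $\ord_{S_\infty}$, or its restriction to $X=\bar X\setminus S_\infty$ takes the form $v_\lambda^{v_0}$ for some divisorial valuation $v_0$ on $K(S)$ and some $\lambda\in\bR_{\ge 0}$, acting by $\ord_E(f)=v_0(f)+\lambda k$ on $f\in R_k=H^0(S,kL)$.

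First I would dispose of $E=\ord_{S_\infty}$ by direct calculation. Its log discrepancy with respect to $\bar D+(1-\tfrac{1}{rn})S_\infty$ is $\tfrac{1}{rn}$. Using the decomposition $H^0(\bar X,m\hat L)=\bigoplus_{k=0}^m s^{m-k}R_k$ with $s$ the defining section of $S_\infty$, one computes $\vol(\cF_{\ord_{S_\infty}}\hat R^{(x)})=(1-x)^n L^{n-1}$ on $[0,1]$; integration gives $L^{n-1}/(n+1)$, and multiplication by $\tfrac{\delta}{\hat L^n}$ yields $\tfrac{1}{rn}$, so $\beta(\ord_{S_\infty})=0$.

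For $E\leftrightarrow(v_0,\lambda)$ with center inside $X$, the center of $E$ does not meet $S_\infty$, so $\ord_E(s)=0$. Lemma~\ref{lem-ldwt} combined with Theorem~\ref{t-Tcano}(3) gives
\[
A_{(\bar X,\bar D+(1-\tfrac{1}{rn})S_\infty)}(E)=A_{(S,B)}(v_0)+\tfrac{\lambda}{r}.
\]
On the filtration side, $\ord_E(s^{m-k}f_k)=v_0(f_k)+\lambda k$ implies
\[
\cF_{\ord_E}^{mx}H^0(\bar X,m\hat L)=\bigoplus_{k=0}^m s^{m-k}\cF_{v_0}^{\,mx-\lambda k}R_k,
\]
and letting $u=k/m$ in the limit $m\to\infty$ yields
\[
\vol\!\bigl(\cF_{\ord_E}\hat R^{(x)}\bigr)=n\!\!\int_0^{\min(1,x/\lambda)}\!\!u^{n-1}\vol_S\!\bigl(\cF_{v_0}R_S^{((x-\lambda u)/u)}\bigr)du+nL^{n-1}\!\!\int_{x/\lambda}^{1}\!\!u^{n-1}du,
\]
where the second term (coming from the regime $mx-\lambda k<0$ in which $\cF_{v_0}^{mx-\lambda k}R_k=R_k$) is interpreted as $0$ for $x\ge\lambda$, and $R_S=\bigoplus_k R_k$.

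Next I would integrate over $x\in[0,\infty)$. The second term contributes $\tfrac{n\lambda L^{n-1}}{n+1}$. For the first, the change of variables $y=(x-\lambda u)/u$ at fixed $u$ (so $dx=u\,dy$) produces $\frac{n}{n+1}\int_0^\infty\vol_S\!\bigl(\cF_{v_0}R_S^{(y)}\bigr)dy$. Using $\hat L^n=L^{n-1}=r^{n-1}(-K_S-B)^{n-1}$ together with the scaling identity
\[
\int_0^\infty\vol_S\!\bigl(\cF_{v_0}R_S^{(y)}\bigr)dy=r^n(-K_S-B)^{n-1}\,S_{-K_S-B}(v_0),
\]
one arrives at $\tfrac{\delta}{\hat L^n}\int_0^\infty\vol(\cF_{\ord_E}\hat R^{(x)})dx=S_{-K_S-B}(v_0)+\tfrac{\lambda}{r}$. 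Combined with the log discrepancy formula, the $\lambda/r$ contributions cancel exactly, leaving
\[
\beta(E)=A_{(S,B)}(v_0)-S_{-K_S-B}(v_0)\ge 0
\]
by the assumed nonnegativity of $\beta_{(S,B)}$ on all divisorial valuations on $S$. The hard part is precisely this cancellation: it both forces and is forced by the coefficient $1-\tfrac{1}{rn}$ on $S_\infty$ (corresponding to $\delta\hat L\sim_\bQ -K_{\bar X}-\bar D-(1-\tfrac{1}{rn})S_\infty$), and its occurrence reduces the $\beta$-invariant on $(\bar X,\bar D+(1-\tfrac{1}{rn})S_\infty)$ to its counterpart on $(S,B)$.
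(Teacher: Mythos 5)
Your computations for $\ord_{S_\infty}$ and for the valuations whose center meets the affine cone $X=\bar X\setminus S_\infty$ are correct, and they follow the same route as the paper (reduce \eqref{eq-XDVstable} to nonnegativity of $\beta$ on the base $(S,B)$ by computing the filtration volumes degreewise and changing variables); in the vertex-centered case your exact bookkeeping even gives the identity $\beta(E)=\beta_{(S,B)}(v_0)$, where the paper records only the inequality $\beta(E)\ge\beta(\bar v)$.

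However, there is a genuine gap in your classification step. It is not true that every $\bC^*$-invariant divisorial valuation on $\bar X$ other than $\ord_{S_\infty}$ restricts on $X$ to $f\mapsto v_0(f)+\lambda k$ on $R_k$ with $\lambda\ge 0$: the constraint $\lambda\ge0$ is exactly the condition that the center of $E$ on $\bar X$ meets the affine cone, so you have omitted all invariant divisors whose center is a proper subvariety of $S_\infty$ (for instance the exceptional divisor of the blow-up of a point of $S_\infty$, which in your normalization has $\lambda=-1$); Theorem \ref{t-Tcano}(1), which you invoke, describes invariant valuations centered on the affine $T$-variety and does not cover these (as stated your dichotomy also misses the purely vertical valuation of the blow-up of the vertex, where $v_0$ is trivial, though that case is harmless). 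The omitted case cannot be absorbed by formally allowing $\lambda<0$ in your formulas: for such $E$ one has $c_1:=\ord_E(S_\infty)>0$, the evaluation on sections is $\ord_E(t^{m-k}f)=c_1(m-k)+\bar v(f)$ rather than $v_0(f)+\lambda k$ (the two differ by a shift proportional to $m$, which changes $\int_0^{+\infty}\vol(\cF_{\ord_E}\hat R^{(x)})dx$ since the integral starts at $x=0$), and the log discrepancy acquires the extra term $-(1-\frac{1}{rn})c_1$, giving $A_{(\bar X,\bar D+(1-\frac{1}{rn})S_\infty)}(E)=A_{(S,B)}(\bar v)+\frac{c_1}{rn}$. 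This is precisely the case $a>0$, $c_1\ge 0$ in the paper's proof, where a separate change of variables again yields $\beta(E)=\beta(\bar v)\ge 0$; your argument needs this third computation (or a justification that such valuations may be ignored, which you have not given) before the lemma is established as stated.
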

  The key of the proof is to relate the $\beta$-invariant for a $\bC^*$-invariant valuation $v$ over $\bar{X}$ to the $\beta$-invariant of the restriction of $v$ over the base $S$. 
\begin{proof}We have $K_{\bar{X}}+\bar{D}+(1-\frac{1}{rn})S_\infty=-\frac{n+1}{rn}\hat{L}=-\delta\hat{L}$, and define
$$\mathcal{F}^x_{v}{\hat{R}}_m:=\{f\in \hat{R}_m |\ \  f\in \hat R_m=\oplus_{0\le k \le m}H^0(S, kr(-K_S-B)) \mbox{  and } v(f)\ge x \}, $$

For any $\bC^*$-invariant divisorial valuation $v=\ord_E$ on $\bar{X}$, there exists $c_1\in \bZ$, $a\ge 0$ and a divisorial valuation $\ord_F$ over $S$ such that for any $f\in H^0(S, mr(-K_S-B))$, we have
\begin{eqnarray*}
v(t)=c_1; \text{ and } 
v(f)=a \cdot \ord_{F}(f)=:\bar{v}(f). 
\end{eqnarray*}
We estimate $\beta(E)$ in three cases depending on the signs of $a$ and $c_1$:

 $(a=0):$ The valuation $v$ is associated to the canonical $\bC^*$-action along the ruling of the cone, up to rescaling, then we easily get $\beta(E)=0$

 $(a>0$ and $c_1\ge 0):$ Then the center of $v$ is contained in $S_\infty$. In this case we can easily calculate:
\begin{eqnarray*}
\vol(\cF \hat{R}^{(x)})&=&\lim_{m\rightarrow +\infty}\frac{\dim_{\bC} \cF^{xm}\hat{R}_m}{m^n/n!}=\lim_{m\rightarrow+\infty} \frac{1}{m^n/n!} \sum_{k=0}^m \dim_{\bC} \cF^{xm-c_1(m-k)}_{\bar{v}} R_k\\
&=&n \int_0^1 \vol(\cF_{\bar{v}} R^{(c_1+\frac{x-c_1}{\tau})})\tau^{n-1} d\tau,
\end{eqnarray*}
where the last identity can be proved in the same way as in \eqref{e-changevb}. So we have:
\begin{eqnarray*}
\int_0^{+\infty}\vol(\cF \hat{R}^{(x)})dx&=&n\int_0^{+\infty}dx \int_0^1 \vol(\cF_{\bar{v}}R^{(c_1+\frac{x-c_1}{\tau})})\tau^{n-1}d\tau\\
&=&n\int_0^1 \tau^{n-1}d\tau \int_0^{+\infty} \vol(\cF_{\bar{v}}R^{(c_1+\frac{x-c_1}{\tau})})dx \\
&=&n\int_0^1 \tau^{n-1}d\tau\left[H^{n-1}c_1(1-\tau)+\tau\int_{c_1}^{+\infty}\vol\left(\cF_{\bar{v}}R^{(y)}\right)dy\right]\\
&=&\frac{c_1}{n+1}+n\int_0^{+\infty} \vol(\cF_{\bar{v}}R^{(x)})dx \int_0^1 \tau^n d\tau \\
&=&\frac{c_1}{n+1}+\frac{n}{n+1}\int_0^{+\infty}\vol(\cF_{\bar{v}}R^{(x)})dx.
\end{eqnarray*}

On the other hand, we have $H^{n-1}=\hat{L}^n$ and: 
$$A_{(\bar{X},\bar{D}+(1-\beta)S_\infty)}(\ord_E)=A_{(S,B)}(\bar{v})+c_1-(1-\beta)c_1=A_{(S,B)}(\bar{v})+\frac{c_1 }{rn}$$ 
So we get:
\begin{eqnarray*}
\beta(E)&=&A_{(S,B)}(\bar{v})+\frac{c_1}{rn}-\frac{\frac{n+1}{rn}}{H^{n-1}}\frac{n}{n+1}\left(\frac{c_1}{n+1}+ \int_0^{+\infty}\vol(\cF_{\bar{v}}R^{(x)})dx\right)\\
&= & A_{(S,B)}(\bar{v})-\frac{1}{rH^{n-1}}\int_0^{+\infty}\vol(\cF_{\bar{v}}R^{(x)})dx=\beta(\bar{v}),
\end{eqnarray*}
which is non-negative by our assumption. 

$(a>0$ and $c_1<0)$: In this case, the center of $v$ is at the vertex. As a consequence we have:
\begin{eqnarray*}
A_{(\bar{X}, \bar{D}+(1-\beta)S_\infty)}(v)&=&A_{(S,B)}(\bar{v})+(-c_1)+(\frac{1}{r}-1) (-c_1) \\
&=&A_{(S,B)}(\bar{v})+\frac{-c_1}{r}\ge A_{(S,B)}(\bar{v}).
\end{eqnarray*}
The similar calculation as in the second case shows that $\beta(E)\ge \beta(\bar{v})$. 
\end{proof}

\bigskip Finally, to show $\hvol(S_0)<\hvol(E)$ for $E\neq S_0$, in \cite{LX16}, it was first proved that if $E$ is a minimizer then it has to be a $\mathbb{C}^*$-equivariant Koll\'ar component. Then a careful study of the geometry of $E$ using the equality condition in \eqref{q-LL} implies $E=S$.  This is similar to the analysis for the equality case in \cite{Fuj18, Liu16} where they showed that the K-stable $\mathbb{Q}$-Fano variety with the maximal volume $(n+1)^n$ can only be $\mathbb{CP}^n$. We will leave the discussion on this uniqueness type result to the general case of cones of higher rational ranks, where we take a somewhat different approach, using more convex geometry.  

\begin{rem}It is worthy pointing out that there is another global invariant for an $n$-dimensional log Fano pair $(S,B)$, defined as 
$$\delta(S,B)=\inf_{v\in \Val_S} \frac{A_{(S,B)}(v)\cdot(-K_S-B)^{n}}{\int^{\infty}_{0}\vol(-K_S-B-tv)dt}$$
 (see \cite{FO16, BJ17}). $\delta$-invariant shares lots of common properties with the normalized volume. For example, the existence of minimizers were proved using similar strategy. 
They both have differential geometric meanings. The minimizer of $\hvol$ is related to the metric tangent cone (see section \ref{ss-dsc}); while the valuation on $K(S)$ yielding $\delta(S,B)$ is related to the the existence of twisted K\"ahler-Einstein metrics (see \cite{BoJ18}).  

For a log Fano pair $(S,B)$ and a cone $x\in (X,D)=C(S,B;-r(K_S+B))$, if $(S,B)$ is not K-semistable, or equivalently $\delta=\delta(S,B)<1$, then we have 
$$\hvol(x, X, D)\ge \frac{\delta^n\cdot (-K_S-B)^{n-1}}{r^n}.$$ 
This follows from our second proof  by looking at $(\bar{X}, \bar{D}+(1-\beta)S_\infty)$ and applying the inequality \cite[Theorem D]{BJ17} which can be written as
\[
(K_{\bar{X}}+\bar{D}+(1-\beta)S_\infty)^{n}\le\frac{(n+1)^n}{n^n}\cdot \hvol(x, X, D)\cdot \bar{\delta}^n,
\]
where $\bar{\delta}:=\delta(\bar{X},\bar{D}+(1-\beta)S_\infty)$. We claim $\min\{\bar{\delta},1\}=\delta$. In fact, by the argument in \cite[Section 7]{BJ17}, we know that $\bar{\delta}$ is computed by a $\mathbb{C}^*$-invariant valuation and the claim follows from the calculation in the proof of Lemma \ref{l-kkcone}.
\end{rem}

\subsubsection{Log Fano cone in general}\label{sss-highrankcone}
We proceed to investigate a log Fano cone $o\in (X, D,\xi)$ where the torus $T$ could have dimension larger than one. However, we consider not only the valuations in $\mathfrak{t}^+_{\mathbb{R}}(X)$ coming from the torus as in \cite{MSY08} (see Section \ref{s-tvariety}) but all valuations in $\Val_{X,o}$. Compared to the proof of Theorem \ref{t-rankonecone}, for the higher rational rank case, we rely more on the construction of Koll\'ar components coming from the birational geometry. More explicitly, we use the relation between special test configurations and Koll\'{a}r components (see \cite[2.3]{LX16} and \cite[3.1]{LX17}).

By  the results from the MMP (see \eqref{e-kol} and the explanation below),  to show a valuation is a minimizer in $\Val_{X,x}$, we only need to show its normalized volume is not greater than that of any $T$-invariant Koll\'ar component. On the other hand, any T-equivariant Koll\'ar component $E$ in $\Val_{X,o}$ yields a special  test configuration of $(\mathcal{X},\mathcal{D},\xi;\eta)$ of $(X,D)$ such that $-\eta\in \ft^+_\bR(X_0)$ and the valuation associated to $-\eta$ coincides with $\ord_E$. We denote by $(X_0,D_0)$ the fiber with a cone vertex $o$. Then we can compare the volumes as $\hvol_{X}(\xi)=\hvol_{X_0}(\xi)$ and $\hvol_{X}(E)=\hvol(-\eta)$. Since $\xi,-\eta\in \ft^+_{\bR}(X_0)$ we reduce the question to the set up of \cite{MSY08} on $X_0$. Then we only need to each time treat one degeneration $X_0$ and try to understand how to pass properties between $X_0$ and $X$. 

With this strategy, we can show the following generalization of Theorem \ref{t-rankonecone}.  
\begin{thm}[\cite{LX17}]\label{t-SDChigh}
Let $x\in (X, D,\xi)$ be a log Fano cone singularity. Then $v_{\xi}$ is a minimizer of $\hvol_{(X, D),x}$ if and only if  $(X, D,\xi)$ is K-semistable. In such case, 
$\hvol(v_{\xi})<\hvol(v)$ for any quasi-monomial valuation $v$ if $v$ is not a rescaling of $v_{\xi}$.
\end{thm}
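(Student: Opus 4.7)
\emph{Proof plan.} The strategy is to reduce the comparison of $\hvol(v_\xi)$ with $\hvol(v)$ on $\Val_{X,x}$ to a comparison of two valuations lying in a common Reeb cone on a degeneration, where the convexity of the volume function from \cite{MSY08} applies. The bridge is the correspondence between $T$-equivariant Koll\'ar components over $x$ and special test configurations of $(X,D,\xi)$: from a $T$-equivariant Koll\'ar component $E$, one produces a special degeneration $\pi\colon(\cX,\cD)\to\bC$ whose central fibre $(X_0,D_0)$ carries an enlarged torus $T\times\la\eta\ra$, with $-\eta$ in the Reeb cone on $X_0$ and $\wt_{-\eta}$ being the degeneration of $\ord_E$.

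By \eqref{e-Ckollar}, together with a standard $T$-equivariance reduction, it suffices to compare $\hvol_X(v_\xi)$ with $\hvol_X(\ord_E)$ for every nontrivial $T$-equivariant Koll\'ar component $E$. For each such $E$, the degeneration above yields the key identities
\[
 \hvol_X(v_\xi)=\hvol_{X_0}(\wt_\xi),\qquad \hvol_X(\ord_E)=\hvol_{X_0}(\wt_{-\eta}),
\]
the first because $\wt_\xi$ is $T$-invariant and the weights are preserved under the degeneration, the second by the very construction of the test configuration. Thus both $\wt_\xi$ and $\wt_{-\eta}$ lie in the Reeb cone $\ft^+_{\bR}(X_0)$ of the enlarged torus on $X_0$, and the task becomes a comparison of normalized volumes within a single Reeb cone.

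For the ``K-semistable $\Rightarrow$ minimizer'' direction, formula \eqref{eq-Futhvol} expresses the generalized Futaki invariant of $(\cX,\cD,\xi;\eta)$ as a positive multiple of $D_{-\eta}\hvol_{X_0}(\wt_\xi)$. Hence K-semistability is equivalent to $D_{-\eta}\hvol_{X_0}(\wt_\xi)\ge 0$ for every special test configuration, i.e.\ for a dense set of directions $-\eta\in\ft^+_{\bR}(X_0)$. Since the restriction of $\vol_{X_0}$ to the slice $\{A_{(X_0,D_0)}=A(\xi)\}$ of the Reeb cone is strictly convex --- a classical second-derivative computation from \cite{MSY08} --- this forces $\wt_\xi$ to be the unique minimizer of $\hvol_{X_0}$ on the slice, yielding $\hvol_{X_0}(\wt_\xi)\le\hvol_{X_0}(\wt_{-\eta})$. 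The reverse implication is immediate: if $v_\xi$ minimizes, then the directional derivative along each such $-\eta$ is nonnegative, so $\Fut\ge 0$.

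For the strict inequality among quasi-monomial valuations, suppose $v\in\Val_{X,x}$ is a quasi-monomial valuation with $\hvol(v)=\hvol(v_\xi)$. Approximate $v$ by $T$-equivariant Koll\'ar components and pass to their special degenerations; this places a representative of $v$ inside an enlarged Reeb cone alongside $\wt_\xi$. The strict convexity of $\vol_{X_0}$ on each slice then forces the limiting valuation to be a positive rescaling of $\wt_\xi$, and tracing back through the degeneration identifies $v$ itself as a rescaling of $v_\xi$. The main obstacle is precisely this last step: realizing a general quasi-monomial valuation --- whose rational rank may exceed that of the starting torus $T$ --- as arising from an equivariant degeneration with a torus large enough to contain both $\xi$ and the valuation in a common Reeb cone, so that the \cite{MSY08} strict convexity can be invoked. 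This degeneration--approximation procedure, which must preserve K-semistability while enlarging the torus, is the technical heart of the higher-rank extension in \cite{LX17}.
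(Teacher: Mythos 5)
Your first direction (K-semistability $\Leftrightarrow$ minimizing against $T$-equivariant Koll\'ar components) follows the paper's route: reduce via \eqref{e-Ckollar} to a $T$-equivariant Koll\'ar component $E$, degenerate to $(X_0,D_0)$ so that $\wt_\xi$ and $\wt_{-\eta}=\ord_E$ both live in the enlarged Reeb cone, and combine $\Fut\ge 0$ (via \eqref{eq-Futhvol}) with convexity of $t\mapsto \hvol_{X_0}(\wt_{\xi-t\eta})$ and the rescaling limit $\hvol(\xi-t\eta)\to\hvol(-\eta)$. Two caveats, though: the convexity you cite cannot simply be quoted from \cite{MSY08}, whose second-variation computation is for isolated singularities with no boundary, whereas the central fibres $(X_0,D_0)$ here are general log Fano cones --- the paper has to establish the needed convexity by the Newton--Okounkov body argument of Section \ref{sss-convex}; and your phrase ``a dense set of directions $-\eta\in\ft^+_\bR(X_0)$'' conflates the different central fibres attached to different test configurations --- for each fixed $E$ only the single ray $\xi_t=\xi-t\eta$ on its own $X_0$ is used, which is all that is needed.

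The genuine gap is the strict inequality $\hvol(v_\xi)<\hvol(v)$ for quasi-monomial $v$ not a rescaling of $v_\xi$. You propose to approximate $v$ by Koll\'ar components and degenerate so that $v$ and $\wt_\xi$ sit in a common Reeb cone, and you correctly flag this as the obstacle; but this route is exactly what the paper says cannot be carried out, because one does not know that $\gr_v(R)$ is finitely generated, so $v$ cannot be degenerated into any Reeb cone, and the limit of the approximating Koll\'ar components is not controlled well enough to ``trace back.'' The paper's actual mechanism (Section \ref{sss-convex}) avoids any such degeneration: writing a $T$-equivariant quasi-monomial $v=(\xi_\mu,v^{(0)})$ as in Theorem \ref{t-Tcano}, one joins $\wt_\xi$ to $v$ by the segment of quasi-monomial valuations $\mu_t=((1-t)\xi+t\xi_\mu,\,t v^{(0)})$, constructs a $\bZ^n$-valued valuation $\bV$ whose semigroup gives a cone $\tilde\sigma$, identifies $\hvol(\mu_t)$ with the Euclidean volume of the region $\Delta_{\tilde\Xi_t}\subset\tilde\sigma$ cut out by hyperplanes all passing through a fixed point, and invokes the strict convexity of $t\mapsto\vol(\Delta_{\tilde\Xi_t})$ from convex geometry (\cite{MSY06,Gig78}); since $t=0$ is a minimum, $\hvol(v)>\hvol(\wt_\xi)$ unless $v$ is a rescaling of $v_\xi$. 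Without this (or an equivalent) device your argument does not close, so the uniqueness half of the theorem remains unproven in your proposal.
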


If $(X, D,\xi)$ is K-semistable, then for each special test configuration $(\mathcal{X},\mathcal{D},\xi;\eta)$, on $X_0$, we can consider the ray $\xi_t=\xi-t\eta$ for $t\in [0, \infty)$. We know 
$$\frac{d}{dt}\hvol_{(X_0,D_0),o}(v_{\xi_t})|_{t=0}=c\cdot {\rm Fut}(\mathcal{X},\mathcal{D},\xi;\eta)\ge 0.$$
Moreover, when $(X_0,D_0,o)=(X_0, \emptyset, o)$ is an isolated singularity, it was shown in \cite{MSY08} that  $\hvol(v_{\xi_t})$ is a convex function. We obtain a stronger result for any log Fano cone $(X_0,D_0,\xi_0)$ (see Section \ref{sss-convex}). In particular, we conclude that $\hvol(v_{\xi_t})$ is an increasing function of $t$, and its limit is $\hvol(-\eta)$, thus the inequality in the following relation holds true:
$$\hvol_{(X, D),x}(\xi)=\hvol_{(X_0, D_0),o}(\xi)\le \hvol_{(X_0, D_0),o}(-\eta)=\hvol_{(X,D),x}(E).$$

The first identity consists of two identities: $A_{(X,D)}(v_\xi)=A_{(X_0, D_0)}(v_\xi)$ and $\vol_{X}(v_\xi)=\vol_{X_0}(v_\xi)$, which essentially follow from the flatness of $T$-equivariant test configuration (see \cite[Lemma 3.2]{LX17}).  The last identity is because $v_{-\eta}=\ord_E$.

This argument is  reversible since we can indeed attach to any special test configuration such a set of valuations (see Remark \ref{r-onetoone}): if we consider the valuation $w_{t}$ obtained by considering the vector field $\xi_t$ as a valuation on $K(\cX)$ and then take its restriction on $K(X)$. The corresponding degeneration induces the test configuration. See \cite[6]{LX16} and \cite[4.2]{LX17} for more details.

\subsubsection{Uniqueness}\label{sss-convex} 
We have seen the convexity of the normalized volume function in the Reeb cone  plays a key role.  In \cite{MSY08}, the strict convexity on the normalized function is established for the valuation varying inside the Reeb cone for an isolated singularity. 
This is the kind of property we need for the uniqueness of the minimizer of a K-semistable Fano cone singularity $(X,D,\xi)$.  However, as we do not know the associated graded ring of other minimizer is finitely generated, we can not degenerate two minimizers into the Reeb cone. Thus  we need develop a technique to deal with valuations outside the Reeb cone.

The idea of the argument in \cite[Section 3.2]{LX17} is to use the theory of Newton-Okounkov bodies which was first developed in \cite{LM09, KK12}) and in the local setting in \cite{Cut13, KK14}. This is a theory which realizes the volumes in algebraic geometry with an asymptotic nature to the Euclidean volumes of some convex bodies in $\mathbb{R}^n$.  So our aim is to apply the Newton-Okounkov body construction to translate the normalized volume of valuations into the volume of convex bodies, and then invoke a convexity property of the volumes functions known in the latter setting. 

To start, we first need to set a valuation $\mathbb{V}$ with $\mathbb{Z}^n$-valued valuation, which sends the elements in $R$ to the lattice points inside a convex region $\tilde{\sigma}$, so that later we can realize the normalized volumes of valuations as the volume of subsets in $\tilde{\sigma}$.

For any fixed $T\cong (\bC^*)^r$-equivariant quasi-monomial valuation $\mu$, we know it is of the form $( \xi_{\mu},v^{(0)})$ where $\xi_{\mu}\in M_{\mathbb{R}}$ and $v^{(0)}$ is a quasi-monomial valuation over $K(Y)$, such that for any function $f \in R_u$, 
$$\mu(f)=\langle \xi_{\mu},u\rangle+v^{(0)}(f)$$
(see Theorem \ref{t-Tcano}(1)). 
We fix a lexicographic order on $\bZ^{r}$ and define for any $f\in R$, 
\[
\bV_1(f)= \min\{u; f=\sum_u f_u \text{ with } f_u \neq 0\}=\bV_1(f),
\]
i.e., the first factor $\bV_1$ comes from the toric part of $\mu$. 

We extend this $\bZ^{r}$-valuation $\bV_1$ to become a $\bZ^n$-valued valuation in the following way: Denote $u_f=\bV_1(f)\in \sigma^{\vee}$ and $f_{u_f}$ the corresponding nonzero component. 
Define $\bV_2(f)=v^{(0)}(f_{u_f})$. Because $\{\beta_i\}$ are $\mathbb{Q}$-linearly independent, we can write
$\bV_2(f)=\sum_{i=1}^{s} m^{*}_i \beta_i$ for a uniquely determined $m^*:=m^*(f_{u_f})=\{m^*_i:=m^*_i(f_{u_f})\}$. Moreover, the Laurent expansion of $f$ has the form:
\begin{equation}\label{eq-flaurent*}
f_{u_f}=z_1^{m^*_1}\dots z_{s}^{m^*_{s}} \chi_{m^*}(z'')+\sum_{m\neq m^*} z_1^{m_1}\dots z_{s}^{m_{s}} \chi_m(z'').
\end{equation}
 Then $\chi_{m^*}(z'')$ in the expansion of \eqref{eq-flaurent*} is contained in $\bC(Z)$, where on some model of $Y$, we have $Z=\{z_1=0\}\cap \dots \{z_s=0\}=D_1\cap \dots\cap D_{s}$ is the center of $v^{(0)}$. 

Extend the set $\{\beta_1, \dots, \beta_{s}\}$ to $d=n-r$ $\bQ$-linearly independent positive real numbers $\{\beta_1, \dots, \beta_{s}; \gamma_1, \dots, \gamma_{d-s}\}$. 
Define
$\bV_3(f)=w_{\gamma}(\chi_{m^*}(z''))$ where $w_{\gamma}$ is the quasi-monomial valuation with respect to the coordinates $z''$ and the $(d-s)$ tuple $\{\beta_1, \dots, \beta_{s}; \gamma_1, \dots, \gamma_{d-s}\}$.

Now we assign the lexicographic order on 
$$\mathbb{G}:=\bZ^{r}\times G_2\times G_3\cong \bZ^{r}\times \bZ^{s}\times \bZ^{n-r-s}$$ and define $\mathbb{G}$-valued valuation:
\begin{equation}
\bV(f)=(\bV_1(f), \bV_2(f_{u_f}), \bV_3(\chi_{m^*})).
\end{equation}

Let $\mathcal{S}$ be the valuative semigroup of $\bV$. Then $\mathcal{S}$ generates a cone $\tilde{\sigma}$ which is the one we are looking for. We also let $P_1: \bR^n\rightarrow \bR^{r}$, $P_2: \bR^n\rightarrow \bR^{s}$ and $P=(P_1, P_2): \bR^n\rightarrow \bR^{r+s}$ be the natural projections. Then $P_1(\tilde{\sigma})=\sigma\subset \bR^r$.

\medskip

To continue, we consider how to construct some subsets $\Delta_{\tilde{\Xi}_t} \subset \tilde{\sigma}$ whose Euclidean volume is the same as the normalized volumes of the valuations.
For any $\xi\in {\rm int}(\sigma)$, denote by $\wt_\xi$ the valuation associated to $\xi$. We can connect $\wt_\xi$ and $\mu$ by a family of quasi-monomial valuations: $\mu_t= ((1-t)\xi+t \xi_{\mu}, t v^{(0)})$ defined as
\[
\mu_t(f)=t v^{(0)}(f)+\langle u, (1-t)\xi+t\xi_{\mu}\rangle \mbox{\ \ \ for any $f\in R_u$}.
\]
So the vertical part of $\mu_t$ corresponds to the vector $\Xi_t:=((1-t)\xi+t\xi_{\mu}, t\beta)\in \bR^{r+s}$. Extend $\Xi_t$ to 
$\tilde{\Xi}_t:=(\Xi_t, 0)\in \bR^n$ and define the following set:
\[
\Delta_{\tilde{\Xi}_t}=\left\{y\in \tilde{\sigma}; \langle y, \tilde{\Xi}_t\rangle \le 1 \right\}=\left\{y\in \tilde{\sigma}; \langle P(y), \Xi_t\rangle \le 1\right\}.
\]
Because $\hvol$ is rescaling invariant, we can assume $A_{(X,D)}(v)=A_{(X,D)}(\xi)=1$. Then by the $T$-invariance of $v_t$, we easily get:
$$A(v_t)=t A(v^{(0)})+A_{(X,D)}((1-t)\xi+t\zeta)=t A_{(X,D)}(v)+(1-t)A_{(X,D)}(\xi)\equiv 1.$$
The Newton-Okounkov body theory implies that we have
\[
\hvol(v_t)=\vol(v_t)=\vol(\Delta_{\tilde{\Xi}_t}).
\]

\medskip

To finish the uniqueness argument, now we only need to look at the convex geometry of $\Delta_{\tilde{\Xi}_t}$.
We note that $\tilde{\Xi}_t$ is linear with respect to $t$, and each region $\Delta_{\tilde{\Xi}_t}$ is cut out by a hyperplane $H_t$ on the convex cone $\tilde{\sigma}$. Moreover, all $H_t$ passes through a fixed point. A key result from convex geometry then shows that  $\phi(t):=\vol(\Delta_{\tilde{\Xi}_t})$ is strictly convex as a function of $t\in [0,1]$ (see \cite{MSY06, Gig78}). By the assumption $\phi(0)=\vol(v_0)=\hvol(\wt_{\xi})$ is a minimum. So the strict convexity implies 
$$\phi(1)=\vol(\Delta_{\tilde{\Xi}_1})=\hvol(v)>\hvol(\wt_\xi)=\phi(0).$$

\subsection{Results on the general case}\label{ss-general}

To treat the general case, the key idea, suggested by the degeneration conjecture, is to understand how  an arbitrary klt singularity can be degenerated to a K-semistable Fano cone singularity. In \cite{LX16}, by localizing the setting of \cite{LX14}, the following approach of using Koll\'ar components is developed.  

From each ideal $\fa$, we can take a dlt modification of 
$$f\colon (Y,D_Y)\to (X,D+\lct(X,D;\fa)\cdot\fa),$$ where $D_Y=f_*^{-1}D+{\rm Ex}(f)$ and for any component $E_i\subset {\rm Ex}(f)$ we have $$A_{X,D}(E)=\lct(X,D;\fa)\cdot \mult_{E}f^*\fa.$$ 
There is a natural inclusion $\D(D_Y)\subset \Val^{=1}_{X,x}$, and using a similar argument as in \cite{LX14}, we can show that there exists a Koll\'ar component $S$ whose rescaling in $\Val^{=1}_{X,x}$ contained in $\D(D_Y)$ satisfies that 
$$\hvol(\ord_S)= \vol^{\rm loc}(-A_{X,D}(S)\cdot S)\le \vol^{\rm loc}(-K_Y-D_Y)\le \mult(\fa)\cdot \lct^n(X,D;\fa).$$
Here $\vol^{\rm vol}(\cdot)$ is the local volume of divisors over $X$ as defined in \cite{Ful13}. Then Theorem \ref{thm:liueq} immediately implies that
\begin{eqnarray}\label{e-kol}
\hvol(x, X,D)=\inf\{\hvol(\ord_S)|\  S \mbox{ is a Koll\'ar component over $x$}\}.
\end{eqnarray}
Moreover, if $x\in (X,D)$ admits a torus group $T$-action, then by degenerating to the initial ideals, as the colengths are preserved and the log canonical thresholds  may only decrease, the infimum of the normalized multiplicities in Theorem \ref{thm:liueq} can be only run over all $T$-equivariant ideals. Then the equivariant MMP allows us to make all the above data $Y$ and $S$ be $T$-equivariant.

In case a minimizer is divisorial,  then the above discussion shows that
\begin{lem}[{\cite{LX16, Blu18}}]\label{l-dmkollar}  A divisorial minimizer of $\hvol_{X,D}$ yields a  Koll\'ar component. 
\end{lem}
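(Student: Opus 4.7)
The plan is to realize the prime divisor $E$ underlying the divisorial minimizer directly as a Koll\'ar component. Normalize so $A_{(X,D)}(E) = 1$ and set $\fa_m := \fa_m(\ord_E)$.

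The first step will be to show that $\ord_E$ computes $\lct(X,D;\fa_m)$ for all $m \gg 0$. Combining minimality with Theorem \ref{thm:liueq} and the elementary bound $\lct(X,D;\fa_m) \le A_{(X,D)}(E)/\ord_E(\fa_m) \le 1/m$ yields the sandwich
\[
\hvol(\ord_E) \;\le\; \lct(X,D;\fa_m)^n \mult(\fa_m) \;\le\; \frac{\mult(\fa_m)}{m^n} \;\xrightarrow[m\to\infty]{}\; \vol(\ord_E) \;=\; \hvol(\ord_E).
\]
Asymptotically both inequalities are equalities, so $\lct(X,D;\fa_m) = 1/m$ for $m \gg 0$ and is attained by $\ord_E$.

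Next I would fix such an $m$ and extract $E$ via a birational model. Take a log resolution $\pi : \widetilde Y \to X$ of $(X, D + \tfrac{1}{m}\fa_m)$ that extracts $E$ together with possibly other exceptional divisors $E_i$, and form the log canonical boundary $\Delta_{\widetilde Y} := \pi_*^{-1}D + E + \sum_i a_i E_i$, where $a_i \le 1$ and $E$ itself has coefficient one by the previous step. Running an appropriately configured relative MMP over $X$ (for instance, a $(K_{\widetilde Y} + \Delta_{\widetilde Y} - \epsilon E)$-MMP for small $\epsilon > 0$) in the spirit of \cite[Section 2.3]{LX16} will contract every $E_i$ but preserve $E$, producing $\mu : Y \to X$ with $\mu^{-1}(x) = E$, the divisor $-E$ $\mu$-ample, and $(Y, \mu_*^{-1}D + E)$ log canonical.

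The main obstacle, and where the minimality hypothesis is essential, is upgrading log canonicity of $(Y, \mu_*^{-1}D + E)$ to plt: one must show $A_{(X,D)}(F) > \ord_F(E)$ for every prime divisor $F \ne E$ over $X$ with center on $Y$ meeting $E$. The plan here is by contradiction. If some $F$ violated this inequality, I would pass to a common log smooth model carrying both $E$ and $F$ and consider the family of quasi-monomial valuations $v_t$ with weights $(1, t)$ on $(E, F)$. Differentiating $\hvol(v_t)$ at $t = 0^+$ in the spirit of the directional-derivative computations of Sections \ref{sss-highrankcone}--\ref{sss-convex}, the hypothesized failure of plt would force $\tfrac{d}{dt}\hvol(v_t)\bigl|_{t=0^+} < 0$, contradicting the minimality of $\ord_E$. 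Ruling out such $F$ yields the plt property, so $\mu$ realizes $E$ as a Koll\'ar component.
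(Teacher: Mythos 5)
Your skeleton (show $\ord_E$ computes the relevant log canonical threshold, extract $E$ by the MMP, then use minimality to upgrade log canonical to plt) is the same as in \cite{LX16}, but two steps have genuine gaps. First, the sandwich argument only yields the asymptotic statement $\lim_m m\cdot\lct(X,D;\fa_m)=A_{(X,D)}(E)=1$, i.e.\ that $\ord_E$ computes $\lct(X,D;\fa_\bullet)$ for the \emph{graded sequence}; it does not give $\lct(X,D;\fa_m)=1/m$ at any finite $m$, nor that $E$ is an lc place of $(X,D+\tfrac1m\fa_m)$ --- convergence to a bound does not imply attainment. (A posteriori this equality does hold for sufficiently divisible $m$, but only because $E$ is a Koll\'ar component, which is what you are trying to prove.) Since your choice of boundary $\Delta_{\widetilde Y}$ with $E$ of coefficient one rests on this exact equality, the extraction step as written does not start; the standard repair is to take $m\gg0$ and a coefficient $c$ slightly below $\lct(X,D;\fa_m)$, so that $(X,D+c\,\fa_m)$ is klt while $A_{(X,D+c\fa_m)}(E)<1$ (possible precisely because $m\,\lct(\fa_m)\to 1$), and then extract $E$ via \cite{BCHM10}.

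Second, and more seriously, the plt step is exactly where minimality must do real work, and your argument there is only asserted. The claim that failure of plt (existence of $F\neq E$ with $A_{(X,D)}(F)=\ord_F(E)$) forces $\frac{d}{dt}\hvol(v_t)\big|_{t=0^+}<0$ along the monomial segment with weights $(1,t)$ is not justified: the directional-derivative formulas of Sections \ref{sss-highrankcone}--\ref{sss-convex} are established for Fano cones, i.e.\ after degenerating along a Koll\'ar component (or at least knowing finite generation of the associated graded ring), so invoking them to prove that $E$ \emph{is} a Koll\'ar component is circular; and even granting such a formula one must still rule out that the derivative merely vanishes at an lc place, so a first-order argument may not suffice. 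The paper's own route (Section \ref{ss-general}, following \cite{LX14,LX16,Blu18}) avoids any derivative computation on the original singularity: from the relevant (graded sequences of) ideals one takes a dlt modification of $(X,D+\lct\cdot\fa)$ and runs the MMP to produce a Koll\'ar component $S$ inside the dual complex with $\hvol(\ord_S)\le \vol^{\rm loc}(-K_Y-D_Y)\le \lct(X,D;\fa)^n\mult(\fa)$; applied to the valuation ideals of the divisorial minimizer, minimality forces equality along this chain and identifies the extracted plt model with the one extracting $E$ itself. You should either reproduce that MMP/dlt (tie-breaking) argument for the plt property, or supply an actual proof of the negative-derivative claim that does not presuppose the cone degeneration.
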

 In general, we know that the minimizer is a limit of a rescaling of Koll\'ar components (see \cite{LX16}). So understanding the limiting process is crucial. When the minimizer is quasi-monomial $v$ of rational rank $r$, i.e., the valuation $v$ is \' etale locally a monomial valuation with respect to a log resolution $(Y,E)\to X$, then  a natural candidates will be the valuations given by taking rational approximations of the monomial coordinates $\alpha \in \mathbb{R}^r_{>0}$.
 
Our first observation in \cite{LX17} is  using  MMP results including the ACC of log canonical thresholds, we could construct a weak log canonical model which extracts divisors whose coordinates are good linear Diophantine approximations of the coordinates of $v$. 
\begin{prop}\label{p-highmodel}For any quasi-monomial valuation $v$ computing a log canonical threshold of a graded sequence of ideals, we can find a sequence of divisors $S_1$,..., $S_r$,
such that  
\begin{enumerate}
\item there is a model $Y\to X$ which precisely extracts $S_1$,..., $S_r$ over $x$,
\item  there exists a component $Z$ of $\cap^r_{i=1} S_i$ such that $(Y, E:=\sum^r_{i=1} S_i)$ is toroidal around the generic point $\eta(Z)$,
\item $v$ is \'etale locally a monomial valuation over  $\eta(Z)$ with respect to $(Y,E)$ (see Section \ref{ss-firstdefinition}),
\item $(Y,E)$ is log canonical, and $-K_Y-E$ is nef.
\end{enumerate}
\end{prop}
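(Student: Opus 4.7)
The plan is to combine the local structure of quasi-monomial valuations computing a log canonical threshold with a relative MMP argument to extract the required divisors on a single model, inheriting a toroidal structure from a starting log smooth model.

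First, I would fix a log smooth model $\mu_0 \colon (Y_0, E_0 = \sum_j F_j) \to X$ on which $v \in \QM_\eta(Y_0, E_0)$, where $\eta$ is the generic point of a connected component of $F_{i_1} \cap \cdots \cap F_{i_r}$, and $v$ is the monomial valuation at $\eta$ with $\bQ$-linearly independent weights $\alpha_1,\ldots,\alpha_r > 0$ (so $r$ equals the rational rank of $v$). Set $c := \lct(X,D;\fa_\bullet)$ and rescale $v$ so that $v(\fa_\bullet)=1$; the hypothesis becomes $A_{(X,D)}(v) = c$.

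Second, I would show that each $F_{i_j}$ is itself a log canonical place of $(X,D;c\cdot\fa_\bullet)$. Indeed, the monomial formulas $A_{(X,D)}(v) = \sum_j \alpha_j A_{(X,D)}(F_{i_j})$ and $v(\fa_\bullet) = \sum_j \alpha_j \ord_{F_{i_j}}(\fa_\bullet)$, combined with the coordinatewise inequality $A_{(X,D)}(F_{i_j}) \ge c\cdot\ord_{F_{i_j}}(\fa_\bullet)$ and the equality at $v$, force termwise equality. Thus the $F_{i_j}$ are the natural candidates for the $S_j$: they are $r$ divisors, each computing $c$.

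Third, to obtain a single model that extracts exactly these $r$ divisors and on which $-K_Y-E$ is nef, I would run a relative MMP. Approximating $\fa_\bullet$ by $\fa_k$ for $k$ large, take a log resolution $\tilde\mu \colon \tilde Y \to X$ dominating $Y_0$ and write $\tilde\mu^{*}(K_X + D + (c/k)\fa_k) = K_{\tilde Y} + \tilde\Gamma$, noting that $\lfloor\tilde\Gamma\rfloor$ contains the strict transforms of the $F_{i_j}$ by the lct computation above. Running a $(K_{\tilde Y} + \tilde\Gamma)$-MMP over $X$, the exceptional divisors with coefficient strictly less than $1$ get contracted while the lc-place divisors $F_{i_j}$ survive; passing to the limit $k\to\infty$ using the ACC of log canonical thresholds, one arrives at a model $\pi\colon Y \to X$ whose exceptional locus is exactly $S_1,\ldots,S_r$. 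By construction this is a weak log canonical model of $(X,D+c\cdot\fa_\bullet)$, so $(Y,E=\sum S_j)$ is lc and $-(K_Y+E)$ is nef over $X$, giving (1) and (4).

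Finally, (2) and (3) are inherited from the log smooth structure at $\eta$: since the $S_j$ are not contracted and the arrangement of $(Y_0,\sum_j F_{i_j})$ near $\eta$ is toroidal, the MMP is an isomorphism at the generic point $\eta(Z)$ of the corresponding component $Z$ of $\bigcap_j S_j$, so $(Y,E)$ is toroidal at $\eta(Z)$ and $v$ remains the monomial valuation with the same weights $\alpha_1,\ldots,\alpha_r$. The hard part is the MMP step itself: one must arrange that every exceptional divisor other than the $F_{i_j}$ can be contracted while the $F_{i_j}$ are preserved and the cone/termination theorems apply, which requires a delicate choice of boundary coefficients balancing the lc-place property against the strict inequalities needed to trigger the contractions, together with careful use of ACC of lct to control the limit as $k\to\infty$.
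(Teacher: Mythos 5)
Your Step 2 is where the argument breaks down: the divisors $F_{i_1},\dots,F_{i_r}$ of the log smooth model on which $v$ is monomial are in general \emph{not} log canonical places of $(X,D+c\cdot\fa_\bullet)$, and they need not even be centered at $x$. The termwise-equality argument fails because your inequality goes the wrong way: for a monomial valuation one has $v(\fa)=\min_{\beta\in \mathrm{Newt}(\fa)}\langle\alpha,\beta\rangle\ \ge\ \sum_j\alpha_j\,\ord_{F_{i_j}}(\fa)$, so from $A_{(X,D)}(v)=c\,v(\fa_\bullet)$ and $A_{(X,D)}(v)=\sum_j\alpha_jA_{(X,D)}(F_{i_j})$ you only obtain $\sum_j\alpha_jA_{(X,D)}(F_{i_j})\ge c\sum_j\alpha_j\ord_{F_{i_j}}(\fa_\bullet)$, which is already implied by the coordinatewise inequalities and forces nothing. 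A concrete counterexample: on $X=\bC^2$ let $\fa_m$ be generated by the monomials $x^iy^j$ with $2i+\sqrt{2}\,j\ge m$; the lct of $\fa_\bullet$ is computed by the monomial valuation with weights $(2,\sqrt{2})$ in the coordinates $(x,y)$, yet $\ord_{\{x=0\}}(\fa_\bullet)=0$ while $A(\{x=0\})=1$, so $\{x=0\}$ is not an lc place and is not exceptional over the origin at all. Since your MMP step is predicated on the strict transforms of the $F_{i_j}$ lying in $\lfloor\tilde\Gamma\rfloor$, surviving the MMP, and being the precise exceptional locus over $x$, the construction never gets started.

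The actual argument (from the reference behind this proposition) takes a genuinely different set of divisors: $S_1,\dots,S_r$ are \emph{new} toroidal divisors over the stratum point $\eta$, chosen so that their weight vectors are good linear Diophantine approximations of $\alpha\in\bR^r_{>0}$; in particular they are divisorial valuations centered at $x$ and only approximately lc places. This is exactly where the ACC of log canonical thresholds is needed: it controls the log discrepancies of the approximating divisors uniformly as the approximation improves, so that for a suitable auxiliary boundary the pair stays log canonical and a BCHM-type relative MMP extracts precisely $S_1,\dots,S_r$ on a weak log canonical model, yielding (1), (4), with (2), (3) coming from the toroidal choice over $\eta$. Your invocation of ACC "in the limit $k\to\infty$" is not performing this role, and without the Diophantine approximation step there is no reason the extracted divisors have the rational rank $r$ of $v$ or approximate $v$ at all.
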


Fix the first model $Y_0=Y$, then one can construct a sequence of models $(Y_j, E_j)$ satisfying Proposition \ref{p-highmodel} such that a suitable rescaling of the components of $E_j$ become closer and closer to $v$. To make the notation easier, we rescale $v$ into $\Val_{X,x}^{=1}$. Similarly, we can embed the dual complex of a dlt modification of $(Y_j, E_j)$ into $\Val_{X,x}^{=1}$  (see \cite{dFKX}). Our construction moreover satisfies that
$$\DR(Y_0,E_0)\supset \DR(Y_1,E_1)\supset \cdots $$
 Then the above discussion indeed implies that

\begin{lem}A quasi-monomial minimizer $v\in \Val_{X,x}^{=1}$ can be written as a limit of $c_j\cdot \ord_{S_j}\in \DR(Y_j, E_j)$ where $S_i$ are Koll\'ar components. 
\end{lem}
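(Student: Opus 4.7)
The plan is to produce, for each $j$, a Koll\'ar component $S_j$ whose rescaling to $\Val_{X,x}^{=1}$ lies in $\DR(Y_j,E_j)$, and then to use the nested shrinking of these dual complexes around $v$ to force $c_j\cdot\ord_{S_j}\to v$.

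First, I would choose a divisorial approximation inside $\DR(Y_j,E_j)$. Since $v$ is \'etale-locally monomial at $\eta(Z_j)$ with (generally irrational) coordinates $\alpha^{(j)}\in\bR^{r}_{>0}$ with respect to the components of $E_j$, rational approximations $\beta^{(j)}\in\bQ^{r}_{>0}$ of $\alpha^{(j)}$ produce divisorial valuations $w_j\in\QM_{\eta(Z_j)}(Y_j,E_j)$. After rescaling so that $A_{(X,D)}(w_j)=1$, these lie in $\DR(Y_j,E_j)\subset\Val_{X,x}^{=1}$ and converge to $v$ as $\beta^{(j)}\to\alpha^{(j)}$.

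Second, I would upgrade $w_j$ to a Koll\'ar component living in the same dual complex by applying the MMP-based extraction underlying \eqref{e-kol}. Concretely, to the valuation ideal $\fa_{m_j}(w_j)$ for a large $m_j$ I associate the dlt modification of $\bigl(X,D+\lct(X,D;\fa_{m_j}(w_j))\cdot\fa_{m_j}(w_j)\bigr)$ and pick out a Koll\'ar component $S_j$ inside its dual complex with $\hvol(\ord_{S_j})\le\lct^n\cdot\mult(\fa_{m_j}(w_j))$, as in the argument preceding \eqref{e-kol}. Because $w_j$ is toroidal with respect to $(Y_j,E_j)$, the ideal $\fa_{m_j}(w_j)$ is monomial in those toroidal coordinates, so this dlt modification can be arranged to be a toroidal blow-up of a stratum of $(Y_j,E_j)$, forcing $c_j\cdot\ord_{S_j}\in\DR(Y_j,E_j)$.

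The main obstacle is precisely this containment step: showing that the Koll\'ar component produced by the MMP extraction really sits in $\DR(Y_j,E_j)$ rather than in the dual complex of some further modification. Controlling the MMP to stay within the toroidal chart around $\eta(Z_j)$ uses Proposition \ref{p-highmodel}(2)--(4) — especially the nefness of $-K_{Y_j}-E_j$ and the log canonical property near $\eta(Z_j)$ — together with a toroidal/equivariant version of the MMP as used in \cite{LX16,LX17}. Once this compatibility is secured, the convergence $c_j\cdot\ord_{S_j}\to v$ is automatic: by construction of the sequence in Proposition \ref{p-highmodel}, suitable rescalings of the components of $E_j$ tend to $v$, so the diameters of the nested dual complexes $\DR(Y_j,E_j)$ around $v$ in $\Val_{X,x}^{=1}$ shrink to zero.
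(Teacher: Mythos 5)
There is a genuine gap, and it sits exactly at the step you yourself flag as "the main obstacle." Your plan produces $S_j$ from the dlt modification of $\bigl(X,D+\lct(X,D;\fa_{m_j}(w_j))\cdot\fa_{m_j}(w_j)\bigr)$, so what the construction behind \eqref{e-kol} gives you is a Koll\'ar component whose rescaling lies in the dual complex of \emph{that} dlt modification -- an object with no a priori relation to $\DR(Y_j,E_j)$. Your attempt to force the relation, namely that $\fa_{m_j}(w_j)$ is "monomial in the toroidal coordinates" so the dlt modification "can be arranged to be a toroidal blow-up of a stratum of $(Y_j,E_j)$," is not justified: $\fa_{m_j}(w_j)$ is an $\fm_x$-primary ideal on $X$ whose description is toroidal only near $\eta(Z_j)$, its lc places need not be toroidal valuations of $(Y_j,E_j)$, and nothing in Proposition \ref{p-highmodel}(2)--(4) or an appeal to "toroidal/equivariant MMP" controls where the MMP extraction lands. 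In fact, if your mechanism worked it would place $c_j\cdot\ord_{S_j}$ in the simplex $\sigma_{\eta(Z)}\subset\Val_{X,x}^{=1}$ of monomial valuations over $\eta(Z)$ -- precisely the stronger containment that the text immediately after the lemma says cannot currently be shown. So the proposal proves the crucial containment only by asserting something that is, as far as the authors know, an open problem. A secondary weak point: your convergence step rests on the unproved claim that the diameters of the nested complexes $\DR(Y_j,E_j)$ shrink to zero around $v$; the construction only guarantees that suitable rescalings of the components of $E_j$ approach $v$.

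The paper's route avoids your obstacle by never detouring through the auxiliary ideals $\fa_{m_j}(w_j)$: the Koll\'ar component is extracted by running the MMP (as in the discussion preceding \eqref{e-kol}, with the pair $(Y,D_Y)$ there replaced by a dlt modification of $(Y_j,E_j)$), using that $(Y_j,E_j)$ is log canonical and $-K_{Y_j}-E_j$ is nef both to control $\hvol(\ord_{S_j})$ by the local volume and to guarantee that the resulting valuation is, after rescaling, a point of $\DR(Y_j,E_j)$ \emph{by construction}. If you want to salvage your argument, the fix is to make the extraction intrinsic to the models $(Y_j,E_j)$ furnished by Proposition \ref{p-highmodel} rather than to the valuation ideals of a rational approximation, and to justify convergence by the comparison between $v$ and the valuations in $\DR(Y_j,E_j)$ coming from the Diophantine approximation, not by an unproved diameter estimate.
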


It would be natural to expect that $c_j\cdot \ord_{S_j}$ is indeed contained in the simplex $\sigma_{\eta(Z)}\subset \Val_{X,x}^{=1}$ which corresponds to all the monomial valuations in $\Val_{X,x}^{=1}$ over $\eta(Z)$ with respect to $(Y,E)$. However, for now we can not show it. 

If we further assume $R_0=\gr_v(R)$ is finitely generated, then we have the following
\begin{prop}If $R_0=\gr_v(R)$ is finitely generated, then $\gr_{v}(R)\cong \gr_{v_i}(R)$ for any $v_i\in \sigma_{\eta(Z)}$ sufficiently close to $v$. 
\end{prop}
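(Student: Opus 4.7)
The plan is to realise $\gr_{v_i}(R)$ as a further associated graded of $\gr_v(R)$, and then to use the finite-generation hypothesis to see that this second degeneration is an isomorphism.

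\textbf{Step 1 (Two-step initial forms).} For any $f\in R$, the Laurent expansion $f=\sum_{\gamma} c_\gamma y^\gamma$ at $\eta(Z)$ contains, for each fixed bound, only finitely many exponents $\gamma$ with $\langle\alpha,\gamma\rangle$ below that bound, since each $\alpha_j>0$ forces $|\gamma|$ to be bounded. Consequently, for $\alpha_i$ sufficiently close to $\alpha$ in $\sigma_{\eta(Z)}$, any exponent $\gamma_0$ achieving $v_i(f)=\min_\gamma\langle\alpha_i,\gamma\rangle$ must also achieve $v(f)$: a strict $v$-gap between $\gamma_0$ and a genuine $v$-minimiser would survive a small perturbation of $\alpha$. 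Defining $w_i(y^\gamma):=\langle\alpha_i,\gamma\rangle$ as a weight function on $\gr_v(R)$, this yields $\mathrm{in}_{v_i}(f)=\mathrm{in}_{w_i}\bigl(\mathrm{in}_v(f)\bigr)$ and hence a canonical identification
\[
\gr_{v_i}(R)\;\cong\;\gr_{w_i}\!\bigl(\gr_v(R)\bigr).
\]

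\textbf{Step 2 (Rigidity on the cone degeneration).} By the finite-generation hypothesis and the $T$-variety theory of Section~\ref{s-tvariety}, $X_0:=\spec(\gr_v R)$ is an affine $T$-variety for a torus $T$ of dimension $s=\ratrank v$, and $v$ itself corresponds to an interior Reeb vector $\xi_v\in \ft^+_\bR(X_0)$. For $\alpha_i$ close to $\alpha$, the weight $w_i$ on $\gr_v R$ corresponds to a vector $\xi_i\in\ft_\bR$ close to $\xi_v$, which by openness of the Reeb cone still lies in $\ft^+_\bR(X_0)$. A standard fact about affine $T$-varieties shows that for any interior Reeb vector $\xi\in\ft^+_\bR(X_0)$, the weight valuation $\wt_\xi$ satisfies $\gr_{\wt_\xi}\cO(X_0)\cong \cO(X_0)$ as a $\bC$-algebra: the $\xi$-grading merely refines the existing $T$-weight decomposition, and products of $T$-homogeneous elements remain $T$-homogeneous with additive weights, so no product is annihilated. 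Applied to $\gr_v R$ with $\xi=\xi_i$ this gives $\gr_{w_i}(\gr_v R)\cong \gr_v R$, which combined with Step~1 yields $\gr_{v_i}(R)\cong \gr_v(R)$.

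\textbf{Main obstacle.} The subtle point is making rigorous the passage from the weight function $w_i$ on $\gr_v R$--defined a priori via monomial exponents in the completion at $\eta(Z)$--to a genuine character of the $T$-action on $X_0$. Each $T$-weight space of $\gr_v R$ may be multi-dimensional, spanned by several monomials $y^\gamma$ with the same $\langle\alpha,\gamma\rangle$ but possibly differing $\langle\alpha_i,\gamma\rangle$; the corresponding finitely many defining relations of $\gr_v R$, each $v$-homogeneous by construction, must remain homogeneous under the perturbed weights $v_i(\phi_j)$. Turning this linear-algebraic compatibility into an open condition on $\alpha_i\in\sigma_{\eta(Z)}$ (rather than a codimension-positive one) is the technical heart of the proof, and the place where the finite-generation hypothesis is used most decisively.
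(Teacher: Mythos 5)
Your Step 2 is fine and captures the right intuition: for an algebra graded by the value group $M\cong\bZ^{r}$ of $v$, the filtration induced by any linear functional that is strictly positive on all occurring weights has associated graded canonically isomorphic to the algebra itself, because products of $M$-homogeneous elements remain homogeneous. The genuine gap is Step 1. The identification $\gr_{v_i}(R)\cong\gr_{w_i}(\gr_v(R))$, uniformly for all $v_i$ in a fixed neighborhood of $v$ in $\sigma_{\eta(Z)}$, is essentially the whole content of the proposition, and your justification is only a per-element argument: for a fixed $f$ the finitely many exponents competing for the minimum give an open condition on $\alpha_i$, but the size of that neighborhood depends on $f$, since the $v$-gaps between competing exponents are not uniformly bounded below as $f$ ranges over $R$. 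No single neighborhood works for all $f$ unless one uses finite generation, and indeed the asserted two-step identity fails in general for quasi-monomial $v$ with non-finitely generated $\gr_v(R)$ --- which is exactly why the hypothesis is in the statement --- yet Step 1 never invokes it. You then explicitly defer ``turning this into an open condition'' to your closing paragraph without resolving it, so the proposal in effect reduces the proposition to itself.

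For comparison (the survey states this proposition without proof, quoting \cite{LX17}), the way finite generation actually enters is: choose finitely many $v$-homogeneous generators of $\gr_v(R)$, lift them to $f_1,\dots,f_m\in R$, and present $\gr_v(R)$ by finitely many homogeneous relations. Only finitely many strict inequalities among the exponents appearing in the $f_j$ and in the lifted relations are involved, so these initial data are unchanged for $\alpha_i$ in a genuinely open (Gr\"obner-cone type) neighborhood of $\alpha$; this produces a map from $\gr_v(R)$ onto the subalgebra of $\gr_{v_i}(R)$ generated by the $\mathrm{in}_{v_i}(f_j)$. One must still argue that these initial forms generate all of $\gr_{v_i}(R)$ (an approximation/induction-on-values step using the relations), and finally that the resulting surjection $\gr_v(R)\twoheadrightarrow\gr_{v_i}(R)$ is injective, e.g.\ because both are integral domains of Krull dimension $n$ by Piltant's criterion (Lemma \ref{lem-quasi}(2)), and a surjection of affine domains of equal dimension has trivial kernel. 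None of these steps appear in your write-up; without them Step 1 is an assertion rather than a proof.
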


This immediately implies that $(X_0:={\rm Spec}(R_0),D_0)$ is semi-log-canonical (slc). The final ingredient we need is the following,
\begin{prop}Under the above assumptions on $(X,D)$ and its quasi-monomial minimizer $v$, then $\xi_v$ is a minimizer of $(X_0,D_0)$. In particular, 
$$\hvol(x, X,D)=\hvol(o, X_0,D_0).$$
\end{prop}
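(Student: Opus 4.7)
The plan is to establish the chain
\begin{equation*}
\hvol(x,X,D)=\hvol_{X,x}(v)=\hvol_{X_0,o}(\wt_{\xi_v})\ge \hvol(o,X_0,D_0)\ge \hvol(x,X,D),
\end{equation*}
which forces $\wt_{\xi_v}$ to achieve the infimum $\hvol(o,X_0,D_0)$. The leftmost equality is the hypothesis that $v$ minimizes $\hvol$ on $(X,D)$, and the middle inequality is the definition of $\hvol(o,X_0,D_0)$ as an infimum applied to the single valuation $\wt_{\xi_v}$. The two substantive links are the middle equality and the rightmost inequality.

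The middle equality $\hvol_{X,x}(v)=\hvol_{X_0,o}(\wt_{\xi_v})$ is handled directly from the degeneration. Finite generation of $R_0=\gr_v R$ upgrades the filtration by $v$ into a $T$-equivariant, $\bQ$-Gorenstein flat family $\pi:(\cX,\cD)\to \mathbb{A}^1$ (the extended Rees construction) with general fiber $(X,D)$ and special fiber $(X_0,D_0)$. The graded isomorphism $R_0\cong\gr_v R$ identifies $\fa_m(\wt_{\xi_v})\subset R_0$ with the image of $\fa_m(v)\subset R$ for every $m$, giving $\ell(R/\fa_m(v))=\ell(R_0/\fa_m(\wt_{\xi_v}))$ and hence $\vol_X(v)=\vol_{X_0}(\wt_{\xi_v})$. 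For log discrepancies, a $T$-equivariant nonvanishing section of $|-m(K_{X_0}+D_0)|$ extends across the $\bQ$-Gorenstein family to a $\la\xi_v\ra$-equivariant nonvanishing section of $|-m(K_{\cX/\mathbb{A}^1}+\cD)|$; Lemma~\ref{lem-ldwt} applied on the two fibers and $\bQ$-Gorenstein flatness then give $A_{X,D}(v)=A_{X_0,D_0}(\wt_{\xi_v})$.

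The rightmost inequality $\hvol(o,X_0,D_0)\ge\hvol(x,X,D)$ is the heart of the proposition; I would argue it by contradiction using the Koll\'ar-component characterization~\eqref{e-kol}. If the inequality failed, there would be a $T$-equivariant Koll\'ar component $S$ over $X_0$ with $\hvol_{X_0,o}(\ord_S)<\hvol(x,X,D)$. Such $S$ corresponds to a special test configuration of the log Fano cone $(X_0,D_0,\wt_{\xi_v})$, i.e.\ a $\bQ$-Gorenstein degeneration $X_0\rightsquigarrow X_{00}$ equivariant for a torus containing $\la\xi_v\ra$. Composing with the Rees degeneration $X\rightsquigarrow X_0$ and choosing a rational Reeb vector $\xi'$ on $X_0$ sufficiently close to $\xi_v$, one obtains a one-parameter family of divisorial valuations on $X$ interpolating between $v$ and some divisorial valuation $v'$, i.e.\ a ray in $\Val_{X,x}^{=1}$ issuing from $v$. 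The minimizer property of $v$ forces the one-sided derivative of $\hvol$ along this ray at $v$ to be non-negative; by the Futaki-derivative identity~\eqref{eq-Futhvol} applied on $(X_0,D_0,\wt_{\xi_v})$ and transported across the matching of the previous paragraph, this derivative is a positive multiple of $\hvol_{X_0,o}(\ord_S)-\hvol_{X_0,o}(\wt_{\xi_v})$, contradicting our assumption.

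The main obstacle is the construction of the ray in $\Val_{X,x}^{=1}$ emanating from the possibly higher-rank quasi-monomial valuation $v$ that corresponds to a Koll\'ar component on $X_0$, together with the derivative computation along this ray. When $v$ is divisorial this reduces to the cone analysis of Section~\ref{ss-cone} via $\beta$-invariants. For higher rational rank, the weak log canonical model of Proposition~\ref{p-highmodel} is needed to set up \'etale-monomial coordinates near $v$, and the Newton--Okounkov body approach of Section~\ref{sss-convex} is needed to realize $\hvol$ along the ray as a volume of convex bodies whose strict convexity pins down the sign of the derivative; one must also verify that the rational approximation $\xi'\to \xi_v$ commutes with the derivative computation, which uses continuity of $\hvol$ inside the Reeb cone together with the $\bQ$-Gorenstein flatness of the composed family.
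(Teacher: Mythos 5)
Your overall route is essentially the paper's: the identity $\hvol_{X,x}(v)=\hvol_{X_0,o}(\wt_{\xi_v})$ coming from flatness of the Rees degeneration, and a contradiction obtained by transporting a hypothetical better equivariant divisor over $X_0$ back to a family of valuations $v_t\in\Val_{X,x}$ via composed degenerations (this is exactly Lemma \ref{l-doudeg}) together with convexity of $\hvol$ along the resulting segment in the Reeb cone. The genuine gap is that you treat $(X_0,D_0)=({\rm Spec}(\gr_v R),D_0)$ as a log Fano cone and invoke the Koll\'ar-component characterization \eqref{e-kol} on it, whereas at this stage of the argument $(X_0,D_0)$ is only known to be semi-log-canonical --- possibly non-normal and not klt. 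Its klt-ness is a \emph{consequence} of this proposition (it is deduced right afterwards from $\hvol(o,X_0,D_0)=\hvol(x,X,D)>0$), so assuming it is circular; moreover \eqref{e-kol}, the notion of Koll\'ar component, and even the quantity $\hvol(o,X_0,D_0)$ itself need care outside the klt setting (for non-normal $X_0$ the paper defines it through the normalization).

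The paper closes precisely this branch: when the model extracting the approximating divisors is only lc and $(X_0,D_0)$ is slc but not klt, Lemma \ref{l-volslc} gives $\hvol(o',X_0,D_0)=0$, and \cite[Lemma 4.13]{LX17} still produces an equivariant anti-ample irreducible divisor $E$ over $X_0$ with $\hvol(\ord_E)$ arbitrarily small; the same composed-degeneration-plus-convexity argument then contradicts the minimality of $v$ on $(X,D)$. You would need to add this case for your proof to cover the stated generality. A smaller imprecision: the one-sided derivative of $\hvol$ along your ray at $\wt_{\xi_v}$ is a positive multiple of the generalized Futaki invariant of the induced special test configuration, not literally of $\hvol_{X_0}(\ord_S)-\hvol_{X_0}(\wt_{\xi_v})$; convexity only bounds that derivative by the secant slope, and the cleaner conclusion (the one the paper uses) is that convexity forces $\hvol_Y(\xi_Y-t\eta)<\hvol_Y(\xi_Y)=\hvol_X(v)$ already for small $t>0$, which directly contradicts minimality without any derivative computation.
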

\begin{proof} We claim that $\xi_v$ is indeed a minimizer of $\hvol_{X_0,D_0}$. If not, we can find a degeneration $(Y,D_Y, \xi_{Y})$ induced by an irreducible anti-ample divisor $E$ over $o'\in X_0$ with 
$$\hvol_Y(\xi_E)=\hvol_{X_0}(\ord_{E})<\hvol_{X_0}(\xi_v)=\hvol_Y(\xi_Y).$$ 
This is clear by our discussion when $(X_0,D_0)$ is klt. The same thing still holds when the model extracting $S_j$ is only log canonical but not plt, which implies that $(X_0,D_0)$ is semi-log-canonical but not klt. In fact, denote by $(X^{\rm n}_0,D^{\rm n}_0)\to (X_0,D_0)$ the normalization, then Lemma \ref{l-volslc} implies that 
$$\hvol(o', X_0,D_0):=\sum_{o_i\to  o'} \hvol(o_i, X^{\rm n}_0,D^{\rm n}_0)=0$$ in this case. The argument in \cite[Lemma 4.13]{LX17} then says in this case, we can still extract an equivariant anti-ample irreducible divisor $E$ over $o'\in X_0$ with $\hvol(\ord_E)$ arbitrarily small. 

Then Lemma \ref{l-doudeg} shows that we can construct a degeneration from $(X,D)$ to $(Y,D_Y)$ and a family of valuations $v_t\in \Val_{X,x}$ for $t\in [0,\epsilon]$ (for some $0<\epsilon\ll 1$), with the property that
$$\hvol_X(v_t)=\hvol_Y(\xi_Y-t \eta)<\hvol_Y(\xi_Y)=\hvol_{X_0}(\xi_v)=\hvol_X(v),$$
where for the second inequality, we use again the fact that $\hvol_Y(\xi_Y-t\cdot\eta)$ is a convex function in this setting as well. But this is a contradiction.
\end{proof}
\begin{lem}\label{l-doudeg}
Let $(x\in X)\subset (0\in \mathbb C^N)$ be a closed affine variety. If $\lambda_1 \in \mathbb{N}^N$ is a coweight of $(\mathbb{C}^*)^N$ which gives an action degenerating $X$ to $X_0$ when $t\rightarrow 0$, and $\lambda_2  \in \mathbb{N}^N$ degenerates $X_0$ to $Y$ when $t\to 0$, then for $k\in \mathbb{N}$ sufficiently large, $k\lambda_1+\lambda_2$ degenerates $X$ to $Y_0$. 
\end{lem}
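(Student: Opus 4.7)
The plan is to recast the lemma as an identity of initial ideals in $R:=\mathbb{C}[z_1,\ldots,z_N]$ and then apply a standard Gr\"obner-basis perturbation fact. Writing $I=I(X)\subset R$, for any coweight $\lambda\in\mathbb{N}^N$ and $f=\sum_\alpha c_\alpha z^\alpha$, I put $\mathrm{in}_\lambda(f):=\sum_{\langle\lambda,\alpha\rangle=d_\lambda(f)}c_\alpha z^\alpha$ with $d_\lambda(f):=\min\{\langle\lambda,\alpha\rangle:c_\alpha\ne 0\}$, and $\mathrm{in}_\lambda(I):=\langle\mathrm{in}_\lambda(f):f\in I\rangle$. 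Via the extended Rees construction, the coordinate ring of the $\mathbb{C}^*$-degeneration of $X\subset\mathbb{C}^N$ under the action $t\cdot z_i=t^{\lambda_i}z_i$ as $t\to 0$ is exactly $R/\mathrm{in}_\lambda(I)$. Thus the hypotheses read $I(X_0)=\mathrm{in}_{\lambda_1}(I)$ and $I(Y)=\mathrm{in}_{\lambda_2}(\mathrm{in}_{\lambda_1}(I))$, and the conclusion (reading $Y_0$ as $Y$) amounts to the identity
\[
\mathrm{in}_{k\lambda_1+\lambda_2}(I)=\mathrm{in}_{\lambda_2}\bigl(\mathrm{in}_{\lambda_1}(I)\bigr)\qquad\text{for all }k\gg 0.
\]

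Next I will reduce this to a small-$\epsilon$ perturbation statement. Because $\mathrm{in}_{cw}=\mathrm{in}_w$ for any positive scalar $c$, the rescaling $k\lambda_1+\lambda_2=k(\lambda_1+\tfrac{1}{k}\lambda_2)$ turns the desired identity into
\[
\mathrm{in}_{\lambda_1+\epsilon\lambda_2}(I)=\mathrm{in}_{\lambda_2}\bigl(\mathrm{in}_{\lambda_1}(I)\bigr)\qquad\text{for all sufficiently small }\epsilon>0,
\]
with $\epsilon=1/k$. This is a classical result in Gr\"obner-basis theory (see e.g.\ Sturmfels, \emph{Gr\"obner Bases and Convex Polytopes}, Prop.~1.13): a two-step refinement by $(\lambda_1,\lambda_2)$ is computed by the single weight $\lambda_1+\epsilon\lambda_2$ once $\epsilon$ is small, equivalently by $k\lambda_1+\lambda_2$ once $k$ is large.

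To unpack that input I would pick finitely many $f_1,\ldots,f_m\in I$ whose $\lambda_1$-initial forms generate $\mathrm{in}_{\lambda_1}(I)$, enlarged so that $\{\mathrm{in}_{\lambda_2}(\mathrm{in}_{\lambda_1}(f_i))\}$ generates $\mathrm{in}_{\lambda_2}(\mathrm{in}_{\lambda_1}(I))$. Since each $f_i$ has only finitely many exponent vectors, there is a uniform $\epsilon_0>0$ such that the monomial-wise equality $\mathrm{in}_{\lambda_1+\epsilon\lambda_2}(f_i)=\mathrm{in}_{\lambda_2}(\mathrm{in}_{\lambda_1}(f_i))$ holds for every $i$ and every $0<\epsilon\le\epsilon_0$; one then takes any integer $k\ge 1/\epsilon_0$. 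The one non-formal step, and the main (though routine) obstacle, is ensuring the $\{f_i\}$ can be chosen so that these initial forms actually generate the single-step initial ideal $\mathrm{in}_{\lambda_1+\epsilon\lambda_2}(I)$ and not merely lie in it; this is where one invokes the existence of a standard basis for the lexicographic refinement of $\lambda_1$ by $\lambda_2$, i.e.\ the usual existence theorem for Gr\"obner bases with respect to an arbitrary monomial order.
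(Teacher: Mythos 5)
Your proposal is correct and is essentially the argument the paper has in mind: the paper gives no independent proof here, deferring to Section 6 of \cite{LX16}, Lemma 3.1 of \cite{LWX18} and Section 5 of \cite{And13}, all of which prove the lemma exactly by identifying each degeneration with passage to an initial ideal and showing $\mathrm{in}_{k\lambda_1+\lambda_2}(I)=\mathrm{in}_{\lambda_2}\bigl(\mathrm{in}_{\lambda_1}(I)\bigr)$ for $k\gg 0$, i.e.\ the classical weight-perturbation fact you quote after rescaling $k\lambda_1+\lambda_2=k(\lambda_1+\tfrac{1}{k}\lambda_2)$. Two cosmetic points only: fix one consistent min/max convention (your min-convention formula corresponds to the inverse action as $t\to 0$), and, as you already flag, since a weight order with min convention is not a global term order the generation statement for $\mathrm{in}_{\lambda_1+\epsilon\lambda_2}(I)$ rests on standard (Mora) bases rather than ordinary Gr\"obner bases — which is precisely what the cited fact encapsulates.
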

The proof was essentially given in \cite[section 6]{LX16} (see also \cite[Lemma 3.1]{LWX18}) and uses some argument in the study of toric degenerations (see e.g. \cite[Section 5]{And13}).

\begin{lem}\label{l-volslc}
If $o\in (X,D)$ is an lc but not klt point, then 
$$\hvol(o, X,D):=\inf_{v\in \Val_{X,o}}\hvol(v)=0.$$ 
\end{lem}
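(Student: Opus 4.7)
The plan is to construct a family of quasi-monomial valuations $v_s \in \Val_{X,o}$, $s \geq 0$, whose log discrepancy is constant in $s$ but whose volume tends to zero as $s \to \infty$; then $\hvol(v_s) \to 0$ forces $\hvol(o, X, D) = 0$.

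Since $(X,D)$ is lc but not klt at $o$, I first choose a log resolution $\pi \colon Y \to X$ of $(X,D)$ and a prime divisor $E \subset Y$ with $A_{X,D}(E) = 0$ and $o \in W := \pi(E)$. If $W = \{o\}$, then $\ord_E \in \Val_{X,o}$ satisfies $A_{X,D}(\ord_E) = 0$ and $\vol(\ord_E) \leq \mult(\fm_o)/\ord_E(\fm_o)^n < \infty$, hence $\hvol(\ord_E) = 0$ and we are done. Otherwise $\dim W \geq 1$; by further blowing up over $o$ I arrange that $Y$ carries an exceptional prime divisor $F$ with $\pi(F) = \{o\}$ whose intersection with $E$ has a component with generic point $\eta \in \pi^{-1}(o)$ and such that $(Y, E + F)$ is SNC at $\eta$. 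Such $F$ exists because $o \in W$, so blowing up a sufficiently general point of $E \cap \pi^{-1}(o)$ does the job. Consider the family
\[
v_s := s \cdot \ord_E + \ord_F \in \QM_\eta(Y, E + F), \qquad s \geq 0.
\]
By Definition \ref{d-logd}(b), $A_{X,D}(v_s) = s \cdot 0 + A_{X,D}(F) = A_{X,D}(F) =: A_0$, independent of $s$. Since $\dim W \geq 1$ there exists $f \in \fm_o$ not vanishing identically on $W$, which gives $\ord_E(f) = 0$, whence $\ord_E(\fm_o) = 0$; thus $v_s(\fm_o) = \ord_F(\fm_o) =: c > 0$, also independent of $s$, and in particular $v_s \in \Val_{X,o}$.

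The main step is then to show $\vol(v_s) \to 0$ as $s \to \infty$, which gives $\hvol(v_s) = A_0^n \, \vol(v_s) \to 0$ and completes the proof. The heuristic is that as $s$ grows, $v_s$ becomes dominated by $s \cdot \ord_E$, whose center $W$ has positive dimension in $X$, so the $n$-dimensional Newton--Okounkov body of $v_s$ collapses in one direction at rate $1/s$. Concretely, for $m \leq s$ one has the containment
\[
\fa_m(v_s) \supseteq I_W + \fm_o^{\lceil m/c \rceil},
\]
because $\ord_E(I_W) \geq 1$ and $v_s(\fm_o^k) \geq kc$; the right-hand side has colength at most $\ell(\cO_{W,o}/\fm_{W,o}^{\lceil m/c \rceil}) = O(m^{\dim W})$, of strictly smaller order than $m^n$. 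In the smooth case this is already decisive: a direct monomial count in local coordinates adapted to $E + F$ identifies $v_s$ with the monomial valuation of weights $(c, \ldots, c, s + c)$ on $\mathbb{C}^n$, giving $\vol(v_s) = c^{n-1}/(s + c) = O(1/s)$.

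The main obstacle is making the volume estimate rigorous in the general (possibly singular) situation at $o$, where one must track $\fa_m(v_s)$ also in the regime $m > s$. I would handle this via the Newton--Okounkov body construction of \cite{LM09, KK12, Cut13}, which expresses $\vol(v_s)$ as $n!$ times the Euclidean volume of a bounded convex body $\Delta_s \subset \mathbb{R}^n$; the constraint coming from the $\ord_E$-component of $v_s$ confines $\Delta_s$ to a slab of width $O(1/s)$ in one coordinate direction, yielding $\vol(v_s) = O(1/s)$. Alternatively, one may bypass the volume estimate by appealing to the extension of Theorem \ref{thm:liueq} to lc pairs: the filtration $\{\fa_m(v_s)\}_m$ already witnesses $\lct(\fa_\bullet(v_s))^n \mult(\fa_\bullet(v_s)) \leq A_0^n \vol(v_s)$, and the same analysis shows this tends to zero.
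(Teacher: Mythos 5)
Your construction is a genuinely different route from the paper's. The paper's proof is a three-step reduction: pass to a dlt modification (along which $\hvol$ at a preimage point can only go up), then use the lower semicontinuity of Theorem \ref{t-lower} to specialize to a point where the ambient variety is smooth and $D$ is a smooth reduced divisor, and finally exhibit the weighted blow-ups with weights $(1,\epsilon,\dots,\epsilon)$, giving $\hvol(E_\epsilon)=(n-1)^n\epsilon\to 0$. Your family $v_s=s\cdot\ord_E+\ord_F$ is, at a smooth point with $E$ the coefficient-one divisor, exactly this family up to rescaling ($\epsilon=1/(s+1)$), and in that case your computation is correct. The difference is that you try to run the estimate directly on the possibly singular germ $o\in X$, thereby avoiding both the dlt modification and Theorem \ref{t-lower}; the price is that you must prove $\vol(v_s)\to 0$ without any smoothness.

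That is where there is a genuine gap. Your containment $\fa_m(v_s)\supseteq I_W+\fm_o^{\lceil m/c\rceil}$ only holds for $m\le s$, and since $\vol(v_s)$ is the limit as $m\to\infty$ with $s$ fixed, it contributes nothing to the volume bound (you note this, but it should not be presented as partial progress). The Newton--Okounkov argument you propose is not yet a proof either: first, the identity $\vol(v_s)=n!\cdot\vol(\Delta_s)$ for all $s$ simultaneously, with bodies cut out of one fixed cone $\tilde\sigma$ by the weight functionals $sy_1+y_2$, is not automatic on a singular germ --- it requires the kind of careful construction of a $\bZ^n$-valued valuation with the correct lattice-point/colength comparison carried out in \cite[Section 3.2]{LX17} and \cite{Cut13}, and those hypotheses must be checked here. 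Second, even granting that identity, ``the body lies in a slab of width $O(1/s)$'' does not give $\vol(\Delta_s)=O(1/s)$: the region $\{y\in\tilde\sigma:\ sy_1+y_2\le 1\}$ can be unbounded in the remaining directions, and the slices transverse to $y_1$ need not be uniformly bounded, so no rate follows from the slab alone. What does work, once the body identity is in place, is the weaker statement you actually need: the regions decrease as $s$ increases, have finite volume for one $s_0$ (since $\vol(v_{s_0})<\infty$), and shrink to a subset of the hyperplane $\{y_1=0\}$, so by monotone convergence $\vol(v_s)\to 0$ without any rate. Your suggested ``bypass'' via an lc extension of Theorem \ref{thm:liueq} is not a bypass, since it still requires the same volume estimate. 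So the skeleton is sound and the smooth case is complete, but the key estimate in the singular case needs either the Okounkov-body machinery made precise as above, or the paper's shortcut of reducing to a smooth point via a dlt modification and Theorem \ref{t-lower}.
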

\begin{proof}

Let $\pi^{\rm dlt}\colon (X^{\rm dlt},D^{\rm dlt})\to (X,D)$ be a dlt modfication and pick $o''$ a preimage of $o$ under $\pi^{\rm dlt}$, then 
$\hvol(o'', X^{\rm dlt},D^{\rm dlt})\ge \hvol(o, X,D)$, thus we can assume $(X^{\rm dlt},D^{\rm dlt})$ is dlt $\bQ$-factorial. 

By specialing a sequence of points, and applying Theorem \ref{t-lower}, we can assume $o\in (X,D)$ is a point on a smooth variety with a smooth reduced divisor $D$.  
Now we can take a weighted blow up of $(1,\epsilon,....,\epsilon)$ where the first coordinate yields $D$. Then the exceptional divisor $E$ has its normalized volume
$$\hvol(E)=\frac{(n-1)^n\epsilon^n}{\epsilon^{n-1}}=(n-1)^n\epsilon\to 0 \mbox {\ as \  } \epsilon\to 0.$$
\end{proof}

This implies that $(X_0,D_0)$ is klt and $(X_0,D_0,\xi_v)$ is a K-semistable Fano cone.  To summarize, we have shown Part (a) in the following theorem which characterize what we know about the Stable Degeneration Conjecture \ref{conj-local} for a general klt singularity.

  \begin{thm}[{\cite[Theorem 1.1]{LX17}}]\label{t-high}
Let $x\in (X,D)$ be a klt singularity.  Let $v$ be a quasi-monomial valuation in $\Val_{X,x}$ that minimises $\hvol_{(X,D)}$ and has a  finitely generated associated graded ring  ${\rm gr}_v(R)$ (which is always true if the rational rank of $v$ is one by Lemma \ref{l-dmkollar}). Then the following properties hold:
\begin{enumerate}
\item[(a)] The degeneration
$\big(X_0=_{\rm defn}{\rm Spec}\big({\rm gr}_v(R)\big), D_0, \xi_v \big)$ is a K-semistable Fano cone, i.e. $v$ is a K-semistable valuation;
\item[(b)] Let $v'$ be another quasi-monomial valuation in $\Val_{X,x}$ that minimises $\hvol_{(X,D)}$. Then $v'$ is a rescaling of $v$.
\end{enumerate}
Conversely, any quasi-monomial valuation that satisfies (a) above is a minimiser. 
 \end{thm}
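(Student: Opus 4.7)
For part (a), the plan is to use finite generation of $R_0 := \gr_v(R)$ to realize the $T$-equivariant flat degeneration $\cX \to \mathbb{A}^1$ whose central fiber $X_0 = \spec(R_0)$ carries the Reeb vector $\xi_v$ from the grading, and to let $D_0$ be the flat limit of $D$. The first task is to show $(X_0, D_0)$ is klt. Proposition \ref{p-highmodel} produces a log-smooth model $(Y, E)$ extracting Koll\'ar components whose rational combinations in $\sigma_{\eta(Z)}$ approximate $v$, and the graded-ring preservation for nearby $v_i \in \sigma_{\eta(Z)}$ identifies $X_0$ with the common specialization, forcing $(X_0, D_0)$ to be semi-log-canonical. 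To upgrade slc to klt, I argue by contradiction: if $(X_0, D_0)$ were strictly log canonical, Lemma \ref{l-volslc} applied to its normalization together with \cite[Lemma 4.13]{LX17} would yield a $T$-equivariant anti-ample divisor $E$ over $o \in X_0$ of arbitrarily small normalized volume, and Lemma \ref{l-doudeg} would let me compose the degeneration $X \rightsquigarrow X_0$ with the divisor-extracting degeneration $X_0 \rightsquigarrow Y$ to produce a valuation on $X$ with $\hvol$ strictly smaller than $\hvol_X(v)$, contradicting minimality.

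With $(X_0, D_0, \xi_v)$ established as a log Fano cone, K-semistability is proved by the same contradiction recipe. A special test configuration $(\cX', \cD', \xi_v; \eta)$ of $(X_0, D_0, \xi_v)$ with negative Futaki invariant would, via the strict convexity of $\hvol$ along the Reeb ray $\xi_t = \xi_v - t\eta$ from Section \ref{sss-convex}, give $\hvol_{X_0}(\wt_{\xi_t}) < \hvol_{X_0}(\wt_{\xi_v})$ for small $t > 0$. Composing with the degeneration $X \rightsquigarrow X_0$ via Lemma \ref{l-doudeg} again produces valuations $v_t \in \Val_{X,x}$ with $\hvol_X(v_t) = \hvol_{X_0}(\wt_{\xi_t}) < \hvol_X(v)$, contradicting the minimality of $v$.

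For part (b), I transplant the Newton-Okounkov body argument from Section \ref{sss-convex}, originally used for the Fano cone $X_0$, directly onto $X$ using the $T$-grading on $R$ coming from $\gr_v$. Given a second quasi-monomial minimizer $v'$, normalize so $A_{(X,D)}(v) = A_{(X,D)}(v') = 1$, pick a common log smooth model, and write $v' = (\xi_\mu, v^{(0)})$ as in Section \ref{sss-highrankcone}. The interpolating family $\mu_t = ((1-t)\xi_v + t\xi_\mu, t v^{(0)})$ has $A_{(X,D)}(\mu_t) \equiv 1$ by linearity of the log-discrepancy in the Reeb direction and $T$-invariance of $v^{(0)}$. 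Building the $\mathbb{Z}^n$-valued valuation $\mathbb{V}$ refining $v'$ and its cone $\tilde{\sigma}$ realizes $\vol(\mu_t) = \vol(\Delta_{\tilde{\Xi}_t})$, where the cutting hyperplanes all share a common point. Strict convexity of $t \mapsto \vol(\Delta_{\tilde{\Xi}_t})$ from \cite{Gig78, MSY06} then forces $v'$ to be a rescaling of $v$.

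For the converse, assume a quasi-monomial $v$ has $(X_0, D_0, \xi_v)$ a K-semistable log Fano cone. Theorem \ref{t-SDChigh} gives that $\wt_{\xi_v}$ minimizes $\hvol_{(X_0, D_0), o}$, and flatness of the $T$-equivariant degeneration preserves $A$ and $\vol$, so $\hvol_X(v) = \hvol_{X_0}(\wt_{\xi_v}) = \hvol(o, X_0, D_0)$. Combined with Theorem \ref{t-lower} (lower semicontinuity in families), for any $w \in \Val_{X,x}$ one has $\hvol_X(w) \ge \hvol(x, X, D) \ge \hvol(o, X_0, D_0) = \hvol_X(v)$, so $v$ is a minimizer. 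The main obstacle is part (b): the competitor $v'$ need not be $T$-equivariant, nor have a finitely generated graded ring, so $v'$ cannot simply be degenerated to a Reeb valuation on $X_0$; the Newton-Okounkov detour through a single ambient cone $\tilde{\sigma}$ containing both rays is the essential device that turns the uniqueness problem into a piece of convex geometry where strict convexity can bite.
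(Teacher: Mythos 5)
Your part (a) and your converse essentially track the paper: the paper first shows (by the same contradiction scheme, using Proposition \ref{p-highmodel}, Lemma \ref{l-volslc}, \cite[Lemma 4.13]{LX17} and Lemma \ref{l-doudeg}) that $\xi_v$ minimizes $\hvol_{X_0,D_0}$, whence $(X_0,D_0)$ is klt and K-semistability follows via Theorem \ref{t-SDChigh}; and for the converse it obtains the inequality $\hvol(x,X,D)\ge \hvol(o,X_0,D_0)$ by degenerating ideals to their initial ideals rather than by quoting Theorem \ref{t-lower}, a cosmetic difference. The genuine gap is in part (b). The Newton--Okounkov/convexity machinery of Section \ref{sss-convex} cannot be run ``directly on $X$'': it rests on the structure theory of $T$-varieties --- the weight decomposition $R=\bigoplus_\alpha R_\alpha$, the decomposition $v'=(\xi_\mu,v^{(0)})$ of a $T$-invariant quasi-monomial valuation from Theorem \ref{t-Tcano}(1), and linear interpolation in the Reeb direction. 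A general klt singularity carries no torus action: $v$ only filters $R$, and the grading you invoke lives on $\gr_v(R)$, i.e.\ on $X_0$, not on $R$. Hence neither ``$\wt_{\xi_v}$'' as a valuation on $X$, nor the expression $v'=(\xi_\mu,v^{(0)})$, nor the family $\mu_t=((1-t)\xi_v+t\xi_\mu,\,t v^{(0)})$ is defined on $X$, and there is no common cone $\tilde\sigma$ containing ``both rays''. Your closing diagnosis --- that $v'$ cannot be degenerated to $X_0$ and that the direct Okounkov construction on $X$ is the essential device --- is exactly backwards relative to the paper.

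What the paper actually does for (b): apply Proposition \ref{p-highmodel} (Diophantine approximation, MMP, ACC of log canonical thresholds) to the second minimizer $v'$ to obtain a model $Z\to X$ extracting divisors along whose intersection $v'$ is toroidal; degenerate this model along the Rees degeneration to a birational morphism $Z_0\to X_0$, producing a quasi-monomial valuation $w$ on $X_0$ that is a degeneration of $v'$ and satisfies $\hvol_{X_0}(w)\le \hvol_X(v')=\hvol_X(v)=\hvol_{X_0}(\wt_{\xi_v})$; then the uniqueness on the Fano cone $X_0$ (Theorem \ref{t-SDChigh}, proved by the convex-geometric argument of Section \ref{sss-convex} on the $T$-variety $X_0$, where the second valuation need not be toric but only $T$-invariant quasi-monomial) forces $w=\wt_{\xi_v}$ up to rescaling; finally the inequality $w(\mathbf{in}(f))\ge v'(f)$ together with $\vol(w)=\vol(v')$ gives $v'(f)=\xi_v(\mathbf{in}(f))$ for all $f\in R$, so $v'$ is determined by $\xi_v$ and is a rescaling of $v$. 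Without this degeneration of $v'$ to the central fiber, your uniqueness argument does not get off the ground.
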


\begin{proof} 

We first show the uniqueness in general, under the assumption that it admits a degeneration $(X_0,D_0,\xi_v)$ given by a K-semistable minimiser $v$. For another quasi-monomial minimiser $v'$ of rank $r'$, by a combination of the Diophantine approximation and an MMP construction including the application of ACC of log canonical thresholds (see Proposition \ref{p-highmodel}),  we can obtain a model $f\colon Z\to X$ which extracts $r'$ divisors $E_i$ ($i=1,...,r'$) such that $(Z, D_Z=_{\rm defn}\sum E_i+f_*^{-1}D)$ is log canonical. Moreover,  the quasi-monomial valuation $v'$ can be computed at the generic point of a component of the intersection of $E_i$, along which $(Z,D_Z)$ is toroidal. 
Then with the help of the MMP, one can show  $Z\to X$ degenerates to a birational morphism $Z_0\to X_0$. Moreover, there exists a quasi-monomial valuation  $w$ computed on $Y_0$ which can be considered as a degeneration of $v'$ with 
$$\hvol_{X_0}(w)=\hvol_X(v')=\hvol_X(v)=\hvol_{X_0}(\xi_v).$$ Thus $w=\xi_v$ by Section \ref{sss-convex}  after a rescaling. Since $w({\bf in}(f))\ge v'(f)$ and $\vol(w)=\vol(v')$, we may argue this implies 
$$\xi_v({\bf in}(f))=v'(f)$$
(see \cite[Section 4.3]{LX17}). Therefore, $v'$ is uniquely determined by $\xi_v$. 

\medskip

To show the last statement, we  already know it for a cone singularity. For  a valuation $v$ on a general singularity $X$ such that the degeneration $(X_0,D_0,\xi_v)$ is K-semistable,  since the degeneration to the initial ideal argument implies that $\hvol(x, X,D)\ge \hvol(o, X_0,D_0)$,  then 
$$\hvol_X(v)=\hvol_{X_0}(\xi_v)=\hvol(o, X_0,D_0)$$ is equal to $\hvol(x, X,D)$. 
\end{proof}

So in other words, the stable degeneration conjecture precisely predicts the following two sets coincide:
\[ 
\left \{
  \begin{tabular}{c}
 Minimizers of  $\hvol$  
  \end{tabular}
\right \} \longleftrightarrow
 \Big\{
  \begin{tabular}{c}
K-semistable valuations  
  \end{tabular}
\Big\}.
\]
Theorem \ref{t-existence} and Theorem \ref{t-high} together imply the existence of left hand side and the uniqueness of the right hand side, as well as the direction that any K-semistable valuation is a minimizer.

\bigskip

 Finally, let us conclude this section with the two dimensional case.  
 \begin{thm}\label{thm-2dim}
Let $(X, D, x)$ be a two-dimensional log terminal singularity. The Stable Degeneration Conjecture \ref{conj-local} holds for $(X, D)$. Moreover, if $D$ is a $\bQ$-divisor, then the minimizer of $\hvol_{(X, D)}$ is always divisorial.
\end{thm}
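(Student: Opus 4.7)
The plan is to address the two parts of the theorem via complementary strategies: divisoriality of the minimizer when $D$ is a $\bQ$-divisor via a boundedness argument on Koll\'ar components, and the full Stable Degeneration Conjecture via quasi-monomial rank-at-most-two analysis specific to surfaces. Both parts ultimately reduce to Theorem \ref{t-high} once the minimizer is identified as a quasi-monomial valuation with finitely generated graded ring.

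Step 1 ($\bQ$-divisor divisoriality): Use \eqref{e-kol} to obtain a minimizing sequence of Koll\'ar components $\{S_i\}$. After rescaling so that $v_{S_i}(\fm_x) = 1$, Theorem \ref{t-izumi}(2) yields a uniform upper bound on $A_{X,D}(\ord_{S_i})$. When $D$ is a $\bQ$-divisor, the values $A_{X,D}(\ord_S)$ for Koll\'ar components $S$ lie in a discrete subset $\frac{1}{N}\bZ_{>0}$ with $N$ depending on the Cartier index of $K_X + D$; so, passing to a subsequence, $A_{X,D}(\ord_{S_i}) = a$ is constant. The Koll\'ar components $S$ with fixed $a$ and bounded $\hvol$ then form a bounded family in dimension two by BAB-type boundedness for surface plt pairs (Alexeev). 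Consequently the infimum is attained by some Koll\'ar component $S_*$, giving a divisorial minimizer.

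Step 2 (Full SDC): For arbitrary $\bR$-divisor $D$, take a minimizer $v^*$ from Theorem \ref{t-existence}. The goal is to show $v^*$ is quasi-monomial with finitely generated $\gr_{v^*} R$, so that Theorem \ref{t-high} delivers K-semistability of the Fano cone degeneration. In dimension two, quasi-monomial valuations have rational rank at most two. If the rank is one, then $v^*$ is divisorial and Lemma \ref{l-dmkollar} shows it comes from a Koll\'ar component, so $\gr_{v^*} R$ is the affine cone ring and is finitely generated. If the rank is two, then $v^*$ is monomial at an snc point on some log smooth model, and $\gr_{v^*} R$ is a polynomial ring in two variables, hence automatically finitely generated. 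The nontrivial ingredient is showing that the minimizer is quasi-monomial to begin with; in dimension two this follows from the tree structure of $\Val_{X,x}$ (\cite{FJ04}) combined with the fact that any minimizer is a limit of rescaled Koll\'ar components (along a single path in the tree).

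The main obstacle is the rigorous boundedness of Koll\'ar components in Step 1 and the quasi-monomiality of the minimizer in Step 2. The former uses dimension-two BAB (Alexeev) applied to the family of plt pairs $(Y, S + \mu_*^{-1}D) \to (X, D)$ extracting $S$, together with the observation that $(S, \text{Diff}_S)$ is a one-dimensional log Fano pair whose boundary coefficients lie in a discrete set. The latter relies on specific features of surface valuations not available in higher dimensions: indeed, quasi-monomiality of minimizers remains conjectural in general but is accessible on surfaces through the explicit tree-like geometry of $\Val_{X,x}$.
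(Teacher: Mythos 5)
Your proposal has genuine gaps at the two places where the real content of the theorem lies. In Step 2, the finite generation claim in the rank-two case is wrong: if $v^*$ is monomial at a point $\eta$ of a log smooth model $Y\to X$, what is automatically a polynomial ring is (essentially) $\gr_{v^*}\cO_{Y,\eta}$, not $\gr_{v^*}R$ for $R=\cO_{X,x}$. The latter is the subalgebra generated by initial forms of elements of $R$; since the graded pieces are one-dimensional it is the semigroup algebra $\bC[\Gamma]$ of the value semigroup $\Gamma=v^*(R\setminus\{0\})$, and finite generation of $\Gamma$ is precisely the nontrivial point (e.g.\ for the quasi-monomial valuation with $v(x)=2$, $v(y)=3$, $v(y^2-x^3)=6+\sqrt{2}$ on $\bC^2$, $\gr_v R\cong \bC[a,b,c]/(a^3-b^2)$, not a polynomial ring). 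The paper gets this from the sequence-of-key-polynomials description of surface valuations (\cite[Theorem 2.28]{FJ04}), plus the observation that the residue field is $\bC$ so $\gr_{v^*}R\cong\bC[\Gamma]$. Likewise, your quasi-monomiality argument does not work: in the valuative tree \emph{every} valuation, including infinitely singular ones, is a limit of divisorial valuations along a single path, so "limit of rescaled Koll\'ar components along a path" proves nothing. The paper instead uses that a minimizer computes $\lct$ of its own graded sequence of valuation ideals and invokes the (known in dimension two) Jonsson--Musta\c{t}\u{a}-type statement that such a valuation is quasi-monomial; it also reduces the general case to $\bC^2$ via the smooth cover $X=\bC^2/G$ and descends the (unique, hence $G$-invariant) minimizer, a reduction your argument omits while quoting \cite{FJ04}, which is stated for $\bC^2$.

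Step 1 also has a gap, and is in any case not how the paper proves divisoriality. The properness estimate of Theorem \ref{t-izumi}(2), applied after rescaling so that $v_{S_i}(\fm_x)=1$, bounds the rescaled log discrepancy $A_{X,D}(\ord_{S_i})/\ord_{S_i}(\fm_x)$, not $A_{X,D}(\ord_{S_i})$ itself; since $\ord_{S_i}(\fm_x)$ is a priori unbounded, discreteness of log discrepancies in $\tfrac{1}{N}\bZ$ does not yield a constant subsequence, and even granting a bounded family one still needs an argument that the infimum is attained within it. The paper's route is different and shorter: once the SDC is established in dimension two, a non-divisorial (rank-two) minimizer would degenerate $(X,D)$ to a toric pair with $\bQ$-boundary whose Reeb vector $\xi_{v_*}$ solves the convex-geometric volume minimization of Section \ref{sss-convex}; in the plane that convex problem always has a rational solution, contradicting the irrationality forced by rank two. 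You should either repair the boundedness/attainment argument or switch to this toric rationality argument.
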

\begin{proof}

We first consider the case when $X=\bC^2$. Let $v_*$ be a minimizer and denote $\frak{a}_\bullet=\{\frak{a}_m(v_*)\}_{m\in \bN}$. Then it was known that $v_*$ computes the log canonical threshold of $(X, D+\frak{a}_\bullet)$. By similar argument as \cite[]{JM12}, we know that $v_*$ must be quasi-monomial.

If $v_*$ is divisorial, then we know that the associated divisor is a Koll\'{a}r component. Otherwise, $v_*$ satisfies
${\rm rat.rk.}(v_*)=2$ and ${\rm tr.deg.}(v_*)=0$. From the description of valuations on $\bC^2$ using {\it sequences of key polynomials} (SKP), it was showed that the valuative semigroup $\Gamma$ of $v_*$ is finitely generated (see \cite[Theorem 2.28]{FJ04}). Since the residual field of $v_*$ is $\bC$, we know that ${\rm gr}_{v_*}R\cong \bC[\Gamma]$, which is finitely generated. By \cite{LX17}, we know that $v_*$ is indeed the unique minimizer of $\hvol$ (up to scaling) which is a K-semistable valuation. 

If $D$ is a $\bQ$-divisor and $v_*$ is not divisorial, then the pair $(X_0, D_0)$ is a $\bQ$-Gorenstein toric pair with $\bQ$-boundary toric divisor and the associated Reeb vector field $\xi_{v_*}$ solves the convex geometric problem. But in dimension 2 case (i.e. on the plane), it is easy to see that the corresponding convex geometric problem as discussed in section \ref{sss-convex} for toric valuations always has a rational solution. This is a contradiction to $v_*$ being non-divisorial.

More generally, we know that $X=\bC^2/G$ where $G$ is a finite group acting on $\bC^2$ without pseudo-reflections. Consider the covering $(\bC^2, \tilde{D}, 0)\rightarrow (X, D, x)$. Then by the above discussion, there exists a unique minimizer $v_*$ of $\hvol_{(\bC^2, \tilde{D}, 0)}$. In particular, $v_*$ is invariant under the $G$-action. So it descends to a minimizer of $\hvol_{(X, D, x)}$ which is quasi-monomial and has a finitely generated associated graded ring.

\end{proof}

\section{Applications}\label{s-application}

In this section, we give some applications of the normalized volume. We have seen that the normalized volume question of a cone singularity is closely related the K-semistability of the base. Another situation where singularities naturally appear is on the limit of smooth Fano manifolds.

\subsection {Equivariant K-semistability of Fano}\label{ss-equiva}

An interesting application of the minimizing theory is to treat the equivariant K-semistability. 
\begin{defn} A log Fano pair $(S,B)$ with a $G$-action is called $G$-equivariant K-semistable, if for any $G$-equivariant test configuration $ (\mathcal{S},\mathcal{B})$, the generalized Futaki invariant ${\rm Fut}(\mathcal{S},\mathcal{B})\ge 0$.  We can similarly define $G$-equivariant K-polystability. 
\end{defn}
The notion of usual K-(semi,poly)stability trivially implies the equivariant one. It is a natural question to ask whether they are equivalent, and if it is confirmed it will reduce the problem of verifying K-stability into a much simpler ones if the log Fano pair carries a large symmetry. When $S$ is smooth and $B=0$, this is proved in \cite{DS16}, using an analytic argument. Here we want to explain how our approach can  give a proof of such an equivalence when $G=T$ is a torus group.  

The key is  the fact we obtain in \eqref{e-Ckollar} and \eqref{e-kol} : let $x\in (X,D)$ be a klt singularity which admits a $T$ action for a torus group $T$, then 
 \begin{eqnarray}\label{e-Tkollar}
 \ \ \ \inf_{v\in \Val_{X,x}} \hvol(v)=\{\inf(\hvol(\ord_S)) | \ \ \mbox{$T$-equivariant Koll\'ar components $S$} \}.
 \end{eqnarray}

So if $(S,B)$ is not K-semistable, by Theorem \ref{t-rankonecone}, we know that over the cone $x\in (X,D)$, the valuation $\ord_{S_{\infty}}$ obtained by the canonical blow up does not give a minimizer. By  \eqref{e-Tkollar}, there exists a $T$-equivariant valuation $v$ such that $\hvol(v)<\hvol(\ord_{S_{\infty}}) $.  So we can find a $T$-equivariant Koll\'ar component $S$ such that $\hvol(\ord_S)<\hvol(\ord_{S_{\infty}})$. Then arguing as before, we can find a $T$-equivariant test configuration $(\mathcal{S},\mathcal{B})$ with ${\rm Fut}(\mathcal{S},\mathcal{B})< 0$. 

To prove a similar statement for K-polystability is more delicate. Assume a K-semistable log pair $(S,B)$ admits a test configuration $(\mathcal{S},\mathcal{B})$ with ${\rm Fut}(\mathcal{S},\mathcal{B})=0$.  We still take the cone construction of a K-semistable log Fano pair as before. The special test configuration determines a ray $v_{t}$ of valuations in $\Val_{X,x}$, emanating from the canonical component $v_0=\ord_{S_{\infty}}$.  Using the fact that the Futaki invariant is 0, a minimal model program argument shows that this implies for $t\ll 1$, $v_{t}$ is automatically $\mathbb{C}^*$-equivariant, which immediately implies the test configuration is  $\mathbb{C}^*$-equivariant. Therefore we show the following result (also see \cite{CS16} for an earlier attempt). 
\begin{thm}[\cite{LX16, LWX18}]
The K-semistability (resp. K-polystability) of a log Fano pair $(S,B)$ is equivalent to the $T$-equivariant K-semistablity ($T$-equivariant K-polystablity) for any torus group $T$ acting on $(S,B)$. 
\end{thm}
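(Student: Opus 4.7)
The plan is to reduce both equivalences to the normalized volume minimization problem on the affine cone $x \in (X,D) = C(S,B;-r(K_S+B))$ via Theorem \ref{t-rankonecone}, and then invoke the equivariant infimum formula \eqref{e-Tkollar} together with the one-to-one correspondence between special test configurations and $\mathbb{C}^*$-equivariant Koll\'ar components (Remark \ref{r-onetoone}). Both forward implications (unrestricted stability implies $T$-equivariant stability) are tautological, so I focus on the converses.

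For the K-semistability equivalence I argue the contrapositive. Assume $(S,B)$ is not K-semistable. By Theorem \ref{t-rankonecone}, $\ord_{S_\infty}$ is not a minimizer of $\hvol_{(X,D),x}$. The $T$-action on $(S,B)$ lifts to a $T$-action on the cone commuting with the defining cone $\mathbb{C}^*$-action, so the equivariant version of formula \eqref{e-Tkollar} produces a $T$-equivariant Koll\'ar component $E$ over $x$ with $\hvol(\ord_E)<\hvol(\ord_{S_\infty})$. By Remark \ref{r-onetoone} applied $T$-equivariantly, $E$ corresponds to a $T$-equivariant special test configuration $(\mathcal{S},\mathcal{B})$ of $(S,B)$. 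The derivative identity from the first approach of Section \ref{ss-cone},
\[
\frac{d}{dt}\hvol(v_t)\Big|_{t=0} \;=\; \frac{n}{r^n}(-K_S-B)^{n-1}\cdot\Fut(\mathcal{S},\mathcal{B}),
\]
together with the strict inequality above, then forces $\Fut(\mathcal{S},\mathcal{B})<0$, violating $T$-equivariant K-semistability.

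For the K-polystability equivalence, assume $(S,B)$ is $T$-equivariantly K-polystable; by the first part it is already K-semistable. Let $(\mathcal{S},\mathcal{B})$ be a special test configuration with $\Fut(\mathcal{S},\mathcal{B})=0$; I must show it is a product. On the cone it induces a ray $v_t\in \Val_{X,x}$ with $v_0=\ord_{S_\infty}$, and the displayed identity gives $\tfrac{d}{dt}\hvol(v_t)|_{t=0}=0$. Since $v_0$ already minimizes $\hvol$ and the slope vanishes, the convexity result of Section \ref{sss-convex} shows $v_t$ remains a minimizer for $t$ in a neighborhood of $0$. Next, using the $T$-equivariant MMP construction underlying \eqref{e-Tkollar}, I would extract, for each small $t>0$, a $T$-equivariant Koll\'ar component $E_t$ whose rescaling approximates $v_t$ and shares its (minimal) normalized volume. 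By Remark \ref{r-onetoone}, $E_t$ corresponds to a $T$-equivariant special test configuration $(\mathcal{S}_t,\mathcal{B}_t)$ with $\Fut(\mathcal{S}_t,\mathcal{B}_t)=0$. By $T$-equivariant K-polystability, each $(\mathcal{S}_t,\mathcal{B}_t)$ is a product, i.e.\ comes from a cocharacter of $\Aut(S,B)$, which on the cone amounts to an extra $\mathbb{C}^*$-symmetry of $(X,D)$ itself fixing $v_t$. Letting $t\to 0$ would then show that the $\mathbb{C}^*$-action generated by $\eta$ extends from the central fiber $X_0$ to an automorphism of $(X,D)$, proving that $(\mathcal{S},\mathcal{B})$ is a product.

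The main obstacle is the K-polystability direction. One must carry out the $T$-equivariant MMP extraction of Koll\'ar components from a quasi-monomial valuation along the ray $v_t$, and then control the limit as $t\to 0$ so that the cocharacters producing the approximating $(\mathcal{S}_t,\mathcal{B}_t)$ converge to an actual cocharacter of $\Aut(S,B)$ rather than degenerating. This requires the careful equivariant MMP machinery of \cite{LX16} to match convergence in $\Val_{X,x}$ with convergence of one-parameter subgroups of the automorphism group, and in particular to guarantee that the limiting symmetry is a global automorphism of $(X,D)$ and not just of the central fiber $(X_0,D_0)$.
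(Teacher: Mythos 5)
Your semistability half is correct and follows essentially the paper's own argument: pass to the cone, use Theorem \ref{t-rankonecone} together with the equivariant formula \eqref{e-Tkollar} (applied with the torus generated by $T$ and the cone $\mathbb{C}^*$, so that the Koll\'ar component you produce does correspond to a test configuration of $(S,B)$) to get an equivariant Koll\'ar component of strictly smaller normalized volume, and then convert it into a $T$-equivariant special test configuration with $\Fut<0$.

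The K-polystability half has a genuine gap. Your pivotal claim --- that since $v_0=\ord_{S_\infty}$ is a minimizer and the slope of $\hvol(v_t)$ vanishes at $t=0$, convexity keeps $v_t$ a minimizer for small $t$ --- is false. Convexity with vanishing derivative at the minimum does not give local constancy, and the paper's uniqueness statements (the ``Moreover'' part of Theorem \ref{t-rankonecone}, and Theorem \ref{t-SDChigh}, Theorem \ref{t-high}(b)) assert exactly the opposite: for K-semistable $(S,B)$ the minimizer is unique up to rescaling among quasi-monomial valuations, so $\hvol(v_t)>\hvol(v_0)$ strictly for all $t>0$ whenever the test configuration is nontrivial; $\Fut(\mathcal{S},\mathcal{B})=0$ only kills the first-order term. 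Consequently there are no $T$-equivariant Koll\'ar components $E_t$ approximating $v_t$ ``with the same (minimal) normalized volume'' --- by uniqueness such an $E_t$ would have to be a rescaling of $v_0$ --- so the auxiliary configurations $(\mathcal{S}_t,\mathcal{B}_t)$ with $\Fut=0$, and the limit $t\to 0$ you intend to take, cannot be produced; the difficulty you flag about converging cocharacters is downstream of this more basic failure. The paper's route is different and avoids any limiting of auxiliary degenerations: using $\Fut(\mathcal{S},\mathcal{B})=0$, an MMP argument shows that for $t\ll 1$ the valuations $v_t$ along the ray are automatically invariant under the torus acting on the cone, which immediately implies that the given test configuration $(\mathcal{S},\mathcal{B})$ is itself $T$-equivariant; the hypothesis of $T$-equivariant K-polystability then applies to it directly. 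The missing ingredient in your write-up is precisely this equivariance statement for the original degeneration, not a convergence statement for a family of product degenerations.
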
 
For other groups $G$, e.g. finite groups or general reductive groups, we haven't proved the corresponding result as \eqref{e-Tkollar}. It is a consequence of the uniqueness part of the stable degeneration conjecture. We also note that in \cite{LX17}, it is proved that quasi-monomial minimizers over a $T$-equivariant klt singularity are automatically $T$-invariant.

\subsection{Donaldson-Sun's Conjecture}\label{ss-dsc}

One major application of what we know about the stable degeneration conjecture, formulated in Theorem \ref{t-high}, is the solution of \cite[Conjecture 3.22]{DS17} (see Conjecture \ref{conj-DS}), which predicts  that for a singularity appearing on a Gromov-Hausdorff limit of K\"ahler-Einstein metrics, its metric tangent cone only depends on the algebraic structure of the singularity.  In this section, we briefly explain the idea. 

\subsubsection{K-semistable degeneration}
Let $(M_k, g_k)$ be a sequence of K\"{a}hler-Einstein manifolds with positive curvature. Then possibly taking a subsequence, $(M_k, g_k)$ converges in the Gromov-Hausdorff topology to a limit
metric space $(X, d_\infty)$. By the work of Donaldson-Sun and Tian, $X$ is homeomorphic to a $\bQ$-Fano variety. For any point $x\in X$, a metric tangent cone $C_xX$ is defined as a pointed Gromov-Hausdorff limit:
\begin{equation}
C_xX=\lim_{r_k\rightarrow 0} \left(X, x, \frac{d_\infty}{r_k}\right).
\end{equation} 
By Cheeger-Colding's theory, $C_xX$ is always a metric cone. By \cite{CCT02}, the real codimension of singularity set of $C_xX$ is at least 4 and the regular part admits a Ricci-flat K\"{a}hler cone structure. In \cite{DS17}, it is further proved that $C_xX$ is an affine variety with an effective torus action. They proved that $C_xX$ is uniquely determined by the metric structure $d_\infty$ and can be obtained in the following steps. In the first step, they defined a filtration $\{\cF^\lambda\}_{\lambda\in \mathcal{S}}$ of the local ring $R=\cO_{X,o}$ using the limiting metric structure $d_\infty$. Here $\mathcal{S}$ is a set of positive numbers that they called the holomorphic spectrum which depends on the torus action on the metric tangent cone $C$. In the second step, they proved that the associated graded ring of $\{\mathcal{F}^\lambda\}$ is finitely generated and hence defines an affine variety, denoted by $W$. In the last step, they showed that $W$ equivariantly degenerates to $C$. Notice that this process depends crucially on the limiting metric $d_\infty$ on $X$. They then made the following conjecture. 
\begin{conj}[Donaldson-Sun]\label{conj-DS}
Both $W$ and $C$ depend only on the algebraic germ structure of $X$ near $x$.
\end{conj}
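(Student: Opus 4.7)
The plan is to identify $W$ and $C$ with canonical algebraic objects predicted by the Stable Degeneration Conjecture~\ref{conj-local}, and then invoke the uniqueness statements in Theorem~\ref{t-high}. First, I would reinterpret the Donaldson--Sun filtration $\{\mathcal{F}^\lambda\}_{\lambda\in\mathcal{S}}$ on $R=\mathcal{O}_{X,x}$ valuation-theoretically. Since the filtration is constructed from distances in $d_\infty$ together with the Reeb vector field $\xi_0$ of the Ricci-flat K\"ahler cone metric on $C$, one expects it to coincide with $\fa_\bullet(v)$ for a single quasi-monomial $v\in\Val_{X,x}$ whose value semigroup lies in the holomorphic spectrum $\mathcal{S}$. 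Granting this, $\gr_v R$ is exactly the coordinate ring of $W$, and the further $T$-equivariant degeneration $W\rightsquigarrow C$ matches the pattern in Theorem~\ref{t-high}.

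Second, I would verify that this $v$ minimizes $\hvol_{X,x}$. Since $(C,\xi_0)$ admits a Ricci-flat K\"ahler cone metric, Theorem~\ref{thm-RF2K} gives that $(C,\xi_0)$ is K-polystable among special test configurations, and then Theorem~\ref{t-SDChigh} gives that $\wt_{\xi_0}$ minimizes $\hvol_{C,o_C}$. K-semistability transfers back along the equivariant degeneration $W\rightsquigarrow C$, so $\wt_{\xi_0}$ also minimizes $\hvol_{W,o_W}$. Using the flatness of the first degeneration $X\rightsquigarrow W$ (so that $A$ and $\vol$ both transfer, as in \cite[Lemma~3.2]{LX17}), one gets $\hvol_X(v)=\hvol_W(\wt_{\xi_0})$; combined with the degeneration inequality $\hvol(x,X)\ge \hvol(o_W,W)$, this forces $v$ to be a minimizer. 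The finite generation of $\gr_v R$ is automatic from the construction of $W$ as an affine variety.

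Third, Theorem~\ref{t-high}(b) asserts that a quasi-monomial minimizer with finitely generated graded ring is unique up to rescaling. Since that uniqueness is stated inside $\Val_{X,x}$, which depends only on the algebraic germ $(x\in X)$, the valuation $v$ and hence $W=\spec(\gr_v R)$ are determined algebraically. For $C$: it arises from the K-semistable Fano cone $W$ as a $T$-equivariant degeneration to a K-polystable Fano cone. The uniqueness of such a polystable degeneration inside the K-semistable ``orbit'' — the cone analogue of the fact that a K-semistable family contains a unique K-polystable point up to isomorphism — would then imply that $C$ is also determined algebraically by $W$, hence by $(x\in X)$.

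The hardest step, in my view, will be the first one: matching the analytically defined filtration $\{\mathcal{F}^\lambda\}$ with $\fa_\bullet(v)$ for an honest quasi-monomial $v$ and then showing $v$ is a minimizer. The minimizing property depends crucially on translating the Ricci-flat condition on $C$ into K-polystability of $(C,\xi_0)$ and then comparing $\hvol$-values across the two-step degeneration, which requires the cone-version Berndtsson subharmonicity and semicontinuity inputs of \cite{LX17}. A secondary obstacle is the K-polystable uniqueness used to pin down $C$ from $W$; this needs the Fano cone analogue of Berman's uniqueness theorem, which is precisely the content carried out in \cite{LWX18} to complete the resolution of Conjecture~\ref{conj-DS}.
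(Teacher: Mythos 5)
Your proposal is correct and follows essentially the same route as the paper: identify the Donaldson--Sun filtration with a quasi-monomial valuation $v_0$, show it minimizes $\hvol_{X,x}$ by transferring K-semistability from $(C,\xi_0)$ (Theorem \ref{thm-RF2K}, Theorem \ref{t-SDChigh}, Proposition \ref{p-semideg}) back through the two-step degeneration, pin down $W$ via the uniqueness of quasi-monomial minimizers with finitely generated graded ring in Theorem \ref{t-high}, and then obtain $C$ from $W$ via the uniqueness of K-polystable degenerations (Theorem \ref{t-uniquecone} of \cite{LWX18}). The points you flag as hardest (matching the analytic filtration with a valuation, and the polystable uniqueness) are exactly the inputs the paper supplies via the normality of $W$ together with Lemma \ref{lem-quasi}, and via \cite{LWX18}, respectively.
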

We made the following observations:
\begin{enumerate}
\item $\{\cF^{\lambda}\}$ comes from a valuation $v_0$. This is due to the fact that $W$ is a normal variety. More explicitly, since the question is local, we can assume $X={\rm Spec}(R)$ with the germ of $x\in X$, by the work in \cite{DS17}, one can embed both $X$ and $C$ into a common ambient space $\bC^N$, and $v_0$ on $X$ is induced by the monomial valuation $\wt_{\xi_0}$ where $\xi_0$ is the linear holomorphic vector field with $2{\rm Im}(\xi_0)$ being the Reeb vector field of the Ricci flat K\"{a}hler cone metric on $C$. By this construction, it is clear that the induced valuation by $v_0$ on $W$ is nothing but $\wt_{\xi_0}$.  

\item 
$v_0$ is a quasi-monomial valuation. This follows from Lemma \ref{lem-quasi}.
\end{enumerate}

More importantly we conjectured in \cite{Li15} that $v_0$ can be characterized as the unique minimizer of $\hvol_{X, x}$. As a corollary of the theory developed so far, we can already confirm \cite[Conjecture 3.22]{DS17} for $W$. 
\begin{thm}[\cite{LX17}]
The semistable cone $W$ in Donaldson-Sun's construction depends on the algebraic structure of $(X, x)$.
\end{thm}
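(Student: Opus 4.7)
The plan is to show that the valuation $v_0$ from which Donaldson--Sun construct the filtration is characterized purely algebraically as a minimizer of $\hvol_{X,x}$, and then invoke the uniqueness in Theorem \ref{t-high} to conclude that $W = \mathrm{Spec}(\mathrm{gr}_{v_0} R)$ is determined by the algebraic germ alone.

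First I would record the structural input already given: $v_0$ is a quasi-monomial valuation in $\Val_{X,x}$, and $\mathrm{gr}_{v_0}R$ is isomorphic to the coordinate ring of $W$, hence finitely generated. So the hypotheses of Theorem \ref{t-high} are in force as soon as $v_0$ is shown to minimize $\hvol_{X,x}$. The grading yields a good $T$-action on $W$, and the Reeb vector $\xi_0 \in \mathfrak{t}^+_{\mathbb{R}}$ corresponding to $v_0$ has the property that $\mathrm{wt}_{\xi_0}$ on $W$ descends from $v_0$ on $X$. Consequently $A_{X}(v_0) = A_W(\mathrm{wt}_{\xi_0})$ and $\vol_X(v_0) = \vol_W(\mathrm{wt}_{\xi_0})$ by the flatness of the associated Rees-type degeneration, as recorded in the proof of Theorem \ref{t-high}, giving
\[
\hvol_X(v_0) = \hvol_W(\mathrm{wt}_{\xi_0}).
\]

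Next, the degeneration from $W$ to the metric tangent cone $C$ in \cite{DS17} is carried out by a one-parameter subgroup of the same torus, and in particular respects $\xi_0$; the same flatness argument yields $\hvol_W(\mathrm{wt}_{\xi_0}) = \hvol_C(\mathrm{wt}_{\xi_0})$. At this point the decisive analytic input enters: since $C$ carries a Ricci-flat K\"ahler cone metric with Reeb vector $2\,\mathrm{Im}(\xi_0)$, Theorem \ref{thm-RF2K} says $(C, \xi_0)$ is K-polystable, whence Theorem \ref{t-SDChigh} gives $\hvol_C(\mathrm{wt}_{\xi_0}) = \hvol(o, C)$. Chaining the identities yields $\hvol_X(v_0) = \hvol(o, C)$. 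On the other hand, applying ``degeneration to initial ideals decreases the normalized volume'' (the principle behind Theorem \ref{t-lower}) to the two-step degeneration $X \leadsto W \leadsto C$ gives $\hvol(x, X) \geq \hvol(o, C)$. Combined with the trivial $\hvol(x, X) \leq \hvol_X(v_0)$, we force equality throughout, so $v_0$ is a minimizer of $\hvol_{X,x}$.

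With $v_0$ identified as a quasi-monomial minimizer whose associated graded ring is finitely generated, Theorem \ref{t-high}(b) asserts that any other such minimizer must be a rescaling of $v_0$. Since rescaling a valuation does not change its associated graded ring, $W = \mathrm{Spec}(\mathrm{gr}_{v_0}R)$ depends only on the algebraic germ $(X, x)$, which is the desired conclusion. The main obstacle in carrying out this program is checking that the chain of normalized-volume identities genuinely survives the two-step flat degeneration $X \leadsto W \leadsto C$ while tracking log discrepancies, valuation ideals and colengths compatibly with the $T$-action; in particular, the inequality $\hvol(x, X) \geq \hvol(o, C)$ requires the generic-limit-of-graded-sequences machinery used in Theorem \ref{t-existence} together with the semicontinuity of Theorem \ref{t-lower}, and the identification $\mathrm{wt}_{\xi_0}$-on-$W$ versus $v_0$-on-$X$ must be pinned down via Lemma \ref{lem-quasi} to justify the passage between the valuation on the total space and its restriction to the special fiber.
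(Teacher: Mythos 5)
Your proposal is correct and follows essentially the same route as the paper: the analytic input that $(C,\xi_0)$ is K-polystable (Theorem \ref{thm-RF2K}) combined with Theorem \ref{t-SDChigh}, the transfer of normalized volumes through the two-step $T$-equivariant degeneration via flatness and the degeneration-to-initial-ideals inequality, and finally the uniqueness in Theorem \ref{t-high}(b). Your explicit chain of volume identities is exactly the content the paper packages into Proposition \ref{p-semideg} and the converse statement of Theorem \ref{t-high} (and note the inequality $\hvol(x,X)\ge\hvol(o,C)$ needs only the initial-ideal degeneration argument behind Theorem \ref{thm:liueq}, not the generic-limit machinery of Theorem \ref{t-existence}).
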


The proof consists of the following steps consisting of analytic and algebraic arguments:
\begin{enumerate}
\item By Theorem \ref{thm-RF2K}, $(C, \xi_0)$ is K-polystable and in particular K-semistable. By Theorem \ref{t-SDChigh}, $\wt_{\xi_0}$ is a minimizer of $\hvol_{C}$.

\item By Proposition \ref{p-semideg}, $(W, \xi_0)$ is K-semistable. By Theorem  \ref{t-SDChigh} again, $\wt_{\xi_0}$ is a minimizer of $\hvol_W$. Moreover, by Theorem \ref{t-high}, $v_0$ is a minimizer of $\hvol_X$. 

\item $v_0$ is a quasi-monomial minimizer of $\hvol_X$ with a finitely generated associated graded ring. By Theorem \ref{t-high}, such a $v_0$ is indeed the unique minimizer of $\hvol$ among all quasi-monomial valuations.

\end{enumerate}

The following is an immediate consequence of Theorem  \ref{t-SDChigh}.
\begin{prop}\label{p-semideg}
Assume there is a special degeneration of a log-Fano cone $(X, D, \xi_0)$ to $(X_0, D_0, \xi_0)$. Assume that $(X_0, D_0,\xi_0)$ is K-semistable, then $(X, D, \xi_0)$ is also K-semistable, or equivalently, $\wt_{\xi_0}$ is the minimizer of $\hvol_{(X,D,x)}$.
\end{prop}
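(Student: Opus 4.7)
The plan is to exploit the if-and-only-if characterization in Theorem \ref{t-SDChigh} together with two basic properties of $\hvol$ under a $T$-equivariant special degeneration, namely fiberwise invariance for the toric valuation $\wt_{\xi_0}$ and lower semicontinuity for the global invariant $\hvol(x,X,D)$. Writing the special degeneration as $(\cX,\cD,\xi_0;\eta)\to\mathbb{A}^1$ with general fiber $(X,D)$ and central fiber $(X_0,D_0)$, my goal is to show that $\wt_{\xi_0}$ is a minimizer of $\hvol_{(X,D),x}$; once this is established, the converse direction of Theorem \ref{t-SDChigh} immediately yields K-semistability of $(X,D,\xi_0)$.

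The argument is a short equality chain built from three inputs. First, by Theorem \ref{t-SDChigh} applied on the K-semistable central fiber, $\wt_{\xi_0}$ minimizes $\hvol_{(X_0,D_0),o}$, so
$$\hvol_{(X_0,D_0),o}(\wt_{\xi_0})=\hvol(o,X_0,D_0).$$
Second, the degeneration-to-initial-ideal argument (equivalently, the lower semicontinuity Theorem \ref{t-lower} applied to the family $\cX\to\mathbb{A}^1$) yields
$$\hvol(x,X,D)\ \geq\ \hvol(o,X_0,D_0).$$
Third, and this is the computational crux, both the log discrepancy and the normalized volume of the toric valuation $\wt_{\xi_0}$ are preserved under the degeneration:
$$A_{(X,D)}(\wt_{\xi_0})=A_{(X_0,D_0)}(\wt_{\xi_0}),\qquad \vol_{X,x}(\wt_{\xi_0})=\vol_{X_0,o}(\wt_{\xi_0}),$$
hence
$$\hvol_{(X,D),x}(\wt_{\xi_0})=\hvol_{(X_0,D_0),o}(\wt_{\xi_0}).$$
This flatness-type identity is already recorded as Lemma 3.2 of \cite{LX17} and is the same fact invoked in the discussion following Theorem \ref{t-SDChigh}; I would cite it rather than reprove it, since it follows from unwinding the definitions against the flatness of $\cR=\bigoplus_\alpha \cR_\alpha$ as a $\bC[t]$-module (so that weight multiplicities, and therefore colengths of valuation ideals, are constant in $t$) together with the $T$-equivariant triviality of the relative log canonical class along $\eta$.

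Combining these three inputs with the trivial bound $\hvol(x,X,D)\leq \hvol_{(X,D),x}(\wt_{\xi_0})$ collapses everything to equality:
$$\hvol(x,X,D)\ \leq\ \hvol_{(X,D),x}(\wt_{\xi_0})\ =\ \hvol_{(X_0,D_0),o}(\wt_{\xi_0})\ =\ \hvol(o,X_0,D_0)\ \leq\ \hvol(x,X,D).$$
Therefore $\wt_{\xi_0}$ is a minimizer of $\hvol_{(X,D),x}$, and the ``only if'' direction of Theorem \ref{t-SDChigh} then gives that $(X,D,\xi_0)$ is K-semistable, which is the equivalent formulation stated in the proposition.

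The main obstacle, in spirit, is the invariance identity in the third step, since the first and second inputs are essentially off-the-shelf applications of the theory already developed. However, that identity is already available in \cite{LX17}, so the proof proposal here is really a packaging argument: combine Theorem \ref{t-SDChigh} (both directions) with the flatness-invariance of $(A,\vol)$ along $\wt_{\xi_0}$ and the semicontinuity of $\hvol$ along the family, and observe that the resulting chain is forced to be an equality.
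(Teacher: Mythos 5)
Your proof is correct and follows essentially the same route as the paper: the paper treats this as an immediate consequence of Theorem \ref{t-SDChigh}, using exactly your three ingredients — minimality of $\wt_{\xi_0}$ on the K-semistable central fiber, the invariance $A$ and $\vol$ of $\wt_{\xi_0}$ under the flat $T$-equivariant degeneration (\cite[Lemma 3.2]{LX17}), and the degeneration-to-initial-ideals inequality $\hvol(x,X,D)\ge\hvol(o,X_0,D_0)$ — to force the same equality chain. Your substitution of Theorem \ref{t-lower} for the initial-ideal argument is a harmless variant.
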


Asssume $(X, x)$ lives on a Gromov-Hausdorff limit of K\"{a}hler-Einstein Fano manifold. Then we can define the volume density in the sense of Geometric Measure Theory as the following quantity:
\begin{equation}
\Theta(x,X)=\lim_{r\rightarrow 0}\frac{{\rm Vol}(B_r(x))}{r^{2n} {\rm Vol}(B_1(\underline{0})}.
\end{equation}
Note that $n^n=\hvol(0, \bC^n)$. The normalized volumes of klt singularities on Gromov-Hausdorff limits have the following differential geometric meaning:
\begin{thm}[\cite{LX17}]\label{thm-hvol2Theta}
With the same notation as above, we have the identity:
\begin{equation}
\frac{\hvol(x,X)}{n^n}=\Theta(x,X).
\end{equation}
\end{thm}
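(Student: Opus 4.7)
The strategy is to identify both sides with the normalized volume of the valuation produced by the Donaldson--Sun construction on the metric tangent cone $C = C_xX$, and then match this quantity against the geometric volume density via the index character and Cheeger--Colding. By the Donaldson--Sun construction recalled above, the germ $(X, x)$ admits a two-step degeneration $X \rightsquigarrow W \rightsquigarrow C$, where $W = \mathrm{Spec}(\gr_{v_0} R)$ and $v_0$ is induced, after an equivariant embedding into $\bC^N$, by the linear Reeb field $\xi_0$ of the Ricci-flat K\"ahler cone metric on $C$. By Theorem \ref{thm-RF2K}, $(C, \xi_0)$ is K-polystable with $A_C(\wt_{\xi_0}) = n$, so Theorem \ref{t-SDChigh} yields that $\wt_{\xi_0}$ minimizes $\hvol_C$. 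Proposition \ref{p-semideg} then propagates K-semistability from $C$ up to $W$ and gives $\hvol_W(\wt_{\xi_0}) = \hvol_C(\wt_{\xi_0})$. Since $v_0$ is quasi-monomial by Lemma \ref{lem-quasi}(2), with finitely generated associated graded ring $\gr_{v_0} R = \cO_W$, Theorem \ref{t-high} identifies $v_0$ as a minimizer of $\hvol_X$, so
\begin{equation*}
\hvol(x, X) \;=\; \hvol_X(v_0) \;=\; \hvol_C(\wt_{\xi_0}) \;=\; n^n \cdot \vol_C(\wt_{\xi_0}).
\end{equation*}

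It therefore remains to show $\vol_C(\wt_{\xi_0}) = \Theta(x, X)$. On the algebraic side, the valuation ideals $\fa_m(\wt_{\xi_0}) = \bigoplus_{\langle \alpha, \xi_0 \rangle \geq m} (R_0)_\alpha$ are graded by the $T$-weights, and a standard Karamata--Tauberian argument applied to the meromorphic expansion $F(\xi_0, t) = a_0(\xi_0)(n-1)!/t^n + O(t^{1-n})$ from Section \ref{s-sasaki} gives $\vol_C(\wt_{\xi_0}) = (n-1)! \cdot a_0(\xi_0)$. Combined with the identity $a_0(\xi_0)(n-1)! = \vol_{\mathrm{Reeb}}(\xi_0)$ recorded in Section \ref{s-sasaki}, this produces
\begin{equation*}
\vol_C(\wt_{\xi_0}) \;=\; \vol_{\mathrm{Reeb}}(\xi_0) \;=\; \frac{\vol(B_1(o_C), g_C)}{\vol(B_1(\underline{0}), g_{\bC^n})},
\end{equation*}
where the last equality is the content of \eqref{eq-volxi}. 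On the geometric side, Cheeger--Colding's volume convergence applied to the pointed Gromov--Hausdorff limit defining $C$ yields $\lim_{r \to 0} r^{-2n} \vol(B_r(x)) = \vol(B_1(o_C), g_C)$, and dividing by $\vol(B_1(\underline{0}))$ gives $\Theta(x, X)$. Chaining these equalities closes the proof.

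The main obstacle I expect is not any single step but verifying that Donaldson--Sun's valuation $v_0$ truly satisfies the hypotheses of the algebraic stable degeneration machinery: one must confirm that $v_0$ is quasi-monomial (which follows from the torus action on $C$, normality of $W$, and Lemma \ref{lem-quasi}(2)), that its associated graded ring is finitely generated and agrees with $\cO_W$ (an input from \cite{DS17}), and that the K-polystability of $(C, \xi_0)$ guaranteed by the Ricci-flat cone metric descends along $W \rightsquigarrow C$ via Proposition \ref{p-semideg}. Once $v_0$ is known to satisfy the hypotheses of Theorem \ref{t-high}, what remains is a comparison of three volumes --- algebraic colength, Reeb integral, and Cheeger--Colding metric density --- each standard within its own framework, with the bridge provided by the index character asymptotics and the metric cone structure on $C$.
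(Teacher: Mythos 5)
Your proposal is correct and follows essentially the same route as the paper: K-semistability of $(C,\xi_0)$ from the Ricci-flat cone metric (Theorem \ref{thm-RF2K}), propagation through the two-step degeneration via Proposition \ref{p-semideg} and Theorem \ref{t-high} to get $\hvol(x,X)=\hvol(o_C,C)=n^n\vol(\xi_0)$, and the identification of $\vol(\xi_0)$ with the volume density of the metric cone, with $\Theta(x,X)=\Theta(o_C,C)$ from metric geometry (your Cheeger--Colding step is exactly this). You merely make explicit some points the paper leaves implicit, such as the index-character/Tauberian bridge between the algebraic valuation volume and the Reeb volume of \eqref{eq-volxi}, and the verification of the hypotheses of Theorem \ref{t-high} for $v_0$.
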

\begin{proof}
From the standard metric geometry, we have $\Theta(x, X)=\Theta(o_C, C)$. Because $C$ admits a Ricci-flat K\"{a}hler cone metric,  by Theorem \ref{thm-RF2K}, $(C, \xi_0)$ is K-semistable. $\hvol(x, X)=\hvol(o_C, C)$.

On the other hand, since $C$ is a metric cone, from the definition of the volume of $\xi_0=\frac{1}{2}\left(r\partial_r-i J(r\partial_r)\right)$ is equal to:
\begin{eqnarray*}
\Theta(o_C, C)=\frac{{\rm Vol}(C\cap \{r=1\})}{{\rm Vol}(S^{2n-1})}=\vol(\xi_0).
\end{eqnarray*}
By Theorem \ref{thm-irSE}, $A(\wt_{\xi_0})=n$ and $\hvol(o_C, C)=n^n \vol(\xi_0)=n^n \Theta(o_C, C)$.

\end{proof}

\subsubsection{Uniqueness of polystable degeneration}

To confirm Donaldson-Sun's conjecture, we also need to prove the uniqueness of polystable degenerations for K-semistable Fano cones.

Since  a Fano cone singularity $(C,\xi)$ with a Ricci-flat K\"ahler cone metric is aways K-polystable (see \cite[Theorem 7.1]{CS15} and also Theorem \ref{thm-RF2K}), once knowing that $W$ only depends on the algebraic structure of $o\in M_{\infty}$, an affirmative answer to Conjecture \ref{conj-DS} follows from the following more general result by letting $(X,D, \xi_0)=(W, \emptyset, \xi_0)$: 

\begin{thm}[\cite{LWX18}]\label{t-uniquecone}
Given a K-semistable log Fano cone singularity $(X,D,\xi_0)$, there always exists a special test configuration $(\mathcal{X},\mathcal{D}, \xi_0; \eta)$ which degenerates $(X,D,\xi_0)$ to a K-polystable log Fano cone singularity $(X_0,D_0,\xi_0)$. Furthermore, such $(X_0,D_0,\xi_0)$ is uniquely determined by $(X, D, \xi_0)$ up to isomorphism.
\end{thm}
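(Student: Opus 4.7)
I would prove existence and uniqueness separately, using throughout the correspondence of Theorem~\ref{t-SDChigh} between K-semistability of $(X,D,\xi_0)$ and the property that $\wt_{\xi_0}$ minimizes $\hvol$. For existence, the plan is to iterate non-product special test configurations of vanishing Futaki invariant until no such configurations remain. For uniqueness, the plan is to build a joint two-parameter degeneration of the two K-polystable candidates over $\bC^2$ and apply K-polystability of each to extract an isomorphism. This parallels the global strategy of \cite{LWX18}, with the normalized volume minimization of Theorem~\ref{t-SDChigh} replacing properness of the global K-moduli stack.

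\textbf{Existence.} If $(X,D,\xi_0)$ is already K-polystable, take the trivial test configuration. Otherwise pick a non-product special test configuration $(\mathcal X_1,\mathcal D_1,\xi_0;\eta_1)$ with $\Fut=0$ and central fiber $(X_1,D_1,\xi_0)$. I claim this central fiber is again K-semistable. By flatness $A_{(X,D)}(\wt_{\xi_0})=A_{(X_1,D_1)}(\wt_{\xi_0})$ and $\vol_X(\wt_{\xi_0})=\vol_{X_1}(\wt_{\xi_0})$, so
\[
\hvol_{X_1}(\wt_{\xi_0}) \;=\; \hvol_X(\wt_{\xi_0}) \;=\; \hvol(x,X,D).
\]
The initial-ideal specialization argument of Section~\ref{ss-general} gives $\hvol(o_1,X_1,D_1)\ge\hvol(x,X,D)$, while the trivial bound $\hvol(o_1,X_1,D_1)\le\hvol_{X_1}(\wt_{\xi_0})$ forces equality throughout. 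Hence $\wt_{\xi_0}$ is a minimizer on $\Val_{X_1,o_1}$, and applying Theorem~\ref{t-SDChigh} to $(X_1,D_1,\xi_0)$ yields K-semistability. Moreover, $(X_1,D_1)$ carries a torus $\langle T,\eta_1\rangle$ strictly larger than $T$: the non-product hypothesis is precisely the statement that $\eta_1$ does not arise from a $T$-equivariant automorphism of $X$, and correspondingly it gives a new $\bC^*$-factor in the torus acting on $X_1$. Iterating, the acting torus strictly enlarges at each step, with rank bounded by $\dim X$, so the process terminates after finitely many steps at a K-polystable $(X_0,D_0,\xi_0)$. A single composite test configuration then comes from iterated application of Lemma~\ref{l-doudeg} to pass from $X$ to $X_0$ via a one-parameter subgroup of the final enlarged torus.

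\textbf{Uniqueness.} Given two K-polystable degenerations $(X_0^i,D_0^i,\xi_0)$, $i=1,2$, via test configurations $(\mathcal X_i,\mathcal D_i,\xi_0;\eta_i)$, I would choose an equivariant closed embedding $(X,D)\hookrightarrow (\bC^N,\Delta)$ under a big torus $T_{\mathrm{big}}\supset T$ containing both $\langle\eta_i\rangle$. The rank-two subtorus of $T_{\mathrm{big}}$ generated by $\eta_1,\eta_2$ then produces a flat $T_{\mathrm{big}}$-equivariant family $\mathcal Y\to \bC^2$ whose generic fiber is $X$, whose restrictions along the axes recover $\mathcal X_1$ and $\mathcal X_2$, and whose fiber over the origin is some common limit $(X_0^{12},D_0^{12},\xi_0)$. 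Restricting $\mathcal Y$ to $\{t_1=0\}$ gives a special test configuration of $(X_0^1,D_0^1,\xi_0)$ degenerating it to $(X_0^{12},D_0^{12},\xi_0)$. The generalized Futaki invariant of this second-level test configuration vanishes, because both endpoints are K-semistable with the same normalized volume $\hvol_{X_0^1}(\wt_{\xi_0})=\hvol_{X_0^{12}}(\wt_{\xi_0})=\hvol(x,X,D)$, so the directional derivative expression \eqref{eq-Futhvol} forces $\Fut=0$. K-polystability of $(X_0^1,D_0^1,\xi_0)$ then forces this second-level test configuration to be a product, whence $X_0^{12}\cong X_0^1$. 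The symmetric argument gives $X_0^{12}\cong X_0^2$, and therefore $X_0^1\cong X_0^2$ as log Fano cones.

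\textbf{Main obstacle.} The most delicate step is the rigorous construction of the joint family $\mathcal Y$ and the verification that each second-level degeneration $X_0^i\rightsquigarrow X_0^{12}$ is indeed a \emph{special} test configuration: one must check that $(X_0^{12},D_0^{12})$ is klt with a well-defined Reeb vector $\xi_0$, that $K_{\mathcal Y}+\mathcal D_{\mathcal Y}$ is $\bQ$-Cartier, and that the induced second-level generalized Futaki invariant matches the directional derivative of $\vol_{X_0^{12}}$. The corresponding global argument in \cite{LWX18} uses equivariant K-semistability (Section~\ref{ss-equiva}) together with properness of the K-moduli stack; in the local cone setting one must replace the moduli-theoretic input by the normalized-volume characterization of Theorem~\ref{t-SDChigh} and perform a careful equivariant relative MMP on $\mathcal Y$ to extract the klt central fiber, all while keeping the $T_{\mathrm{big}}$-action and the distinguished Reeb vector $\xi_0$ intact. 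I expect this MMP construction, together with the verification of klt-ness of $X_0^{12}$, to be the main technical hurdle.
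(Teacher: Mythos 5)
Your overall skeleton (iterate $\Fut=0$ degenerations for existence; build a common degeneration of the two polystable candidates and use polystability to force products for uniqueness) is the right shape, but both halves rest on steps that fail as written. For uniqueness, the decisive gap is the construction of the joint family $\mathcal Y\to\bC^2$: you posit a big torus $T_{\mathrm{big}}$ containing \emph{both} $\langle\eta_1\rangle$ and $\langle\eta_2\rangle$ acting compatibly on an ambient $\bC^N\supset X$, and then take the rank-two subtorus they generate. But $\eta_1$ and $\eta_2$ are one-parameter degenerations of $X$ that in general do not commute and do not lie in any common torus acting compatibly with $X$ (if they did, the two degenerations would essentially commute and uniqueness would be almost immediate). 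Producing a two-parameter family interpolating the two test configurations is precisely the hard content of the proof in \cite{LWX18}: there one approximates $\xi_0$ by integral vectors $\tilde\xi_k$, takes the Koll\'ar components $E_k$ over $X$ associated to $\tilde\xi_k-\eta_1$, and shows that the divisorial valuation $E_k\times\bC^*$ can be extracted as the unique exceptional divisor over the \emph{second} test configuration $\cX^{(2)}$ by an MMP argument (via \cite{BCHM10}) applied to a triple $(\cX^{(2)},\cD^{(2)}+c_k'\,\mathfrak A_\bullet)$, where $\mathfrak A_\bullet$ is the equivariant degeneration of the valuation ideals of $\ord_{E_k}$; the applicability of this extraction hinges on the quantitative estimate $\hvol(E_k)=f(0)+O(k^{-2})$, which is exactly where K-semistability together with $\Fut(\cX^{(1)},\cD^{(1)},\xi_0;\eta_1)=0$ enters. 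Only after this does the relative Rees algebra construction give the family over $\bC^2$ whose two boundary restrictions are the second-level degenerations of $X_0^{(1)}$ and $X_0^{(2)}$. Your ``main obstacle'' paragraph proposes to run an equivariant MMP \emph{on} $\mathcal Y$, but $\mathcal Y$ is what has to be manufactured in the first place; there is no substitute in your proposal for the $E_k$/MMP/estimate mechanism.

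In the existence step there is a concrete error: the initial-ideal specialization argument of Section \ref{ss-general} gives $\hvol(o_1,X_1,D_1)\le\hvol(x,X,D)$ (colengths are preserved and log canonical thresholds can only decrease under degeneration), not the inequality $\hvol(o_1,X_1,D_1)\ge\hvol(x,X,D)$ you assert. With the correct direction your chain only yields $\hvol(o_1,X_1,D_1)\le\hvol_{X_1}(\wt_{\xi_0})$ and does not show that $\wt_{\xi_0}$ minimizes on the central fiber. This is not a repairable sign slip within your argument: your reasoning never uses $\Fut=0$, yet K-semistability of the central fiber is false in general for special test configurations with positive Futaki invariant, so any proof must use the vanishing of the Futaki invariant essentially (in \cite{LWX18} this is done by combining the convexity of $\xi\mapsto\hvol(\wt_\xi)$ on the enlarged Reeb cone of the central fiber, the interchange-of-degenerations Lemma \ref{l-doudeg}, and Theorem \ref{t-SDChigh}, in the same spirit as the proof that a quasi-monomial minimizer with finitely generated graded ring produces a K-semistable cone). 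Your claim that $\Fut=0$ of the second-level test configuration follows from equality of normalized volumes has the same circularity, since it presupposes that $\wt_{\xi_0}$ is a minimizer on the common degeneration, i.e.\ its K-semistability, which again is not yet established at that point.
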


For the special case of smooth (or $\bQ$-Gorenstein smoothable) Fano varieties, this was proved in \cite[7.1]{LWX} based on analytic results which also show the uniqueness of Gromov-Hausdorff limit for a flat family of Fano K\"ahler-Einstein manifolds.
Our proof of Theorem \ref{t-uniquecone} is however a completely new algebraic argument.

We briefly discuss the idea to prove Theorem \ref{t-uniquecone} in \cite{LWX18}, which heavily depends on the study of normalized volumes as discussed in Section \ref{ss-cone}.

Let $(\cX^{(i)}, D^{(i)}, \xi_0, \eta^{(i)}), (i=1,2)$, be two special test configurations of the log Fano cone $(X, D, \xi_0)$ with the central fibre $(X^{(i)}_0, D^{(i)}_0, \xi_0)$. To show Theorem \ref{t-uniquecone}, the main step is to show that if $\Fut(\cX^{(i)}, \cD^{(i)}, \xi_0; \eta^{(i)})=0, (i=1,2)$, then there exists special test configurations $(\cX'^{(i)}, \cD'^{(i)})$ of $(X^{(i)}_0, D^{(i)}_0)$ such that $(\cX'^{(i)}, \cD'^{(i)})$ have isomorphic central fibres, which we will describe below. 

We consider the normalized volume functional defined on the valuation space $\Val_{X,x}$ over the vertex $x$ of the cone $X$. Then $(\cX^{(1)}, \cD^{(1)}, \xi_0; \eta^{(1)})$ determines a  ``ray" of valuations emanating from the toric valuation  $\wt_{\xi_0}$ and the generalized Futaki invariant ${\rm Fut}(\cX^{(1)},\cD^{(1)}, \xi_0; \eta^{(1)})$ is the derivative of the normalized volume at $\wt_{\xi_0}$ along this ray.

\begin{equation}
\xymatrix@1 @R=1.2pc @C=1pc
{
(X^{(2)}_0, D^{(2)}_0) \ar@{~>}_{(\cX'^{(2)}, \cD'^{(2)})}[dd]
 &   & &  &(X, D) \ar_{ (\cX^{(2)}, \cD^{(2)})\longleftarrow \cY^{(2)}_k\longleftarrow\cE^{(2)}_k}@{~>}[llll] \ar^{(\cX^{(1)}, \cD^{(1)})\leftarrow \cY_k\leftarrow \cE_k=E_k\times\bC^1}@{~>}[dd]  & \ar[l] Y_k\leftarrow E_k
 \\
& & &   \hskip 2cm &   &\\
(X'_{0}, D'_{0})  & & & & (X^{(1)}_0, D^{(1)}_0)\ar@{~>}_{(\cX'^{(1)}, \cD'^{(1)})}[llll] & \ar[l] Y_{k,0} \leftarrow E_k
}
\end{equation}

We can approximate $\xi_0$ by a sequence of integral vectors $\tilde{\xi}_{k}$ such that $|\tilde{\xi}_{k}-k \xi_0|\le C$. 
For $k\gg 1$, the vector $\tilde{\xi}_k-\eta$ corresponds to a Koll\'{a}r component $E_k$ over $X$.
Our key argument is to show that $E_k$ can be degenerated along $(\cX^{(2)}, \cD^{(2)})$ to get a model $\cY_k^{(2)}\rightarrow \cX^{(2)}$ with an exceptional divisor $\cE^{(2)}_k$ such that $(\cY_k^{(2)}, \cE^{(2)}_k)\times_{\bC}\bC^*\cong (Y_k, E_k)\times \bC^*$ where the isomorphism is compatible with the equivariant isomorphism of the {\it second} special test configuration. Note that $E_k\times\bC^*$ determines a divisorial valuation over $X\times\bC^*$ and hence over $(\cX^{(2)}, \cD^{(2)})$. So the goal is to show that this divisorial valuation can be extracted as the only exceptional divisor over $\cX^{(2)}$. By the work in the minimal model program (MMP) (see \cite{BCHM10}), this would be true if there is 
a graded sequence of ideals $\frak{A}_\bullet$ and a positive real number $c'_k$ such that two conditions are satisfied: 
$$(\cX^{(2)}, \cD^{(2)}+c'_k \frak{A}_\bullet)\mbox{ is klt \ \ \ and\ \ \ } A(E_k\times\bC; \cX^{(2)}, \cD^{(2)}+ c'_k \frak{A}_\bullet)<1,$$
where $A(E_k\times\bC; \cX^{(2)}, \cD^{(2)}+c'_k \frak{A}_\bullet)$ is the log discrepancy of (the birational transform of) $E_k\times\bC$ with respect to the triple $(\cX^{(2)}, \cD^{(2)}+ c'_k \frak{A}_\bullet)$. 

To find such a $\frak{A}_\bullet$, we look at the graded sequence of valuative ideals $\{\fa_{\bullet}\}$ of $\ord_{E_k}$ and its equivariant degeneration along the second special test configuration $(\cX^{(2)}, \cD^{(2)})$. The resulting graded sequence of ideals over $\cX^{(2)}$ will be denoted by $\frak{A}_{\bullet}$. 
Using the study in Section \ref{ss-cone} one can show the assumptions that  $(\cX^{(1)}, \cD^{(1)}; \xi_0)$ is K-semistable and $\Fut(\cX^{(1)}, \cD^{(1)}, \xi_0; \eta)=0$ implies 
\[
\mbox{$f(k):=\hvol(E_k)$ is of the order $f(0)+O(k^{-2})$. }
\]
This in turn guarantees that we can find $c'_k$ satisfying the above two conditions. 

Applying the relative Rees algebra construction to $\cE^{(2)}_k\subset \mathcal{Y}^{(2)}_k/\mathbb{C}$,  we get a family over $\mathbb{C}^2$, which over $\mathbb{C}\times \{t\}$ is the same as $(\cX^{(1)},\cD^{(1)})$ for $t\neq 0$ and gives a degeneration of $(X^{(1)}_0, D^{(1)}_0)$ for $t=0$. On the other hand, over $\{0\}\times \mathbb{C}$, we get a degeneration of $(X^{(2)}_0, D^{(2)}_0)$. Therefore, we indeed show that the two special fibers of two special test configurations $(\cX^{(i)},\cD^{(i)}, \xi_0; \eta^{(i)})\ (i=1,2)$ with ${\rm Fut}(\cX^{(i)},\cD^{(i)}, \xi_0; \eta^{(i)})=0$  will have a common degeneration.

\subsection{Estimates in dimension three and K-stability of threefolds}\label{ss-cubic3}

In general, it is not so easy to find the minimizer of $\hvol(\cdot)$ for a given singularity. A number of cases have been computed in \cite{Li15, LL16, LX16, LX17, LiuX17} including quotient singularities, ADE singularities in all dimensions (except 4-dimensional $D_4$) etc. 

Here we study normalized volumes of threefold klt singularities, and then give a global application where we show that all GIT semi-stable (resp. polystable) cubic threefolds are also 
K-semi-stable (resp. K-polystable). Our main estimate is in Theorem \ref{thm:3dimhvol}, which heavily depends on classifications of canonical threefold singularities. 

\begin{thm}[\cite{LiuX17}]\label{thm:3dimhvol}
 Let $x\in X$ be a $3$-dimensional non-smooth klt singularity.
 Then $\hvol(x, X)\leq 16$ and the equality holds if and only
  if it is an $A_1$ singularity;
\end{thm}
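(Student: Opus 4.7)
The plan is to exhibit, for every non-smooth three-dimensional klt singularity $(x,X)$, a specific valuation whose normalised volume is at most $16$, and then to pin down the equality case via a cone-type analysis. I would split on the Cartier index. If $X$ is not Gorenstein, let $r\ge 2$ be its index and $\pi:(\tilde X,\tilde x)\to (X,x)$ the index-one cover (Galois of degree $r$, \'etale in codimension one), so that $(\tilde X,\tilde x)$ is klt with $K_{\tilde X}=\pi^*K_X$. Pulling back $\fm_x$-primary ideals and using that $\pi$ multiplies colengths by $r$ and preserves $\lct$ (no extra discrepancies from $\pi$), Theorem~\ref{thm:liueq} gives the scaling $\hvol(\tilde x,\tilde X)=r\cdot\hvol(x,X)$. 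Combined with Theorem~\ref{t-smooth},
\[
\hvol(x,X)\;=\;\hvol(\tilde x,\tilde X)/r\;\le\;27/r\;\le\;27/2\;<\;16,
\]
so we may assume $X$ is Gorenstein, whence klt forces canonical in dimension three.

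For the Gorenstein canonical case, I would invoke Reid's description: the isolated terminal points are compound Du Val (cDV), realised analytically as multiplicity-$2$ hypersurfaces $\{f=0\}\subset\bC^4$, while the non-terminal canonical Gorenstein points (such as cones over del Pezzo surfaces of degree $\le 8$) are handled directly by the rank-one cone formula $\hvol(\ord_{S_0})=(-K_S-B)^{n-1}/r$ from \S\ref{ss-cone}, which gives $\hvol\le 9/r<16$ once $r$ is not too small. For a cDV hypersurface, the ordinary blow-up of the origin extracts a prime divisor $E$ with
\[
A(\ord_E)=4-2=2,\qquad \vol(\ord_E)=2,\qquad \hvol(\ord_E)=2^3\cdot 2=16,
\]
so $\hvol(x,X)\le 16$. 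For equality, write $f=f_2+f_{\ge 3}$ and identify $E$ with the projectivised tangent cone $Q=\{f_2=0\}\subset\bP^3$. If $\operatorname{rank} f_2=4$, then $Q\cong\bP^1\times\bP^1$ is smooth, $(\tilde X,E)$ is plt, $E$ is a Koll\'ar component, and $(x,X)$ is analytically the ordinary double point $\sum_i x_i^2=0$, i.e.\ the $A_1$-singularity; Theorem~\ref{t-rankonecone} applied to the K-semistable pair $(\bP^1\times\bP^1,\mathcal{O}(1,1))$ (with $r=\tfrac12$ and $(-K)^2=8$) confirms $\hvol=8/(1/2)=16$. If $\operatorname{rank} f_2\le 3$, then $Q$ is singular, $(\tilde X,E)$ fails plt, and by Lemma~\ref{l-dmkollar} $\ord_E$ is not a minimiser; I would then construct a valuation beating $16$ via a weighted blow-up adapted to the cDV normal form (e.g.\ weights $(n+1,n+1,2,\alpha)$ for $cA_n$ with $n\ge 2$, and analogous weight vectors for $cD_k$, $cE_k$).

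The main obstacle is securing a \emph{uniform} strict inequality $\hvol(x,X)<16$ across every non-$A_1$ Gorenstein canonical three-fold type: the weighted-blow-up calculation must be carried out case-by-case through Reid's taxonomy, together with the non-terminal canonical Gorenstein cases. A classification-free alternative would be to use the cone-case theory in reverse: if $\hvol(x,X)=16$, then Theorem~\ref{t-high} combined with Lemma~\ref{l-dmkollar} forces a K-semistable rank-one degeneration to a cone over a two-dimensional K-semistable log Fano $(S,B)$ with $(-K_S-B)^2/r=16$; combining Fujita's volume bound $(-K_S-B)^2\le 9$ with the Cartier constraint on $r$ isolates $(S,B,r)=(\bP^1\times\bP^1,0,\tfrac12)$, whence $(x,X)=A_1$. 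The delicate point in either route lies with the borderline non-ODP cDV points, where the naive valuation $\ord_E$ still computes exactly $16$ and one must manufacture a strictly better one---and, for the conceptual route, verify that the minimiser may be assumed divisorial (or at least of rank one) so the rank-one machinery of \S\ref{ss-cone} applies.
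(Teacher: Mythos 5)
Your non-Gorenstein reduction is where the argument genuinely breaks. You assert $\hvol(\tilde x,\widetilde X)=r\cdot\hvol(x,X)$ as a consequence of Theorem \ref{thm:liueq} by pulling back $\fm_x$-primary ideals. Pulling back ideals only proves one direction, namely $\hvol(\tilde x,\widetilde X)\le \deg(\pi)\cdot\hvol(x,X)$ (lct is preserved under the crepant quasi-\'etale cover and multiplicities scale by $\deg(\pi)$); the reverse inequality --- which is exactly the one your chain $\hvol(x,X)=\hvol(\tilde x,\widetilde X)/r\le 27/r$ uses --- would require knowing that the infimum upstairs is computed by Galois-invariant (i.e.\ pulled-back) ideals. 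That is precisely the open finite degree formula, Conjecture \ref{c-fdf}, which the paper points out is equivalent to equivariant minimization for finite groups (Conjecture \ref{g-equiva}) and is only known on Gromov--Hausdorff limits (Theorem \ref{t-fdf}). The paper's proof avoids it: when the index-one cover $\widetilde X$ is smooth it quotes the known computation $\hvol(x,X)=27/\ind(x,K_X)\le 13.5$ for quotient singularities, and when $\widetilde X$ is singular it uses only the strict inequality $\hvol(x,X)<\hvol(\tilde x,\widetilde X)$ from the weak formula (Proposition \ref{prop:weakfinitedeg}) together with the Gorenstein bound $\le 16$ applied upstairs.

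In the Gorenstein case your handling of non-cDV points is also incomplete: a non-terminal Gorenstein canonical threefold point need not be a cone over a del Pezzo surface (it may be non-isolated, or simply not of cone type), so the rank-one cone formula does not cover these. The paper's route is structural: either no crepant divisor is centered at $x$, and then $x$ is cDV by \cite[Theorem 5.34]{KM98}, giving $\lct(\fm_x)^3\mult(\fm_x)\le(4-\mult(\fm_x))^3\mult(\fm_x)\le 16$ (your ordinary blow-up computation is the same bound in disguise); or some crepant divisor centered at $x$ is extracted via \cite{BCHM10}, an MMP over $X$ contracts its transform, and the strict monotonicity $\hvol(x,X)<\hvol(y,Y)$ for crepant birational morphisms reduces the bound to cDV points on the new model or to a (possibly non-normal) Gorenstein del Pezzo exceptional divisor classified by \cite{Rei94}, with $(-K_E)^2\le 9$ in the normal case. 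This step, which you do not have, is also what produces the uniform strictness needed for the equality statement. Finally, your ``classification-free'' route to the equality case via Theorem \ref{t-high} and Lemma \ref{l-dmkollar} presupposes that the minimizer is divisorial (or quasi-monomial with finitely generated associated graded ring), which is the open Stable Degeneration Conjecture and cannot be assumed; and the weighted-blow-up route through Reid's taxonomy, while plausible (e.g.\ weights $(3,3,3,2)$ already beat $16$ for $A_2$), is acknowledged but not carried out. So both the inequality outside the cDV case and the characterization of equality have genuine gaps.
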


The proof of Theorem \ref{thm:3dimhvol} heavily relies on the classification theory of three dimensional canonical and terminal singularities, developed in the investigation of explicit three dimensional MMP. 

The idea  goes as follows.
Firstly, we reduce to the case of Gorenstein canonical singularity.
If $x\in X$ is not Gorenstein, let us take the index one cover $\tilde{x}\in\widetilde{X}$ 
of $x\in X$. Hence $\tilde{x}\in\widetilde{X}$ is a Gorenstein
canonical singularity. If $\tilde{x}\in\widetilde{X}$
is smooth, then $\hvol(x, X)=27/\ind(x,K_X)\leq 13.5<16$.
If $\tilde{x}\in\widetilde{X}$ is not smooth, the
a weak version of finite degree formula (Proposition
\ref{prop:weakfinitedeg}) implies
that $\hvol(x, X)<\hvol(\tilde{x},\widetilde{X})$.

Next, let us assume that $x\in X$ is Gorenstein 
canonical. By \cite[Proposition 2.36]{KM98}, there
exist only finitely many crepant exceptional divisors over
$X$. By \cite{BCHM10}, we can extract these divisors
simultaneously on a birational model $Y_1\to X$. If none of these exceptional
divisors are centered at $x$, then \cite[Theorem 5.34]{KM98}
implies that $x\in X$ is a cDV singularity, hence $\lct(\fm_x)
\leq 4-\mult(\fm_x)$ which implies 
\[
 \hvol(x, X)\leq \lct(\fm_x)^3\mult(\fm_x)\leq (4-\mult(\fm_x))^3
 \mult(\fm_x)\leq 16.
\]
The equality case can be characterized using the volume of birational
models approach in \cite{LX16}.
If some crepant exceptional divisor $E_1\subset Y_1$ is 
centered at $x$, then let us run $(Y_1,\epsilon E_1)$-MMP
over $X$ for $0\ll \epsilon<1$. By \cite[1.35]{Kol13},
this MMP will terminate as $Y_1\dashrightarrow Y\xrightarrow[]{g}Y'$,
where $Y_1\dashrightarrow Y$ is the composition of a sequence
of flips, and $g : Y \to Y'$ contracts the birational transform $E$ of $E_1$.
If $g(E)$ is a curve, then $Y'$ has cDV singularities
along $g(E)$ by \cite[Theorem 5.34]{KM98}. By choosing 
a point $y'\in g(E)$, we have 
$$\hvol(x, X)<\hvol(y',Y')\leq 16.$$
If $g(E)=y'$ is a point, then we still have $\hvol(x, X)<\hvol(y',Y')$.
Thus it suffices to show $\hvol(y',Y')<16$. 

If $Y$ has
a singular point $y\in E$, then we know that $y\in Y$ is 
a cDV singularity. Hence 
$$\hvol(y',Y')<\hvol(y,Y)\leq 16.$$
So we may assume that $Y$ is smooth along $E$. In particular,
$E$ is a (possibly non-normal) reduced Gorenstein del Pezzo
surface. If $E$ is normal, then classification of such 
surfaces show that $(-K_E)^2\leq 9$. Thus 
\[
\hvol(y',Y')\leq A_{Y'}(\ord_E)^3 \vol(\ord_E)=(-K_E)^2\leq 9<16.
\]
If $E$ is non-normal, then from Reid's classification \cite{Rei94}
either $(-K_E)^2\leq 4$ or the normalization of $E$ is a 
Hirzebruch surface. In the former case, we have $\hvol(y',Y')\leq 4$.
In the latter case, we need to take a general fiber $l$ of $E$ and
argue that $\hvol_{Y',y'}(\ord_l)\leq 16$.

Here are some intermediate results in proving Theorem \ref{thm:3dimhvol}.

\begin{prop}
 Let $\phi : (Y, y) \to (X, x)$ be a birational morphism of klt singularities such that $y \in \mathrm{Ex}(\phi)$. If $K_Y\leq\phi^*K_X$, then $\hvol(x,X) < \hvol(y, Y)$.
\end{prop}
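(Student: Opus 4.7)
My plan is to compare $\hvol_X(v)$ and $\hvol_Y(v)$ valuation-by-valuation via the inclusion $\Val_{Y,y}\subset\Val_{X,x}$ induced by $\phi(y)=x$ and $K(X)=K(Y)$, and then upgrade the resulting weak inequality to a strict one.

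For $v\in\Val_{Y,y}$, the local inclusion $\cO_{X,x}\hookrightarrow\cO_{Y,y}$ gives $\fa_m^X(v)=\fa_m^Y(v)\cap\cO_{X,x}$, hence a $\bC$-linear injection of Artinian modules $\cO_{X,x}/\fa_m^X(v)\hookrightarrow \cO_{Y,y}/\fa_m^Y(v)$. Since length equals $\bC$-dimension here, sending $m\to\infty$ gives $\vol_{X,x}(v)\leq \vol_{Y,y}(v)$. With $\Delta:=\phi^*K_X-K_Y\geq 0$, the log discrepancy transforms as $A_X(v)=A_Y(v)-v(\Delta)\leq A_Y(v)$. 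Multiplying yields $\hvol_X(v)\leq \hvol_Y(v)$ for every $v\in\Val_{Y,y}$, and taking infima (noting that $\Val_{Y,y}\subset\Val_{X,x}$ gives $\hvol(x,X)\leq \inf_{v\in\Val_{Y,y}}\hvol_X(v)$) produces the weak bound $\hvol(x,X)\leq \hvol(y,Y)$.

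For the strict inequality I split into cases. If $y\in\Supp(\Delta)$, then along any minimising sequence $v_i\in\Val_{Y,y}$ normalised by $v_i(\fm_y)=1$, the properness estimate in Theorem \ref{t-izumi}(2) gives a uniform bound $A_Y(v_i)\leq C$, while $v_i(\Delta)\geq c>0$ uniformly, since each component of $\Delta$ through $y$ has a local equation in $\fm_y$ and hence $v_i$-value $\geq 1$. This yields a uniform gap $A_X(v_i)/A_Y(v_i)\leq 1-\delta$ with $\delta=c/C>0$, so $\hvol_X(v_i)\leq (1-\delta)^n\hvol_Y(v_i)$; passing to the limit gives $\hvol(x,X)\leq(1-\delta)^n\hvol(y,Y)<\hvol(y,Y)$.

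The remaining case is when $\phi$ is crepant near $y$, so $\Delta$ vanishes in a neighbourhood of $y$. Then $y\in\mathrm{Ex}(\phi)$ together with Zariski's main theorem forces $\phi^{-1}(x)$ to be positive-dimensional through $y$, and in particular $\cO_{X,x}\subsetneq \cO_{Y,y}$ strictly. The plan here is to leverage this strict inclusion: either verify that the defect $\dim_\bC \cO_{Y,y}/(\cO_{X,x}+\fa_m^Y(v))$ contributes a positive leading term of order $m^n$ for a minimising $v\in\Val_{Y,y}$, giving $\vol_{X,x}(v)<\vol_{Y,y}(v)$ strictly, or invoke the Koll\'ar component extraction \eqref{e-kol} to produce an $S$ over $x\in X$ whose centre on $Y$ avoids $y$, and check $\hvol_X(\ord_S)<\hvol(y,Y)$ directly. \textbf{The hardest part} is precisely this crepant case: the weak inequality is essentially formal, but producing the strict gap requires genuinely using MMP input to detect the non-trivial birational geometry of $\phi$ at $y$.
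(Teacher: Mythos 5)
Your reduction to valuations is sound: for $v\in\Val_{Y,y}\subset\Val_{X,x}$ the contraction of valuation ideals gives $\vol_{X,x}(v)\leq\vol_{Y,y}(v)$, and $A_X(v)=A_Y(v)-v(\Delta)\leq A_Y(v)$ with $\Delta=\phi^*K_X-K_Y\geq 0$, so the weak inequality $\hvol(x,X)\leq\hvol(y,Y)$ follows; and when $y\in\Supp(\Delta)$ your uniform-gap argument via the properness estimate of Theorem \ref{t-izumi} (or, more simply, via a single minimizer supplied by Theorem \ref{t-existence}) does yield strictness. Note that the survey itself states this proposition without proof, as an ingredient quoted from \cite{LiuX17}, so there is no in-paper argument to compare with; but that does not change the assessment of your write-up.

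The genuine gap is the case you yourself flag: when $\Delta$ vanishes near $y$ (e.g.\ a crepant extraction or a $\bQ$-factorial modification, which is exactly the situation in which the proposition is used for Theorem \ref{thm:3dimhvol} and Theorem \ref{thm:mldhvol}), nothing in your text produces the strict inequality. Your first suggested route --- that the defect $\dim_\bC \cO_{Y,y}/(\cO_{X,x}+\fa_m^Y(v))$ has a positive term of order $m^n$ --- is precisely the crux, and the natural first attempt does not deliver it: since $\phi^*\fm_x$ vanishes along the positive-dimensional fiber $F=\phi^{-1}(x)$, restricting to $F$ and using the Izumi inequality only bounds the defect below by $c\,m^{\dim F}$ with $\dim F<n$, which is negligible after dividing by $m^n$; so strictness of $\vol_{X,x}(v)<\vol_{Y,y}(v)$ for a minimizing $v$ requires a genuinely new input (in \cite{LiuX17} this is where the actual work lies), not a routine verification. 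Your second suggested route is not viable as stated: a Koll\'ar component $S$ over $x\in X$ whose center on $Y$ avoids $y$ lies outside $\Val_{Y,y}$, and \eqref{e-kol} or Theorem \ref{thm:liueq} give no mechanism for comparing $\hvol_X(\ord_S)$ with $\hvol(y,Y)$, so there is no reason such an $S$ would witness the strict inequality. As it stands the proposal proves only the non-strict inequality in general, plus strictness in the non-crepant case, and the main case of the proposition remains open.
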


See Conjecture \ref{c-fdf} for more discussions about the following Proposition.

\begin{prop}\label{prop:weakfinitedeg}
 Let $\pi:(\widetilde{X},\tilde{x})\to (X,x)$ be a finite quasi-\'etale
  morphism of klt singularities of degree at least $2$. Then we have
 \[
  \hvol(x,X)<\hvol(\tilde{x},\widetilde{X})\leq \deg(\pi)\cdot\hvol(x,X).
 \]
\end{prop}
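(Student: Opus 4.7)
The plan is to combine Theorem \ref{thm:liueq} with two basic facts about quasi-\'etale covers---preservation of log discrepancies under pullback of valuations, and the Hilbert--Samuel sum formula for multiplicities under finite morphisms---to produce both inequalities. For the upper bound, I pull back $\fm_x$-primary ideals. The key inputs I would prove or invoke are: (i) for any $\tilde v\in\Val_{\widetilde X,\tilde x}$ with restriction $v:=\tilde v|_{K(X)}\in\Val_{X,x}$, one has $A_{\widetilde X}(\tilde v)=A_X(v)$ and $\tilde v(\fa\widetilde R)=v(\fa)$ for every $\fa\subset R:=\cO_{X,x}$ (the log discrepancy identity comes, for divisorial $\tilde v=\ord_{\tilde E}$, from $K_{\widetilde X}=\pi^*K_X$ and the Riemann--Hurwitz formula on a compatible pair of log resolutions, and extends to quasi-monomial valuations by continuity); (ii) as a consequence, $\lct_{\tilde x}(\widetilde X;\fa\widetilde R)=\lct_x(X;\fa)$; (iii) the Hilbert--Samuel sum-formula $\sum_{\tilde x_j\in\pi^{-1}(x)}\mult(\fa\widetilde R_{\tilde x_j})=\deg(\pi)\cdot\mult(\fa)$ over residue field $\bC$, which implies $\mult(\fa\widetilde R_{\tilde x})\le\deg(\pi)\cdot\mult(\fa)$. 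Theorem \ref{thm:liueq} then gives
\[
\hvol(\tilde x,\widetilde X)\le \lct_{\tilde x}(\widetilde X;\fa\widetilde R)^n\mult(\fa\widetilde R_{\tilde x})\le \deg(\pi)\cdot\lct_x(X;\fa)^n\mult(\fa),
\]
and the infimum over $\fa$ yields the upper bound.

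For the strict inequality $\hvol(x,X)<\hvol(\tilde x,\widetilde X)$, pick a minimizer $\tilde v^*$ of $\hvol_{\widetilde X,\tilde x}$ via Theorem \ref{t-existence} and set $v^*:=\tilde v^*|_{K(X)}$. By (i), $A_X(v^*)=A_{\widetilde X}(\tilde v^*)$, so it suffices to establish the strict volume gap $\vol_{\widetilde X,\tilde x}(\tilde v^*)>\vol_X(v^*)$, because then
\[
\hvol(\tilde x,\widetilde X)=A_X(v^*)^n\vol_{\widetilde X,\tilde x}(\tilde v^*)>A_X(v^*)^n\vol_X(v^*)=\hvol_X(v^*)\ge\hvol(x,X).
\]
The identification $\fa_m(v^*)=\fa_m(\tilde v^*)\cap R$ yields an injection of Artinian modules $R/\fa_m(v^*)\hookrightarrow \widetilde R_{\tilde x}/\fa_m(\tilde v^*)$, which together with $\ell_R=\ell_{\widetilde R_{\tilde x}}$ (residue fields $\bC$) gives the weak inequality $\vol_X(v^*)\le\vol_{\widetilde X,\tilde x}(\tilde v^*)$. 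To upgrade to strict, let $d_{\tilde x}$ denote the local degree of $\pi$ at $\tilde x$; since $\pi$ is a nontrivial quasi-\'etale cover of germs of klt singularities with residue field $\bC$, Hensel's lemma for the strict Henselization rules out any \'etale factor at $\tilde x$, forcing $d_{\tilde x}\ge 2$. The cokernel $\widetilde R_{\tilde x}/(R+\fa_m(\tilde v^*))$ then has positive $R$-generic rank $d_{\tilde x}-1$, and a Hilbert--Samuel computation in the spirit of (iii) applied to the graded sequence $\fa_\bullet(v^*)$ shows this cokernel contributes a strictly positive amount to the leading $m^n$-coefficient of $\ell(\widetilde R_{\tilde x}/\fa_m(\tilde v^*))$.

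The main technical obstacle is making this strict comparison rigorous in the asymptotic limit: the quasi-\'etale hypothesis ensures ramification is concentrated in codimension $\ge 2$, so the single-ideal formula $\mult(\fa\widetilde R_{\tilde x})=d_{\tilde x}\cdot\mult(\fa)$ holds, but the inclusion $\fa_\bullet(v^*)\widetilde R_{\tilde x}\subset\fa_\bullet(\tilde v^*)$ is in general strict, and controlling the gap asymptotically is the subtle point. This strict-versus-exact gap is precisely why the proposition yields only $\hvol(\tilde x,\widetilde X)\le\deg(\pi)\cdot\hvol(x,X)$ rather than the conjectural exact equality $\hvol(\tilde x,\widetilde X)=\deg(\pi)\cdot\hvol(x,X)$.
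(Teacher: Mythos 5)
The survey itself states this proposition without proof, deferring to \cite{LiuX17}, so there is no in-paper argument to compare against; I can only assess your proposal directly. Your upper-bound half is essentially correct and is the expected argument: under the convention (implicit in the statement, and needed even for it to be true) that one works with germs, i.e.\ $\pi^{-1}(x)=\{\tilde x\}$ so that $\deg(\pi)$ is the local degree, one has $A_{\widetilde X}(\tilde v)=A_X(\tilde v|_{K(X)})$ because $K_{\widetilde X}=\pi^*K_X$, hence $\lct_{\tilde x}(\widetilde X;\fa\cO_{\widetilde X})=\lct_x(X;\fa)$ (both directions of this equality use that every divisor over $\widetilde X$ whose center meets $\mathrm{Cosupp}(\fa\cO_{\widetilde X})$ is centered at $\tilde x$; with several preimage points your argument would only give one inequality), and $\mult(\fa\cO_{\widetilde X,\tilde x})=\deg(\pi)\cdot\mult(\fa)$, so Theorem \ref{thm:liueq} yields $\hvol(\tilde x,\widetilde X)\le\deg(\pi)\hvol(x,X)$. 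The non-strict inequality $\hvol(x,X)\le\hvol(\tilde x,\widetilde X)$ via restriction of a minimizer and the colength injection is also fine.

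The strict inequality, however, is not proved. First, the appeal to Hensel's lemma to force $d_{\tilde x}\ge 2$ is not valid reasoning: nothing prevents a quasi-\'etale cover from being \'etale at $\tilde x$ (for instance if $x$ lies outside the branch locus), in which case the germs are isomorphic and strictness fails; what rescues the statement is precisely the germ convention $\pi^{-1}(x)=\{\tilde x\}$, under which $d_{\tilde x}=\deg(\pi)\ge 2$ is automatic. Second, and more seriously, the heart of your argument---that the cokernel $\widetilde R_{\tilde x}/(R+\fa_m(\tilde v^*))$ contributes a positive multiple of $m^n$ to $\ell(\widetilde R_{\tilde x}/\fa_m(\tilde v^*))$---is only asserted (note also that this cokernel is Artinian, so ``positive generic rank $d_{\tilde x}-1$'' is a slip: it is $\widetilde R/R$ that has that rank, and converting rank into an asymptotic colength bound is exactly the issue), and you yourself flag it as the main technical obstacle. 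A genuine argument is needed here; for example: by Theorem \ref{t-izumi} one has $\fa_m(\tilde v^*)\subseteq\fm_{\tilde x}^{\lfloor\delta m\rfloor}$, so it suffices to prove $\ell\big(\widetilde R/(R+\fm_{\tilde x}^{j})\big)\ge c\,j^n$ when the local degree is at least $2$; after completing, choose $u\in\widehat{\widetilde{R}}$ not in $K:=\mathrm{Frac}(\widehat{R})$, write the $K$-linear functional with $\lambda(1)=0$, $\lambda(u)=1$ as $\lambda=\mathrm{Tr}(\beta\,\cdot)$, and use $\fm_{\tilde x}^{N}\subseteq\fm_x\widehat{\widetilde{R}}$ together with the determinant trick and Rees's theorem ($\fm_x^{k}\widehat{\widetilde{R}}\cap\widehat{R}\subseteq\overline{\fm_x^{k}}\subseteq\fm_x^{k-c_0}$) to show that $\{b\in\widehat R\colon bu\in\widehat R+\fm_{\tilde x}^{j}\}$ lies in an ideal of colength at least $c\,j^n$, whence $\ell(\widetilde R/(R+\fm_{\tilde x}^j))\ge\ell\big(\widehat Ru/(\widehat Ru\cap(\widehat R+\fm_{\tilde x}^j))\big)\ge c\,j^n$. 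Without some argument of this kind the strict half of the proposition remains open in your write-up. Finally, your closing explanation is misattributed: the obstruction to the exact formula $\hvol(\tilde x,\widetilde X)=\deg(\pi)\hvol(x,X)$ is not the strictness of the inclusion $\fa_\bullet(v^*)\widetilde R\subseteq\fa_\bullet(\tilde v^*)$, but the need to know that a minimizer upstairs may be taken Galois-invariant, i.e.\ uniqueness of minimizers; compare the discussion around Conjecture \ref{c-fdf} and Conjecture \ref{g-equiva}.
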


\bigskip

As mentioned \cite{SS17}, one main application of the local volume estimate
Theorem \ref{thm:3dimhvol} is to the K-stability question of cubic threefolds. 
\begin{thm}[\cite{LiuX17}]\label{thm:Kcubic}
 A cubic threefold is K-(poly/semi)stable if and only
 if it is GIT (poly/semi)stable. In particular,
 any smooth cubic threefold is K-stable.
\end{thm}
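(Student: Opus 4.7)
The plan is to combine a universal lower bound on $\hvol$ forced by K-semistability with sharp upper bounds coming from the singularity type, and then compare K-moduli with GIT moduli. The starting point is Liu's local-global inequality \cite{Liu16}: for any K-semistable $n$-dimensional $\bQ$-Fano variety $X$ and any closed point $x \in X$,
$$\hvol(x, X) \geq \left(\frac{n}{n+1}\right)^n (-K_X)^n.$$
A cubic threefold $X \subset \bP^4$ has $(-K_X)^3 = (2H)^3 = 24$, so K-semistability forces the pointwise bound $\hvol(x, X) \geq \tfrac{27}{64}\cdot 24 = \tfrac{81}{8}$ everywhere.

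For the forward direction (K-semistable $\Rightarrow$ GIT semistable) I would invoke Allcock's classification of GIT-(un)stable cubic threefolds: a cubic threefold fails GIT stability precisely when some singularity is strictly worse than $A_4$, and it fails GIT semistability when its singularities fall into an even more restricted explicit list (higher $A_k$, $D$- and $E$-type rational double points, certain non-cDV triple points, plus distinguished strictly semistable orbits such as the chordal cubic). For each singularity type excluded by GIT semistability I would establish $\hvol(x, X) < \tfrac{81}{8}$, either by exhibiting an explicit Koll\'ar component---invoking $\hvol(x, X) = \inf_S \hvol(\ord_S)$ from \eqref{e-kol}---via a weighted blow-up tailored to the local equation, or through the index-one cover reduction of Proposition \ref{prop:weakfinitedeg} combined with the Gorenstein MMP analysis in the proof of Theorem \ref{thm:3dimhvol}. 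This contradicts the lower bound and yields the forward implication; the polystable and stable refinements follow by inspecting the closed orbit inside each S-equivalence class.

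For the converse (GIT semistable $\Rightarrow$ K-semistable) I would use the comparison-of-moduli strategy of \cite{SS17, LiuX17}. The openness of K-semistability in $\bQ$-Gorenstein families, together with the lower semicontinuity of $\hvol$ (Theorem \ref{t-lower}), yields a proper K-moduli stack $M^K$ of K-polystable cubic threefolds equipped with a natural morphism $\Phi \colon M^K \to M^{\mathrm{GIT}}$ identifying polystable orbits. The forward direction shows $\Phi$ is surjective onto GIT-semistable orbits. Since the Fermat cubic threefold is K-polystable (its alpha-invariant exceeds $\tfrac{3}{4} = \tfrac{n}{n+1}$, giving K-stability via Tian's criterion) and both moduli are irreducible and proper of the same dimension, $\Phi$ is birational between proper Deligne--Mumford stacks, hence an isomorphism by Zariski's main theorem. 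The in-particular statement follows because smooth cubic threefolds are GIT stable with finite automorphism group, so are K-polystable by the equivalence, and K-polystable plus finite automorphism group implies K-stable.

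The main obstacle is the first step: systematically computing or sharply bounding $\hvol$ for every non-GIT-semistable singularity type on a cubic threefold. This requires marrying Allcock's GIT classification with the threefold MMP techniques underpinning Theorem \ref{thm:3dimhvol}, and for each case producing a Koll\'ar component whose normalized volume witnesses $\hvol < \tfrac{81}{8}$---a calculation whose difficulty grows with the degeneration of the singularity, particularly for the non-rational-double-point cases.
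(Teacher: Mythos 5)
Your proposal has two genuine gaps, one in each direction, and both concern steps that the paper handles quite differently. For the forward direction, your plan is to show that every cubic threefold excluded by GIT semistability carries a point with $\hvol < \tfrac{81}{8}$ and then contradict Liu's inequality. This cannot work as stated: the local normalized volume is only a necessary condition for K-semistability, and for many GIT-unstable cubics it gives no obstruction at all. For instance, a cubic with an isolated $A_k$ threefold singularity ($k\geq 6$) is GIT-unstable by Allcock's classification, yet $x_1^2+x_2^2+x_3^2+x_0^{k+1}$ degenerates to the quotient singularity $\bC^3/\bZ_2$ of type $\frac{1}{2}(1,1,0)$, so by lower semicontinuity (Theorem \ref{t-lower}) one has $\hvol \geq \tfrac{27}{2} > \tfrac{81}{8}$ at that point; its K-instability is a global phenomenon, not detectable by any local volume bound. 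The paper never attempts such a case analysis: the implication ``K-(poly/semi)stable hypersurface $\Rightarrow$ GIT (poly/semi)stable'' is quoted wholesale from Paul--Tian \cite{Tia94}, so no classification of singularities of unstable cubics is needed.

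For the converse, you posit a morphism $\Phi\colon M^{K}\to M^{\rm GIT}$ ``identifying polystable orbits'' and then argue it is an isomorphism. But constructing $\Phi$ is precisely the hard content of the paper's proof: a priori a K-polystable (smoothable) limit $X$ of cubic threefolds need not be a cubic threefold at all, in which case it has no GIT point to map to. The paper rules this out by a local argument applied to the unknown limit: Theorem \ref{thm:liuvolcomp} gives $\hvol(x,X)\geq \tfrac{81}{8}$ at every point; the flat limit $L$ of the hyperplane class satisfies $-K_X\sim_{\bQ}2L$, $(L^3)=3$; if $L$ were not Cartier at $x$, the index-one cover together with the finite degree formula (Theorem \ref{t-fdf}) would force $\hvol(\tilde{x},\widetilde{X})\geq \tfrac{81}{4}$, so by Theorem \ref{thm:3dimhvol} the cover is smooth and $\ind(L)=2$, making $x$ a $\frac{1}{2}(1,1,0)$ quotient singularity, which contradicts non-Cartierness via local Grothendieck--Lefschetz; then a result of T.~Fujita identifies $X$ as a cubic. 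Your proposal uses the volume bound only to constrain singularities of honest cubics and never addresses non-hypersurface degenerations, so the moduli comparison has nothing to run on. (Also, surjectivity of $\Phi$ does not follow from the forward direction as you claim; in the paper it is the conclusion of the injective-birational-morphism-between-proper-spaces argument, and the K-polystability of the Fermat cubic is taken from Tian's existence of a K\"ahler--Einstein metric \cite{Tia87} rather than from an alpha-invariant estimate.)
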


The general strategy to prove Theorem \ref{thm:Kcubic} is via the comparison of moduli spaces which has first appeared in \cite{MM93} built on the work of \cite{Tia92}. Later it was also applied in \cite{OSS, SS17}.

First, one can construct a proper algebraic space 
which is a good quotient moduli
space with closed points parametrizing all
smoothable K-polystable $\bQ$-Fano varieties (see e.g. \cite{LWX, Oda15}).  Let $M$ be the closed subspace whose closed points
parametrize KE cubic threefolds and their K-polystable
limits. By \cite{Tia87}, we know that at least one cubic threefold,
namely the Fermat cubic threefold, admits a KE metric. 
Hence $M$ is non-empty. By the Zariski openness of K-(semi)stability
of smoothable Fano varieties (cf. \cite{Oda15,  LWX}), the K-moduli space
$M$ is birational to the GIT moduli space $M^{\rm GIT}$ of cubic threefolds.

Next, we will show that any K-semistable limit $X$ of
a family of cubic threefolds $\{X_t\}$ over a punctured curve is necessarily a cubic threefold.
The idea is to control the singularity of $X$ use an inequality from \cite{Liu16} (see
Theorem \ref{thm:liuvolcomp}) between the global volume of a K-semistable
Fano variety and the local normalized volume.
Since the volume of $X$ is the same as the volume of 
a cubic $3$-fold which is $24$, Theorem \ref{thm:liuvolcomp}
immediately implies that $\hvol(x,X)\geq \frac{81}{8}$ for
any closed point $x\in X$. The limit $X$ carries
a $\bQ$-Cartier Weil divisor $L$ which is the flat limit
of hyperplane sections in the cubic threefolds $X_t$.
It is clear that $-K_X\sim_{\bQ}2L$ and $(L^3)=3$, thus
once we show that $L$ is Cartier, we can claim that
$X$ is a cubic threefold using a result of T. Fujita.

Assume to the contrary that $L$ is not Cartier
at some point $x\in X$, then we may take the index $1$ cover
$(\tilde{x}\in \widetilde{X})\to (x\in X)$ of $L$. From
the finite degree formula Theorem \ref{t-fdf}, 
\[
 \hvol(\tilde{x},\widetilde{X})=\ind(L)\cdot\hvol(x,X)\geq 81/4.
\]
Hence $\tilde{x}\in\widetilde{X}$ is a smooth point and 
$\ind(L)=2$ by Theorem \ref{thm:3dimhvol}. Thus $x\in X$ is a
quotient singularity of type $\frac{1}{2}(1,1,0)$ from
the smoothable condition. Then using the local Grothendieck-Lefschetz
theorem, we can show that $L$ is indeed Cartier at $x\in X$ which
is a contradiction.

So far we have shown that any K-polystable point $X$ in $M$
is a cubic threefold. By an argument of Paul and Tian in \cite{Tia94},
we know that any K-(poly/semi)stable hypersurface is GIT (poly/semi)stable.
Thus we obtain an injective birational morphism
$M\to M^{\rm GIT}$ between proper algebraic spaces.
This implies that $M$ is isomorphic to $M^{\rm GIT}$
which finishes the proof.

\begin{thm}[\cite{Liu16}]\label{thm:liuvolcomp}
 Let $X$ be an $n$-dimensional K-semistable Fano variety. Then for any
 closed point $x\in X$, we have
 \[
  (-K_X)^n\leq \left(1+\frac{1}{n}\right)^n\hvol(x,X).
 \]
\end{thm}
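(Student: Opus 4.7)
The plan is to reduce the bound to a computation on Koll\'ar components and then apply the valuative (Fujita) criterion for K-semistability. By \eqref{e-kol} it suffices to establish
\[
A_X(S)^n\,\vol(\ord_S)\ \geq\ \left(\tfrac{n}{n+1}\right)^n(-K_X)^n
\]
for every Koll\'ar component $S$ over $x$, where $\pi\colon Y\to X$ denotes the associated extraction. K-semistability of the Fano $X$ is equivalent to $\beta(E)\geq 0$ for every prime divisor $E$ over $X$ (Fujita--Li), which applied to $S$ reads
\[
A_X(S)\cdot(-K_X)^n\ \geq\ \int_0^{\tau(S)}\vol_Y\bigl(-\pi^*K_X-tS\bigr)\,dt,\qquad \tau(S):=\sup\{t>0:-\pi^*K_X-tS \text{ is big}\}.
\]

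The next step is to evaluate the integrand explicitly, using the special geometry of Koll\'ar components. Since $\pi(S)=\{x\}$ is a single point, the projection formula forces $(-\pi^*K_X)^{n-k}\cdot S^k=0$ for all $1\leq k\leq n-1$. Since $-S$ is $\pi$-ample, $-\pi^*K_X-tS$ remains nef on the full interval $[0,\tau(S)]$, so its volume equals its top self-intersection and the binomial expansion collapses to
\[
\vol_Y(-\pi^*K_X-tS)\ =\ (-K_X)^n-t^n(-S)^n.
\]
The identity $(-S)^n=\vol_{X,x}(\ord_S)$---obtained from $\fa_m(\ord_S)=\pi_*\cO_Y(-mS)$ together with asymptotic Riemann--Roch and vanishing of $R^1\pi_*\cO_Y(-mS)$ for large $m$---then gives $\tau(S)=\bigl((-K_X)^n/\vol(\ord_S)\bigr)^{1/n}$ and
\[
\int_0^{\tau(S)}\vol_Y\bigl(-\pi^*K_X-tS\bigr)\,dt\ =\ \tfrac{n}{n+1}\,\tau(S)\,(-K_X)^n.
\]
Substituting into Fujita's inequality yields $A_X(S)\geq\tfrac{n}{n+1}\bigl((-K_X)^n/\vol(\ord_S)\bigr)^{1/n}$; raising to the $n$-th power and multiplying through by $\vol(\ord_S)$ produces the required bound, and taking the infimum over $S$ finishes the argument.

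The main obstacle is the volume formula in the second step: to make the integrand as explicit as above I need $-\pi^*K_X-tS$ to stay nef throughout $[0,\tau(S)]$ and the identification $(-S)^n=\vol_{X,x}(\ord_S)$, both of which rely essentially on the Koll\'ar component hypothesis ($-S$ is $\pi$-ample and $\pi$-exceptional over a single point, so the whole Mori chamber structure on $Y/X$ is controlled by $S$). An alternative, equivalent route would bypass the intersection-theoretic computation by working at the level of $\fm_x$-primary ideals $\fa$ via Theorem~\ref{thm:liueq} and the ideal-sheaf formulation of Fujita's inequality; the two approaches feed on the same K-semistability input.
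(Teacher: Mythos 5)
Your overall strategy (reduce to Koll\'ar components via \eqref{e-kol}, then feed a volume estimate into the valuative criterion $\beta(S)\geq 0$) is sound, but the central step is false as stated. The claim that ``since $-S$ is $\pi$-ample, $-\pi^*K_X-tS$ remains nef on the full interval $[0,\tau(S)]$'' does not follow: $\pi$-ampleness of $-S$ only controls curves contracted by $\pi$, whereas the strict transforms of curves through $x$ meet $S$ positively and cut the nef threshold down, in general strictly below the pseudo-effective threshold $\tau(S)$. Consequently neither the exact formula $\vol_Y(-\pi^*K_X-tS)=(-K_X)^n-t^n\vol(\ord_S)$ nor the identity $\tau(S)=\bigl((-K_X)^n/\vol(\ord_S)\bigr)^{1/n}$ holds beyond the nef threshold. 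A concrete counterexample: let $X=\bP^2$, $x$ a point, and $S$ the exceptional divisor of the $(1,2)$-weighted blow-up $\pi\colon Y\to X$; this is a genuine Koll\'ar component with $A_X(S)=3$, $\vol(\ord_S)=1/2$, $S^2=-1/2$. The strict transform $\widetilde{L}_0$ of the line with $\ord_S=2$ satisfies $(3\pi^*H-tS)\cdot\widetilde{L}_0=3-t$, so nefness fails for $t>3$; for $3\leq t\leq 6$ the Zariski decomposition gives $\vol(3\pi^*H-tS)=\tfrac12(t-6)^2$, and $\tau(S)=6$, not $\sqrt{18}$ as your formula predicts. (In dimension $\geq 3$ one cannot even retreat to ``volume $\geq$ top self-intersection'' for big non-nef divisors, so the intersection-theoretic shortcut really is unavailable.)

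The repair is to replace the exact formula by the one-sided bound
\[
\vol_Y(-\pi^*K_X-tS)\ \geq\ (-K_X)^n-t^n\vol(\ord_S),\qquad 0\leq t\leq \bigl((-K_X)^n/\vol(\ord_S)\bigr)^{1/n},
\]
which is what the argument actually needs: it shows in particular that this range lies inside $[0,\tau(S)]$, and plugging it into $A_X(S)\cdot(-K_X)^n\geq\int_0^{\tau(S)}\vol_Y(-\pi^*K_X-tS)\,dt$ reproduces your final optimization verbatim. This inequality is not proved by nefness but by counting sections along the filtration of $\ord_S$: $h^0\bigl(X,\cO(-mK_X)\otimes\fa_{\lceil mt\rceil}(\ord_S)\bigr)\geq h^0(-mK_X)-\ell\bigl(\cO_{X,x}/\fa_{\lceil mt\rceil}(\ord_S)\bigr)$, together with $\lim_m \ell(\cO_{X,x}/\fa_{\lceil mt\rceil})/(m^n/n!)=t^n\vol(\ord_S)$. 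With that substitution your proof becomes essentially Fujita's argument (the smooth-point case in \cite{Fuj18}, where $S$ is the ordinary blow-up and the naive formula happens to be correct). Note also that the survey gives no proof of this theorem; the cited proof in \cite{Liu16} runs the same estimate in the ideal-theoretic formulation of Theorem \ref{thm:liueq}, applying Ding/K-semistability to test configurations built from arbitrary $\fm_x$-primary ideals, which avoids both \eqref{e-kol} and the valuative criterion for singular Fanos --- this is precisely the ``alternative route'' you mention in your last sentence.
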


When $X$ is smooth, the above result was first proved in \cite{Fuj18}.

 
\section{Questions and Future research}\label{s-ques}
\subsection{Revisit stable degeneration conjecture}
The following two parts of stable degeneration conjecture, proposed in \cite{Li15}, are still missing.
\begin{conj}[Quasi-monomial] Let $x\in (X, D)$ be a klt singularity. Any minimizer of $\hvol_{(X,D),x}$ is quasi-monomial. 
\end{conj}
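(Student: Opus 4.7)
The plan is to combine Liu's characterization of $\hvol$ via normalized multiplicities (Theorem~\ref{thm:liueq}) with an approximation-by-retraction argument, reducing the question to a uniqueness statement that refines Theorem~\ref{t-high}(b) beyond the quasi-monomial setting.

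First I would observe that any minimizer $v$ of $\hvol_{(X,D),x}$ must compute the log canonical threshold of its own graded sequence of valuation ideals. Normalizing so that $v(\fa_\bullet(v))=1$, equality in the chain
\[
A_{(X,D)}(v)^n\vol(v)\ \ge\ \lct(\fa_\bullet(v))^n\mult(\fa_\bullet(v))\ \ge\ \hvol(x,X,D)
\]
forces $A_{(X,D)}(v)=\lct(\fa_\bullet(v))$ and $\vol(v)=\mult(\fa_\bullet(v))$. Thus any minimizer lies in the class of valuations computing the lct of a graded sequence of $\fm_x$-primary ideals, a class already conjectured in \cite{JM12} to consist of quasi-monomial valuations; the conjecture would follow from, and is likely entangled with, this Jonsson--Musta\c t\u a principle.

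Second, to exploit this I would try to produce a quasi-monomial competitor via retractions to log smooth models. For each log smooth model $(Y,E)$ over $X$, the retraction $v_{Y,E}:=r_{(Y,E)}(v)$ is quasi-monomial, satisfies $v_{Y,E}\le v$ pointwise, and $A_{(X,D)}(v_{Y,E})\nearrow A_{(X,D)}(v)$ as $(Y,E)$ becomes finer. The pointwise bound yields $\fa_m(v_{Y,E})\supseteq \fa_m(v)$ and hence $\vol(v_{Y,E})\ge \vol(v)$. Combined with the uniform multiplicity estimates underlying Blum's existence proof (Proposition~\ref{p-uniform}) and the generic limit construction, one hopes to extract from a suitable tower $(Y_i,E_i)$ a subsequential limit $v^*$ that is quasi-monomial and still minimizes $\hvol_{(X,D),x}$. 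An alternative but parallel route is to use the identity $\hvol(x,X,D)=\inf_S\hvol(\ord_S)$ over Koll\'ar components and take a generic limit of a minimizing sequence of divisorial valuations.

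The hard part will be to conclude $v=v^*$. The uniqueness result in Theorem~\ref{t-high}(b) compares two quasi-monomial minimizers under the additional hypothesis that one of them has a finitely generated associated graded ring, so it is not strong enough to identify an arbitrary minimizer with its quasi-monomial approximation. One natural attack is to establish a strict convexity of $\hvol$ along a one-parameter family $v_t$ interpolating between $v^*$ and $v$ inside a common log smooth model (together with its retraction), extending the Newton--Okounkov body argument of Section~\ref{sss-convex} outside the Reeb cone. A closely related obstacle already appears in the approximation step itself: forcing $\vol(v_{Y_i,E_i})\to \vol(v)$ through a well-chosen tower of models demands a quantitative continuity of $\vol$ on the inverse system $\varprojlim \QM(Y,E)$, a statement presently outside reach. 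These two issues—extended uniqueness and quantitative continuity of $\vol$ under retraction—appear to be the main obstruction to settling the conjecture with current methods.
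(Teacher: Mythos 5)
There is nothing in the paper to compare your proposal against: the quasi-monomiality of minimizers is stated in Section \ref{s-ques} precisely as one of the two \emph{missing} parts of the Stable Degeneration Conjecture \ref{conj-local}, and the paper offers no proof of it (only the two-dimensional case is settled, in Theorem \ref{thm-2dim}, by an argument special to surfaces). So what you have written cannot be a correct proof; the question is whether your reduction steps are sound and where the argument genuinely stops. Your first step is correct and is the standard known reduction: if $v$ is a minimizer normalized by $v(\fa_\bullet(v))=1$, then Theorem \ref{thm:liueq} forces equality in $A_{(X,D)}(v)^n\vol(v)\ge\lct(\fa_\bullet(v))^n\mult(\fa_\bullet(v))\ge\hvol(x,X,D)$, so $v$ computes the log canonical threshold of its own graded sequence of valuation ideals; quasi-monomiality would then follow from the Jonsson--Musta\c t\u a expectation that lct-computing valuations for graded sequences are quasi-monomial. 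This is exactly the mechanism the paper uses in dimension two, where the Jonsson--Musta\c t\u a-type statement is available on $\bC^2$. But in higher dimension that statement is itself open, so at this point you have reduced one open conjecture to another, not proved anything.

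The second half of your program has a concrete gap beyond the ones you flag. The retraction $v_{Y,E}=r_{(Y,E)}(v)$ satisfies $v_{Y,E}\le v$ on ideals and $A_{(X,D)}(v_{Y,E})\le A_{(X,D)}(v)$, so along a tower of models the log discrepancy increases to $A_{(X,D)}(v)$ while the volume \emph{decreases} towards (at best) $\vol(v)$ from above; the product $\hvol(v_{Y,E})$ is therefore not monotone and there is no a priori reason it stays close to, or below, $\hvol(v)$ --- so the retractions are not automatically near-minimizing competitors, and Proposition \ref{p-uniform} (which needs uniform bounds on $A$ and $\vol$ but gives only convergence of $\mult(\fa_m)/m^n$ for a \emph{fixed} valuation) does not supply the missing uniform continuity of $\vol$ along the inverse system $\varprojlim\QM(Y,E)$. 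Even granting a quasi-monomial minimizing limit $v^*$, identifying $v=v^*$ via Theorem \ref{t-high}(b) is circular: that uniqueness statement applies only to quasi-monomial minimizers (and needs finite generation of the associated graded ring for one of them), which is precisely what you are trying to prove about $v$. Extending the Newton--Okounkov convexity of Section \ref{sss-convex} to a path joining an arbitrary (possibly non-quasi-monomial) minimizer to $v^*$ would require a common good model or a finitely generated degeneration, neither of which is available. So the proposal is a reasonable research program whose first reduction is valid, but it contains no proof of the conjecture, consistent with its status as open in the paper.
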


\begin{conj}[Finite generation] Let $x\in (X={\rm Spec}(R),D)$ be a klt singularity. Any minimizer of $\hvol_{(X,D),x}$ has its associated graded ring $\gr_v(R)$ to be finitely generated. 
\end{conj}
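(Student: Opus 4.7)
The plan is to leverage the companion Quasi-monomial Conjecture together with a Diophantine approximation by Koll\'ar components and boundedness of K-semistable log Fano cones. Assuming $v\in\QM_\eta(Y,E)$ has weights $\alpha = (\alpha_1,\ldots,\alpha_r)\in\bR_{>0}^r$, the first goal would be to construct a canonical candidate for the degeneration $(X_0,D_0,\xi_v)$ and then identify its coordinate ring with $\gr_v R$, so that finite generation follows automatically.

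First, by an application of Proposition \ref{p-highmodel}, I would produce a sequence of models $f_j\colon Y_j\to X$ and weighted Koll\'ar components yielding quasi-monomial valuations $v_j\in\Val_{X,x}^{=1}$ with rational weights $\alpha^{(j)}\to\alpha$. By Lemma \ref{l-dmkollar}, each vertex of the simplex $\sigma_j\subset\Val_{X,x}^{=1}$ containing $v_j$ degenerates $(X,D)$ to a log Fano cone $(X^{(j)}_0,D^{(j)}_0,\xi^{(j)})$, and the minimization property of $v$ combined with continuity of $\hvol$ along such approximations forces $\hvol(v_j)\to\hvol(v)$. The central technical step would then be to show that for $j\gg 0$ the associated graded rings $\gr_{v_j}R$ stabilize up to isomorphism and that this common algebra is canonically isomorphic to $\gr_v R$. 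Once established, finite generation is immediate, since each $\gr_{v_j}R$ is finitely generated because $v_j$ is a rescaled $T$-equivariant Koll\'ar component.

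The main obstacle is producing this stabilization. My approach would be twofold. On the one hand, I would use the ACC and MMP machinery in the style of Section \ref{sss-highrankcone} to bound the complexity of the models $Y_j$, extracting divisors whose log discrepancies $A_{(X,D)}(v_j)$ and volumes $\vol(v_j)$ are uniformly controlled by $A_{(X,D)}(v)$ and $\hvol(v)$. On the other hand, I would invoke boundedness of K-semistable log Fano cones with $\hvol$ bounded below: since the $v_j$ are near-minimizers, the induced degenerations $(X^{(j)}_0,D^{(j)}_0,\xi^{(j)})$ form a family of almost K-semistable log Fano cones of bounded normalized volume and bounded log discrepancy, which, up to passing to a subsequence, should fall into finitely many deformation types. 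Combining a generic limit construction as in the proof of Theorem \ref{t-existence} with the $T$-equivariant Reeb cone analysis of Theorem \ref{t-Tcano} would then yield a common degeneration $X_0$ to which the $X^{(j)}_0$ all specialize, and an argument as in the proof of Theorem \ref{t-high} would identify this $X_0$ with $\spec(\gr_v R)$.

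The hard part will be the boundedness of K-semistable Fano cones in the irregular, higher rational rank setting with a moving torus and varying Reeb vector. For rational rank one, this reduces via Theorem \ref{t-rankonecone} to boundedness of K-semistable log Fano varieties of bounded volume, for which substantial results are available. In higher rational rank, however, $\xi^{(j)}\in\ft^+_\bR$ can be irrational and moves with $j$, and a sufficiently uniform equivariant boundedness statement for the pairs $(X^{(j)}_0,D^{(j)}_0,\xi^{(j)})$ does not appear to be available; producing such control, with enough uniformity as $\xi^{(j)}\to\xi_v$, is the crux of the problem. Once it is in place, closing the loop via Theorem \ref{t-high} would furthermore produce the K-semistable Fano cone structure predicted by the Stable Degeneration Conjecture and complete the proof.
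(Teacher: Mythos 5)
This statement is one of the two parts of the Stable Degeneration Conjecture \ref{conj-local} that the paper explicitly lists as still open; the paper offers no proof of it, so your proposal has to stand on its own, and as written it is a program rather than a proof, with the decisive steps missing. The central gap is the ``stabilization'' step: you want to show that $\gr_{v_j}R$ is eventually constant and canonically isomorphic to $\gr_v R$, and then harvest finite generation from the Koll\'ar-component approximants $v_j$. But this is essentially a restatement of the conjecture itself. In the paper's framework the implication runs the other way: it is only \emph{assuming} $\gr_v(R)$ finitely generated that one gets $\gr_v(R)\cong\gr_{v_i}(R)$ for nearby rational $v_i$ in the simplex $\sigma_{\eta(Z)}$; without that hypothesis the valuative ideals of $v$ are genuinely transcendental data, and finite generation of each $\gr_{v_j}R$ says nothing about the limit (finite generation is not preserved under such limits in general, which is exactly why the conjecture is hard). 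Likewise, invoking Theorem \ref{t-high} to identify the putative common degeneration $X_0$ with $\spec(\gr_v R)$ is circular, since that theorem takes finite generation of $\gr_v(R)$ as a hypothesis.

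Two further points. First, your argument is conditional on the Quasi-monomial Conjecture, which the paper also lists as open, so even the starting assumption $v\in\QM_\eta(Y,E)$ is not free. Second, the boundedness input you propose --- a uniform, $T$-equivariant boundedness statement for (almost) K-semistable log Fano cones with irrational, varying Reeb vectors $\xi^{(j)}\to\xi_v$ --- is, as you yourself note, not available, and the approximants coming from Proposition \ref{p-highmodel} and Lemma \ref{l-dmkollar} are only near-minimizers, hence not K-semistable, so even a boundedness theorem for K-semistable cones would not apply to them directly without an additional ``almost K-semistable implies bounded'' statement that is nowhere in reach of the techniques surveyed here. So the proposal correctly identifies where the difficulty lives, but it does not supply the new idea needed to cross it; the generic limit construction of Theorem \ref{t-existence} produces a graded sequence of ideals, not a finitely generated degeneration, and cannot substitute for the missing finiteness.
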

Due to the fundamental role of the stable degeneration conjecture, it implies many other interesting properties. We discuss a number of special cases or consequences, with the hope that some of them might be solved first. 

One interesting consequence of the uniqueness of the minimizer is the following
\begin{conj}[Group action]\label{g-equiva}
If  there is a group $G$ acting on the klt singularity $x\in (X, D)$ such that $x$ is a fixed point, then 
there exists a $G$-invariant minimizer. 
\end{conj}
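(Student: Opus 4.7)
The plan is to exploit the $G$-invariance of $\hvol_{(X,D),x}$ together with the structural results on minimizers: the $G$-action sends valuations in $\Val_{X,x}$ to valuations in $\Val_{X,x}$ and preserves $\hvol$, so the set $\mathcal{M}\subset \Val_{X,x}$ of minimizers is $G$-invariant, and the task reduces to finding a $G$-fixed point inside $\mathcal{M}$.

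A clean conditional argument proceeds via uniqueness. If one knew that the minimizer is unique up to rescaling (as predicted by the Stable Degeneration Conjecture, and already established for torus-equivariant quasi-monomial minimizers by Theorem \ref{t-high}(b) under the finite generation hypothesis), then for any $g\in G$ the pullback $g^*v$ would be another minimizer, so $g^*v=\lambda(g)\,v$ for some $\lambda(g)\in\bR_{>0}$; but $g$ is an automorphism of $(X,D)$, so $A_{(X,D)}(g^*v)=A_{(X,D)}(v)$, which under the rescaling property $A(\lambda v)=\lambda A(v)$ forces $\lambda(g)=1$. Thus $v$ is automatically $G$-invariant.

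Unconditionally, for $G$ reductive I would proceed in three steps. First, establish the $G$-equivariant analogue of \eqref{e-kol},
$$\hvol(x,X,D)=\inf\{\hvol(\ord_S):S\text{ a }G\text{-equivariant Koll\'ar component}\},$$
by combining (i) restriction of the infimum in Theorem \ref{thm:liueq} to $G$-invariant $\fm_x$-primary ideals, via degeneration to the initial ideal in a $G$-equivariant embedding (generalizing the torus case sketched before \eqref{e-Tkollar} using Luna's slice theorem and a Reynolds operator), with (ii) the dlt modification construction of \cite{LX16} run $G$-equivariantly via equivariant MMP. Second, apply Blum's argument (proof of Theorem \ref{t-existence}) to a sequence of $G$-equivariant Koll\'ar components $S_i$ with $\hvol(\ord_{S_i})\to \hvol(x,X,D)$: the graded sequences $\fa_\bullet(\ord_{S_i})$ are $G$-invariant, the asymptotic multiplicity bounds of Proposition \ref{p-uniform} respect $G$, and the generic-limit construction produces a $G$-invariant limit $\tilde{\fa}_\bullet$ with $\lct(\tilde{\fa}_\bullet)^n\mult(\tilde{\fa}_\bullet)=\hvol(x,X,D)$. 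Third, the locus of valuations computing $\lct(\tilde{\fa}_\bullet)$ is a nonempty $G$-invariant rational polyhedral subcomplex of $\QM(Y,E)$ for some $G$-equivariant log smooth model $(Y,E)$ over $X$; a $G$-fixed point therein — produced by barycentric averaging when $G$ is finite, or by a standard fixed-point theorem on a contractible polytope when $G$ is reductive — is the desired $G$-invariant minimizer.

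The hardest step will be making the generic-limit construction of \cite[Section 5]{Blu18} genuinely $G$-equivariant: that construction depends on non-canonical choices (ultrafilter-type limits over generic points of Hilbert-type parameter schemes) that must be made compatibly with the $G$-action, and one must verify that $G$-invariance of the input sequence passes to the limit $\tilde{\fa}_\bullet$. A secondary subtlety is that the restriction of the infimum to $G$-invariant ideals uses a Reynolds operator, which confines the argument to linearly reductive $G$; for a general algebraic group the conjecture would need a genuinely different idea, and it is plausible that a clean statement requires the reductivity hypothesis.
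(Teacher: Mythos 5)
You should first note that the statement you are proving is presented in the paper as an open conjecture (Conjecture \ref{g-equiva}): the paper gives no proof, only the remarks that the conjecture would follow from the uniqueness part of the Stable Degeneration Conjecture, and that the one established case is $G=T$ a torus, obtained by combining Blum's argument with degeneration of ideals to initial ideals (plus the fact from \cite{LX17} that quasi-monomial minimizers are automatically $T$-invariant). Your conditional argument via uniqueness is precisely the paper's observation and is correct as far as it goes, but it proves nothing unconditionally, since uniqueness of the minimizer is itself conjectural except for quasi-monomial minimizers with finitely generated associated graded ring (Theorem \ref{t-high}(b)).

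Your proposed unconditional argument for reductive $G$ has a genuine gap already at its first step. The reduction of the infimum in Theorem \ref{thm:liueq} to $G$-invariant $\fm_x$-primary ideals is a torus-specific trick: one degenerates an arbitrary ideal to its initial ideal with respect to a one-parameter subgroup, which preserves the colength while the log canonical threshold can only decrease. For a finite or non-toric reductive group there is no analogue of this degeneration, and a Reynolds operator does not convert an arbitrary $\fm_x$-primary ideal into a $G$-invariant one with controlled $\lct(\cdot)^n\mult(\cdot)$ (neither $\bigcap_{g\in G}g^*\fa$ nor $\prod_{g\in G}g^*\fa$ does the job: the former inflates the multiplicity, the latter does not control the threshold). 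The paper says explicitly that the analogue of \eqref{e-Tkollar} is not known for finite or general reductive groups, and indeed the finite group case already implies the open finite degree formula (Conjecture \ref{c-fdf}), so this step cannot be treated as routine equivariantization. Your third step has a second gap of the same nature: extracting a $G$-fixed point from the locus of valuations computing $\lct(\tilde\fa_\bullet)$ by barycentric averaging presupposes that this locus is convex in a simplex of quasi-monomial valuations, or at least that convex combinations of minimizers remain minimizers; convexity of $\hvol$ on such simplices is itself posed as an open question in Section \ref{s-ques}, so the averaging is unjustified. In short, the conditional part of your proposal reproduces the paper's remark, while the unconditional part does not close the conjecture.
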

Applying  this conjecture to a cone singularity, it implies that to test the K-semistability of a log Fano $(S,B)$ with a $G$-action, we only need to test on $G$-equivariant test configurations, a fact known for a Fano manifold $X$ and $G$-reductive. 

There are two special cases naturally appearing in contexts. The first one is that when $G$ is a torus group $T$.  It follows the argument in \cite{Blu18} and the techniques of degenerating ideals to their initials, that there is a $T$-equivariant minimizer. This is the philosophy behind Section \ref{ss-equiva}.   It also follows from \cite{LX17} that any quasi-monomial minimizer is $T$-equivariant.

A more challenging case is when $G$ is a finite group. Indeed, Conjecture \ref{g-equiva} for finite group $G$ implies the following finite degree formula.

\begin{conj}[Finite degree formula]\label{c-fdf}
 If $\pi\colon (y\in Y,D')\to (x\in X,D)$ is a dominant finite morphism between klt singularities, such that $K_Y+D'=\pi^*(K_X+D)$, then
$$\deg(\pi)\cdot \hvol(x, X,D)=\hvol(y, Y,D').$$
\end{conj}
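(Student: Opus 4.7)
\medskip

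\noindent\textbf{Proof proposal.}
The plan is to split the conjectured equality into two inequalities. The upper bound $\hvol(y,Y,D') \le \deg(\pi)\cdot\hvol(x,X,D)$ will follow from Theorem \ref{thm:liueq} by pulling back ideals, while the reverse inequality requires a Galois reduction combined with the equivariant minimiser framework of Conjecture \ref{g-equiva}.

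For the upper bound, given any $\fm_x$-primary ideal $\fa\subset\cO_{X,x}$, set $\fb:=\fa\cdot\cO_{Y,y}$, which is $\fm_y$-primary. The crepant hypothesis $K_Y+D'=\pi^*(K_X+D)$ implies that for any prime divisor $E$ over $Y$ mapping to a prime divisor $F$ over $X$ with ramification index $e$, we have both $A_{(Y,D')}(E)=e\cdot A_{(X,D)}(F)$ and $\ord_E(\fb)=e\cdot\ord_F(\fa)$. Taking the ratio and the infimum over $E$ gives $\lct_{(Y,D')}(\fb)=\lct_{(X,D)}(\fa)$. Combined with the standard colength identity $\mult_Y(\fb)=\deg(\pi)\cdot\mult_X(\fa)$ over a finite morphism, we obtain $\lct(\fb)^n\mult(\fb)=\deg(\pi)\cdot\lct(\fa)^n\mult(\fa)$. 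Passing to graded sequences of ideals and invoking Theorem \ref{thm:liueq} yields the upper bound.

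For the reverse inequality, I would first factor $\pi$ through the Galois closure $\tilde\pi:Z\to X$ with group $G$; by the composition law for finite crepant covers (the formula for two of the three maps $Z\to Y$, $Z\to X$, $Y\to X$ implies it for the third), it suffices to handle the case when $\pi$ itself is Galois with group $G$. Apply Conjecture \ref{g-equiva} to produce a $G$-invariant minimiser $w$ of $\hvol_{(Y,D'),y}$; its restriction $v:=w|_{K(X)}$ is a valuation on $X$ centered at $x$. Letting $e$ denote the ramification index of $w$ over $v$, the crepant condition gives $A_{(Y,D')}(w)=e\cdot A_{(X,D)}(v)$. Moreover, $G$-invariance combined with faithfully flat descent of $G$-invariant ideals along $\pi$ yields the key identity $\fa_{em}(w)=\fa_m(v)\cdot\cO_Y$ for all $m\in\bN$, whence colength multiplicativity gives $\vol_Y(w)=\deg(\pi)\cdot e^{-n}\cdot\vol_X(v)$. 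Multiplying the two relations,
\[
\hvol(w)=A_{(Y,D')}(w)^n\vol_Y(w)=\deg(\pi)\cdot\hvol(v)\ge\deg(\pi)\cdot\hvol(x,X,D),
\]
completing the desired equality when combined with the upper bound.

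The principal obstacle is Conjecture \ref{g-equiva} itself for a finite group $G$: unlike the torus case, which is handled via initial ideal degenerations in \cite{Blu18}, a finite group action has no such averaging mechanism at the level of valuations. A natural route would be to combine the Koll\'ar component characterisation \eqref{e-kol} with $G$-equivariant MMP, arguing that every Koll\'ar component over $y$ can be refined or replaced by a $G$-equivariant one whose normalised volume is no larger, and then extracting a limit using the generic limit construction from the proof of Theorem \ref{t-existence} in a $G$-equivariant manner. A secondary obstacle is the descent identity $\fa_{em}(w)=\fa_m(v)\cdot\cO_Y$, which is transparent for divisorial or quasi-monomial $w$ but for a general minimiser implicitly requires the quasi-monomial part of the stable degeneration conjecture to exclude pathological valuations; if $w$ is known to be quasi-monomial then the identity can be checked on a log smooth model that dominates $\pi$, where the verification is local and toroidal.
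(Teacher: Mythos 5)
Your statement is a conjecture in this paper, not a theorem: the authors do not prove it, and your proposal does not either, for the reason you yourself identify. The paper's entire content concerning Conjecture \ref{c-fdf} is (a) the remark that Conjecture \ref{g-equiva} for a finite group $G$ implies it -- which is precisely your Galois-closure-plus-invariant-minimizer reduction, (b) the weak version, Proposition \ref{prop:weakfinitedeg}, giving $\hvol(x,X)<\hvol(\tilde x,\widetilde X)\le\deg(\pi)\cdot\hvol(x,X)$ for quasi-\'etale covers, and (c) Theorem \ref{t-fdf}, which proves the formula only when $(X,x)$ lies on a Gromov--Hausdorff limit of K\"ahler--Einstein Fano manifolds, by a metric argument (pull back the Ricci-flat K\"ahler cone metric on the metric tangent cone along the quasi-\'etale cover and apply Theorems \ref{thm-RF2K}, \ref{t-SDChigh} and \ref{t-high}). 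So the genuine gap in your write-up is exactly the step you defer to Conjecture \ref{g-equiva} for finite $G$: unlike the torus case, where degeneration to initial ideals produces equivariant objects, no mechanism is known for replacing a Koll\'ar component or a minimizing valuation by a $G$-equivariant one without loss of normalized volume, and your suggested ``$G$-equivariant MMP plus generic limits'' is a restatement of the open problem rather than an argument.

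Even granting Conjecture \ref{g-equiva}, several steps as written would fail and need repair. The descent identity $\fa_{em}(w)=\fa_m(v)\cdot\cO_Y$ is false in general: take $Y=\bC^2$ with $G=\Z/2$ acting by $-1$, $X=Y/G$ the $A_1$ point, and $w$ the monomial valuation with weights $(1,2)$; it is $G$-invariant with $e=1$, but $\fa_1(w)=(x,y)$ while $\bigl(\fa_1(w)\cap\cO_X\bigr)\cdot\cO_Y=(x^2,xy,y^2)$. What is true, and all you need, is the asymptotic colength relation $\vol_Y(w)=\deg(\pi)\cdot\vol_X\bigl(w|_{K(X)}\bigr)$, which follows from an isotypic-decomposition/rank argument for the $\cO_X$-module $\cO_Y$ rather than from flat descent of invariant ideals (note also that for the honest restriction $v=w|_{K(X)}$ the correct relations are $A_{(Y,D')}(w)=A_{(X,D)}(v)$ with no factor $e$; your $e$'s cancel only because of a rescaling convention you never fix). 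In the upper bound, $\mult(\fa\cdot\cO_{Y,y})=\deg(\pi)\mult(\fa)$ fails when $\pi^{-1}(x)$ has more than one point -- only ``$\le$'' holds at $y$, which still suffices -- and $\lct_y(\fa\cdot\cO_{Y,y})\le\lct_x(\fa)$ requires lifting divisorial valuations centered at $x$ to ones centered at $y$. Finally, the Galois-closure reduction is not free: the crepant pullback of $K_X+D$ to the Galois closure need not have effective boundary, so the resulting germ may only be sub-klt and lies outside the framework in which $\hvol$, Theorem \ref{thm:liueq} and the existence of minimizers are established in this paper.
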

This is useful when we want to bound the klt singularities $x\in (X,D)$ with a large volume. 
\begin{thm}[{\cite{LX17}}]\label{t-fdf}  
Conjecture \ref{c-fdf} is true  when $(X, x)$ is on a Gromov-Hausdorff limit of K\"{a}hler-Einstein Fano manifolds. 
\end{thm}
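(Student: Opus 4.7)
The strategy is to convert the algebraic identity into a metric-geometric one via Theorem \ref{thm-hvol2Theta}, which asserts $\hvol(x,X) = n^n \Theta(x,X)$ for klt singularities lying on a Gromov--Hausdorff limit of K\"ahler--Einstein Fano manifolds. Since $\pi$ is quasi-\'etale, the boundaries satisfy $D = D' = 0$, so the conjecture reduces to showing $\hvol(y,Y) = \deg(\pi) \cdot \hvol(x,X)$. By two applications of Theorem \ref{thm-hvol2Theta}, this in turn reduces to the multiplicativity of volume densities $\Theta(y,Y) = \deg(\pi) \cdot \Theta(x,X)$, together with the assertion that $(Y,y)$ itself fits into the framework where $\hvol = n^n \Theta$ applies.

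To place $(Y,y)$ on a GH limit, let $(M_k, g_k) \to (X, d_\infty)$ be the approximating sequence of KE Fanos. After replacing $\pi$ by its Galois closure if necessary, one lifts it to a sequence of quasi-\'etale covers $N_k \to M_k$ for $k \gg 0$, using that quasi-\'etale covers are controlled by representations of the local fundamental group of the regular locus, which is stable under small deformation. Since quasi-\'etale covers preserve the Einstein condition, each $(N_k, h_k)$ is again a KE Fano manifold, with volume bounded by $(-K_{N_k})^n = \deg(\pi) \cdot (-K_{M_k})^n$. Gromov's compactness then yields a subsequential pointed GH limit, which one identifies algebraically with $Y$ near $y$ using the Donaldson--Sun convergence theory applied to $(N_k, h_k)$ and the existing cover structure $N_k \to M_k$. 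Consequently $\hvol(y,Y) = n^n \Theta(y,Y)$.

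For the volume-density identity, since $\pi$ is an isometric \'etale cover outside a codimension-two set, and (after localization) $y$ is the unique set-theoretic preimage of $x$, one has ${\rm Vol}(B_r(y)) = \deg(\pi) \cdot {\rm Vol}(B_r(x))$ for all sufficiently small $r > 0$, where balls on the left are taken with respect to the pulled-back metric. Dividing by $r^{2n}{\rm Vol}(B_1(\underline{0}))$ and letting $r \to 0$ gives $\Theta(y,Y) = \deg(\pi) \cdot \Theta(x,X)$, which combined with the two applications of Theorem \ref{thm-hvol2Theta} completes the proof.

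The main obstacle is the lifting step: promoting the single algebraic quasi-\'etale cover $Y \to X$ to a coherent sequence of quasi-\'etale covers $N_k \to M_k$ of the approximating KE Fanos in a way compatible with GH convergence, and then matching the resulting analytic GH limit with the given algebraic variety $Y$ near $y$ rather than some other klt germ sharing the same local algebraic structure. This fusion of deformation theory of \'etale fundamental groups with the Cheeger--Colding--Donaldson--Sun structure theory for GH limits is the key technical input; once in place, the metric comparison is immediate from the codimension-two \'etaleness of $\pi$.
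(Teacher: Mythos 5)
Your reduction to the quasi-\'etale case with $D=D'=0$ is correct, but the pivotal step---realizing $(Y,y)$ itself on a Gromov--Hausdorff limit of K\"ahler--Einstein Fano manifolds by lifting $\pi$ to quasi-\'etale covers $N_k\to M_k$ of the approximating manifolds---fails. The $M_k$ are smooth Fano manifolds, hence simply connected, so they admit no nontrivial (quasi-)\'etale covers at all; more fundamentally, $\pi$ is only a germ-level cover of the singular point of the limit, governed by the local fundamental group of the punctured germ at $x$, and this datum is invisible on the smooth approximants, where nearby points have trivial local fundamental group. No deformation-theoretic mechanism produces the covers $N_k$, so you cannot apply Theorem \ref{thm-hvol2Theta} to $(y,Y)$; the identity $\hvol(y,Y)=n^n\,\Theta(y,Y)$, on which your whole argument rests (and for which $Y$ would even need a canonical metric near $y$), is precisely the point that requires proof.

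The paper's argument avoids ever putting a metric on $Y$ near $y$. The quasi-\'etale $\pi$ induces quasi-\'etale morphisms $\pi_W\colon W_Y\to W$ and $\pi_C\colon C_Y\to C$ along Donaldson--Sun's two-step degeneration of $(X,x)$. The Ricci-flat K\"ahler cone potential $r^2$ on $C$ pulls back under $\pi_C$ to a weak Ricci-flat K\"ahler cone metric on $C_Y$; by Theorems \ref{thm-RF2K}, \ref{t-SDChigh} and \ref{t-high}, the associated Reeb vector fields give minimizers of $\hvol_X$ and $\hvol_Y$, hence $\hvol(x,X)=\hvol(o_C,C)$ and $\hvol(y,Y)=\hvol(o_{C_Y},C_Y)$, and the degree formula reduces to the multiplicativity $\hvol(o_{C_Y},C_Y)=\deg(\pi_C)\cdot\hvol(o_C,C)$ for the cone covers, a direct volume computation requiring no GH realization of $Y$. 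Your volume-density comparison is essentially this last step in disguise, but it only becomes available after the algebraic reduction to the tangent cones, which your proposal lacks.
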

\begin{proof}
Let $\pi=\pi_X: (Y,y)\rightarrow (X,x)$ be a quasi-\'{e}tale morphism, i.e. $\pi_X$ is \'{e}tale in codimension one. 
Then $\pi_X$ induces a quasi-\'{e}tale morphism along the 2-step degeneration of $X$.

\begin{equation}
\xymatrix@1 @R=1.2pc @C=1pc
{
Y \ar@{~>}_{}[r]
 \ar^{\pi_X} [d] & W_Y \ar@{~>}_{}[r] \ar^{\pi_W}[d] &  C_Y  \ar^{\pi_C}[d] \\
X \ar@{~>}_{}[r] & W \ar@{~>}_{}[r] & C
}
\end{equation}
We can use the above diagram to prove the degree multiplication formula. Roughly speaking, because $C$ admits a Ricci-flat K\"{a}hler cone metric $\omega_C$ with radius function $r^2$ and $\pi_C$ is quasi-\'{e}tale, we can pull back it to get $\pi_C^*r^2$ which is also a potential for a weak Ricci-flat K\"{a}hler cone metric $\omega_{C_Y}$. By Theorem \ref{thm-RF2K}, Theorem \ref{t-SDChigh} and Theorem \ref{t-high}, we know that the Reeb vecotor field associated to $\omega_C$ (resp. $\omega_{C_Y}$) induces minimizing valuations of $\hvol_{X}$ (resp. $\hvol_Y$).
So we get 
\begin{eqnarray*}
\hvol(y, Y)&=&\hvol(o_{C_Y}, C_Y)=\deg(\pi_C)\cdot \hvol(o_C, C)=\deg(\pi_C) \cdot \hvol(x, X).
\end{eqnarray*}
\end{proof}

Another consequence of the stable degeneration conjecture is the following strengthening of Theorem \ref{t-lower}. 

\begin{conj}\label{c-constru}
Let $\pi:(\cX,\cD)\to T$ together with a section $t\in T\mapsto x_t\in\cX_t$
 be a $\bQ$-Gorenstein flat family of klt singularities. Then
 the function $t\mapsto \hvol(x_t,\cX_t,\cD_t)$  is construtible with respect to the Zariski topology.
\end{conj}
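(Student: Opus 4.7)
The plan is to reduce to generic constancy via Noetherian induction and then carry out a family version of the stable degeneration construction. By Noetherian induction on $T$, constructibility follows once we show: for every irreducible closed subset $T_0\subset T$ with generic point $\eta$, the function $t\mapsto \hvol(x_t,\cX_t,\cD_t)$ is constant on a dense open subset $U\ni\eta$. Granting this, each $T_0$ stratifies into a constant locus and a proper closed subset on which induction applies, so $\hvol$ takes finitely many values with locally closed level sets. Theorem \ref{t-lower} serves as a consistency check: the generic value on each $T_0$ must be the maximum, with strictly smaller values confined to proper closed subsets.

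For the generic constancy I would proceed as follows. By Theorem \ref{t-existence} there exists a minimizer $v_\eta\in\Val_{\cX_\eta,x_\eta}$; assuming Conjecture \ref{conj-local} for $x_\eta\in(\cX_\eta,\cD_\eta)$, the minimizer $v_\eta$ is quasi-monomial with finitely generated associated graded ring, and its degeneration $(\cX_{\eta,0},\cD_{\eta,0},\xi_{v_\eta})$ is a K-semistable log Fano cone. Being quasi-monomial, $v_\eta$ is encoded by a log smooth model $(Y_\eta,E_\eta)\to \cX_\eta$ together with a weight vector $\alpha\in\bR_{\geq 0}^r$ on the components of $E_\eta$ meeting at a point above $x_\eta$. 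I would spread $(Y_\eta,E_\eta)$ out to a log smooth model $(\cY,\cE)\to \cX_U$ over an open neighbourhood $U\subset T_0$ of $\eta$ and, for each $t\in U$, define $v_t$ by assigning the same weight vector $\alpha$ to the corresponding fiber components of $\cE$. Finite generation of $\gr_{v_\eta}$ spreads out, after possibly shrinking $U$, to a $\bQ$-Gorenstein flat family of K-semistable log Fano cones $(\cX^{\mathrm{gr}}_U,\cD^{\mathrm{gr}}_U,\xi)\to U$, realised as central fibres of special test configurations degenerating $(\cX_t,\cD_t)$ along $v_t$.

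On $U$, I then need to see $\hvol(v_t)$ is locally constant and equals $\hvol(x_t,\cX_t,\cD_t)$. The log discrepancy $A_{(\cX_t,\cD_t)}(v_t)=\sum_i\alpha_i\, A_{(\cX_t,\cD_t)}(\ord_{\cE_{i,t}})$ is a linear combination of log discrepancies of prime divisors in a flat family, hence locally constant. Flatness of the associated graded family yields $\vol_{\cX_t,x_t}(v_t)=\vol_{\cX^{\mathrm{gr}}_t, 0}(\xi_{v_t})$, with constancy of this volume following from a Hilbert-series argument on the flat family of Fano cones. Combined with a relative openness of K-semistability for families of log Fano cones, Theorem \ref{t-high} then gives $\hvol(x_t,\cX_t,\cD_t)=\hvol(v_t)$ on a possibly smaller dense open, as required.

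The obstacles are twofold. First, the proof as sketched takes Conjecture \ref{conj-local} as input, and neither the quasi-monomial nor the finite generation part is known in general; in the rank-one (divisorial) subcase the spreading out of a Koll\'ar component from $\eta$ to $U$ is already provided by \cite{BCHM10} and constancy of $A$ and $\vol$ in the resulting family is immediate, so only the openness of K-semistability remains to establish. Second, a suitable openness of K-semistability in families of log Fano cones is not yet recorded in the literature at the generality needed; I expect this can be done by adapting the arguments of \cite{LWX18} and \cite{Oda15} to the cone setting, but it is the main step that requires new technical work. An unconditional proof would instead need to replace the stable degeneration input by a direct argument controlling sequences of Koll\'ar components approximating the generic minimizer, which at present yields only $\varepsilon$-approximation rather than exact constancy on a dense open.
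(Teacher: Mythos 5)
The statement you are asked to prove is not a theorem of the paper but an open conjecture: the paper offers no proof of Conjecture \ref{c-constru}, and explicitly remarks that establishing it would require both the Stable Degeneration Conjecture \ref{conj-local} and the (still unproven) openness of K-semistability. Your proposal is, in substance, exactly that conditional strategy: Noetherian induction reduces constructibility to generic constancy on irreducible closed subsets (with Theorem \ref{t-lower} correctly identifying the generic value as maximal), and the generic constancy is then argued by spreading out a quasi-monomial minimizer, its finitely generated associated graded ring, and the resulting K-semistable Fano cone degeneration over a dense open, invoking Theorem \ref{t-high} together with openness of K-semistability in the cone setting. You honestly flag the two inputs that are missing, and those are precisely the gaps: neither the quasi-monomiality/finite generation of minimizers nor openness of K-semistability for families of log Fano cones was known, so the argument cannot close.

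So the verdict is that your write-up is a reasonable reduction of the conjecture to these two open problems — matching the dependence the authors themselves indicate — but it is not a proof, and no unconditional proof is available to compare it against. Two smaller points to keep in mind if you pursue this: spreading out the valuation by fixing the weight vector $\alpha$ on a spread-out log smooth model does give constancy of $A_{(\cX_t,\cD_t)}(v_t)$, but the identification $\vol_{\cX_t,x_t}(v_t)=\vol_{\cX^{\mathrm{gr}}_t,0}(\xi)$ requires knowing that the associated graded construction itself behaves flatly in the family (i.e.\ that finite generation and the degeneration spread out compatibly), which is a nontrivial strengthening of the pointwise statement in Conjecture \ref{conj-local}; and even in the divisorial (rank-one) case, where Lemma \ref{l-dmkollar} provides a Koll\'ar component, you still need the minimizer of the nearby fibers to be the spread-out component, which again is the openness-of-K-semistability issue rather than something immediate from \cite{BCHM10}.
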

Besides the stable degeneration conjecture, to prove Conjecture \ref{c-constru}, we also need to know the well expected speculation that K-semistability is an open condition. It is also natural to consider the volume of non-closed point. However, the following conjecture says after the right scaling, it does not contribute more information. 

\begin{conj}If a klt pair $(X,D)$ has a non-closed point $\eta$, and let $Z=\overline{\{\eta\}}$ has dimension $d$. Pick a general closed point $x\in Z$, then
$$\hvol(x, X,D)=\hvol(\eta,X,D)\cdot \frac{n^n}{(n-d)^{n-d}}.$$
\end{conj}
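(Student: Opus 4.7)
The plan is first to make sense of $\hvol(\eta,X,D)$ by extending Definition \ref{d-normvol} to valuations centered at $\eta$. Since $\cO_{X,\eta}$ has Krull dimension $n-d$, for $v\in\Val_X$ centered at $\eta$ I set $\fa_m(v)=\{f\in\cO_{X,\eta} : v(f)\ge m\}$, and
\[
\vol_{X,\eta}(v):=\limsup_{m\to\infty}\frac{\ell(\cO_{X,\eta}/\fa_m(v))}{m^{n-d}/(n-d)!},\qquad \hvol_{(X,D),\eta}(v):=A_{(X,D)}(v)^{n-d}\vol_{X,\eta}(v).
\]
The next step is a reduction to a product situation. At a general closed point $x\in Z$, I expect the formal completion $\widehat{\cO}_{X,x}$ to split (possibly after a finite \'etale cover) as $\widehat{\cO}_{X_\eta,\bar{x}}\,\hat\otimes\,\bC[[t_1,\ldots,t_d]]$ compatibly with $D$, where $X_\eta$ is a transverse slice containing $\bar x$ and $\hvol(\bar x,X_\eta,D_\eta)=\hvol(\eta,X,D)$. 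This generic formal equisingularity along $Z$ should follow by combining Theorem \ref{t-lower} with Conjecture \ref{c-constru}, applied to the canonical stratification of $(X,D)$ by the formal isomorphism type of the germ, by picking $x$ in the dense open locus where this stratification is trivial.

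Granting the product reduction, the statement becomes the product formula
\[
\hvol\bigl((y_0,0),\;Y\times\mathbb{A}^d,\;D_Y\times\mathbb{A}^d\bigr)\;=\;\hvol(y_0,Y,D_Y)\cdot\frac{n^n}{(n-d)^{n-d}}
\]
for any $(n-d)$-dimensional klt pair $(Y,D_Y)$ with marked point $y_0$. For the $\le$ direction, given $v\in\Val_{Y,y_0}$ and $\lambda=(\lambda_1,\ldots,\lambda_d)\in\bR_{>0}^d$, I construct a quasi-monomial extension $v_\lambda\in\Val_{Y\times\mathbb{A}^d,(y_0,0)}$ by
\[
v_\lambda\bigl(\textstyle\sum_\alpha g_\alpha(y)\,t^\alpha\bigr)\;=\;\min_\alpha\bigl(v(g_\alpha)+\textstyle\sum_i\lambda_i\alpha_i\bigr).
\]
One computes $A_{(X,D)}(v_\lambda)=A_{(Y,D_Y)}(v)+\sum\lambda_i$ from the quasi-monomial formula in Definition \ref{d-logd}(b), and a Fubini-type expansion
\[
\ell\bigl(\cO_{Y\times\mathbb{A}^d,(y_0,0)}/\fa_m(v_\lambda)\bigr)\;=\;\sum_{\alpha\,:\,\lambda\cdot\alpha<m}\ell\bigl(\cO_{Y,y_0}/\fa_{\lceil m-\lambda\cdot\alpha\rceil}(v)\bigr)
\]
together with the Dirichlet integral $\int_{u\ge 0,\,|u|_1\le 1}(1-|u|_1)^{n-d}\,du=(n-d)!/n!$ yields $\vol(v_\lambda)=\vol_Y(v)/\prod\lambda_i$. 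The scalar function $(A+\sum\lambda_i)^n/\prod\lambda_i$ is minimized at $\lambda_i\equiv A_Y(v)/(n-d)$, giving $\hvol(v_\lambda)=\hvol_Y(v)\cdot n^n/(n-d)^{n-d}$; taking the infimum over $v$ proves the $\le$ inequality.

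For the $\ge$ direction in the product case, I would exploit the natural $T:=\mathbb{G}_m^d$-action on the $\mathbb{A}^d$-factor fixing $(y_0,0)$. By the torus-equivariant method of Section \ref{ss-equiva} (specifically the analogue of \eqref{e-Tkollar} for this torus action), $\hvol((y_0,0),Y\times\mathbb{A}^d,D_Y\times\mathbb{A}^d)$ is computed by $T$-equivariant Koll\'ar components over $(y_0,0)$. Any $T$-invariant valuation on $Y\times\mathbb{A}^d$ centered at $(y_0,0)$ factors in the form $v_\lambda$ above for some $v\in\Val_{Y,y_0}$ and rational $\lambda\in\bQ_{>0}^d$, so the $T$-equivariant infimum coincides precisely with the upper bound constructed above.

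The hardest part, in my view, is the generic formal splitting of $(X,D)$ along $Z$ near a general $x$. Without it, one would need a direct valuation-theoretic comparison between $\Val_{X,x}$ and $\Val_{X,\eta}$, for instance by restricting $w\in\Val_{X,x}$ to $\cO_{X,\eta}$ (appropriately renormalized so it becomes centered at $\eta$) and applying the Izumi-type estimates of Theorem \ref{t-izumi} together with a generic-limit argument as in the proof of Theorem \ref{t-existence}. This seems delicate since an arbitrary $w$ need not interact cleanly with the stratum $Z$, but the model case $X=\bC^n$ with $Z$ a linear subspace, where direct checks give $\hvol(\eta)=(n-d)^{n-d}$ and $\hvol(x)=n^n$, provides a useful consistency check.
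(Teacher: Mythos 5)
The statement you are proving is not a theorem of the paper but one of its open problems: it appears in Section \ref{s-ques} as a conjecture, and the only thing the authors record about it is the one-sided inequality $\hvol(x,X,D)\le \hvol(\eta,X,D)\cdot n^n/(n-d)^{n-d}$, obtained by combining the degeneration argument of \cite{LZ18} (applied directly to an arbitrary valuation centered at $\eta$), not by any product reduction. So there is no proof in the paper to compare against, and a complete argument here would be new; your proposal therefore has to be judged on its own, and it has a genuine gap at its central step.

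The gap is the ``generic formal equisingularity'' reduction. First, it does not follow from Theorem \ref{t-lower} together with Conjecture \ref{c-constru}: those statements concern semicontinuity/constructibility of the single number $\hvol(x_t,\cX_t,D_t)$, and constancy of a numerical invariant along $Z$ gives no formal triviality of the germ of $(X,D)$ along $Z$; moreover Conjecture \ref{c-constru} is itself open (the paper points out it needs the stable degeneration conjecture plus openness of K-semistability), so even granting the implication your proof would be conditional. Second, and more seriously, the asserted splitting $\widehat{\cO}_{X,x}\cong\widehat{\cO}_{X_\eta,\bar x}\,\hat\otimes\,\bC[[t_1,\dots,t_d]]$ at a general closed point of $Z$ is false in general: the transverse singularity type can vary with moduli along $Z$ (e.g.\ a family of cone singularities over log Fano bases with nontrivial moduli), so there need not be any dense open subset of $Z$, even after a finite cover, over which the germ is a formal product; the ``stratification by formal isomorphism type'' you invoke need not be a constructible stratification at all. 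Finally, even where a product structure exists, the identification $\hvol(\eta,X,D)=\hvol(\bar x,X_\eta,D_\eta)$ is asserted rather than proved; $\hvol(\eta,\cdot)$ is an invariant of the local ring $\cO_{X,\eta}$ over the non-algebraically-closed residue field $\bC(Z)$, and comparing it with a transverse slice at a closed point requires a ground-field-change statement that is not in the paper. By contrast, your computation in the genuine product case $(Y\times\mathbb{A}^d, D_Y\times\mathbb{A}^d)$ — the Dirichlet-integral evaluation of $\vol(v_\lambda)$, the optimization giving $n^n/(n-d)^{n-d}$, and the reduction to $T$-invariant valuations via the analogue of \eqref{e-Tkollar} — is essentially sound (modulo verifying $A_{(X,D)}(v_\lambda)=A_{(Y,D_Y)}(v)+\sum_i\lambda_i$ for the valuations that occur), but it only settles the conjecture for product singularities and reproduces, in that case, the inequality the paper already knows in general; it does not reach the genuinely open direction $\hvol(x,X,D)\ge \hvol(\eta,X,D)\cdot n^n/(n-d)^{n-d}$ for a general klt pair.
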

In fact, combining the argument in \cite{LZ18},  for any valuation $v\in \Val_X$ such that its center $Z={\rm Center}_X(v)$ on $X$ is of dimension $d$ and $x\in Z $, denoted by $\eta$ is the generic point of $Z$, one can show that
$$\frac{\hvol_{(X,D),\eta}(v)\cdot n^n}{(n-d)^{n-d}}\ge \hvol(x, X,D).$$ i.e.,
$$\hvol(x, X,D)=\inf_{v}\left\{ \frac{n^n\cdot \hvol_{(X,D),\eta}(v)}{(n-d)^{n-d}}\ | \ x\in Z=\overline{\{\eta\}}={\rm Center}_X(v), \dim(Z)=d \right\}.$$


\subsection{Birational geometry study}
A different invariant attached to a klt singularities, called the minimal log discrepancy has been intensively studied in the minimal model program, though there are still many deep questions unanswered.  We can formulate many similar questions for $\hvol$.

\subsubsection{Inversion of adjunction}
One could look for a theory of the change of the volumes when the klt pair is `close' to a log canonical singularities, using the inversion of adjunction.  We have some results along this line. 

\begin{prop}
 Let $x\in (X,\Delta)$ be an $n$-dimensional klt singularity.
 Let $D$ be a normal $\bQ$-Cartier divisor containing
 $x$ such that $(X,D+\Delta)$ is plt. Denote by $\Delta_D$ the different
 of $\Delta$ on $D$. Then 
 \[
  \lim_{\epsilon\to 0+}\frac{\hvol(x, X,(1-\epsilon)D+\Delta)}{n^n\epsilon}
  =\frac{\hvol(x, D,\Delta_D)}{(n-1)^{n-1}}.
 \]
 
\end{prop}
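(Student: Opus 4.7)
The idea is that as $\epsilon \to 0^{+}$, near-minimizing valuations of $(X,(1-\epsilon)D+\Delta)$ should concentrate on $D$: after rescaling so that the valuation of $D$ is fixed, they become quasi-monomial extensions of minimizers on $(D,\Delta_D)$, which is klt by inversion of adjunction on the plt pair $(X,D+\Delta)$. The plan is to prove matching upper and lower bounds for $\hvol(x,X,(1-\epsilon)D+\Delta)$ via the Koll\'ar component formula \eqref{e-kol}, both of which reduce to the same one-variable optimization $\min_{\mu>0}(A_0+\epsilon\mu)^n/\mu = n^n A_0^{n-1}\epsilon/(n-1)^{n-1}$, attained at $\mu^{\star} = A_0/((n-1)\epsilon)$, with $A_0$ playing the role of the log discrepancy of a Koll\'ar component on $(D,\Delta_D)$.

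For the upper bound, fix a Koll\'ar component $S_D$ of $(D,\Delta_D)$ with $\hvol_D(\ord_{S_D})$ close to $\hvol(x,D,\Delta_D)$. The plt hypothesis allows one to lift it to a birational model $f\colon Y \to X$ simultaneously extracting the strict transform $D_Y$ and a prime divisor $S$ over $x$ with $(Y,D_Y+S)$ log smooth at the generic point $\eta$ of $S \cap D_Y$, such that $S \cap D_Y$ is a prime divisor on $D_Y$ recovering $S_D$ under $f|_{D_Y}\colon D_Y \to D$. For $\mu > 0$ consider the quasi-monomial valuation $v_\mu \in \QM_\eta(Y, D_Y+S)$ normalized so that $v_\mu(D)=\mu$ and $v_\mu$ restricts to $\ord_{S_D}$ on $D$. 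Writing $A_0 := A_{(D,\Delta_D)}(\ord_{S_D})$, adjunction gives $A_{(X,(1-\epsilon)D+\Delta)}(v_\mu) = A_0 + \epsilon\mu$, and filtering $\cO_{X,x}$ by powers of a local equation of $D$ gives $\vol(v_\mu) = \vol_D(\ord_{S_D})/\mu$. Hence $\hvol(v_\mu) = (A_0+\epsilon\mu)^n \vol_D(\ord_{S_D})/\mu$, which the one-variable optimization minimizes to $n^n \epsilon \hvol_D(\ord_{S_D})/(n-1)^{n-1}$; letting $S_D$ approach a minimizer then yields the upper bound.

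For the lower bound, apply \eqref{e-kol} to the klt pair $(X,(1-\epsilon)D+\Delta)$ to write $\hvol(x,X,(1-\epsilon)D+\Delta) = \inf_E \hvol(\ord_E)$ over Koll\'ar components $E$ of that pair. When $E \cap D_Y$ is a prime divisor $E_D$ on $D_Y$, adjunction gives $A_{(D,\Delta_D)}(\ord_{E_D}) = A_{(X,D+\Delta)}(\ord_E) =: A_0$, and the analogous filtration argument gives $\vol_X(\ord_E) = \vol_D(\ord_{E_D})/\nu$ with $\nu := \ord_E(D)$. Thus $\hvol(\ord_E) = (A_0+\epsilon\nu)^n \vol_D(\ord_{E_D})/\nu$, and the one-variable bound $(A_0+\epsilon t)^n/t \ge n^n A_0^{n-1}\epsilon/(n-1)^{n-1}$ yields $\hvol(\ord_E) \ge n^n \epsilon \hvol_D(\ord_{E_D})/(n-1)^{n-1} \ge n^n \epsilon \hvol(x,D,\Delta_D)/(n-1)^{n-1}$. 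Taking the infimum over such $E$ gives the lower bound, in fact valid for each fixed $\epsilon > 0$ and not merely in the limit.

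The main technical obstacle is the irreducibility assumption on $E \cap D_Y$ used in the lower bound: a Koll\'ar component $E$ of the perturbed pair need not render $(Y, E+D_Y)$ plt at their intersection, so $E \cap D_Y$ may a priori decompose as $\sum_i m_i E_{D,i}$. I expect the gap to be closed along one of two routes: (a) show that for $\epsilon \ll 1$ the near-minimizing $E$ automatically satisfies plt-ness of $(Y, E+D_Y)$, forced by the optimal scaling $\nu \sim A_0/((n-1)\epsilon)$ making $E$ very close to $\ord_D$ in the relevant sense; or (b) extend the adjunction and filtration computations to the reducible case, establishing a weighted analogue of the one-variable bound that sums over the components $E_{D,i}$. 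In either case the pathological contributions should be $o(\epsilon)$, yielding the stated limit, and this is the main step a complete proof must address.
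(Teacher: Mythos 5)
Your upper bound follows essentially the same route as the paper's proof: fix a Koll\'ar component $S_D$ over $x\in (D,\Delta_D)$, realize it together with the strict transform of $D$ on a model over $X$, move along the quasi-monomial ray between the two divisors, and optimize the one-variable function $(A_0+\epsilon\mu)^n/\mu$, whose minimum $\frac{n^n}{(n-1)^{n-1}}A_0^{n-1}\epsilon$ produces the right constant. But the two steps you merely assert are exactly where the paper does its work. The existence of the lift --- a model $\mu\colon Y\to X$ extracting a single divisor $E$ with $E|_{\widetilde{D}}=S_D$ and $(Y,\mu_*^{-1}\Delta+\widetilde{D}+E)$ qdlt --- is established in the paper by pulling back the valuation ideals $\fa_m$ of $\ord_{S_D}$ to ideals $\fb_m$ on $X$, computing $\lct(X,D+\Delta;\fb_m)$ by inversion of adjunction, extracting an lct-computing divisor via \cite{BCHM10}, and identifying its restriction with $S_D$ through the uniqueness of the log canonical modification of $(D,\Delta_D+c\cdot\fa_m)$; saying ``the plt hypothesis allows one to lift it'' is not a substitute for this construction. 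Also, your exact identity $\vol(v_\mu)=\vol_D(\ord_{S_D})/\mu$ is more than is needed (and not justified by ``filtering by powers of a local equation of $D$'', which moreover only exists for $rD$ since $D$ is $\bQ$-Cartier); the paper proves only the inequality $\vol(v_\lambda)\le \lambda^{1-n}\vol(\ord_{S_D})$ via the Okounkov body description, and that inequality is all your optimization uses, so this part is repairable.

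The genuine gap is the lower bound. The paper does not argue through Koll\'ar components of the perturbed pair at all: it obtains $\epsilon^{-1}\hvol(x,X,(1-\epsilon)D+\Delta)\ge \frac{n^n}{(n-1)^{n-1}}\hvol(x,D,\Delta_D)$, for every $\epsilon$, from the degeneration argument of \cite{LZ18}. In your version, even in the case you call favorable (irreducible $E\cap D_Y$), both identities you rely on are unjustified: the adjunction $A_{(D,\Delta_D)}(\ord_{E_D})=A_{(X,D+\Delta)}(\ord_E)$ needs a plt-type statement along the model of $E$ that a Koll\'ar component of $(X,(1-\epsilon)D+\Delta)$ has no reason to satisfy, and, more seriously, $\vol_X(\ord_E)=\vol_D(\ord_{E_D})/\ord_E(D)$ has no proof --- restricting the valuation ideals of $\ord_E$ to $D$ only yields containments of ideals, and converting that into a volume comparison in the direction you need is precisely the content of the degeneration (initial ideal) argument you are missing. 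Your proposed repairs (a) and (b) address the irreducibility/plt issue but not this volume comparison, so the lower half of your proof does not go through as written; the correct tool is the restriction/degeneration estimate of \cite{LZ18}, which is how the paper concludes.
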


\begin{proof}
 Using the degeneration argument in \cite{LZ18}, 
 we know that 
 \[
 \epsilon^{-1} \hvol(x, X,(1-\epsilon)D+\Delta)\geq \frac{n^n}{(n-1)^{n-1}}\hvol(x, D,\Delta_D).
 \]
 Hence it suffices to show the reverse inequality
 is true after taking limits. Let us pick an arbitrary
 Koll\'ar component $S$ over $x\in (D,\Delta_D)$ with valuation ideals
 $\fa_m:=\fa_m(\ord_S)$. Choose $m$ sufficiently 
 divisible so that $\fa_{im}=\fa_m^i$ for any $i\in\bN$.
 Then we know that $\lct(D,\Delta_D;\fa_m)=A_X(\ord_S)/m=:c$.
 Let $\fb_m$ be the pull-back ideal of $\fa_m$ on $X$.
 By inversion of adjunction, we have $\lct(X,D+\Delta;\fb_m)=\lct(D,\Delta_D;\fa_m)=c$.

 Let $E$ be an exceptional divisor over $X$ computing
 $\lct(X,D+\Delta;\fb_m)$. Then $E$ is centered at $x\in X$
 since $(X,D+\Delta)$ is plt. For $\epsilon_1>0$ sufficiently small,
 we have that $(X,\Delta+(1-\epsilon_1)(D+c\cdot\fb_m))$
 is a klt pair over which the discrepancy of $E$ is negative.
 Thus \cite{BCHM10} implies that there exists a proper birational model
 $\mu:Y\to X$ which only extracts $E$. Moreover,
 $\mu:Y\to X$ is a log canonical modification of 
 $(X,\Delta+D+c\cdot\fb_m)$.
 Let $\widetilde{D}$ be the normalization of $\mu_*^{-1}D$.
 Then by adjunction, the lifting morphism
 $\tilde{\mu}:\widetilde{D}\to D$ is a log
 canonical (in fact plt) modification of $(D,\Delta_D+c\cdot\fa_m)$.
 Since $\mathrm{Bl}_{\fa_m}D\to D$ provides a model of the Koll\'ar component
 $S$, this is the only log canonical modification of 
 $(D,\Delta_D+c\cdot\fa_m)$. Hence $E|_{\widetilde{D}}=S$ and 
 $(\widetilde{D},\tilde{\mu}_*^{-1}\Delta_D+
 E|_{\widetilde{D}})$ is plt. Then by inversion
 of adjunction, $(Y,\mu_*^{-1}\Delta+\mu_*^{-1}D+E)$
 is qdlt and $\mu_*^{-1}D=\widetilde{D}$ is normal. 
 Note that all the constructions so far are independent of the
 choice of $\epsilon$.
 
 Over the qdlt model $(Y,\mu_*^{-1}\Delta+\mu_*^{-1}D+E)$, we consider a quasi-monomial valuation $v_{\lambda}$
 of weights $1$ and $\lambda$ along divisors $\widetilde{D}$
 and $E$ respectively. By adjunction, we know that $A_{(X,\Delta)}(\ord_E)=
 A_{(D,\Delta_D)}(\ord_S)+\ord_E(D)$. Hence computation shows that
 \[
  A_{(X,\Delta+(1-\epsilon) D)}(v_{\lambda})
  =\lambda A_{(D,\Delta_D)}(\ord_S)+\lambda\epsilon \cdot \ord_E(D)+\epsilon.
 \]
 Then using the  Okounkov body description of the volume (see \cite{LM09, KK12}), we easily see 
 that $\vol(v_\lambda)\leq \lambda^{1-n}\vol(\ord_S)$.
 Hence
\begin{eqnarray*}
  \hvol_{(X,\Delta+(1-\epsilon)D)}(v_\lambda)&  \leq &\lambda^{1-n}((A_{(D,\Delta_D)}(\ord_S)+\epsilon\cdot \ord_E(D))\lambda+\epsilon)^n\vol(\ord_S)\\
  &=:&\phi(\lambda).
 \end{eqnarray*}
 It is easy to see that $\phi(\lambda)$ reaches
 its minimum at 
 $$\lambda_0=\frac{(n-1)\epsilon}{A_{D,\Delta_D}(\ord_S)+\epsilon\cdot \ord_E(D)}.$$
 Hence computation shows
 \[
  \epsilon^{-1}\hvol_{(X,\Delta+(1-\epsilon)D)}(v_{\lambda_0})
  \leq \frac{n^n}{(n-1)^{n-1}}(A_{(D,\Delta_D)}(\ord_S)+\epsilon\cdot \ord_E(D))^{n-1}\vol(\ord_S).
 \]
 Thus
 \[
  \limsup_{\epsilon\to 0}\epsilon^{-1}\hvol(x, X,\Delta+(1-\epsilon)D)
  \leq \frac{n^n}{(n-1)^{n-1}}\hvol_{(D,\Delta_D)}(\ord_S)
 \]
 Since this inequality holds for any Koll\'ar component
 $S$ over $x\in (D,\Delta_D)$, the proof is finished.
\end{proof}

When the center is zero dimensional, we also have

\begin{prop}
 Let $x\in (X,\Delta)$ be a klt singularity. Let $D\geq 0$
 be a $\bQ$-Cartier divisor such that $(X,\Delta+D)$ is log
 canonical with  $\{x\}$ being the minimal non-klt center.
 Then there exists $\epsilon_0>0$ (depending only on
 the coefficient of $\Delta,D$ and $n$) and a quasi-monomial
 valuation $v\in\Val_{X,x}$ such that $v$ computes
 both $\lct(X,\Delta;D)$ and  $\hvol(x,X,\Delta+(1-\epsilon)D)$ for any $0<\epsilon<\epsilon_0$.
 In particular, 
 \[
  \hvol(x, X,\Delta+(1-\epsilon)D)
 =\hvol_{x,(X,\Delta)}(v)\cdot\epsilon^n\textrm{ for any }
 0<\epsilon<\epsilon_0.
 \]
\end{prop}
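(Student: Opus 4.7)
The plan is to (1) produce a quasi-monomial lc place $v^*$ of $(X,\Delta+D)$ that minimizes $\hvol_{(X,\Delta),x}$ over all quasi-monomial lc places of $(X,\Delta+D)$, (2) verify by direct computation the scaling identity $\hvol_{(X,\Delta+(1-\epsilon)D),x}(v^*)=\epsilon^n\hvol_{(X,\Delta),x}(v^*)$, and (3) prove that no other valuation beats $v^*$ once $\epsilon$ is sufficiently small, via a generic-limit argument in the spirit of \cite{Blu18}. The technical heart is (3).

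For (1), by \cite{BCHM10} one extracts simultaneously on some $\bQ$-factorial dlt modification $\mu:Y\to X$ of $(X,\Delta+D)$ all divisorial lc places of $(X,\Delta+D)$; since $\{x\}$ is the minimal non-klt center, they are all centered at $x$. On the associated quasi-monomial simplex $\Sigma=\QM(Y,E)\subset \Val_{X,x}$ every valuation satisfies $v(D)=A_{(X,\Delta)}(v)$. Normalizing to $A_{(X,\Delta)}\equiv 1$ makes $\Sigma$ compact and $\hvol_{(X,\Delta),x}=\vol$ continuous there, so the function attains a minimum value $M$ at some quasi-monomial $v^*\in\Sigma$; enlarging $Y$ (or passing to a Blum-style generic limit if necessary) ensures $M$ is in fact the infimum over all quasi-monomial lc places. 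By construction $v^*$ computes $\lct(X,\Delta;D)=1$. For (2), $v^*(D)=A_{(X,\Delta)}(v^*)$ gives $A_{(X,\Delta+(1-\epsilon)D)}(v^*)=\epsilon A_{(X,\Delta)}(v^*)$, hence $\hvol_{(X,\Delta+(1-\epsilon)D),x}(v^*)=\epsilon^n M$, yielding the upper bound $\hvol(x,X,\Delta+(1-\epsilon)D)\leq \epsilon^n M$.

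For (3), suppose to the contrary there exist $\delta>0$, $\epsilon_k\downarrow 0$, and $w_k\in\Val_{X,x}$ with $\hvol_{(X,\Delta+(1-\epsilon_k)D),x}(w_k)\leq\epsilon_k^n M(1-\delta)$. Rescale so $A_{(X,\Delta)}(w_k)=1$; the lc condition on $(X,\Delta+D)$ forces $s_k:=w_k(D)\in[0,1]$, and writing $s_k=1-\epsilon_k\alpha_k$ with $\alpha_k\geq 0$ the hypothesis becomes
\[
(1+(1-\epsilon_k)\alpha_k)^n\vol(w_k)\leq M(1-\delta).
\]
Since $\vol(w_k)=\hvol_{(X,\Delta),x}(w_k)\geq \hvol(x,X,\Delta)>0$ and $\vol(w_k)\leq M(1-\delta)$, the sequence $\alpha_k$ is uniformly bounded. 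Applying the generic-limit construction of \cite{Blu18} to $(\fa_\bullet(w_k))_k$ produces a limit graded sequence $\tilde{\fa}_\bullet$ of $\fm_x$-primary ideals and a quasi-monomial valuation $w^*\in\Val_{X,x}$ computing $\lct(X,\Delta;\tilde{\fa}_\bullet)$. Tracking a local equation of $D$ through the limit yields $w^*(D)\geq \liminf s_k=1$, while lower semi-continuity of log discrepancy gives $A_{(X,\Delta)}(w^*)\leq 1$; the lc hypothesis $A_{(X,\Delta+D)}(w^*)\geq 0$ then forces both equalities, so $w^*$ is a quasi-monomial lc place of $(X,\Delta+D)$ with $\hvol_{(X,\Delta),x}(w^*)=\vol(w^*)\leq M(1-\delta)<M$, contradicting the minimality of $M$.

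The main obstacle is precisely this generic-limit step: one must simultaneously track log discrepancies (relative to both $\Delta$ and $\Delta+D$), the valuation of $D$, and volume along the limit, and identify the limit $w^*$ as an honest lc place of $(X,\Delta+D)$ rather than an arbitrary quasi-monomial valuation. The uniformity of $\epsilon_0$ claimed in the statement then follows from ACC of log canonical thresholds, which provides a uniform positive lower bound on $A_{(X,\Delta+D)}(w)/A_{(X,\Delta)}(w)$ for valuations failing to be lc places, uniform across all pairs with the prescribed coefficients; this lets one take $\epsilon_0$ depending only on those coefficients and $n$.
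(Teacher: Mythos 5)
Your first half is essentially the paper's: minimize $\vol$ on the dual complex of a dlt modification of $(X,\Delta+D)$ and use $v^*(D)=A_{(X,\Delta)}(v^*)$ to get $A_{(X,\Delta+(1-\epsilon)D)}(v^*)=\epsilon A_{(X,\Delta)}(v^*)$ and the upper bound $\hvol(x,X,\Delta+(1-\epsilon)D)\le\epsilon^nM$ (one slip: lc places of $(X,\Delta+D)$ need not all be centered at $x$, since there may be larger lc centers; one must restrict to the subset $\DR^{\circ}(\Delta^{\dlt})$ of valuations centered at $x$, nonempty exactly because $\{x\}$ is the minimal non-klt center). The genuine gap is your step (3), which is the heart of the proposition. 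First, the logic does not close: the negation of the statement only yields $\epsilon_k\to 0$ and minimizers $w_k$ with $\hvol_{(X,\Delta+(1-\epsilon_k)D),x}(w_k)<\epsilon_k^nM$, with possibly vanishing relative deficit, so your fixed-$\delta$ contradiction hypothesis is strictly weaker than what you must rule out; at best your argument would show $\lim_{\epsilon\to0}\epsilon^{-n}\hvol(x,X,\Delta+(1-\epsilon)D)=M$, i.e.\ an asymptotic statement rather than exact equality on an interval. Second, the generic-limit machinery of \cite{Blu18} compares $\lct(\tilde\fa_\bullet)^n\mult(\tilde\fa_\bullet)$ with $\liminf\lct(\fa_m(w_k))^n\mult(\fa_m(w_k))$; it does not allow you to ``track a local equation of $D$'' nor to assert $w^*(D)\ge\liminf w_k(D)$ and $A_{(X,\Delta)}(w^*)\le 1$ for the limiting valuation --- those are precisely the statements that would need proof and are not part of that construction. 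Third, the uniformity mechanism you invoke is false as stated: there is no uniform positive lower bound on $A_{(X,\Delta+D)}(w)/A_{(X,\Delta)}(w)$ over valuations that are not lc places, because quasi-monomial valuations approaching a point of $\DR(\Delta^{\dlt})$ make this ratio arbitrarily small without ever being lc places.

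The paper closes the gap by a different mechanism, via Kollár components rather than generic limits. By \eqref{e-kol}, $\hvol(x,X,\Delta+(1-\epsilon)D)$ is an infimum over Kollár components $S$ of the perturbed pair; each comes with a plt extraction $\mu:Y\to X$ with $S=\mu^{-1}(x)$, and ACC of log canonical thresholds \cite{HMX14} gives $\epsilon_0$ (depending only on the coefficients and $n$) such that $(Y,\mu_*^{-1}(\Delta+D)+S)$ is log canonical for all $0<\epsilon<\epsilon_0$. Since $K_Y+\mu_*^{-1}(\Delta+D)+S\sim_{\bQ}\mu^*(K_X+\Delta+D)+A_{(X,\Delta+D)}(\ord_S)S$, evaluating log discrepancies of this lc pair on some $v'\in\DR^{\circ}(\Delta^{\dlt})$, which satisfies $A_{(X,\Delta+D)}(v')=0$ and $v'(S)>0$, forces $A_{(X,\Delta+D)}(\ord_S)=0$; hence every such Kollár component rescales into $\DR(\Delta^{\dlt})$, the global infimum reduces to the dual complex, and there the exact scaling $\hvol_{(X,\Delta+(1-\epsilon)D)}(v')=\epsilon^n\vol_X(v')$ yields the equality with $\epsilon^n\hvol_{x,(X,\Delta)}(v)$. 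Some reduction of this kind (restricting the minimization to lc places for small $\epsilon$, with $\epsilon_0$ produced by ACC on the extraction models) is needed; the generic-limit sketch cannot be repaired by the ACC gap statement you propose.
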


\begin{proof}
Let $Y^{\dlt}\to X$ be a dlt modification
  of $(X,\Delta+D)$. Let $K_{Y^{\dlt}}+\Delta^{\dlt}$
  be the log pull back of $K_X+\Delta+D$. Then by \cite{dFKX},
  the dual complex $\DR(\Delta^{\dlt})$ form
  a natural subspace of $\Val_{X,x}^{=1}$. Any divisorial
  valuation $\ord_E$ computing $\lct(X,\Delta;D)$
  corresponds to a rescaling of a valuation in $\DR(\Delta^{\dlt})$.
  Consider the function $\vol_X:\DR(\Delta^{\dlt})\to\bR_{>0}
  \cup\{+\infty\}$. Denote by $\DR^{\circ}(\Delta^{\dlt})$
  the open subset of $\DR(\Delta^{\dlt})$ consisting
  of valuations centered at $x$. Since $\{x\}$ is the minimal
  non-klt center of $(X,\Delta+D)$, we know that $\DR^{\circ}(\Delta^{\dlt})$
  is non-empty.
  By \cite{BFJ14} the function $\vol$ is continuous
  on $\DR(\Delta^{\dlt})$, so we
  can take a $\vol$-minimizing valuation $v\in\DR^{\circ}(\Delta^{\dlt})$.
  Hence  $v$ is a minimizer of $\hvol$ restricted to
  $\DR^{\circ}(\Delta^{\dlt})$.
  
  Assume $S$ is an arbitrary Koll\'ar component over $(X,\Delta+(1-\epsilon)D)$.
  Then we have a birational morphism $\mu: Y\to X$ such that
  $K_Y+\mu_*^{-1}(\Delta+(1-\epsilon)D)+S$ is plt,
  and $\mu$ is an isomorphism away from $x$ with $S=\mu^{-1}(x)$.
  Then by ACC of lct \cite{HMX14}, we know that
  there exists $\epsilon_0$ such that $K_Y+\mu_*^{-1}(\Delta+D)+S$
  is log canonical whenever $0<\epsilon<\epsilon_0$.
  Let $v'$ be an arbitrary divisorial valuation in
  $\DR^{\circ}(\Delta^{\dlt})$. Since $K_Y+\mu_*^{-1}(\Delta+D)+S\sim_{\bQ}
  \mu^*(K_X+\Delta+D)+A_{(X,\Delta+D)}(\ord_S)S$, we have
  \[
   0\leq A_{(Y,\mu_*^{-1}(\Delta+D)+S)}(v')=A_{(X,\Delta+D)}(v')
   -A_{(X,\Delta+D)}(\ord_S)\cdot v'(S).
  \]
  Since $A_{(X,\Delta+D)}(v')=0$ and  $v'(S)>0$ since
  $\{x\}$ is the only lc center, we know that $A_{(X,\Delta+D)}(\ord_S)=0$.
  Thus a rescaling of $\ord_S$ belongs 
  to $\DR(\Delta^{\dlt})$. Then by \cite{LX16} we see
  that 
  \begin{align*}
  \hvol(x,X,\Delta+(1-\epsilon)D)&=\min_{v'\in\DR(\Delta^{\dlt})}\hvol_{(X,\Delta+(1-\epsilon)D)}(v')\\
  &=\epsilon^n\min_{v'\in\DR(\Delta^{\dlt})} \vol_{X}(v')=
  \hvol_{x,(X,\Delta)}(v)\cdot\epsilon^n.
  \end{align*}
\end{proof}

One should be able to solve the following question using the above techniques.

\begin{que}
Let $x\in (X,\Delta)$ be an $n$-dimensional klt singularity. Let $D$ be an effective $\bQ$-Cartier $\bQ$-Weil
divisor through $x$. Let $c=\lct(X,\Delta;D)$,
and let $W$ be the minimal log canonical
center of $(X,\Delta+cD)$ containing $x$. 
By Kawamata's subadjunction,
we have $(K_X+\Delta+cD)|_W=K_W+\Delta_W+J_W$, where $(W,\Delta_W+J_W)$ is a generalized klt
pair. Denote by $k:=\codim_{X}W$, then is it true that
\[
 \lim_{\epsilon\to 0+} \epsilon^{-k}\frac{\hvol(x,X,\Delta+(1-\epsilon)cD)}{n^n}\geq
 \frac{\hvol(w,X,\Delta)}{k^k}\cdot \frac{\hvol(x,W,\Delta_W+J_W)}{(n-k)^{n-k}}
\]
where $w$ is the generic point of $W$ in $X$ and $\hvol(x,W,\Delta_W+J_W)$ is similarly defined as for the usual klt pair case in Definition \ref{d-normvol}?
\end{que}

\subsubsection{Uniform bound}
The following is conjectured in \cite{SS17} (see also \cite{LiuX17}).
\begin{conj}Let $x\in X$ be an $n$-dimensional singular point, then $\hvol(x, X)\le 2(n-1)^n$.
\end{conj}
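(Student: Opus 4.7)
My plan follows the architecture of the three-dimensional proof of Theorem~\ref{thm:3dimhvol}: a finite-cover reduction to Gorenstein canonical singularities, followed by an MMP-based extraction of a crepant exceptional divisor, and finally a global Fano-variety inequality to bound its normalized volume. The expected extremal case is the $n$-dimensional ordinary double point ($A_1$), whose canonical Koll\'ar component is the smooth quadric $Q^{n-1}\subset\mathbb{P}^n$; with Fano index $r=n-1$ and anticanonical degree $(-K_{Q})^{n-1}=2(n-1)^{n-1}$, the cone formula gives $\hvol(\ord_{S_0})=r\cdot(-K_S-B)^{n-1}=2(n-1)^n$.

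First I would reduce to the Gorenstein canonical case. If $x\in X$ is not Gorenstein, let $\pi\colon(\tilde x,\tilde X)\to(x,X)$ be the index-one cover; Proposition~\ref{prop:weakfinitedeg} gives $\hvol(x,X)\le\hvol(\tilde x,\tilde X)/\deg\pi$ with $\deg\pi\ge 2$. If $\tilde X$ is smooth at $\tilde x$, Theorem~\ref{t-smooth} yields $\hvol(x,X)\le n^n/2$, and the elementary inequality $(n/(n-1))^n\le 4$ (valid for all $n\ge 2$, with equality at $n=2$) gives the desired bound; if $\tilde X$ is singular at $\tilde x$, an inductive application of the conjecture to the smaller-index singularity gives $\hvol(x,X)\le 2(n-1)^n/2 < 2(n-1)^n$.

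Next, assume $x\in X$ is Gorenstein canonical. Using BCHM, extract all crepant exceptional divisors on a small $\mathbb{Q}$-factorial model $Y_1\to X$. If no crepant divisor is centered at $x$, then $x\in X$ is (the higher-dimensional analogue of) a compound Du Val singularity by \cite[Thm 5.34]{KM98}, and one argues directly via $\hvol\le\lct(\mathfrak{m}_x)^n\mult(\mathfrak{m}_x)$ using the quadric/ruled structure of generic hyperplane sections. Otherwise, run a $(Y_1,\epsilon E_1)$-MMP over $X$ to produce a single crepant contraction $g\colon Y\to Y'$ with exceptional divisor $E$ over $x$. If $\dim g(E)\ge 1$, the target acquires cDV-type singularities along $g(E)$ and one reduces to a lower-dimensional cDV analysis. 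If $g(E)$ is a point $y'$, then $\hvol(x,X)<\hvol(y',Y')$, and when $Y$ is smooth along $E$ one has $\hvol(\ord_E)=\vol(\ord_E)=(-K_E)^{n-1}$, with $E$ an $(n-1)$-dimensional Gorenstein Fano variety (possibly non-normal); if $Y$ is singular along $E$, the singular locus of $Y$ near $E$ is again cDV, forcing an inductive descent.

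The main obstacle is a second-order Fujita-type inequality: for every K-semistable $(n-1)$-dimensional log Fano pair $(S,B)$ not isomorphic to $(\mathbb{P}^{n-1},0)$, $(-K_S-B)^{n-1}\le 2(n-1)^{n-1}$, conjecturally with equality characterizing the quadric $Q^{n-1}$. Combined with the Kobayashi--Ochiai-type bound that the Fano index $r$ of $(S,B)$ satisfies $r\le n-1$ whenever the cone $C(S,B;-\tfrac{1}{r}(K_S+B))$ is singular, and with the cone formula $\hvol(\ord_{S_0})=r\cdot(-K_S-B)^{n-1}$, one obtains $\hvol\le(n-1)\cdot 2(n-1)^{n-1}=2(n-1)^n$; passing from an arbitrary klt germ to the cone case uses the stable degeneration conjecture (Conjecture~\ref{conj-local}, established under the quasi-monomial finite-generation hypothesis in Theorem~\ref{t-high}). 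The first-order Fujita--Liu inequality $(-K)^{n-1}\le n^{n-1}$ (Theorem~\ref{thm:liuvolcomp}) pins down the extremal at $\mathbb{P}^{n-1}$, but the second-order statement is open in full generality; a $\delta$-invariant approach \`a la Blum--Jonsson and Liu--Zhuang seems most promising in the smooth Fano manifold case, and extending it to the singular log Fano setting, together with the still-missing finite-generation part of the stable degeneration conjecture, is the principal new difficulty.
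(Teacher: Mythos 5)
This statement is not proved in the paper at all: it appears in Section \ref{s-ques} as an open conjecture (attributed to \cite{SS17}), and the only case the paper establishes is $n=3$, namely Theorem \ref{thm:3dimhvol}, whose proof leans heavily on the classification of three-dimensional canonical and terminal singularities. So there is no proof to compare yours against, and your text, read on its own terms, is a research outline rather than a proof — as you yourself acknowledge when you flag the ``second-order Fujita-type inequality'' and the finite-generation half of the Stable Degeneration Conjecture \ref{conj-local} as open.

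The gaps are concrete and substantial. First, the inequality you need — that every K-semistable $(n-1)$-dimensional (log) Fano other than $\mathbb{P}^{n-1}$ has (log) anticanonical volume at most $2(n-1)^{n-1}$ — is, through the cone construction and Theorems \ref{t-rankonecone} and \ref{t-SDChigh}, essentially the cone case of the very conjecture you are trying to prove; reducing the general case to it (and to the still-missing quasi-monomiality and finite-generation parts of Conjecture \ref{conj-local}, only partially available via Theorem \ref{t-high}) trades the conjecture for open statements of comparable strength rather than for known results. Second, the MMP-plus-classification skeleton you transplant from the proof of Theorem \ref{thm:3dimhvol} does not survive in dimension $n\ge 4$: the characterization of Gorenstein canonical points with no crepant divisor centered at them as cDV singularities, and with it the inequality $\lct(\fm_x)\le 4-\mult(\fm_x)$, is specifically three-dimensional (\cite[Theorem 5.34]{KM98}), Gorenstein terminal singularities are unclassified in higher dimension, and Reid's classification of non-normal Gorenstein del Pezzo surfaces \cite{Rei94}, which handles the non-normal exceptional divisor case, has no higher-dimensional analogue; so the ``inductive descent'' steps (reduction to lower-dimensional cDV-type loci, the hyperplane-section argument, and the treatment of a non-normal crepant divisor) are not available. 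The reduction to the Gorenstein case via Proposition \ref{prop:weakfinitedeg} and Theorem \ref{t-smooth} is the one part that does go through verbatim, but by itself it does not advance the problem.
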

The constant $2(n-1)^n$ is the volume of a rational double point.
When $n=3$, it is proved in Theorem \ref{thm:3dimhvol}. The implication to the K-stability question of cubic hypersurfaces as in the argument of Theorem \ref{thm:Kcubic} holds in any dimension. 

We also ask whether the following strong property of the set of local volumes holds.  
\begin{ques}Fix the dimension $n$, and a finite set $I\subset [0,1]$. Is it true that the set ${\rm Vol}^{\rm loc}_{n,I}$ consisting of all possible local volumes of $n$-dimensional klt singularities $x\in (X,D)$ with $(\mbox{coefficients of }D)\subset I$ has the only accumulation point 0? 
\end{ques}

Next we give a comparison between local volumes and minimal
log discrepancies.

\begin{thm}\label{thm:mldhvol}
 Let $x\in (X,\Delta)$ be an $n$-dimensional complex klt singularity.
 Then there exists a neighborhood $U$ of $x\in X$
 such that $(U,\Delta|_U)$ is $(\hvol(x,X,\Delta)/n^n)$-lc.
 Moreover, $\mld(x,X,\Delta)>\hvol(x,X,\Delta)/n^n$.
\end{thm}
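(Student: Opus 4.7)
Both statements reduce to a single pointwise bound: for every klt singularity $y\in(X,\Delta)$ and every prime divisor $E$ over $X$ with center $\{y\}$,
\[
A_{(X,\Delta)}(E) \;>\; \hvol(y,X,\Delta)/n^n.
\]
The ``moreover'' statement is this inequality at $y=x$, while the neighborhood $\epsilon$-lc statement follows by combining it with the lower semicontinuity of $\hvol$ in Theorem \ref{t-lower}.

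The pointwise bound would be proved as follows. Applying Theorem \ref{thm:liueq} to the graded sequence of valuation ideals $\fa_\bullet(\ord_E)$ together with the obvious $\lct(X,\Delta;\fa_\bullet(\ord_E))\leq A_{(X,\Delta)}(E)$, one gets $\hvol(y, X, \Delta) \leq A_{(X,\Delta)}(E)^n\,\vol(\ord_E)$. The Izumi-type estimate of Theorem \ref{t-izumi}, combined with the integrality $\ord_E(\mathfrak m_y)\geq 1$ (valid since $E$ is a prime divisor centered at $y$), yields $\vol(\ord_E)\leq \mult(\mathfrak m_y)/\ord_E(\mathfrak m_y)^n \leq \mult(\mathfrak m_y)$. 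Hence the pointwise bound reduces to the multiplicative inequality
\[
\hvol(y, X, \Delta)^{n-1}\,\mult(\mathfrak m_y) \;\leq\; n^{n(n-1)},
\]
which yields $A_{(X,\Delta)}(E)\geq \hvol(y)/n^{n-1}$, strictly larger than $\hvol(y)/n^n$ for $n\geq 2$. The multiplicative inequality is saturated on smooth points and on cones over K-semistable Fano varieties, and its proof passes through the Koll\'ar-component characterization $\hvol(y, X, \Delta)=\inf_S\hvol(\ord_S)$ of equation \eqref{e-kol}, the adjunction formula $\hvol(\ord_S) = A_{(X,\Delta)}(S)(-K_S-\Delta_S)^{n-1}$ for Koll\'ar components, and the Fujita-type bound $(-K_S-\Delta_S)^{n-1}\leq n^{n-1}$ for the $(n-1)$-dimensional log Fano pair $(S,\Delta_S)$.

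For the globalization: Theorem \ref{t-lower} provides, for any $\delta\in(0,\hvol(x,X,\Delta)(n-1)/n)$, a Zariski neighborhood $U_\delta$ of $x$ on which $\hvol(y, X, \Delta) > \hvol(x, X, \Delta)-\delta$. The pointwise bound $A_{(X,\Delta)}(E)\geq \hvol(y)/n^{n-1}$, combined with $\hvol(y)>\hvol(x)-\delta>\hvol(x)/n$, then yields $A_{(X,\Delta)}(E) > \hvol(x, X, \Delta)/n^n$ uniformly for $E$ centered at a closed point of $U_\delta$. Divisors whose center in $U_\delta$ is positive-dimensional reduce to the closed-point case by specializing to a general closed point of the center; in the degenerate case $E=D$ a component of $\Delta$, one uses the direct computation $A_{(X,\Delta)}(D)=1-\mathrm{coef}_D(\Delta)$ together with a boundary-sensitive strengthening of Theorem \ref{t-smooth} giving $\hvol(x, X, \Delta)\leq n^n(1-\mathrm{coef}_D(\Delta))$ for each component $D$ of $\Delta$ passing through $x$.

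The main obstacle is the multiplicative inequality $\hvol(y, X, \Delta)^{n-1}\mult(\mathfrak m_y)\leq n^{n(n-1)}$: it is not a formal consequence of Fujita and Izumi alone, since the adjunction relation couples $A_{(X,\Delta)}(S)$ to $(-K_S-\Delta_S)^{n-1}$ in such a way that the natural variational argument along a minimizing Koll\'ar component does not immediately close. The likely route is either to exploit that the minimizing Koll\'ar component approaches K-semistability of the log Fano base with maximal anti-canonical volume, or to degenerate $(X,\Delta)$ to its tangent cone at $y$ (whose multiplicity equals $\mult(\mathfrak m_y)$ and whose normalized volume is bounded above by $\hvol(y,X,\Delta)$ by lower semicontinuity under the $\mathfrak m_y$-adic degeneration) and use that the inequality holds as an equality on cones.
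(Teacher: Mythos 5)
Your reduction runs through an inequality that is never established: the ``multiplicative inequality'' $\hvol(y,X,\Delta)^{n-1}\,\mult(\fm_y)\leq n^{n(n-1)}$, which is the entire content of your pointwise bound once you have written $\hvol(y)\leq A_{(X,\Delta)}(E)^n\vol(\ord_E)$ and $\vol(\ord_E)\leq \mult(\fm_y)$. You acknowledge it is not a formal consequence of Fujita plus Izumi, but the two repairs you sketch do not close the gap: (i) the route through the minimizing Koll\'ar component/stable degeneration uses the Stable Degeneration Conjecture \ref{conj-local}, which is open in general, and even granting it you would still need to compare $\mult(\fm_y)$ with the multiplicity of the vertex of the degenerate cone, which is not addressed (the initial ideal ${\bf in}(\fm_y)$ sits inside the maximal ideal of the central fibre, and the resulting multiplicity comparisons go the wrong way); (ii) the $\fm_y$-adic tangent cone of a klt singularity need not be normal, klt, or $\bQ$-Gorenstein, so ``$\hvol$ of the tangent cone'' is not defined, Theorem \ref{t-lower} does not apply to that degeneration, and ``the inequality holds as an equality on cones'' (which you verify only for K-semistable cones via Fujita's bound) is unavailable. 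There are also secondary soft spots: divisors with positive-dimensional center are not handled by ``specializing to a general closed point'' (the log discrepancy belongs to $E$, whose center is the whole positive-dimensional $Z$, not to a divisor centered at that closed point), and the ``boundary-sensitive strengthening of Theorem \ref{t-smooth}'' you postulate is exactly \cite[Theorem 33]{BL18}, i.e.\ an external input rather than something your argument produces.

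For comparison, the paper's proof avoids any multiplicity estimate and any semicontinuity in $y$. Given a divisor $E$ with $x$ in the closure of its center and $a(E;X,\Delta)<0$ (the only divisors at issue, since $\hvol(x,X,\Delta)\leq n^n$), one extracts $E$ alone on a $\bQ$-factorial model $\mu\colon Y\to X$ by \cite[Corollary 1.39]{Kol13}; the crepant pullback pair $(Y,\mu_*^{-1}\Delta-a(E;X,\Delta)E)$ is klt and contains $E$ with coefficient $1-A_{(X,\Delta)}(E)$ through any $y\in\mu^{-1}(x)$. Then the strict monotonicity of $\hvol$ under such crepant birational morphisms (\cite[Corollary 2.11]{LiuX17}, recorded as a Proposition in Section \ref{ss-cubic3}) gives $\hvol(x,X,\Delta)<\hvol(y,Y,\mu_*^{-1}\Delta-a(E;X,\Delta)E)$, and \cite[Theorem 33]{BL18} bounds the latter by $n^nA_{(X,\Delta)}(E)$; components of $\Delta$ through $x$ are handled by \cite[Theorem 33]{BL18} directly. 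If you want to salvage your approach you must either prove the multiplicative inequality independently or replace it by this extraction-plus-monotonicity argument.
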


\begin{proof}
If $x\in X$ is not $\bQ$-factorial then
we may replace $X$ by its $\bQ$-factorial modification
under which the local volume will increase by
\cite[Corollary 2.11]{LiuX17}.
Let $\Delta_i$ be any component
of $\Delta$ containing $x$. Then \cite[Theorem 33]{BL18}
implies that $A_{(X,\Delta)}(\Delta_i)\geq \hvol(x,X,\Delta)/n^n$.
Let $E$ be any exceptional divisor over $X$ such that $x$ is contained
in the Zariski closure of $c_X(E)$ and $a(E;X,\Delta)<0$. 
Then by \cite[Corollary 1.39]{Kol13}, there exists
a proper birational morphism $\mu:Y\to X$ such that $Y$
is normal, $\bQ$-factorial and $E=\mathrm{Ex}(\mu)\supset\mu^{-1}(x)$. 
Since $K_Y+\mu_*^{-1}\Delta-a(E;X,\Delta)E=\mu^*(K_X+\Delta)$, we know that $(Y,\mu_*^{-1}\Delta-a(E;X,\Delta)E)$
is klt. Let $y\in \mu^{-1}(x)$ be a point, then $y$ lies on $E$.
Hence by \cite[Corollary 2.11]{LiuX17} and \cite[Theorem 33]{BL18}
we have 
\[
 \hvol(x,X,\Delta)< \hvol(y,Y,\mu_*^{-1}\Delta-a(E;X,\Delta)E)\leq A_{(X,\Delta)}(E)n^n.
\]
Thus $A_{(X,\Delta)}(E)>\hvol(x,X,\Delta)/n^n$ which finishes the proof.
\end{proof}

Next we will discuss application to boundedness 
generalizing a result by C. Jiang \cite[Theorem 1.6]{Jia17}.

\begin{cor}\label{cor:bdd}
 Let $n$ be a natural number and $c$ a positive
 real number. Then the projective varieties $X$ satisfying the
 following properties:
 \begin{itemize}
  \item $(X,\Delta)$ is a klt pair of dimension $n$
  for some effective $\bQ$-divisor $\Delta$,
  \item $-(K_X+\Delta)$ is nef and big,
  \item $\alpha(X,\Delta)^n(-(K_X+\Delta))^n\geq c$,
 \end{itemize}
 form a bounded family.
\end{cor}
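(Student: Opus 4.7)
The plan is to convert the alpha--volume hypothesis into a uniform lower bound on minimal log discrepancies, and then invoke Birkar's solution of the BAB conjecture. The key new input is Theorem~\ref{thm:mldhvol}, which lets us deduce an $\epsilon$-klt bound from a local volume bound; this is exactly the ingredient that is missing in Jiang's earlier version~\cite{Jia17} and forces the extra $\epsilon$-klt hypothesis there.

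First, I would translate the global alpha hypothesis into a uniform positive lower bound on the local normalized volume at every closed point $x\in X$. Recall Fujita's basis-type comparison $S(v)\leq \tfrac{n}{n+1}T(v)$ between the basis-type threshold and the pseudo-effective threshold of a valuation, which after taking infima yields the now-standard inequality $\delta(X,\Delta)\geq \tfrac{n+1}{n}\alpha(X,\Delta)$. On the other hand, the $\delta$-generalization of Liu's inequality (i.e.\ Theorem~\ref{thm:liuvolcomp} extended to the not-necessarily-K-semistable case by Blum--Jonsson) gives, for any closed point $x\in X$,
\[
\delta(X,\Delta)^n\cdot\bigl(-(K_X+\Delta)\bigr)^n \;\leq\; \Bigl(\tfrac{n+1}{n}\Bigr)^n\hvol(x,X,\Delta).
\]
Chaining these two inequalities, $\hvol(x,X,\Delta)\geq \alpha(X,\Delta)^n\cdot(-(K_X+\Delta))^n\geq c$ for every $x\in X$.

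Next, I would apply Theorem~\ref{thm:mldhvol} pointwise: for every $x\in X$ one gets $\mld(x,X,\Delta)>\hvol(x,X,\Delta)/n^n\geq c/n^n$. Setting $\epsilon:=c/n^n$, this means that $(X,\Delta)$ is uniformly $\epsilon$-klt, with $\epsilon$ depending only on $n$ and $c$. Since $\Delta\geq 0$, the underlying variety $X$ is also $\epsilon$-klt.

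Finally, I would apply Birkar's theorem (the BAB conjecture). Running a $-(K_X+\Delta)$-MMP we reach the anti-canonical model $(X',\Delta')$, which is an $\epsilon$-klt log Fano pair of dimension $n$ with $-(K_{X'}+\Delta')$ ample; BAB asserts that such pairs form a bounded family, and a standard MMP argument transfers this to boundedness of the birational class, in particular of $X$. The main obstacle in this plan is not internal to the argument but rather lies in importing two external statements, the $\delta$-version of Theorem~\ref{thm:liuvolcomp} and BAB itself; once these are granted, Theorem~\ref{thm:mldhvol} is what closes the gap between an alpha bound and an mld bound.
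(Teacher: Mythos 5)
Your main chain is exactly the paper's argument: combine the $\alpha$--$\delta$ comparison and the $\delta$-version of the local--global volume inequality from \cite{BJ17} (Theorems A and D there) to get a uniform lower bound on $\hvol(x,X,\Delta)$ at every closed point, then apply Theorem \ref{thm:mldhvol} to conclude that $(X,\Delta)$ is uniformly $\epsilon$-lc with $\epsilon$ depending only on $n$ and $c$, and finish with Birkar's theorem. (Your sharper use of $\delta\geq\frac{n+1}{n}\alpha$ gives $\epsilon=c/n^n$ rather than the paper's $c/(n+1)^n$; the constant is irrelevant.)

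The one step you should repair is the last one. Birkar's result \cite[Theorem 1.1]{Bir16} is stated precisely for pairs $(X,B)$ that are $\epsilon$-lc with $-(K_X+B)$ \emph{nef and big}, so it applies to $(X,\Delta)$ directly, with no reduction to the ample model -- this is how the paper concludes. Your detour through the anticanonical model $(X',\Delta')$ and the assertion that ``a standard MMP argument transfers this to boundedness of the birational class, in particular of $X$'' is not valid as written: boundedness of one birational model does not bound another (already blow-ups show that a ``birational class'' is never bounded), and recovering $X$ from the bounded Fano model $X'$ requires controlling the crepant partial resolutions $X\to X'$, i.e.\ which divisors of log discrepancy $\leq 1$ are extracted and with what intersection-theoretic data -- that control is exactly the additional content of the nef-and-big form of BAB, not a routine consequence of the ample case. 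So either quote Birkar's theorem in its actual generality, or supply a genuine argument bounding the models $X$ over the bounded family of $X'$; as it stands this final transfer is a gap, though an easily fixable one.
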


\begin{proof}
 By \cite[Theorem A and D]{BJ17} (generalizing \cite{Liu16}), for any closed point
 $x\in X$ we have
 \[
  c\leq \alpha(X,\Delta)^n(-(K_X+\Delta))^n\leq 
  \delta(X,\Delta)^n(-(K_X+\Delta))^n
  \leq \left(1+\frac{1}{n}\right)^n\hvol(x,X,\Delta).
 \]
 Hence Theorem \ref{thm:mldhvol} implies $(X,\Delta)$
 is $(c/(n+1)^n)$-lc. Therefore, the BAB Conjecture
 proved by Birkar in \cite[Theorem 1.1]{Bir16}
 implies the boundedness of $X$.
\end{proof}

\begin{rem}
 In the conditions of Corollary \ref{cor:bdd} if we 
 also assume that the coefficients
 of $\Delta$ are at least $\epsilon$ for any fixed
 $\epsilon\in (0,1)$, then such pairs $(X,\Delta)$
 are log bounded. This partially generalizes \cite[Theorem 1.4]{Che18}.
  Besides, all results should hold for $\bR$-pairs.
\end{rem}

\begin{que}
 Is it true that for any $n$-dimensional klt singularity
 $x\in X$, we have $\mld(x,X)\geq \hvol(x,X)/n^{n-1}$?
\end{que}

\subsection{Miscellaneous  Questions}

\subsubsection{Positive characteristics}

In this section, we consider a variety $X$ over an algebraically
closed field $\bk$ of characteristic $p>0$. From \cite{Har98, HW02}, we know that klt singularities are closely related to strongly $F$-regular singularities
in positive characteristic. Moreover,
log canonical thresholds ($\lct$) correspond to $F$-pure thresholds
($\fpt$) in positive characteristic (see \cite{HW02}).
In spirit of Theorem \ref{thm:liueq}, we define the $F$-volume
of singularities in characteristic $p$ as follows.

\begin{defn}[\cite{Liu18b}]
 Let $X$ be an $n$-dimensional strongly $F$-regular variety over an algebraically
 closed field $\bk$ of positive characteristic. Let $x\in X$
 be a closed point. We define the \emph{$F$-volume}
 of $(x\in X)$ as
 \[
  \Fvol(x,X):=\inf_{\fa\colon\fm_x\textrm{-primary}}
  \fpt(X;\fa)^n\mult(\fa).
 \]
\end{defn}

Similar to \cite{dFEM}, Takagi and Watanabe \cite{TW04}
showed that if $x\in X$ is a smooth point, then $\Fvol(x,X)=n^n$.

Another interesting invariant of a strongly $F$-regular
singularity $x\in X$ is its $F$-signature
$s(x,X)$, see \cite{SVdB97, HL02, Tuc12}. 
In \cite{Liu18b}, we estabilish the following comparison result between
the $F$-volume and the $F$-signature.

\begin{thm}[\cite{Liu18b}]\label{t-com}
 Let $x\in X$ be an $n$-dimensional strongly $F$-regular singularity.
 Then
 \[
  n!\cdot s(x,X)\leq \Fvol(x,X)\leq n^n\min\{1, n!\cdot s(x,X)\}.
 \]
\end{thm}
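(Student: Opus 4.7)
The theorem packages three comparisons: (i) $n! \cdot s(x,X) \le \Fvol(x,X)$, (ii) $\Fvol(x,X) \le n^n$, and (iii) $\Fvol(x,X) \le n^n \cdot n! \cdot s(x,X)$. I will prove them separately.

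For (i), my plan is to establish the pointwise inequality $\fpt(X;\fa)^n \cdot \mult(\fa) \ge n! \cdot s(x,X)$ for every $\fm_x$-primary ideal $\fa$, whereupon taking the infimum gives the result. The argument is a counting argument on the Frobenius decomposition $F^e_* R \cong R^{a_e} \oplus M_e$, where $a_e/p^{en} \to s(x,X)$. Fix $c < \fpt(X;\fa)$; strong $F$-regularity of $(R,\fa^{c})$ implies that for $e \gg 0$ each of the $a_e$ free summands of $F^e_* R$ is realized as $F^e_* r_i$ for some $r_i\in R$ admitting a splitting $\phi_i: F^e_* R \to R$ with $\phi_i(F^e_* r_i)=1$ and $\phi_i(F^e_* \fa^{\lceil c(p^e-1)\rceil})\subseteq \fm_x$. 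A non-trivial linear dependence among the cosets $\{\bar r_i\}$ in $R/\fa^{\lceil c p^e\rceil}$ would contradict the splittings, so these cosets are linearly independent; hence $a_e \le \ell_R(R/\fa^{\lceil cp^e\rceil})$. Dividing by $p^{en}$ and using $\ell_R(R/\fa^m)\sim m^n\,\mult(\fa)/n!$, the limits $e\to\infty$ and then $c\nearrow \fpt(X;\fa)$ yield the pointwise bound.

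For (ii) and (iii), I construct explicit $\fm_x$-primary ideals realizing the stated upper bounds. For (ii), the smooth case is Takagi--Watanabe, where the monomial ideals $(x_1^a,\ldots,x_n^a)$ achieve $\fpt^n \cdot \mult=n^n$; I would take a system of parameters $f_1,\ldots,f_n$ for $R$ and study weighted ideals $(f_1^{a_1},\ldots,f_n^{a_n})$, using Brian\c con--Skoda to bound $\fpt \le \sum 1/a_i$ and Serre's formula to compute $\mult = a_1\cdots a_n \cdot \ell(R/(f_1,\ldots,f_n))$, then optimize via AM--GM to obtain $\fpt^n\mult \le n^n \cdot \ell(R/(f_1,\ldots,f_n))$, refining by a limit construction to absorb the colength factor. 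For (iii), I would work with ideals adapted to the $F$-signature, the natural candidate being the splitting ideal $I_e := \{r\in R : \phi(F^e_* r)\in\fm_x \text{ for all } \phi\in \Hom_R(F^e_* R,R)\}$, which by definition satisfies $\ell_R(R/I_e)/p^{en}\to s(x,X)$. Lech's inequality supplies $\mult(I_e) \le n!\cdot \ell_R(R/I_e)$, and combining with an $F$-pure threshold estimate of order $n/p^e$ yields $\fpt(I_e)^n \cdot \mult(I_e) \to n^n\cdot n! \cdot s(x,X)$ as $e\to\infty$.

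The main obstacle is the $F$-pure threshold bound in (iii): the natural inclusion $\fm_x^{[p^e]}\subseteq I_e$ produces a comparison of thresholds in the \emph{wrong} direction (a lower bound on $\fpt(I_e)$), so the desired upper bound $\fpt(I_e)\lesssim n/p^e$ must be extracted either from a finer containment between $I_e$ and a Frobenius-type ideal, or by replacing $I_e$ with a related ideal constructed directly from the splittings (e.g.\ the intersection of kernels of a maximal free family of splittings, viewed through a Matlis-duality-style identification). A secondary obstacle is the removal of the colength factor $\ell(R/(f_1,\ldots,f_n))$ in (ii), which requires passing to graded sequences or to suitably regularized parameter families rather than a single ideal.
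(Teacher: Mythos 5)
The survey itself states Theorem \ref{t-com} without proof, quoting it from \cite{Liu18b} (in preparation), so your argument can only be judged on its own terms; on those terms it has a genuine gap, and the most serious one is in part (i). You claim that for $c<\fpt(X;\fa)$, strong $F$-regularity of the pair $(R,\fa^c)$ forces \emph{every} one of the $a_e$ free summands of $F^e_*R$ to admit a splitting $\phi_i$ with $\phi_i(F^e_*r_i)=1$ and $\phi_i(F^e_*\fa^{\lceil c(p^e-1)\rceil})\subseteq\fm_x$, and you deduce $a_e\le \ell(R/\fa^{\lceil cp^e\rceil})$. That deduction would be valid if the premise held, but the premise is false: strong $F$-regularity of the pair only produces \emph{some} summands with $\fa^c$-compatible splittings, and their number is the splitting number of the pair, $\ell(R/I_e(\fa^c))\sim s(R,\fa^c)\,p^{en}$, which is in general strictly smaller than $a_e\sim s(R)\,p^{en}$. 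Concretely, for $R=\bk[[x_1,\dots,x_n]]$, $\fa=\fm_x$ and $c=1<n=\fpt(\fm_x)$, your inequality would read $p^{en}=a_e\le \ell(R/\fm_x^{p^e})\approx p^{en}/n!$, which fails for $n\ge 2$. The correct mechanism approaches $\fpt$ from \emph{above}: set $I_e:=\{r\in R:\phi(F^e_*r)\in\fm_x \text{ for all }\phi\in\Hom_R(F^e_*R,R)\}$; since $R$ is $F$-pure, $\fpt(\fa)=\lim_e \frac{1}{p^e}\max\{N:\fa^N\not\subseteq I_e\}$, so for any $c>\fpt(\fa)$ and $e\gg 0$ one has $\fa^{\lceil cp^e\rceil}\subseteq I_e$, while $a_e=\ell(R/I_e)$ (perfect residue field). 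Hence $\ell(R/I_e)\le\ell(R/\fa^{\lceil cp^e\rceil})$, and dividing by $p^{en}$, letting $e\to\infty$ and then $c\searrow\fpt(\fa)$ gives $n!\,s(x,X)\le \fpt(\fa)^n\mult(\fa)$ for every $\fm_x$-primary $\fa$, which is (i).

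Parts (ii) and (iii) are plans rather than proofs, as you partly acknowledge, and the obstacles you flag are real. For (ii), $\mult((f_1^{a_1},\dots,f_n^{a_n}))=a_1\cdots a_n\, e((f_1,\dots,f_n);R)$ (which equals $a_1\cdots a_n\,\ell(R/(f_1,\dots,f_n))$ here since strongly $F$-regular rings are Cohen--Macaulay), and at a singular point every parameter ideal has multiplicity at least $e(\fm_x)\ge 2$; so optimizing exponents or taking limits within this family can only give $\Fvol(x,X)\le n^n e(\fm_x)$, never $n^n$ --- the colength factor is not an artifact that a ``limit construction'' can absorb, and a genuinely different mechanism is needed (e.g.\ a degeneration/semicontinuity argument for $\fpt$ in the spirit of the characteristic-zero proof of Theorem \ref{t-smooth}). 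For (iii), the entire content is the missing estimate $\fpt(I_e)\lesssim n/p^e$: the containment $\fm_x^{[p^e]}\subseteq I_e$ bounds $\fpt(I_e)$ from below only, and for singular $R$ the ideal $I_e$ is strictly larger than $\fm_x^{[p^e]}$, so it is not clear that the splitting ideals themselves have small enough $F$-pure threshold; until that bound (or a replacement family of ideals) is supplied, (iii) is not established.
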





It would be interesting to study the limiting behavior
of $F$-volumes of mod-$p$ reductions
of a klt singularity over characteristic zero when $p$ goes
to infinity.
\begin{conj} Let $x\in (X,\Delta)$ be a klt singularity over characteristic $0$. Let $x_p\in (X_p,\Delta_p)$ be its reduction mod $p\gg 0$,
then 
$$\hvol(x, X,\Delta)=\lim_{p\to \infty}\Fvol(x_p, X_p,\Delta_p).$$ 
\end{conj}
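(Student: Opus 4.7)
The plan is to prove the two inequalities $\limsup_{p\to\infty}\Fvol(x_p,X_p,\Delta_p)\le \hvol(x,X,\Delta)$ and $\liminf_{p\to\infty}\Fvol(x_p,X_p,\Delta_p)\ge \hvol(x,X,\Delta)$ separately. The first is an ``evaluate at a characteristic-zero minimizing ideal'' argument that reduces to the classical convergence $\fpt_p\to\lct$; the second, which is the real difficulty, demands a compactness procedure for transporting approximate $\Fvol$-minimizers from positive characteristic back to characteristic zero.

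For the upper bound, I fix $\epsilon>0$ and, using the ideal formulation in Theorem~\ref{thm:liueq}, pick an $\fm_x$-primary ideal $\fa\subset\cO_{X,x}$ with $\lct(X,\Delta;\fa)^n\mult(\fa)\le \hvol(x,X,\Delta)+\epsilon$. I then spread $(X,\Delta,\fa)$ out over a dense open subset of $\spec\bZ$ and reduce modulo $p$ to obtain $\fa_p\subset\cO_{X_p,x_p}$. For $p\gg 0$, multiplicity is preserved by flatness, so $\mult(\fa_p)=\mult(\fa)$, and by the Hara--Yoshida/Musta\c{t}\u{a}--Takagi--Watanabe comparison, $\fpt(X_p,\Delta_p;\fa_p)\le \lct(X,\Delta;\fa)$ with $\lim_p \fpt(X_p,\Delta_p;\fa_p)=\lct(X,\Delta;\fa)$. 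Since $\Fvol(x_p,X_p,\Delta_p)\le \fpt(\fa_p)^n\mult(\fa_p)$, taking $\limsup$ and then letting $\epsilon\to 0$ concludes this half.

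For the lower bound, for each $p$ I would pick an $\fm_{x_p}$-primary ideal $\fa^{(p)}$ with $\fpt(\fa^{(p)})^n\mult(\fa^{(p)})\le \Fvol(x_p,X_p,\Delta_p)+1/p$, and aim to produce a characteristic-zero ``limit ideal'' $\tilde\fa\subset\cO_{X,x}$ with $\mult(\tilde\fa)\le \liminf_p\mult(\fa^{(p)})$ and $\lct(\tilde\fa)\le \liminf_p\fpt(\fa^{(p)})$; then Theorem~\ref{thm:liueq} yields $\hvol(x,X,\Delta)\le \liminf_p\Fvol(x_p,X_p,\Delta_p)$. Concretely, the steps are: rescale so that $\fpt(\fa^{(p)})$ stays in a fixed compact subinterval of $(0,\infty)$; establish a positive-characteristic Izumi-type estimate uniform in $p$ so that $\fm_{x_p}^N\subset \fa^{(p)}\subset \fm_{x_p}^{\lfloor\delta N\rfloor}$ for $p$-independent constants (the analogue of Theorem~\ref{t-izumi}); and apply a generic-limit construction modeled on \cite[\S 5]{Blu18} over a non-principal ultraproduct of the $\overline{\mathbb{F}}_p$, which by \L{}o\'{s}'s theorem is algebraically closed of characteristic zero and admits the reduction of $(X,\Delta)$.

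The main obstacle will be this last step: securing Izumi-type bounds uniform across characteristics, and checking that $\lct$ of the ultraproduct limit ideal dominates $\liminf_p\fpt(\fa^{(p)})$ (the latter uses that $F$-pure thresholds converge to log canonical thresholds, but only when suitable uniformity is available). A cleaner but heavier alternative is to establish first a positive-characteristic analogue of the existence Theorem~\ref{t-existence} for $\Fvol$, realizing the infimum in each $(X_p,\Delta_p)$ by a valuation or graded filtration, after which the transfer to characteristic zero reduces to the convergence of minimizers and closes the argument via the characteristic-zero theory of Section~\ref{s-def} together with $\fpt_p\to\lct$. At present the $F$-singularity analogues of the asymptotic-invariant machinery of \cite{JM12,BdFFU15} needed for this route are not yet in place, which is why the statement remains conjectural.
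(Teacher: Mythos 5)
This statement is posed in the paper as an open conjecture: no proof is given there, and the surrounding text (e.g.\ the remark that, combined with Theorem \ref{t-com}, it would answer a question on uniform lower bounds for $F$-signatures) makes clear the authors regard it as unresolved. So your proposal cannot be checked against a proof in the paper, and, as you yourself concede in the last sentence, it is not a proof either. The upper-bound half is fine and standard: fixing an $\fm_x$-primary $\fa$ that nearly computes $\hvol(x,X,\Delta)$ via Theorem \ref{thm:liueq}, spreading out, and using $\mult(\fa_p)=\mult(\fa)$ together with $\lim_{p\to\infty}\fpt(X_p,\Delta_p;\fa_p)=\lct(X,\Delta;\fa)$ does give $\limsup_p\Fvol(x_p,X_p,\Delta_p)\le\hvol(x,X,\Delta)$.

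The genuine gap is the entire lower-bound half, which is where the conjecture lives. Two steps in your outline are not available tools but restatements of the difficulty. First, the $p$-uniform Izumi/properness estimate: the constants in Theorem \ref{t-izumi} are produced by a log resolution argument in characteristic zero, and there is no known way to make them uniform in $p$ for the reductions $(X_p,\Delta_p)$; without this, the generic-limit (or ultraproduct) construction of \cite[\S 5]{Blu18} cannot even be set up, since you lose the containments $\fm_{x_p}^N\subset\fa^{(p)}\subset\fm_{x_p}^{\lfloor\delta N\rfloor}$ with $p$-independent $N,\delta$. Second, and more seriously, the inequality $\lct(\tilde\fa)\le\liminf_p\fpt(\fa^{(p)})$ requires comparing log canonical thresholds with $F$-pure thresholds \emph{uniformly over a family of ideals that varies with $p$}; the known convergence $\fpt(\fa_p)\to\lct(\fa)$ applies only to reductions of a single characteristic-zero ideal, and the uniform version you need is precisely the kind of open problem (ACC/uniform comparison of $F$-thresholds) that makes the conjecture hard. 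There is also the unaddressed bookkeeping that the ultraproduct of the $\overline{\mathbb{F}}_p$ is a large algebraically closed field of characteristic zero, not $\bC$, so one must descend the limit ideal and compare $\lct$ and $\hvol$ across base fields. In short: your first half is a correct routine reduction, but the second half identifies rather than fills the missing ingredients, so the statement remains exactly as open as the paper leaves it.
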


\begin{rem}Together with Theorem \ref{t-com}, this will imply that for the reductions $(X_p,\Delta_p)$, the F-signature $s(x_p,X_p,\Delta_p)$ has a uniform lower bound as $p\to \infty$, as asked in \cite[Question 5.9]{CRST16}.
\end{rem}

\subsubsection{Relation to local orbifold Euler numbers}

In \cite{Lan03}, Langer introduced local orbifold Euler numbers for general log canonical surface singularities and used it to prove a Miyaoka-Yau inequality for any log canonical surface. In an attempt to understand Langer's inequality using the K\"{a}hler-Einstein metric on a log canonical surface, 
Borbon-Spotti conjectured recently in \cite{BS17} that the volume densities of the singular K\"{a}hler-Einstein metrics should match Langer's local Euler numbers (at least for log terminal surface singularities). They verified this in special examples by comparing the known values of both sides. 
On the other hand, from Theorem \ref{thm-hvol2Theta}, we know that the normalized volume is equal to the volume density up to a factor $(\dim X)^{\dim X}$ for any point $(X,x)$ that lives on a Gromov-Hausdorff limit of smooth K\"{a}hler-Einstein manifolds (\cite{HS17, LX17}). In view of this connection, one can formulate a purely algebraic problem about two algebraic invariants of the singularities.
This problem was already posed by in \cite{BS17} at least in the log terminal case. We formulate the following form by including one of Langer's expectations (see \cite[p.381]{Lan03}):
\begin{conj}[{see \cite[p.37]{BS17}}]\label{conj}
Let $(X, D, x)$ be a germ of log canonical surface singularity with $\bQ$-boundary. Then we have
\begin{equation}
e_{\rm orb}(x, X, D)=
\left\{
\begin{array}{ll}
\frac{1}{4}\hvol(x, X,D), & \text{ if } (X,D) \text{ is log terminal };\\
0, & \text{ if } (X, D) \text{ is not log terminal}.
\end{array}
\right. 
\end{equation}
\end{conj}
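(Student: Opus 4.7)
The argument naturally splits along the dichotomy in the statement.

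\emph{Strictly log canonical case.} The conjecture asserts $e_{\rm orb}(x,X,D)=0$, which is one of Langer's own expectations (p.~381 of \cite{Lan03}). I would derive this directly from his combinatorial formula on a log resolution: a non-klt place forces a contribution that annihilates the weighted dual-graph sum defining $e_{\rm orb}$. This is consistent with Lemma \ref{l-volslc}, which yields $\inf_v\hvol(v)=0$ in the non-klt lc regime, so at least philosophically both sides of the conjecture vanish for the same reason.

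\emph{Log terminal case --- reduction to smooth via covers.} Since $D$ is a $\bQ$-divisor, Theorem \ref{thm-2dim} provides a unique-up-to-scaling minimizer $v_*$ of $\hvol_{(X,D),x}$, and asserts that $v_*$ is \emph{divisorial}, hence $v_*=\ord_S$ for some Koll\'ar component $S$. My plan is to pass to the finite quasi-\'etale Galois cover $\pi\colon(\widetilde X,\widetilde D)\to(X,D)$ with $\widetilde X$ smooth, produced by the finiteness of the local fundamental group of a klt surface germ, and reduce the problem to a base computation on $(\bC^2,\widetilde D)$. The reduction rests on a pair of multiplicativities:
\begin{equation*}
\hvol(\tilde x,\widetilde X,\widetilde D)=\deg(\pi)\cdot\hvol(x,X,D),\qquad e_{\rm orb}(\tilde x,\widetilde X,\widetilde D)=\deg(\pi)\cdot e_{\rm orb}(x,X,D).
\end{equation*}
The second is built into Langer's construction. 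The first is the surface case of the finite degree formula (Conjecture \ref{c-fdf}); I would derive it from Theorem \ref{thm-2dim} by observing that uniqueness forces the minimizer $\tilde v_*$ on $\widetilde X$ to be $G$-equivariant and to lie over $S$, and then comparing valuation ideals through the cover.

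\emph{Base computation and main obstacle.} After reduction, one must verify the conjecture on $(\bC^2,\widetilde D)$ for $\widetilde D$ a $G$-invariant klt $\bQ$-divisor through the origin. In the trivial case $\widetilde D=0$ one has $\tfrac{1}{4}\hvol(0,\bC^2)=1=e_{\rm orb}(0,\bC^2)$, matching the quotient computation $\tfrac{1}{4}\hvol(0,\bC^2/G)=1/|G|=e_{\rm orb}(0,\bC^2/G)$ for every finite $G\subset\mathrm{GL}_2$ acting without pseudo-reflections. For nontrivial $\widetilde D$, further Kawamata covers absorb boundary coefficients of the form $1-\tfrac{1}{m}$, leaving finitely many model configurations where $\hvol$ (computed from an explicit weighted blow-up supplying the minimizer, cf.\ Theorem \ref{thm-2dim}) and $e_{\rm orb}$ (computed via Langer's formula on the minimal log resolution) are both evaluable in closed form and matched by direct calculation. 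The step I expect to be the main obstacle is the first multiplicativity above --- the dim-$2$ finite degree formula for $\hvol$ with $\bQ$-boundary. Theorem \ref{thm-2dim} supplies the structural inputs (existence, uniqueness, finite generation of $\gr_{v_*}R$), but converting them into the sharp degree equality requires a careful equivariant analysis of $\gr_{v_*}R$ and its pullback along $\pi$, and that is where I expect the bulk of the technical work of the proof to lie.
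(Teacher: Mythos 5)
The statement you are addressing is a \emph{conjecture} in the paper, not a theorem: the authors give no proof of Conjecture \ref{conj}. What the paper records is (i) the special case established in \cite{Li18}, namely $2$-dimensional log Fano cones and log CY cones (which yields, e.g., the line-arrangement computation quoted afterwards), and (ii) a sketched strategy for the general log terminal case that is quite different from yours: use Theorem \ref{thm-2dim} to produce the unique minimizing Koll\'ar component $S\cong\bP^1$ with $(S,\Delta)$ K-semistable, restrict $\mathscr{F}=\Omega^1(\log(S+D))$ to $S$, invoke the slope semistability of \cite[Theorem 1.3]{Li18}, and then an orbifold/logarithmic generalization of Wahl's formula \cite{Wah93} together with Langer's work ``should imply'' $e_{\rm orb}(x,X,D)=\tfrac14\hvol(\ord_S)$. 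Even that outline is explicitly conditional, so there is no proof in the paper for your proposal to match.

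As a proof attempt your plan has concrete gaps that would stop it from closing the statement. First, the ``base computation'' on $(\bC^2,\widetilde D)$ is not a finite check: after the index-one/Kawamata covers the boundary germ is still an arbitrary klt $\bQ$-divisor (arbitrary rational coefficients, singular or tangent branches), so this step is essentially the full conjecture for $(\bC^2,D)$ --- exactly the part that is open beyond the cone-type configurations handled in \cite{Li18}; asserting it is ``evaluable in closed form'' assumes what is to be proved. Second, the multiplicativity $\hvol(\tilde x,\widetilde X,\widetilde D)=\deg(\pi)\,\hvol(x,X,D)$ is itself Conjecture \ref{c-fdf}: unconditionally one only has the inequalities of Proposition \ref{prop:weakfinitedeg}, and the equality is proved in the paper only for singularities on Gromov--Hausdorff limits (Theorem \ref{t-fdf}); in dimension $2$ it is plausible that uniqueness from Theorem \ref{thm-2dim} forces a $G$-invariant minimizer upstairs and hence the degree formula, but that descent argument (including why the descended valuation minimizes downstairs and how log discrepancies and volumes transform) is precisely the technical content you defer, not something you can treat as available. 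Third, in the strictly log canonical branch the vanishing $e_{\rm orb}(x,X,D)=0$ is one of Langer's \emph{unproven expectations} (this is why the paper folds it into the conjecture); Langer's invariant is not given by a dual-graph sum that visibly vanishes at a non-klt place, so ``deriving it directly from his combinatorial formula'' is an assertion, not an argument --- and Lemma \ref{l-volslc} only tells you the $\hvol$ side is $0$, which says nothing about $e_{\rm orb}$.
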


In \cite{Li18}, it was proved that the above conjecture is true when $(X, D, x)$ is a $2$-dimensional log-Fano cone or a log-CY cone. In particular, combined with Langer's calculation, one gets the local orbifold Euler numbers of line arrangements.
\begin{prop}[{\cite{Lan03, Li18}}]
Let $L_1, \dots, L_n$ be $m$ distinct lines in $\bC^2$ passing through $0$. Let $D=\sum_{i=1}^m \delta_i L_i$, 
where $0\le \delta_1\le \delta_2\le \dots \le \delta_m\le 1$. Denote $\delta=\sum_{i=1}^m \delta_i$. Then we have:
\begin{equation}
e_{\rm orb}(0, \bC^2, D)=\left\{
\begin{array}{lcc}
0 & \text{ if} &  (\bC^2, D, 0) \text{ is not klt } ;\\
(1-\delta+\delta_m)(1-\delta_m)& \text{ if } & \delta<2\delta_m;\\
 \frac{(2-\delta)^2}{4} & \text{ if } & 2\delta_m\le \delta\le 2.
\end{array}
\right.
\end{equation}
 \end{prop}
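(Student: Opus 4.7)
The plan is to reduce the computation of $e_{\rm orb}$ to that of $\hvol$ via Conjecture \ref{conj}, and then to compute $\hvol(0, \bC^2, D)$ by a case analysis based on the K-stability of the base of the cone. The diagonal $\bC^*$-action on $\bC^2$ presents $(\bC^2, D, 0)$ as the cone over $(\bP^1, B)$, with $B = \sum_{i=1}^m \delta_i [p_i]$ and $p_i \in \bP^1$ the point corresponding to $L_i$; in the klt regime ($\delta < 2$ and every $\delta_i < 1$) this is a two-dimensional log Fano cone, so Conjecture \ref{conj} applies by \cite{Li18} and it suffices to establish
\[
\hvol(0, \bC^2, D) = \begin{cases}
(2-\delta)^2 & \text{if } 2\delta_m \le \delta < 2,\\
4(1-\delta+\delta_m)(1-\delta_m) & \text{if } \delta < 2\delta_m.
\end{cases}
\]
The non-klt cases follow from either the log Calabi--Yau part of the conjecture (at $\delta = 2$) or Langer's direct computation of $e_{\rm orb}$ in \cite{Lan03}.

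For the sub-case $\delta \ge 2\delta_m$, the valuative criterion from \cite{Fuj18, Fuj16, Li17} applied to $(\bP^1, B)$ gives, for each $i$, $\beta(p_i) = A_{(\bP^1, B)}(p_i) - S_{-K_{\bP^1}-B}(p_i) = (1-\delta_i) - (2-\delta)/2 = (\delta - 2\delta_i)/2 \ge 0$ (and $\beta(p) = \delta/2 \ge 0$ at any other point), so $(\bP^1, B)$ is K-semistable. By Theorem \ref{t-rankonecone} the divisorial valuation $v_{S_0}$ coming from the blow-up of $0 \in \bC^2$ minimizes $\hvol$; since $A_{(\bC^2, D)}(v_{S_0}) = 2-\delta$ and $\vol(v_{S_0}) = 1$, this yields $\hvol(0, \bC^2, D) = (2-\delta)^2$.

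For the sub-case $\delta < 2\delta_m$, choose linear coordinates $(x, y)$ on $\bC^2$ with $L_m = \{y=0\}$ and consider the divisorial valuation $v^*$ given by the weighted blow-up with $v^*(x) = 1-\delta_m$ and $v^*(y) = 1-\delta+\delta_m$. The hypothesis $\delta < 2\delta_m$ forces $v^*(y) > v^*(x) > 0$, so $v^*(L_i) = v^*(x) = 1-\delta_m$ for every $i \ne m$ while $v^*(L_m) = v^*(y) = 1-\delta+\delta_m$. A direct computation then gives $A_{(\bC^2, D)}(v^*) = 2(1-\delta_m)(1-\delta+\delta_m)$ and $\vol(v^*) = [(1-\delta_m)(1-\delta+\delta_m)]^{-1}$, hence $\hvol(v^*) = 4(1-\delta_m)(1-\delta+\delta_m)$. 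To certify that $v^*$ is actually a minimizer I invoke the final ``converse'' assertion of Theorem \ref{t-high}: taking initial forms of the defining equations of the $L_i$ shows that the associated graded degeneration of $(\bC^2, D, 0)$ with respect to $v^*$ is the toric pair $(X_0, D_0) = (\bC^2, \delta_m \{y=0\} + (\delta-\delta_m)\{x=0\})$ carrying the Reeb vector $\xi_{v^*} = (1-\delta_m, 1-\delta+\delta_m)$. A single-variable calculus minimization of $\hvol_{X_0, D_0}(\wt_\xi) = [\xi_1(1-\delta+\delta_m) + \xi_2(1-\delta_m)]^2/(\xi_1 \xi_2)$ over the Reeb cone $\bR_{>0}^2$ shows that $\xi_{v^*}$ is the unique critical point up to scaling, so Theorem \ref{t-SDChigh} gives K-semistability of $(X_0, D_0, \xi_{v^*})$ and Theorem \ref{t-high} then certifies that $v^*$ minimizes $\hvol_{(\bC^2, D), 0}$.

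The hard part is identifying the correct minimizing valuation $v^*$ in the non-K-semistable sub-case; once found, it is dictated by the destabilizing point $p_m$ on the base, and everything else is computation together with the stable degeneration machinery (Theorems \ref{t-high} and \ref{t-SDChigh}). As a consistency check, the algebraic identity $(2-\delta)^2 - 4(1-\delta+\delta_m)(1-\delta_m) = (\delta - 2\delta_m)^2$ confirms that the two case formulas agree exactly at $\delta = 2\delta_m$ and that the asymmetric formula always gives the smaller value when it applies. Dividing the resulting $\hvol$ by $4$ then produces the claimed formulas for $e_{\rm orb}$.
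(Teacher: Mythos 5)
The paper itself does not prove this proposition: it is quoted from \cite{Lan03, Li18}, the logic being that Langer computed $e_{\rm orb}$ for line arrangements directly, while \cite{Li18} proves Conjecture \ref{conj} for two-dimensional log Fano/log CY cones, so that the formula can be matched with the normalized volume. Your proposal reconstructs the statement from the normalized-volume side, which is a genuinely different (and in the context of this survey, the intended complementary) derivation: you reduce $e_{\rm orb}$ to $\tfrac14\hvol$ via \cite{Li18}, identify $(\bC^2,D,0)$ as the quasi-regular cone over $(\bP^1,\sum\delta_i p_i)$, check K-semistability of the base by the $\beta$-criterion (correctly: $\beta(p_i)=(\delta-2\delta_i)/2$), and in the semistable regime $\delta\ge 2\delta_m$ conclude $\hvol=(2-\delta)^2$ from the canonical valuation, while in the unstable regime you exhibit the monomial valuation with weights $(1-\delta_m,\,1-\delta+\delta_m)$ along $(x,y)$, whose log discrepancy $2(1-\delta_m)(1-\delta+\delta_m)$ and volume $[(1-\delta_m)(1-\delta+\delta_m)]^{-1}$ you compute correctly, and certify it via its toric degeneration and the converse direction of Theorem \ref{t-high}. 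All the numerics check out, including the overlap identity $(2-\delta)^2-4(1-\delta+\delta_m)(1-\delta_m)=(\delta-2\delta_m)^2$. What your route buys is a self-contained computation inside the stable-degeneration machinery of Sections 3--4; what Langer's route buys is independence from that machinery (his is a direct computation of local orbifold Euler numbers), which is why the non-klt cases in your write-up still have to fall back on \cite{Lan03} (note that for $\delta>2$ the pair is not even lc, so Conjecture \ref{conj} as formulated says nothing there).

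Two small points of justification deserve tightening, though neither is fatal. First, Theorem \ref{t-rankonecone} is stated for cones $C(S,B;-r(K_S+B))$ with $r$ a positive integer; here the polarization is $\cO_{\bP^1}(1)=-\tfrac{1}{2-\delta}(K_{\bP^1}+B)$, so you should instead invoke the quasi-regular log Fano cone formulation (Theorem \ref{t-SDChigh} together with the identification of cone K-semistability with K-semistability of the quotient $(S,B)$). Second, in the unstable case your certification of K-semistability of $(X_0,D_0,\xi_{v^*})$ from the fact that $\xi_{v^*}$ minimizes $\hvol$ \emph{among toric valuations} is not literally an application of Theorem \ref{t-SDChigh}, which requires minimization over all of $\Val_{X_0,o}$; the missing (standard) step is the reduction to torus-invariant valuations on the toric pair, which the paper supplies via \eqref{e-Tkollar}: for the full two-torus, the $T$-equivariant Koll\'ar components over the fixed point are exactly the toric valuations, so your AM--GM minimization over the Reeb cone does compute the global infimum, and the rest of your argument then goes through. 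Also note the minimizer is only divisorial when the weight ratio is rational; in general it is a rank-two quasi-monomial valuation, which is harmless since its associated graded ring is still the polynomial ring.
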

Here we point out a possible application of Theorem \ref{thm-2dim} (i.e. 2-dimensional case conjecture \ref{conj-local}) for studying Conjecture \ref{conj} for any log terminal singularity $(x, X, D)$. First, by Theorem \ref{thm-2dim} there exists a unique Koll\'{a}r component $S\cong \bP^1$ which minimizes the normalized volume. Let $\mu: Y\rightarrow X$ be the extraction of $S$ and $\Delta={\rm Diff}_S(D)$. By Theorem \ref{t-high} we know that $(S, \Delta)\cong (\bP^1, \sum_i \delta_i p_i)$ is indeed K-semistable (see \cite[section 6]{LX16}). Then $\mathscr{F}:=\Omega^1(\log (S+D))$ (defined using ramified coverings as in \cite{Lan03}) restricted to $S$ fits into an exact sequence of orbifold sheaves:
\begin{equation}
0\longrightarrow \Omega^1_S(\log(\Delta))\rightarrow \mathscr{F}|_S\rightarrow \cO_S\rightarrow 0.
\end{equation}
By \cite[Theorem 1.3]{Li18}, we know that $\mathscr{E}:=\mathscr{F}|_S$ is slope semistable. Then the generalization of \cite[Proposition 3.16]{Wah93} to the logarithmic/orbifold setting combined together with Langer's work should imply that $e_{\rm orb}(x, X, D)=\frac{c_1(\mathscr{E})^2}{4(-S\cdot S)_Y}$ which is indeed equal to $\frac{\hvol(\ord_S)}{4}$. 

\subsubsection{Normalized volume function}
We have mainly concentrated on the minimizer of the normalized volume function. We can also ask questions on the general behavior of the normalized volume function. For example:
\begin{ques}[Convexity] Let $\sigma\subset \Val_{X,x}$ be a simplex of quasi-monomial valuations. Is it true that $\hvol(\cdot)$ is always convex on $\sigma$? Is there a more general convexity property for $\hvol$ on $\Val_{X,x}$?
\end{ques}

\begin{ques}
Is the normalized volume a lower semicontinuous function on $\Val_{X,x}$? If this is true, then it would directly imply the existence of minimizer of $\hvol$ using the properness estimate in Theorem \ref{t-izumi}.
\end{ques}

\begin{bibdiv}
\begin{biblist}

\bib{AIPSV12}{article}{
    AUTHOR = {Altmann, K.}
    AUTHOR={Ilten, N.O.}
    AUTHOR={Petersen, L.}
    AUTHOR={S\"{u}ss, H.}
    author={Vollmert, R.},
     TITLE = {The geometry of T-varieties},
   booktitle = {Contributions to algebraic geometry},
  publisher = {Eur. Math. Soc.},
   series={EMS Ser. Congr. Rep.},
   year={2012},
     PAGES = {17-69},
     place={Z\"{u}rich},
 }

\bib{And13}{article}{
    AUTHOR = {Anderson, D.},
     TITLE = {Okounkov bodies and toric degenerations},
   JOURNAL = {Math. Ann.},
    VOLUME = {356},
      YEAR = {2013},
    NUMBER = {3},
     PAGES = {1183--1202},
   }

\bib{Bir16}{article}{
    AUTHOR = {Birkar, C.}, 
     TITLE = {Singularities of linear systems and boundedness of Fano varieties}
   JOURNAL = {arXiv:1609.05543},
      YEAR = {2016}
       }

\bib{BCHM10}{article}{
   author={Birkar, C.},
  author={Cascini, P.},
  author={Hacon, C.},
  author={McKernan, J.},
 title={Existence of minimal models for varieties of log general type},
  journal={J. Amer. Math. Soc.},
  volume={23},
   date={2010},
   number={2},
   pages={405--468},
}

\bib{BdFFU15}{incollection}{
    AUTHOR = {Boucksom, S.},
      AUTHOR = {de Fernex, T.},
        AUTHOR = {Favre, C.},
          AUTHOR = {Urbinati, S.},
     TITLE = {Valuation spaces and multiplier ideals on singular varieties},
 BOOKTITLE = {Recent advances in algebraic geometry},
    SERIES = {London Math. Soc. Lecture Note Ser.},
    VOLUME = {417},
     PAGES = {29--51},
 PUBLISHER = {Cambridge Univ. Press, Cambridge},
      YEAR = {2015},
 }	

\bib{BdFF12}{article}{
    AUTHOR = {Boucksom, S.} 
    AUTHOR={de Fernex, T.} 
    AUTHOR={Favre, C.},
      TITLE = {The volume of an isolated singularity},
   JOURNAL = {Duke Math. J.},
  FJOURNAL = {Duke Mathematical Journal},
    VOLUME = {161},
      YEAR = {2012},
    NUMBER = {8},
     PAGES = {1455--1520},
}

\bib{BFJ14}{incollection}{
    AUTHOR = {Boucksom, S.},
        AUTHOR = {Favre, C.},
    AUTHOR={ Jonsson, M.},
     TITLE = {A refinement of {I}zumi's theorem},
 BOOKTITLE = {Valuation theory in interaction},
    SERIES = {EMS Ser. Congr. Rep.},
     PAGES = {55--81},
 PUBLISHER = {Eur. Math. Soc., Z\"urich},
      YEAR = {2014},
}

\bib{BHJ17}{article}{
    AUTHOR = {Boucksom, S.} 
    AUTHOR={Hisamoto, T.} 
    AUTHOR={ Jonsson, M.},
     TITLE = {Uniform {K}-stability, {D}uistermaat-{H}eckman measures and
              singularities of pairs},
   JOURNAL = {Ann. Inst. Fourier (Grenoble)},
  FJOURNAL = {Universit\'e de Grenoble. Annales de l'Institut Fourier},
    VOLUME = {67},
      YEAR = {2017},
    NUMBER = {2},
     PAGES = {743--841},
 }

\bib{Blu18}{article}{
   author={Blum, H.},
  title={Existence of Valuations with Smallest Normalized Volume},
 JOURNAL = {Compos. Math.},
  VOLUME = {154},
      YEAR = {2018},
    NUMBER = {4},
     PAGES = {820--849},
     }

\bib{BJ17}{article}{
   author={Blum, H.},
   AUTHOR={Jonsson, M.} 
     TITLE = {Thresholds, valuations, and K-stability},
   JOURNAL = {arXiv:1706.04548},
      YEAR = {2017},
   }
   
\bib{BoJ18}{article}{
author={Boucksom, S.},
author={Jonsson, M.},
title={A non-Archimedean approach to K-stability},
journal={arXiv:1805.11160},
year={2018},
}

\bib{BL18}{article}{
   author={Blum, H.},
     author={Liu, Y.},
  title={The normalized volume of a singularity is lower semicontinuous   },
 JOURNAL = {arXiv:1802.09658},
  
      YEAR = {2018},
     }

\bib{BS17}{article}{
author={Borbon, M. },
author={Spotti, C.},
title={Calabi-Yau metrics with conical singularities along line arrangements},
journal={arXiv:1712.07967},
year={2017},
}

\bib{CCT02}{article}{
    AUTHOR = {Cheeger, J. }
    AUTHOR={Colding, T.}
    AUTHOR={Tian, G.},
     TITLE = {On the singularities of spaces with bounded {R}icci curvature},
   JOURNAL = {Geom. Funct. Anal.},
  FJOURNAL = {Geometric and Functional Analysis},
    VOLUME = {12},
      YEAR = {2002},
    NUMBER = {5},
     PAGES = {873--914},}

\bib{Che18}{article}{
   author={Chen, W.},
     TITLE = {Boundedness of varieties of Fano type with alpha-invariants and volumes bounded below},
   JOURNAL = {arXiv:1810.04019},
      YEAR = {2018},
   }

\bib{CDS}{article}{
    AUTHOR = {Chen, X.},
    AUTHOR={Donaldson, S.} 
    AUTHOR={Sun, S.},
     TITLE = {K\"ahler-{E}instein metrics on {F}ano manifolds. {I}:
              {A}pproximation of metrics with cone singularities, II: Limits with cone angle less than $2\pi$, III: Limits as cone angle approaches $2\pi$ and completion of the main proof.}
   JOURNAL = {J. Amer. Math. Soc.},
  FJOURNAL = {Journal of the American Mathematical Society},
    VOLUME = {28},
      YEAR = {2015},
    NUMBER = {1},
     PAGES = {183--197,199--234,235--278},
 }

\bib{CRST16}{article}{
    AUTHOR = {Carvajal-Rojas, J.},
    AUTHOR={Schwede, K.} 
    AUTHOR={Tucker, K.}
    TITLE = {Fundamental groups of {$F$}-regular singularities via
              {$F$}-signature},
   JOURNAL = {Ann. Sci. \'{E}c. Norm. Sup\'{e}r. (4)},
  FJOURNAL = {Annales Scientifiques de l'\'{E}cole Normale Sup\'{e}rieure. Quatri\`eme
              S\'{e}rie},
    VOLUME = {51},
      YEAR = {2018},
    NUMBER = {4},
     PAGES = {993--1016},
       }

\bib{CS16}{article}{
    AUTHOR = {Codogni, G.},
    AUTHOR={Stoppa, J.} 
     TITLE = {Torus equivariant K-stability}
   JOURNAL = {arXiv:1602.03451v2},
      YEAR = {2016}
       }

\bib{CS15}{article}{
    AUTHOR = {Collins, T.},
    AUTHOR={Sz\'ekelyhidi, G.} 
     TITLE = {Sasaki-Einstein metrics and K-stability }
   JOURNAL = {arXiv: 1512.07213, To appear in Geom. Topol.},
      YEAR = {2015}
       }

\bib{CS18}{article}{
    AUTHOR = {Collins, T.},
    AUTHOR={Sz\'ekelyhidi, G.} 
     TITLE = {K-semistability for irregular {S}asakian manifolds},
   JOURNAL = {J. Differential Geom.},
  FJOURNAL = {Journal of Differential Geometry},
    VOLUME = {109},
      YEAR = {2018},
    NUMBER = {1},
     PAGES = {81--109},
   }

\bib{Cut13}{article}{
    AUTHOR = {Cutkosky, S.},
     TITLE = {Multiplicities associated to graded families of ideals},
   JOURNAL = {Algebra Number Theory},
  FJOURNAL = {Algebra \& Number Theory},
    VOLUME = {7},
      YEAR = {2013},
    NUMBER = {9},
     PAGES = {2059--2083},
   }

\bib{dFKX}{incollection}{
   author={de Fernex, T.},
     author={Koll\'ar, J.},
       author={Xu, C.},
   title={The dual complex of singularities},
 number={74},
   year={2017},
 booktitle={Higher dimensional algebraic geometry--in honour of Professor Yujiro Kawamata's sixtieth birthday},
   series={Advanced Studies in Pure Mathematics},
   publisher={Math. Soc. Japan, Tokyo}
   pages={103-130},
}

\bib{dFEM}{article}{
    AUTHOR = {de Fernex, T.}
    AUTHOR={ Ein, L.}
    AUTHOR={Musta\c t\u a, M.},
      TITLE = {Bounds for log canonical thresholds with applications to
              birational rigidity},
   JOURNAL = {Math. Res. Lett.},
  FJOURNAL = {Mathematical Research Letters},
    VOLUME = {10},
      YEAR = {2003},
    NUMBER = {2-3},
     PAGES = {219--236},
   }


\bib{DS16}{article}{
    AUTHOR = {Datar, V.}
    AUTHOR={Sz\'ekelyhidi, G.},
     TITLE = {K\"ahler-{E}instein metrics along the smooth continuity method},
   JOURNAL = {Geom. Funct. Anal.},
  FJOURNAL = {Geometric and Functional Analysis},
    VOLUME = {26},
      YEAR = {2016},
    NUMBER = {4},
     PAGES = {975--1010},
}

\bib{DS14}{article}{
    AUTHOR = {Donaldson, S.}
    AUTHOR={Sun, S.},
     TITLE = {Gromov-{H}ausdorff limits of {K}\"ahler manifolds and algebraic
              geometry},
   JOURNAL = {Acta Math.},
  FJOURNAL = {Acta Mathematica},
    VOLUME = {213},
      YEAR = {2014},
    NUMBER = {1},
     PAGES = {63--106},
   }

\bib{DS17}{article}{
    AUTHOR = {Donaldson, S.}
    AUTHOR={Sun, S.},
     TITLE = {Gromov-{H}ausdorff limits of {K}\"ahler manifolds and algebraic
              geometry, II},
    JOURNAL = {J. Differential Geom.},
  FJOURNAL = {Journal of Differential Geometry},
    VOLUME = {107},
      YEAR = {2017},
    NUMBER = {2},
     PAGES = {327--371},
     }

\bib{ELS03}{article}{
    AUTHOR = {Ein, L.} 
   AUTHOR={Lazarsfeld, R.} 
   AUTHOR={Smith, K.},
     TITLE = {Uniform approximation of {A}bhyankar valuation ideals in
              smooth function fields},
   JOURNAL = {Amer. J. Math.},
  FJOURNAL = {American Journal of Mathematics},
    VOLUME = {125},
      YEAR = {2003},
    NUMBER = {2},
     PAGES = {409--440},
 }

    \bib{FJ04}{book}{
    AUTHOR = {Favre, C.}
    AUTHOR={Jonsson, M.},
     TITLE = {The valuative tree},
    SERIES = {Lecture Notes in Mathematics},
    VOLUME = {1853},
 PUBLISHER = {Springer-Verlag, Berlin},
      YEAR = {2004},
     PAGES = {xiv+234},
  }
 
\bib{FO16}{article}{
    AUTHOR = {Fujita, K.} 
   AUTHOR={Odaka, Y.} 
     TITLE = {On the K-stability of Fano varieties and anticanonical divisors},
   JOURNAL = {arXiv:1602.01305, to appear in Tohoku Math. J.},
      YEAR = {2016},
   }

    \bib{Fuj16}{article}{
  author={Fujita, K.},
   title={A valuative criterion for uniform K-stability of $\mathbb{Q}$-Fano varieties},
   journal={ arXiv:1602.00901, To appear in J. Reine Angew. Math.},
     date={2016},
}

    \bib{Fuj18}{article}{
  author={Fujita, K.},
   title={Optimal bounds for the volumes of K\"ahler-Einstein Fano manifolds},
   journal={ Amer. J. Math.},
  volume={140},
   date={2018},
   number={2},
   pages={391--414},
}

\bib{Ful13}{article}{
 author={Fulger, M.},
   title={Local volumes of Cartier divisors over normal algebraic varieties},
   journal={ Ann. Inst. Fourier (Grenoble)},
  volume={63},
   date={2013},
   number={5},
   pages={1793-1847},
}

    \bib{Gig78}{article}{
    AUTHOR = {Gigena, B.}
     TITLE = {Integral invariants of convex cones},
   JOURNAL = {J. Differential Geometry},
    VOLUME = {13},
      YEAR = {1978},
     PAGES = {191--222},
  }

\bib{Har98}{article}{
    AUTHOR = {Hara, N.},
     TITLE = {Classification of two-dimensional {$F$}-regular and {$F$}-pure
              singularities},
   JOURNAL = {Adv. Math.},
  FJOURNAL = {Advances in Mathematics},
    VOLUME = {133},
      YEAR = {1998},
    NUMBER = {1},
     PAGES = {33--53},
}
 \bib{HL02}{article}{
    AUTHOR = {Huneke, C.}
    AUTHOR = {Leuschke, G.},
     TITLE = {Two theorems about maximal {C}ohen-{M}acaulay modules},
   JOURNAL = {Math. Ann.},
  FJOURNAL = {Mathematische Annalen},
    VOLUME = {324},
      YEAR = {2002},
    NUMBER = {2},
     PAGES = {391--404},
}

\bib{HMX14}{article}{
  author={Hacon, C.},
       author={McKernan, J.},
   author={Xu, C.},
   title={A{CC} for log canonical thresholds},
   journal={Ann. of Math.},
  date={2014},
   volume={180},
   pages={523-571},
   issue={2}
}

\bib{HS17}{article}{
author={Hein, H.-J.},
author={Sun, S.}
title={Calabi-Yau manifolds with isolated conical singularities},
journal={Publ. Math. IHES},
volume={126},
year={2017},
pages={73-130}
}  

\bib{HW02}{article}{
    AUTHOR = {Hara, N.},
    AUTHOR = {Watanabe, K.},
     TITLE = {F-regular and {F}-pure rings vs. log terminal and log
              canonical singularities},
   JOURNAL = {J. Algebraic Geom.},
  FJOURNAL = {Journal of Algebraic Geometry},
    VOLUME = {11},
      YEAR = {2002},
    NUMBER = {2},
     PAGES = {363--392},
}

\bib{Jia17}{article}{
    AUTHOR = {Jiang, C.},
     TITLE = {Boundedness of $\bQ$-Fano varieties with degrees and alpha-invariants bounded from below},
   JOURNAL = {arXiv:1705.02740},
      YEAR = {2017},
}

\bib{JM12}{article}{
    AUTHOR = {Jonsson, M.}
    AUTHOR={Musta\c t\u a, M.},
     TITLE = {Valuations and asymptotic invariants for sequences of ideals},
   JOURNAL = {Ann. Inst. Fourier (Grenoble)},
  FJOURNAL = {Universit\'e de Grenoble. Annales de l'Institut Fourier},
    VOLUME = {62},
      YEAR = {2012},
    NUMBER = {6},
     PAGES = {2145--2209},
 }

\bib{KK12}{article}{
    AUTHOR = {Kaveh, K.} 
    AUTHOR={Khovanskii, A.},
     TITLE = {Newton-{O}kounkov bodies, semigroups of integral points,
              graded algebras and intersection theory},
   JOURNAL = {Ann. of Math. (2)},
  FJOURNAL = {Annals of Mathematics. Second Series},
    VOLUME = {176},
      YEAR = {2012},
    NUMBER = {2},
     PAGES = {925--978},
      ISSN = {0003-486X},
   MRCLASS = {52C07 (14M25 20M14 52B20)},
  MRNUMBER = {2950767},
MRREVIEWER = {Alexander A. Borisov},
       DOI = {10.4007/annals.2012.176.2.5},
       URL = {https://doi.org/10.4007/annals.2012.176.2.5},
}

\bib{KK14}{article}{
    AUTHOR = {Kaveh, K.} 
    AUTHOR={Khovanskii, A.},
     TITLE = {Convex bodies and multiplicities of ideals},
   JOURNAL = {Proc. Steklov Inst. Math.},
  FJOURNAL = {Proceedings of the Steklov Institute of Mathematics},
    VOLUME = {286},
      YEAR = {2014},
    NUMBER = {1},
     PAGES = {268--284},
 }

\bib{KM98}{book}{
   author={Koll{\'a}r, J.},
   author={Mori, S.},
   title={Birational geometry of algebraic varieties},
   series={Cambridge Tracts in Mathematics},
   volume={134},
   note={With the collaboration of C. H. Clemens and A. Corti;
   Translated from the 1998 Japanese original},
   publisher={Cambridge University Press},
   place={Cambridge},
   date={1998},
   pages={viii+254},
}

   \bib{Kol04}{article}{
author={Koll\'{a}r, J.},
title={Seifert $G_m$-bundles},
journal={arXiv:0404386},
year={2004}
}

 \bib{Kol13}{book}{
    AUTHOR = {Koll{\'a}r, J.},
     TITLE = {Singularities of the minimal model program},
    series= {Cambridge Tracts in Mathematics},
    VOLUME = {200},
      NOTE = {With a collaboration of S{\'a}ndor Kov{\'a}cs},
 PUBLISHER = {Cambridge University Press},
   ADDRESS = {Cambridge},
      YEAR = {2013},
   }

   \bib{Lan03}{article}{
author={Langer, A.},
title={Lograrithmic orbifold Euler numbers of surfaces with applications},
journal={Proc. London Math. Soc.},
volume={86},
pages={358-396},
year={2003}
}

     \bib{Laz04}{book}{
    AUTHOR = {Lazarsfeld, Robert},
     TITLE = {Positivity in algebraic geometry. {II}},
    SERIES = {Ergebnisse der Mathematik und ihrer Grenzgebiete. 3. Folge. A
              Series of Modern Surveys in Mathematics},
    VOLUME = {49},
 PUBLISHER = {Springer-Verlag, Berlin},
      YEAR = {2004},
     PAGES = {xviii+385},
   }

   \bib{Li17}{article}{
  author={Li, C.},
   title={ K-semistability is equivariant volume minimization},
 JOURNAL = {Duke Math. J.},
  FJOURNAL = {Duke Mathematical Journal},
    VOLUME = {166},
      YEAR = {2017},
    NUMBER = {16},
     PAGES = {3147--3218},
      ISSN = {0012-7094},
}

   \bib{Li15}{article}{
  author={Li, C.},
   title={Minimizing normalized volumes of valuations},
   journal={Math. Zeit.},
    VOLUME = {289},
      YEAR = {2018},
    NUMBER = {1-2},
     PAGES = {491--513},
  date={2018},
}

\bib{Li18}{article}{
author={Li, C.},
title={On the stability of extensions of tangent sheaves on K\"{a}hler-Einstein Fano/Calabi-Yau pairs},
journal={arXiv:1803.01734},
year={2018},
}

 \bib{Liu16}{article}{
  author={Liu, Y.},
   TITLE = {The volume of singular {K}\"ahler--{E}instein {F}ano varieties},
   JOURNAL = {Compos. Math.},
  FJOURNAL = {Compositio Mathematica},
    VOLUME = {154},
      YEAR = {2018},
    NUMBER = {6},
     PAGES = {1131--1158},}
   
   \bib{Liu18b}{article}{
  author={Liu, Y.},
   title={ The $F$-volume of singularities
   in positive characteristic},
   journal={In preparation},
  date={2019},
}
   
 \bib{LL16}{article}{
  author={Li, C.},
  author={Liu, Y.},
   title={ K\"ahler-Einstein metrics and volume minimization},
   JOURNAL = {Adv. Math.},
  FJOURNAL = {Advances in Mathematics},
    VOLUME = {341},
      YEAR = {2019},
     PAGES = {440--492},
     }     

\bib{LM09}{article}{
    AUTHOR = {Lazarsfeld, R.}
    AUTHOR={Musta\c t\u a, M.},
     TITLE = {Convex bodies associated to linear series},
   JOURNAL = {Ann. Sci. \'Ec. Norm. Sup\'er. (4)},
  FJOURNAL = {Annales Scientifiques de l'\'Ecole Normale Sup\'erieure. Quatri\`eme
              S\'erie},
    VOLUME = {42},
      YEAR = {2009},
    NUMBER = {5},
     PAGES = {783--835},
  }

\bib{LS13}{article}{
 author={Liendo, A.},
author={S\"{u}ss, H.},
title={Normal singularities with torus actions},
journal={Tohoku Mathematical Journal},
volume={65},
year={2013},
number={1},
pages={105-130},
}

 \bib{LWX}{article}{
  author={Li, C.},
  author={Wang, X.},
  author={Xu, C.}
   title={ On proper moduli space of smoothable K\"ahler-Einstein Fano varieties},
   journal={arXiv:1411.0761v3, to appear in Duke Math. J.},
  date={2014},
}

\bib{LWX18}{article}{
author={Li, C.},
author={Wang, X.},
author={Xu, C.},
title={Algebraicity of the metric tangent cones and equivariant K-stability},
journal={arXiv:1805.03393},
year={2018},
}


\bib{LX14}{article}{
    AUTHOR = {Li, C.}
    AUTHOR={Xu, C.},
     TITLE = {Special test configuration and {K}-stability of {F}ano
              varieties},
   JOURNAL = {Ann. of Math. (2)},
  FJOURNAL = {Annals of Mathematics. Second Series},
    VOLUME = {180},
      YEAR = {2014},
    NUMBER = {1},
     PAGES = {197--232},
}

\bib{LX16}{article}{
    AUTHOR = {Li, C.}
    AUTHOR={Xu, C.},
     TITLE = { Stability of Valuations and Koll\'ar Components },
   JOURNAL = {arXiv:1604.05398, to appear in J. Eur. Math. Soc.},
      YEAR = {2016},
}

\bib{LX17}{article}{
    AUTHOR = {Li, C.}
    AUTHOR={Xu, C.},
     TITLE = { Stability of Valuations: Higher Rank},
   JOURNAL = {arXiv:1707.05561,  to appear in Peking Math. J.},
      YEAR = {2017},
}

\bib{LiuX17}{article}{
    AUTHOR = {Liu, Y.}
    AUTHOR={Xu, C.},
     TITLE = {K-stability of cubic threefolds},
   JOURNAL = { arXiv:1706.01933},
      YEAR = {2017},
}

\bib{LZ18}{article}{
   AUTHOR = {Liu, Y.}
    AUTHOR={Zhuang, Z.},
     TITLE = {Birational superrigidity and K-stability of singular Fano complete intersections},
   JOURNAL = {arXiv:1803.08871},
      YEAR = {2018},
}

\bib{Mcl14}{article}{
    AUTHOR = {McLean, M.},
     TITLE = {Reeb orbits and the minimal discrepancy of an isolated
              singularity},
   JOURNAL = {Invent. Math.},
  FJOURNAL = {Inventiones Mathematicae},
    VOLUME = {204},
      YEAR = {2016},
    NUMBER = {2},
     PAGES = {505--594},
    }

\bib{MM93}{incollection}{
    AUTHOR = {Mabuchi, T.}
    AUTHOR={Mukai, S.},
     TITLE = {Stability and {E}instein-{K}\"ahler metric of a quartic del
              {P}ezzo surface},
 BOOKTITLE = {Einstein metrics and {Y}ang-{M}ills connections ({S}anda,
              1990)},
    SERIES = {Lecture Notes in Pure and Appl. Math.},
    VOLUME = {145},
     PAGES = {133--160},
 PUBLISHER = {Dekker, New York},
      YEAR = {1993},
   MRCLASS = {32J15 (32J27 32L07 53C55 53C56)},
  MRNUMBER = {1215285},
MRREVIEWER = {J. S. Joel},
}

\bib{MSY06}{article}{
    AUTHOR = {Martelli, D.} 
    AUTHOR={Sparks, J.} 
    AUTHOR={ Yau, S.},
     TITLE = {The geometric dual of $a$-maximisation for toric Sasaki-Einstein manifolds},
   JOURNAL = {Comm. Math. Phys.},
  FJOURNAL = {Communications in Mathematical Physics},
    VOLUME = {268},
      YEAR = {2006},
     PAGES = {39-65},
  }

\bib{MSY08}{article}{
    AUTHOR = {Martelli, D.} 
    AUTHOR={Sparks, J.} 
    AUTHOR={ Yau, S.},
     TITLE = {Sasaki-{E}instein manifolds and volume minimisation},
   JOURNAL = {Comm. Math. Phys.},
  FJOURNAL = {Communications in Mathematical Physics},
    VOLUME = {280},
      YEAR = {2008},
    NUMBER = {3},
     PAGES = {611--673},
  }

 \bib{Oda15}{article}{
    AUTHOR = {Odaka, Y.},
     TITLE = {Compact moduli spaces of {K}\"ahler-{E}instein {F}ano varieties},
   JOURNAL = {Publ. Res. Inst. Math. Sci.},
  FJOURNAL = {Publications of the Research Institute for Mathematical         Sciences},
    VOLUME = {51},
      YEAR = {2015},
    NUMBER = {3},
     PAGES = {549--565},
 }

 \bib{OSS}{article}{
 AUTHOR = {Odaka, Y.},
  AUTHOR = {Spotti, C.},
    AUTHOR={Sun, S.}, 
     TITLE = {Compact moduli spaces of del {P}ezzo surfaces and
              {K}\"ahler-{E}instein metrics},
   JOURNAL = {J. Differential Geom.},
  FJOURNAL = {Journal of Differential Geometry},
    VOLUME = {102},
      YEAR = {2016},
    NUMBER = {1},
     PAGES = {127--172},
      ISSN = {0022-040X},
   MRCLASS = {14J15 (14J45 32Q25 53C25 53C55 58D27)},
  MRNUMBER = {3447088},
MRREVIEWER = {I. Dolgachev},
       URL = {http://projecteuclid.org/euclid.jdg/1452002879},
}

 \bib{PS08}{article}{
    AUTHOR = {Petersen, L.},
    author={S\"{u}ss, H.},
     TITLE = {Torus invariant divisors},
   JOURNAL = {Israel J. Math.},
    VOLUME = {182},
      YEAR = {2011},
     PAGES = {481-504},
}

 \bib{Rei94}{article}{
    AUTHOR = {Reid, M.},
     TITLE = {Nonnormal del {P}ezzo surfaces},
   JOURNAL = {Publ. Res. Inst. Math. Sci.},
  FJOURNAL = {Kyoto University. Research Institute for Mathematical
              Sciences. Publications},
    VOLUME = {30},
      YEAR = {1994},
    NUMBER = {5},
     PAGES = {695--727},
}


\bib{SS17}{article}{
    AUTHOR = {Spotti, C.},
    AUTHOR={Sun, S.}, 
TITLE={Explicit Gromov-Hausdorff compactifications of moduli spaces of K\"ahler-Einstein Fano manifolds}
   JOURNAL = {Pure Appl. Math. Q.},
  FJOURNAL = {Pure and Applied Mathematics Quarterly},
    VOLUME = {13},
      YEAR = {2017},
    NUMBER = {3},
     PAGES = {477-515},
}

\bib{SVdB97}{article}{
    AUTHOR = {Smith, K.}
    AUTHOR = {Van den Bergh, M.},
     TITLE = {Simplicity of rings of differential operators in prime
              characteristic},
   JOURNAL = {Proc. London Math. Soc. (3)},
  FJOURNAL = {Proceedings of the London Mathematical Society. Third Series},
    VOLUME = {75},
      YEAR = {1997},
    NUMBER = {1},
     PAGES = {32--62},     
}


 \bib{Tak06}{article}{
    AUTHOR = {Takagi, S.},
     TITLE = {Formulas for multiplier ideals on singular varieties},
   JOURNAL = {Amer. J. Math.},
  FJOURNAL = {American Journal of Mathematics},
    VOLUME = {128},
      YEAR = {2006},
    NUMBER = {6},
     PAGES = {1345--1362},
  }

\bib{Tei03}{incollection}{
    AUTHOR = {Teissier, B.},
     TITLE = {Valuations, deformations, and toric geometry},
 BOOKTITLE = {Valuation theory and its applications, {V}ol. {II}
              ({S}askatoon, {SK}, 1999)},
    SERIES = {Fields Inst. Commun.},
    VOLUME = {33},
     PAGES = {361--459},
 PUBLISHER = {Amer. Math. Soc., Providence, RI},
      YEAR = {2003},
}

\bib{Tei14}{article}{
author={Teissier, B.}
title={Overweight deformations of affine toric varieties and local uniformization},
journal={arXiv:1401.5204},
year={2014}
}

\bib{Tia87}{article}{
    AUTHOR = {Tian, G.},
     TITLE = {On {K}\"ahler-{E}instein metrics on certain {K}\"ahler manifolds
              with {$C_1(M)>0$}},
   JOURNAL = {Invent. Math.},
  FJOURNAL = {Inventiones Mathematicae},
    VOLUME = {89},
      YEAR = {1987},
    NUMBER = {2},
     PAGES = {225--246},
      ISSN = {0020-9910},
   MRCLASS = {53C25 (32C10 53C55 58E20)},
  MRNUMBER = {894378},
MRREVIEWER = {M. Kalka},
       DOI = {10.1007/BF01389077},
       URL = {https://doi.org/10.1007/BF01389077},
}

\bib{Tia92}{article}{
    AUTHOR = {Tian, G.},
     TITLE = {On {C}alabi's conjecture for complex surfaces with positive
              first {C}hern class},
   JOURNAL = {Invent. Math.},
  FJOURNAL = {Inventiones Mathematicae},
    VOLUME = {101},
      YEAR = {1990},
    NUMBER = {1},
     PAGES = {101--172},
}

\bib{Tia94}{article}{
    AUTHOR = {Tian, G.},
     TITLE = {The {$K$}-energy on hypersurfaces and stability},
   JOURNAL = {Comm. Anal. Geom.},
  FJOURNAL = {Communications in Analysis and Geometry},
    VOLUME = {2},
      YEAR = {1994},
    NUMBER = {2},
     PAGES = {239--265},
 }

\bib{Tia97}{article}{
    AUTHOR = {Tian, G.},
     TITLE = {K\"ahler-{E}instein metrics with positive scalar curvature},
   JOURNAL = {Invent. Math.},
  FJOURNAL = {Inventiones Mathematicae},
    VOLUME = {130},
      YEAR = {1997},
    NUMBER = {1},
     PAGES = {1--37},
 }

\bib{Tia13}{article}{
    AUTHOR = {Tian, G.},
     TITLE = {Partial {$C^0$}-estimate for {K}\"ahler-{E}instein metrics},
   JOURNAL = {Commun. Math. Stat.},
  FJOURNAL = {Communications in Mathematics and Statistics},
    VOLUME = {1},
      YEAR = {2013},
    NUMBER = {2},
     PAGES = {105--113},
   }

\bib{Tia15}{article}{
    AUTHOR = {Tian, G.},
     TITLE = {K-stability and {K}\"ahler-{E}instein metrics},
   JOURNAL = {Comm. Pure Appl. Math.},
  FJOURNAL = {Communications on Pure and Applied Mathematics},
    VOLUME = {68},
      YEAR = {2015},
    NUMBER = {7},
     PAGES = {1085--1156},
     
 }
 
 \bib{Tuc12}{article}{
    AUTHOR = {Tucker, K.},
     TITLE = {{$F$}-signature exists},
   JOURNAL = {Invent. Math.},
  FJOURNAL = {Inventiones Mathematicae},
    VOLUME = {190},
      YEAR = {2012},
    NUMBER = {3},
     PAGES = {743--765},
}     
\bib{TW04}{article}{
    AUTHOR = {Takagi, S.}
    AUTHOR = {Watanabe, K.},
     TITLE = {On {F}-pure thresholds},
   JOURNAL = {J. Algebra},
  FJOURNAL = {Journal of Algebra},
    VOLUME = {282},
      YEAR = {2004},
    NUMBER = {1},
     PAGES = {278--297},
}

\bib{Wah93}{article}{
author={Wahl, J.},
title={Second Chern class and Riemann-Roch for vector bundles on resolutions of surface singularities},
journal={Math. Ann.},
volume={295},
year={1993},
pages={81-110},
}

\bib{Xu14}{article}{
  AUTHOR = {Xu, C.},
     TITLE = {Finiteness of algebraic fundamental groups},
   JOURNAL = {Compos. Math.},
  FJOURNAL = {Compositio Mathematica},
    VOLUME = {150},
      YEAR = {2014},
    NUMBER = {3},
     PAGES = {409--414},
}

\end{biblist}
\end{bibdiv}





\end{document}